\newcommand\hrefdefaultfont{\ttfamily}
\xpatchcmd\href{\setkeys{href}{#1}}{\setkeys{href}{font=\hrefdefaultfont,#1}}{}{\fail}
\let\c@equation\c@subsection
\numberwithin{equation}{section}
\let\c@figure\c@equation
\numberwithin{figure}{section}
\theoremstyle{plain}
\newtheorem{theorem}[equation]{Theorem}
\newtheorem{corollary}[equation]{Corollary}
\newtheorem{lemma}[equation]{Lemma}
\newtheorem{proposition}[equation]{Proposition}
\newtheorem{axiom/}[equation]{Axiom}
\theoremstyle{definition}
\newtheorem{definition/}[equation]{Definition}
\newtheorem{example/}[equation]{Example}
\newtheorem{fact*}{Fact}
\newtheorem{fact/}[equation]{Fact}
\newtheorem{question/}[equation]{Question}
\newtheorem*{question*}{Question}
\newtheorem*{answer*}{Answer}
\newtheorem*{application*}{Application}
\newtheorem{convention/}[equation]{Convention}
\newtheorem{criterion/}[equation]{Criterion}
\newenvironment{definition}
  { 
   \pushQED{\qed}\begin{definition/}}
  {\popQED\end{definition/}}
\newenvironment{example}
  {%
   \pushQED{\qed}\begin{example/}}
  {\popQED\end{example/}}
\newenvironment{question}
  {%
   \pushQED{\qed}\begin{question/}}
  {\popQED\end{question/}}
\newenvironment{convention}
  {%
   \pushQED{\qed}\begin{convention/}}
  {\popQED\end{convention/}}
\newenvironment{criterion}
  {%
   \pushQED{\qed}\begin{criterion/}}
  {\popQED\end{criterion/}}
\theoremstyle{remark}
\newtheorem{remark/}[equation]{Remark}
\newtheorem{notation/}[equation]{Notation}
\newtheorem*{remark*}{Remark}
\newtheorem*{case*}{Case}
\newtheorem{claim}[equation]{Claim}
\newtheorem*{claim*}{Claim}
\newtheorem*{subclaim*}{Subclaim}
\newenvironment{remark}
  {%
   \pushQED{\qed}\begin{remark/}}
  {\popQED\end{remark/}}
\newenvironment{notation}
  {%
   \pushQED{\qed}\begin{notation/}}
  {\popQED\end{notation/}}
\DeclareMathOperator{\vol}{Vol}
\DeclareMathOperator{\SL}{SL}
\DeclareMathOperator{\abs}{ab}
\DeclareMathOperator{\Aut}{Aut}
\DeclareMathOperator{\End}{End}
\DeclareMathOperator{\PAut}{PAut}
\DeclareMathOperator{\Sym}{Sym}
\DeclareMathOperator{\Sp}{Sp}
\DeclareMathOperator{\RV}{RV}
\DeclareMathOperator{\R}{R}
\DeclareMathOperator{\genus}{genus}
\DeclareMathOperator{\sgn}{sgn}
\DeclareMathOperator{\col}{col}
\DeclareMathOperator{\Mat}{Mat}
\newcommand{\KZ}{\mathrm{KZ}}
\newcommand{\from}{\colon}
\renewcommand{\geq}{\geqslant}
\renewcommand{\leq}{\leqslant}
\newcommand{\cross}{\times}
\newcommand{\st}{\mid}
\newcommand{\calA}{\mathcal{A}}
\newcommand{\calB}{\mathcal{B}}
\newcommand{\calC}{\mathcal{C}}
\newcommand{\calD}{\mathcal{D}}
\newcommand{\calE}{\mathcal{E}}
\newcommand{\calH}{\mathcal{H}}
\newcommand{\calJ}{\mathcal{J}}
\newcommand{\calL}{\mathcal{L}}
\newcommand{\calM}{\mathcal{M}}
\newcommand{\calN}{\mathcal{N}}
\newcommand{\calO}{\mathcal{O}}
\newcommand{\calQ}{\mathcal{Q}}
\newcommand{\calR}{\mathcal{R}}
\newcommand{\calS}{\mathcal{S}}
\newcommand{\calT}{\mathcal{T}}
\newcommand{\calV}{\mathcal{V}}
\newcommand{\calW}{\mathcal{W}}
\newcommand{\CC}{\mathbb{C}}
\newcommand{\NN}{\mathbb{N}}
\newcommand{\RR}{\mathbb{R}}
\newcommand{\ZZ}{\mathbb{Z}}
\renewcommand{\Re}{\mathrm{Re}}
\renewcommand{\Im}{\mathrm{Im}}
\newcommand{\upB}{\mathrm{B}}
\newcommand{\upC}{\mathrm{C}}
\newcommand{\upD}{\mathrm{D}}
\newcommand{\upH}{\mathrm{H}}
\newcommand{\upP}{\mathrm{P}}
\newcommand{\upT}{\mathrm{T}}
\newcommand{\closure}[1]{\overline{#1}}
\newcommand{\close}[1]{\overline{#1}}
\newcommand{\cover}[1]{{\widetilde{#1}}}
\newcommand{\bdy}{\partial}
\newcommand{\Id}{\mathrm{Id}}
\newcommand{\diff}{\mathrm{d}}
\renewcommand{\root}{\mathrm{root}}
\newcommand{\biroot}{\mathrm{biroot}}
\newcommand{\flip}{\mathrm{flip}}
\newcommand{\lab}{\mathrm{lab}}
\newcommand{\av}{\mathrm{av}}
\newcommand{\orb}{\mathrm{orb}}
\newcommand{\irreg}{\mathrm{irreg}}
\newcommand{\reg}{\mathrm{reg}}
\newcommand{\zip}{\mathrm{zip}}
\newcommand{\hyp}{\mathrm{hyp}}
\newcommand{\nonhyp}{\mathrm{nonhyp}}
\newcommand{\Rtop}{\mathrm{t}}
\newcommand{\Rbot}{\mathrm{b}}
\newcommand{\shift}{\text{\textup{S}}}
\newcommand{\leb}{\mathrm{Leb}}
\newcommand{\Flow}{\mathrm{Flow}}
\newcommand{\SFlow}{\mathrm{SFlow}}
\newcommand{\isom}{\cong}
\renewcommand{\subset}{\subseteq}
\newcommand{\It}{I_{\text{t}}}
\newcommand{\Ib}{I_{\text{b}}}
\newcommand{\Mod}{\mathrm{Mod}}
\g@addto@macro\bfseries{\boldmath}
\newcommand{\fakeenv}{} 
\newenvironment{restate}[2]  
{
 \renewcommand{\fakeenv}{#2} 
 \theoremstyle{plain}
 \newtheorem*{\fakeenv}{#1~\ref{#2}} 
 \begin{\fakeenv}
}
{
 \end{\fakeenv}
}
\author[Bell]{Mark Bell}
\address[Mark Bell]{
  Independent\\
  UK.}
\email{mark00bell@googlemail.com}
\author[Delecroix]{Vincent Delecroix}
\address[Vincent Delecroix]{
 CNRS - Universitsé de Bordeaux\\
 351, cours de la Libération\\
 33400 Talence. \href{https://orcid.org/0000-0002-9608-782X}{ORCID: 0000-0002-9608-782X}.
}
\email{vincent.delecroix@u-bordeaux.fr}
\author[Gadre]{Vaibhav Gadre}
\address[Vaibhav Gadre]{
  School of Mathematics and Statistics\\
  University of Glasgow\\
  University Place, Glasgow, G128QQ UK. \href{https://orcid.org/0000-0003-4222-9551}{ORCID: 0000-0003-4222-9551}.
}
\email{vaibhav.gadre@glasgow.ac.uk}
\author[Gutiérrez-Romo]{Rodolfo Gutiérrez-Romo}
\address[Rodolfo Gutiérrez-Romo]{
        Departamento de Ingeniería Matemática \& Centro de Modelamiento Matemático, CNRS-IRL 2807, Universidad de Chile, Beauchef 851, Santiago, Chile. \href{https://orcid.org/0000-0002-5446-6602}{ORCID: 0000-0002-5446-6602}.
        }
\email{g-r@rodol.fo}
\author[Schleimer]{Saul Schleimer}
\address[Saul Schleimer]{
    	Department of Mathematics\\
        University of Warwick\\
        Coventry, CV47AL UK. 
        \href{https://orcid.org/0000-0003-3362-7822}{ORCID: 0000-0003-3362-7822}.}
\email{s.schleimer@warwick.ac.uk}
\subjclass[2020]{Primary 30F30; Secondary 37D40, 32G15, 30F60, 37A20, 37F34, 57K20}
\keywords{Moduli of Riemann surfaces, Quadratic differentials, Teichmüller dynamics, Diagonal flow, Monodromy groups, Kontsevich--Zorich cocycle, Rauzy--Veech groups, Lyapunov spectra, Fundamental group}
\thanks{This work is in the public domain.}
\title{Diagonal flow detects topology of strata}
\begin{document}

\begin{abstract}
We study the interplay between the diagonal flow on, and the topology of, a stratum component of a space of rooted quadratic differentials.
We prove that the flow group -- the subgroup of the fundamental group generated by almost-flow loops -- equals the fundamental group.
As a corollary, we show that the plus and minus modular Rauzy--Veech groups are finite-index subgroups of their ambient modular monodromy groups. 
This partially answers a question of Yoccoz.

Using this, and recent advances on algebraic hulls and Zariski closures of symplectic monodromy groups, we prove that the Rauzy--Veech groups are Zariski dense in their ambient symplectic groups.  
Density, in turn, implies the simplicity of the plus and minus Lyapunov spectra of any component of any stratum of quadratic differentials.
We thus establish the Kontsevich--Zorich conjecture.
\end{abstract}

\maketitle

\section{Introduction}

\subsection*{Motivation}

A Riemann surface $S$ equipped with an (abelian or quadratic) differential $q$ has a canonical flat metric. 
This metric determines, and is determined by, the \emph{periods} of the differential: 
its integrals along appropriately chosen relative cycles.
Varying $q$, without changing the underlying combinatorics, gives rise to an ambient \emph{stratum component} $\calC$: a moduli space of differentials.  
(The possible components are completely classified; see~\cite{Kon-Zor03, Lan08, Che-Moe}.)
The periods endow the component with (orbifold) charts known as \emph{period coordinates}.
The usual identification of $\CC$ with $\RR^2$ gives an $\SL(2, \RR)$--action on the period coordinates. 

As a hint of deep importance of these spaces, and this action, suppose that $q$ is a differential and $\calN$ is its $\SL(2, \RR)$--orbit closure. 
Then $\calN$ is dynamically defined, yet is an algebraic variety~\cite{Fil16}.
Futhermore, in period coordinates $\calN$ is cut out by homogeneous linear equations, with (real) algebraic coefficients~\cite{Esk-Mir-Moh}. 
So, despite the essentially transcendental dependence of flat surfaces on their differentials, their dynamics surprisingly behave much like a Lie group acting on a homogeneous space. 

\subsection*{Monodromy}

Our goal is to understand the \emph{monodromy} associated to the component $\calC$. 
There is a forgetful map from $\calC$ to $\calM(S)$: 
the moduli space of Riemann surface structures on $S$.
Both $\calC$ and $\calM(S)$ are orbifolds; 
both have various manifold covers. 

The (orbifold) universal cover of $\calM(S)$ is the so-called Teichmüller space $\calT(S)$; 
this is homeomorphic to an open ball in $\RR^{6g-6}$ where $g = \genus(S)$.
The deck group is the mapping class group $\Mod(S)$.

To obtain finite manifold cover of $\calC$ we consider \emph{rooted} differentials. 
A \emph{root} for $q$ is a horizontal unit tangent vector at a singularity.  
The choice of root breaks all symmetries of $q$
and so unwraps the orbifold locus.
The resulting finite cover is a manifold. 
We fix a connected component of this space and denote it by $\calC_\root$.  

The above maps of spaces induce the following sequence of homomorphisms:
\[
\uppi_1(\calC_\root) \to \uppi_1^\orb(\calC) \to \uppi_1^\orb(\calM(S)) \isom \Mod(S) \xrightarrow{\rho} \Aut(\upH_1(S; \ZZ)) \cong \Sp(2g, \ZZ)
\]
The third map, $\rho$, is the symplectic representation of the mapping class group.
We call the image of $\uppi_1(\calC_\root)$, inside the mapping class group, the \emph{modular monodromy group}. 
The image of the modular monodromy group under $\rho$ is known as the \emph{symplectic monodromy group}.

Monodromy groups are ``topological offspring'' of the stratum component $\calC_\root$.
In order to relate the topology and dynamics of $\calC_\root$ we ask the following.

\begin{question}
	\label{q:local_global_vague}
	Is $\uppi_1(\calC_\root)$ ``detected'' by the diagonal flow? 
\end{question}

To make this precise, suppose that $U \subset \calC_\root$ is a small contractible open set.
Suppose that $q_0 \in U$ is a base-point.
Consider the diagonal flow trajectories that start and end in $U$.
For each, we connect its endpoints to $q_0$ inside of $U$ to get a loop based at $q_0$.
As $U$ is contractible, the resulting based homotopy class is independent of the choices made inside of $U$. 
We call these based homotopy classes \emph{almost-flow loops}. 
The \emph{flow group} of $\calC_\root$ associated with the pair $(U, q_0)$ is the subgroup of $\uppi_1(\calC_\root, q_0)$ generated by all such loops. 
(A semigroup version, and some of its applications, are discussed by Hamenstädt~\cite[Section 4.2]{Ham}.)
So, \Cref{q:local_global_vague} can be restated as follows.

\begin{question}
	\label{q:local_global}
	Suppose that $\calC_\root$ is a stratum component of the moduli space of (rooted) abelian or quadratic differentials.  
	Is the flow group of $\calC_\root$ equal to $\uppi_1(\calC_\root)$?
\end{question}

\noindent
This is a version of a ``question'' of Yoccoz~\cite[Remark~9.3]{Yoc}.\footnote{Yoccoz asks if the fundamental group of the Rauzy diagram surjects $\Mod(S)$.
	However, what is \emph{meant} is the monodromy of $\calC$ inside of the ``obvious'' subgroup of $\Mod(S)$~\cite{MatPer}.}

One of our main results is a positive answer to \Cref{q:local_global}, in the setting of rooted differentials.

\begin{restate}{Theorem}{t:flow-general}
	Suppose that $\calC_\root$ is any component of a stratum of the moduli space of rooted abelian or quadratic differentials. 
	Suppose that $U$ is any contractible open set and $q_0 \in U$ is a choice of base-point. 
	Then the flow group of $\calC_\root$, associated with the pair $(U, q_0)$, equals $\uppi_1(\calC_\root, q_0)$.
\end{restate}

\noindent
We also define the \emph{strict flow group} associated with the pair $(U, q_0)$.
This is the subgroup of the flow group of $\calC_\root$ generated by those elements which are \emph{closed} diagonal flow trajectories. 
We then directly show that the strict flow group and the flow group of $\calC_\root$ coincide.
This implies the following, suggested by Forni:

\begin{restate}{Corollary}{c:strict-flow}
	Suppose that $\calC_\root$ is any component of a stratum of the moduli space of rooted abelian or quadratic differentials. 
	Suppose that $U$ is any contractible open set and $q_0 \in U$ is a choice of base-point. 
	Then the strict flow group of $\calC_\root$, associated with the pair $(U, q_0)$, equals $\uppi_1(\calC_\root, q_0)$.
\end{restate}

\noindent 
Through the zippered rectangles construction, the diagonal flow on $\calC_\root$ can be coded combinatorially by a Rauzy diagram.
In the course of the proof of \Cref{t:flow-general}, we prove the following. 
\begin{restate}{Theorem}{t:pi1-surjective}
	Let $\calD_\root$ be the Rauzy diagram for $\calC_\root$. 
	Then the natural homomorphism  $\uppi_1(\calD_\root) \to \uppi_1(\calC_\root)$ is surjective.
\end{restate}

For strata of abelian differentials, previous work by Calderon and Calderon--Salter also allows us to explicitly compute the image of the flow group inside of $\Mod(S)$ and of $\Aut(\upH_1(S; \ZZ))$ (or some larger group, such as $\Aut(\upH_1(S, Z; \ZZ))$), up to finite index \cite{Cal,Cal-Sal21a,Cal-Sal21b,Cal-Sal23}.

\subsection*{Cocycles}

Fix $\calC$, a stratum component. 
Given a bundle over $\calC$, the diagonal flow gives us a natural cocycle. 
The most studied of these is the \emph{Kontsevich--Zorich cocycle}.
This can be lifted to a connected component $\calT\calC$ of the Teichmüller space of abelian or quadratic differentials (the choice of $\calT\calC$ is, in general, not unique \cite{Cal,Cal-Sal23}). 

In more detail, we define a vector bundle over $\calT\calC$ with a suitable fibre. 
In the abelian case, this fibre is the first cohomology of the underlying topological surface; 
in the quadratic case, it is the first cohomology of the orientation double cover. 
By Poincaré-duality, it is also possible to use the corresponding homology groups as the fibre.

The $\SL(2, \RR)$--action induces a trivial dynamical cocycle on this vector bundle. 
By modding out by the mapping class group, the vector bundle descends to a bundle over $\calC$ known as the \emph{Hodge bundle}; 
similarly the cocycle descends to the \emph{Kontsevich--Zorich cocycle} \cite{Kon-Zor97,Kon}. 
In the quadratic case, the cocycle naturally splits into two distinct symplectically orthogonal blocks, usually referred to as the \emph{plus} (or \emph{invariant}) and \emph{minus} (or \emph{anti-invariant}) pieces.

Moduli spaces of abelian or quadratic differentials carry a natural $\SL(2,\RR)$--invariant measure called the Masur--Smillie--Veech measure. 
The cocycle is measurable and log-integrable; by Oseledets' theorem, it has Lyapunov exponents. 

Many interesting dynamical properties of abelian or quadratic differentials can be written in terms of the Lyapunov exponents of the Kontsevich--Zorich cocycle.
An important example are the deviations of ergodic averages of the linear flow on almost every abelian or quadratic differential \cite{Zor97,Esk-Kon-Zor}. 
In fact, when the Lyapunov spectrum of the Kontsevich--Zorich cocycle is simple, these deviations can be precisely described. 

Kontsevich and Zorich conjectured that the Lyapunov spectrum is simple for all abelian stratum components~\cites[Conjecture~2]{Zor97}[page~1499]{Zor99}.  
Their conjecture extends naturally to the quadratic case as follows. 
We form the branched orientation double cover. 
The homology of the cover splits into the plus and minus eigenspace for the involution; 
the $\SL(2, \RR)$ action preserves this splitting. 
Simplicity is conjectured in both pieces~\cite{ZorPer}.

In the abelian case, Forni proved positivity of the exponents and simplicity in genus two \cite{For}. 
Simplicity for all abelian strata was later established in the famous work by Avila--Viana \cite{Avi-Via07b}.
Similar techniques were used by Matheus--Möller--Yoccoz \cite{Mat-Moe-Yoc} to prove simplicity for certain loci of square-tiled surfaces.
A coding-free proof of this was then given by Eskin--Matheus \cite{Esk-Mat}.

Simplicity in the quadratic case was shown for many stratum components \cite{Tre,Gut17}.  
Our paper establishes simplicity for all abelian and quadratic stratum components; as discussed below, our proof relies on certain machinery of these previous authors, but is independent of their theorems.  

\subsection*{Rauzy--Veech groups}

The Rauzy--Veech groups are subgroups of the symplectic group generated by the matrices (in a preferred basis) induced by evaluating these cocycles over based loops in Rauzy diagrams. 
It follows from \Cref{t:pi1-surjective} that Rauzy--Veech groups have finite index in the corresponding symplectic monodromy groups. We use this fact to prove the following:

\begin{restate}{Theorem}{t:zariski}
	For every component of every abelian stratum, the Rauzy--Veech group is Zariski dense in its ambient symplectic group.
	Furthermore, for every component of every quadratic stratum, the plus and minus Rauzy--Veech groups are Zariski dense in their corresponding ambient symplectic groups.
\end{restate}

The groups of \Cref{t:zariski} that arise, by splitting singularities, from abelian strata are known to be finite index inside the ambient symplectic groups (over $\ZZ$) and hence Zariski dense. 
This was shown by Avila--Matheus--Yoccoz \cite{Avi-Mat-Yoc} for all abelian hyperelliptic components.
It was extended to all components (abelian or quadratic) that arise from minimal abelian strata by splitting singularites by the fourth author \cite{Gut19, Gut17}. 

With \Cref{t:flow-general} in hand, we can compute the Kontsevich--Zorich cocycle over any loop in $\calC_\root$ and not just along the diagonal flow.
It is this additional flexibility that allows us, for the question of Zariski density, to consider a symplectic monodromy group instead of a Rauzy--Veech group.

For the symplectic monodromy groups of abelian differentials, and also for the symplectic monodromy groups induced by the minus piece of the cocycle for quadratic differentials, we directly apply some of Filip's results to obtain Zariski density \cite[Corollary 1.7]{Fil17}. For the symplectic monodromy groups induced by the plus piece of the cocycle, we need to discuss \emph{algebraic hulls}.

The algebraic hull of the Kontsevich--Zorich cocycle restricted to a linear invariant suborbifold $\calN$ can be thought of as the smallest algebraic group into which the cocycle over $\calN$ can be measurably conjugated.
As such, the hull is both an algebro-geometric and an ergodic-theoretic object.
Eskin--Filip--Wright showed that the algebraic hull is as large as it can be, namely it equals the stabiliser of the tautological plane (that is, the cohomology classes spanned by the real and imaginary parts of the differential) in the Zariski closure of the symplectic monodromy group \cite[Theorem 1.1]{Esk-Fil-Wri}. 
We remark that, just as the theorem by Eskin--Filip--Wright shows that the algebraic hull is as large as it can be, \Cref{t:flow-general} shows that the flow group is also as large as it can be. Thus, for stratum components, \Cref{t:flow-general} can be considered as a dynamical analogue of the result by Eskin--Filip--Wright.

The plus piece of the Kontsevich--Zorich cocycle does not meet the tautological plane. The stabiliser then equals the Zariski closure of the symplectic monodromy group, and hence so does the algebraic hull. This result, together with Filip's classification of the possible Lie algebra representations of algebraic hulls \cite[Theorem 1.2]{Fil17}, enables us to show that the Zariski closure of the symplectic monodromy group of the plus piece is $\Sp(2g, \RR)$ by a simple dimension count.

\subsection*{Simplicity} By the work of Benoist \cite{Ben97}, Zariski density of an appropriate Rauzy--Veech group implies that the monoids associated with the Kontsevich--Zorich cocycles are ``rich'' in the sense of the simplicity criterion of Avila--Viana \cite{Avi-Via07a, Avi-Via07b}. 
As a consequence of \Cref{t:zariski}, we can apply the Avila--Viana criterion to prove the Kontsevich--Zorich simplicity conjecture.

As mentioned before, simplicity was known for all abelian \cite{Avi-Via07b} and some quadratic stratum components \cite{Gut17}.
It is also known for the principal stratum of quadratic differentials by different methods through the recently announced solution by Eskin--Mirzakhani--Rafi of the Furstenberg problem for random walks on the mapping class group. However, we have claimed the known results as our proof is self-contained and is uniform across all stratum components.

\begin{restate}{Theorem}{t:KZ}
	The Kontsevich--Zorich cocycle has a simple spectrum for all components of all strata of abelian differentials. The plus and minus Kontsevich--Zorich cocycles also have a simple spectrum for all components of all strata of quadratic differentials. 
\end{restate}

\subsection*{Acknowledgements}
The authors are immensely grateful to Carlos Matheus for countless illuminating conversations. 
We also thank Giovanni Forni, Maxime Fortier Bourque, Erwan Lanneau, and Alex Wright for their helpful comments on an earlier version of this article.
We thank Diane Maclagan for many helpful conversations about convex geometry.
We thank Cameron Wilson for sharing with us his elegant proof of \Cref{r:strong}.

The fourth author is grateful to the ANID AFB-170001, the FONDECYT Iniciación 11190034, and the MATHAMSUD 21-MATH-07 grants.
The third author was supported by LMS Scheme 4 during the final revision of the paper.

\section{Strategies}

We outline the key steps and ideas in our proofs.

\subsection*{Rooted differentials}

The dynamical issues we consider are stable under passing to a finite cover of the given stratum component $\calC$.
Accordingly, in \Cref{s:rooted} we pass to the space $\calC_\root$ of \emph{rooted differentials}:
differentials decorated with a choice of horizontal unit tangent vector at a singularity.
We do this for two reasons.
Most importantly, a generic rooted differential admits a canonical decomposition into \emph{zippered rectangles}. 
Also, while $\calC$ has a complicated orbifold structure, the cover $\calC_\root$ is a manifold; 
this simplifies various transversality and fundamental group arguments.

\subsection*{Zippered rectangles and the based-loop theorem} 

The zippered rectangle construction is due to Veech \cite{Vee82} in the abelian case and due to Boissy--Lanneau \cite{Boi-Lan} in the quadratic case. We discuss them in depth in \Cref{s:zippered_rectangles}.
Parameter spaces of zippered rectangles, where the length of base-arc is chosen in a specific way (that we call \emph{distinguished}), define contractible open sets in $\calC_\root$ which we call \emph{polytopes of differentials}.
See \Cref{s:polytopes}.
The union of these polytopes is dense in $\calC_\root$. 
However, the complement of their union is complicated; 
in particular the polytopes do not give $\calC_\root$ a CW-complex structure.
For instance, there are compact arcs in $\calC_\root$ that intersect, transversely, the faces of the polytopes infinitely many times.
See \Cref{a:non-polytopal} for an example and relevant discussions.
As a result, our \emph{based-loop theorem} (\Cref{t:based-loops}) cannot be deduced from naïve transversality arguments.

Fortunately, as discussed by Yoccoz~\cite[Proposition in Section 9.3]{Yoc}, the subset of rooted differentials that do \emph{not} admit any decomposition into zippered rectangles is a countable union of codimension-two subsets. See \Cref{d:saddled} and \Cref{l:base-arc-infinite}.
Thus, any based loop $\gamma \from [0, 1] \to \calC_\root$ can be homotoped to be disjoint from such differentials.

After this homotopy, we cover the image of $\gamma$ by finitely many charts with good properties. 
We arrange matters so that the boundaries of these charts are codimension-one embedded submanifolds. 
A further homotopy makes $\gamma$ transverse to these boundaries while still being covered by the charts. 
Unfortunately, our charts may not be contained in any of the polytopes defined above.
That is, the base-arcs along $\gamma$ may not be the distinguished base-arcs. 
If it is not, we flow; we flow forwards if the base-arc is too short and backwards if the base-arc is too long. 

We apply the flow, with direction determined as above, to a sufficiently small subsegment of $\gamma$, contained in a chart.
We then replace the given subsegment of $\gamma$ by two segments contained in the flow and one segment contained in the interior of a polytope.
This gives a homotopy of $\gamma$. 
Doing this finitely many times, we homotope from $\gamma$ to a concatenation of segments which alternate between being diagonal flow segments (forward or backward) or lying inside of polytopes.
This is \Cref{t:based-loops}, our \emph{based-loop theorem}.

\subsection*{Rauzy--Veech induction and the diagonal flow}

A zippered rectangle decomposition gives combinatorics in the form of an \emph{unlabelled irreducible generalised permutation}~\cite[Definition~3.1]{Boi-Lan}.
The decomposition, and thus the rooted differential, can be recovered from the permutation and various associated parameters such as the widths and heights of the rectangles as well as the heights of the zippers. 

If we apply the forward diagonal flow, the base-arc grows (exponentially) until it is no longer the distinguished base-arc.
At this point we pass to a new, unique, shorter base-arc which is again a distinguished base-arc.
In this way we obtain a new irreducible generalised permutation as well as new parameters.
We call one such operation a \emph{Rauzy--Veech move}.

The collection of these moves gives a renormalisation procedure known as the \emph{Rauzy--Veech induction}. See \Cref{s:RV-induction}.

It was originally defined by Rauzy and Veech for abelian differentials~\cite{Rau, Vee82} and by Boissy and Lanneau~\cite{Boi-Lan} for quadratic differentials.
Applying the diagonal flow to a single (generic) rooted differential, we obtain a sequence of generalised permutations.
This is the \emph{Rauzy--Veech coding} for the given differential.

The generalised permutations and the Rauzy--Veech moves give an automaton (a directed graph), called the \emph{Rauzy diagram}, as follows.
Two permutations $\pi$ and $\pi'$ are equivalent if we can precompose with a permutation $\sigma$ to obtain $\pi \circ \sigma = \pi'$.
The vertices of the automaton are equivalence classes of irreducible generalised permutations arising from differentials in $\calC_\root$.
There is a directed edge from $[\pi]$ to $[\rho]$ if some representative of the latter arises from a single Rauzy--Veech move applied to some representative of the former.
Furthermore, the Rauzy diagram is strongly connected.
This follows from work of Masur~\cite{Mas} and Veech~\cite{Vee82,Vee86} showing that the diagonal flow is ergodic.

We use the Rauzy--Veech coding to derive simplicity, as explained below.

\subsection*{Flow groups and the fundamental group} 

Suppose that $\pi$ is a vertex in the Rauzy diagram. 
Suppose that $q$ lies in $\calC(\pi)$, the polytope of differentials with combinatorics $\pi$.
We define a homomorphism from the fundamental group of the Rauzy diagram (as an undirected graph, based at $\pi$) to $\uppi_1(\calC_\root, q)$. 
That is, for any directed loop in the Rauzy diagram, based at $\pi$, we choose a diagonal flow segment whose coding is the loop, and cone its endpoints to $q$. 
Then strong connectivity of the Rauzy diagram extends this to all based loops.
With this done, we use the based-loop theorem to show that the homomorphism is surjective.
See \Cref{t:pi1-surjective}.
This answers a question of Yoccoz \cite[Remark in Section 9.3]{Yoc}.

Suppose that $U$ is a contractible open set in $\calC_\root$ with a base-point $q$.
From any diagonal flow segment with endpoints in $U$, we obtain an almost-flow loop by coning the endpoints to $q$.
Since shrinking $U$ makes the flow group smaller (in principle), we may assume that $U$ is contained in a polytope $\calC(\pi)$. 
We use the based-loop theorem, and surjectivity, to show that the flow group equals the fundamental group of $\calC_\root$.
See \Cref{t:flow-general}.
In other words, at the level of the fundamental group, the diagonal flow captures the topology of $\calC_\root$ and hence the topology of $\calC$ (up to finite index).

\subsection*{Simplicity}

By a criterion of Avila--Viana~\cite{Avi-Via07a,Avi-Via07b}, simplicity of log-integrable cocycles, such as the Kontsevich--Zorich cocycle, can be deduced from the existence of a coding for the flow that has an ``approximate product structure'' and a notion of ``richness'' for the cocycle. A (weaker) version of this coding is stated in \Cref{c:simplicity}.
As we indicated earlier, a coding with the required product structure
can be achieved by accelerating the Rauzy--Veech induction. 
This was done by Avila--Gouëzel--Yoccoz~\cite{Avi-Gou-Yoc} for abelian differentials and by Avila--Resende~\cite{Avi-Res} for quadratic differentials. 
See \Cref{s:dynamics} for more details.
The remaining task, and the crux of the problem, is to obtain the ``richness'' of the cocycle. 
The required richness was established by Avila--Viana~\cite{Avi-Via07b} for abelian stratum components by a direct computation. 

By the work of Benoist~\cite{Ben97}, Zariski density in the symplectic group of an associated group implies richness of the cocycle. 
In fact, Zariski density is strictly stronger~\cite[Appendix~A]{Avi-Mat-Yoc}).

For the Kontsevich--Zorich cocycle, the associated group is the Rauzy--Veech group. 
Its Zariski density for hyperelliptic components was proved by Avila--Matheus--Yoccoz. 
In fact, their result is stronger; they prove that the Rauzy--Veech group is a certain finite-index subgroup of the ambient symplectic group~\cite[Theorem 1.1]{Avi-Mat-Yoc}. 
This finite index result was extended by the fourth author to all components (abelian or quadratic) that arise by splitting zeroes from minima abelian components~\cites[Theorem 1.1]{Gut19}[Theorem 1.1]{Gut17}.

Our result on flow groups (\Cref{t:flow-general}) is a key step in our plan to prove Zariski density of the Rauzy--Veech group, of \emph{any} stratum component.
This is because we can use the cocycle along any loop in $\calC_\root$; 
we are not restricted to almost-flow loops.

In recent work~\cite{Fil17}, Filip classifies the situations in which zero Lyapunov exponents can arise, in terms of the Zariski closure of a symplectic monodromy group. See \Cref{s:algebraic_hulls_Zariski_closures}. From this description, he also derives the fact that, when restricted to the symplectic block that contains the tautological plane, the Zariski closure of this group is the full symplectic group for this block \cite[Corollary 1.7]{Fil17}.
Combined with this fact, our \Cref{t:flow-general} directly yields simplicity for abelian components.

A quadratic component lifts to a linear invariant suborbifold of its orientation double-cover. 
Hence Filip's result again applies. 
The involution on the orientation double-cover splits the Kontsevich--Zorich cocycle into two symplectically orthogonal blocks, usually referred to as the \emph{plus} (or \emph{invariant}) cocycle and the \emph{minus} (or \emph{anti-invariant}) cocycle. 
We will refer to the induced subspaces, inside the relative homology of a fixed flat surface, as the \emph{plus} and \emph{minus piece}, respectively.
The minus piece contains the tautological plane.
Again by Filip's corollary, the Zariski closure for the minus cocycle is the full symplectic group.
Simplicity of the minus cocycle follows directly from combining this with \Cref{t:flow-general}. See \Cref{s:minus}.

It remains to tackle the plus cocycle. Filip also classifies the possible algebraic hulls of any linear invariant suborbifold at the level of Lie algebra represntations \cite[Theorem 1.2]{Fil17}. Moreover, Eskin--Filip--Wright showed that \cite[Theorem 1.1]{Esk-Fil-Wri} the algebraic hull and the Zariski closure of a piece of the symplectic monodromy group not containing the tautological place coincide. Hence, this applies for the plus piece.

Now, we exploit our extra flexibility to build a dimension argument that eliminates all but the full symplectic group as the Zariski closure.
We carry out the dimension argument first for components of minimal strata (\Cref{s:minimal strata}) and hyperelliptic components with two zeros (\Cref{s:hyperelliptic with two}) to conclude Zariski density for the symplectic monodromy groups of these components. 
This implies the Zariski density of their Rauzy--Veech groups, as they are finite index in the symplectic monodromy groups (a consequence of \Cref{t:pi1-surjective}). 
We then deal with a few remaining low genera components by using a well-known criterion for Zariski density \cite{Pra-Rap} in \Cref{s:sporadic}. 
Finally, we extend the density to Rauzy--Veech groups of all quadratic components by standard techniques of surgery/splitting zeroes (\Cref{s:extending_from_basic}). 
The density allows us to apply the Avila--Viana criterion to conclude the proof of the Kontsevich--Zorich conjecture in full generality.

\section{Preliminaries}

\subsection{Differentials and stratum components}
\label{s:differentials_components}





Suppose that $P$ is a \emph{polygon}: 
a compact, convex region in $\CC$ with boundary a finite union of line segments.
Suppose that $Z = Z(P)$ is a finite subset of $P$.  
We call $Z$ the \emph{marked points} of $P$.  
We call the pair $(P, Z)$ a \emph{marked polygon}.

A \emph{(completely marked) flat surface} $(S, q)$ is a finite collection $\{(P, Z(P))\}_P$ of marked polygons, 
together with \emph{side pairings} as follows.
\begin{enumerate}
	\item 
	A side pairing is a (half-)translation which reverse the induced orientation of the paired boundary segments.
	\item
	A side pairing between $(P, Z(P))$ and $(Q, Z(Q))$ must send points of $Z(P)$ to points of $Z(Q)$.
	\item 
	Every boundary segment is part of exactly one side pairing.
\end{enumerate} 
The \emph{underlying surface} $S$ is the quotient of the disjoint union $\bigsqcup_P P$ by the side pairings.
We deduce that $S$ is compact, without boundary, and (canonically) oriented.
We will now abuse notation and use simply $q$ to denote our flat surface.
The set of \emph{singularities} $Z(q)$ is the image, under the quotient map, of $\bigsqcup_P Z(P)$.  

Topological and geometric properties in the complex plane $\CC$, preserved by (half-)translations, descend to flat surfaces.
This applies to orientations, to the real (horizontal) and imaginary (vertical) foliations, to geodesics (line segments), and to angles.
These give orientations, \emph{horizontal} and \emph{vertical} foliations, \emph{flat geodesics}, and \emph{cone angles} to flat surfaces.

A point of $q$ with cone angle $k\pi$ is a \emph{pole}, a \emph{regular point}, or a \emph{zero} if $k$ equals one, two, or more, respectively.
We require that $Z(q)$ contain all poles and zeros of $q$.
Note that $Z(q)$ may contain (finitely many) regular points -- these are the \emph{marked regular points}.
Note that at a regular point the tangent space to $q$ is a plane;
at poles and zeros the tangent space is a cone of the corresponding angle.

\begin{definition}
	Suppose that $q$ is a flat surface.
	Then $K(q)$ is the multi-set of cone angles (divided by $\pi$) appearing at the singularities (that is, points of $Z(q)$). 
	We call $K(q)$ the \emph{cone angle data} for $q$. 
\end{definition}

Note that the sum $\sum_{k \in K(q)} (k - 2) = 4g - 4$ recovers the genus of the underlying surface $S$.
For example, if $Z(q)$ contains only marked regular points then $S$ is the two-torus.

A (completely marked) flat surface $q$ comes equipped with a decomposition into polygons.
Thus a pair of flat surfaces may be isometric, with identical regular marked points and identical vertical and horizontal foliations, yet not be ``the same''.
To deal with this, we say that a \emph{differential} is a scissors congruence class of flat surfaces.
We again abuse notation and use simply $q$ to denote the differential represented by $(S, q)$. 

\begin{definition}
	\label{d:stratum-component}
	The set of differentials sharing the same cone angle data $K$ is called a \emph{stratum}.
	We equip a stratum with the quotient topology (by scissors congruence) of the subspace topology (enforcing gluings by (half-)translations) of the product topology (coming from the vertices and marked points of the given polygons).
	A connected component $\calC$ of a stratum is called a \emph{(stratum) component}.
\end{definition}

In general, a stratum component $\calC$ is an \emph{orbifold}. 
We refer to the book by Boileau--Maillot--Porti~\cite{Boi-Mai-Por} for background on orbifolds and their fundamental groups.
Here are the combinatorial and algebraic invariants shared by all differentials $q$ in a fixed stratum component $\calC$.

\begin{enumerate}
	\item 
	\emph{Abelian}
	or \emph{quadratic}:
	whether or not the vertical foliation of $q$ is orientable.  
	
	\item
	\emph{Singularity data}:
	the multi-set
	\[
	\kappa(q) = 
	\begin{cases}
		\left\{k - 2 \st k \in K(q)\right\}, & \mbox{if $q$ is quadratic} \\
		\left\{(k - 2)/2 \st k \in K(q)\right\}, & \mbox{if $q$ is abelian}
	\end{cases}
	\]
	Each element of $\kappa(q)$ is an integer -- in the abelian case this follows from orientability of the vertical foliation.
	
	\item
	\emph{Hyperelliptic}: if every $q$ lying in $\calC$ admits an involution with $2g + 2$ fixed points~\cite{Lan04}.
	
	\item
	\emph{Spin}: (only for abelian components where all elments of $\kappa(q)$ are even) defined as the Arf invariant of a specific quadratic form~\cites{Joh}[Appendix~C]{Zor08}.
	
	\item
	\emph{Regular} or \emph{irregular}: (when possible) distinguished by the dimension of a cohomology group corresponding to a specific divisor \cite{Che-Moe}.
\end{enumerate}

For the convenience of the reader, we also list the complete classification of abelian and quadratic stratum components in \Cref{s:classification}.

\subsection{Saddle connections, triangulations, and period coordinates} 

Suppose that $q$ is a differential.  A \emph{saddle connection} for $q$ is a flat geodesic that meets the singularities $Z(q)$ exactly in its endpoints.
(Note that a saddle connection may be a loop.)
By~\cite[Section~4]{Mas-Smi}, when $Z(q)$ is non-empty there is a triangulation of $q$ where the vertices are exactly the points of $Z(q)$ and where the edges are saddle connections. 
If we choose an order on the edges of the triangulation, then their complex lengths give a vector which we call the \emph{(redundant) period coordinates} of $q$.
Note that these coordinates are functions on (small) manifold charts about orbifold points of $\calC$. 
If $q$ is quadratic then there are various ambiguities of sign.  
We deal with these in~\Cref{s:orientation-covers}.
Also, we give a less redundant version of period coordinates in \Cref{s:singularity_parameters}.


\subsection{\texorpdfstring{$\SL(2,\RR)$}{SL(2,R)}-action} 

The usual action of $\SL(2,\RR)$ on $\RR^2 \isom \CC$ preserves side pairings of polygons and also scissors congruence.
As a result, it descends to an action on differentials, preserving stratum components. 
The diagonal part of the $\SL(2,\RR)$--action gives the \emph{diagonal flow} on $\calC$. 
This is also called the \emph{Teichmüller flow} on $\calC$.  

By the famous work of Eskin--Mirzakhani--Mohammadi~\cite{Esk-Mir-Moh}, closures of $\SL(2,\RR)$--orbits inside of (a certain manifold cover of) $\calC$ are submanifolds cut out by linear equations (with real coefficients and no constant terms) in period coordinates. 
Such an orbit closure is called a \emph{linear invariant submanifold}. 

\subsection{Rooted differentials} 
\label{s:rooted}

We now give a manifold cover (of finite degree) of $\calC$ (essentially following~\cite[Section~6]{Vee82}). 

\begin{definition}
	Suppose that $q \in \calC$ is a differential.  
	Suppose that $z$ lies in $Z(q)$. 
	Suppose that $v$ is a unit tangent vector, at $z$, pointing along some leaf of the horizontal foliation.  
	We call the pair $(q, v)$ a \emph{rooted} differential.  
\end{definition}

Recalling the difference between the order of a point and the total angle at a point gives a naive count of $4g - 4 + 2|Z(q)|$ for the number of rootings of $q$.
However, some rootings of $q$ may be equivalent to others when $q$ has a symmetry.

Rooted differentials are intended to reproduce the notion of a \emph{marked translation surface} that is widely used in the literature: 
see~\cite[Section 6.10]{Yoc} and~\cite[Section 3]{Boi1}.
We use $\calC_\root$ to denote the space of rooted differentials.  

\begin{lemma}
	\label{Lem:Rooted}
	Suppose that $\calC$ is a stratum component with non-empty singularity data.
	Then $\calC_\root$ is a manifold. 
	Furthermore, the map $\calC_\root \to \calC$ forgetting the root is an orbifold covering map of finite degree. \qed
\end{lemma}


The manifold $\calC_\root$ may not be connected.  
This happens, for example, when $q$ has singularities of different orders.
We fix any one component; 
in a slight abuse of notation, from now on we will call this component $\calC_\root$.


\begin{remark}
	One drawback of $\calC_\root$ is that the $\SL(2, \RR)$--action on $\calC$ does not lift. 
	However, the diagonal action does lift, and this is all we will need below. 
	We finally remark that there is a natural action of the universal cover $\cover{\SL}(2, \RR)$ on $\calC_\root$.
\end{remark}

\section{Zippered rectangles}
\label{s:zippered_rectangles}

Here we pass from the flat geometry of rooted differentials to combinatorics with parameters.
We do this, following Veech~\cite{Vee82} (the abelian case) and Boissy--Lanneau~\cite{Boi-Lan} (the quadratic case), using the \emph{zippered rectangle} construction.  
Usually, this is used to investigate the dynamics of the diagonal flow on $\calC_\root$. 
However, we are also interested in the topology of $\calC_\root$; 
so we present the full details of the zippered rectangle construction and draw particular attention to the aspects we will need. 

The \emph{singularity parameters} we use are due to Yoccoz~\cite[Section~4.3]{Yoc} for abelian strata.  
These are closely related to the \emph{zipper parameters} introduced by Veech~\cite[Section~6]{Vee82}.
In this section we define both parameterisations and discuss how to move between them.  

\subsection{Base-arcs}

We begin with a version of Keane's property~\cite[Section~2]{Keane75}.  
Fix a stratum component $\calC$.

\begin{definition}
	\label{d:saddled}
	Suppose that $q$ is a differential in $\calC$. 
	We say that $q$ is \emph{saddled} if it has either a horizontal \emph{or} a vertical saddle connection.
	We say that $q$ is \emph{bi-saddled} if it has both a horizontal \emph{and} a vertical saddle connection. 
	We denote the sets of saddled and bi-saddled differentials in $\calC$ by $\calV$ and $\calW$, respectively. 
\end{definition}

The sets $\calV$, $\calW$, and their complements lift to $\calC_\root$. 
In a small abuse of notation we will not notationally distinguish the lifts from the originals.

\begin{remark}
	\label{r:countable-union}
	The set $\calV$ is a countable union of codimension-one loci,
	and $\calW$ is a countable union of codimension-two loci, in $\calC_\root$. 
\end{remark}

Suppose that $(q, v)$ is a rooted quadratic differential. 
Let $I_v$ be the horizontal separatrix emanating from the root. 
(Note that $I_v$ may be a horizontal saddle connection, and thus compact.)
We orient $I_v$ away from the base-point of $v$.  
(In our diagrams $I_v$ is always oriented to the right.)
A segment of a leaf of the vertical foliation, emanating from a point of $I_v$, is \emph{upwards} or \emph{downwards} as it makes an angle of $\pi/2$ or $-\pi/2$ with $I_v$.

Suppose that $r$ lies in $I_v$. 
Let $I(r)$ be the subarc of $I_v$ from the base of the root to $r$.
We call $r$ the \emph{right endpoint} of $I(r)$.

\begin{definition}
	\label{d:base-arc}
	We say that $I(r)$ is a \emph{base-arc} for $q$ if it satisfies the following. 
	\begin{enumerate}
		\item
		\label{i:base-arc every saddle connection}
		The interior of $I(r)$ meets every vertical saddle connection of $q$. 
		\item 
		\label{i:base-arc right endpoint}
		If the right endpoint $r$ does not lie in $Z(q)$ then the upward or downward ray emanating from $r$ hits a point of $Z(q)$ before hitting $I(r)$ a second time.
		\qedhere
	\end{enumerate} 
\end{definition}


If $I(r)$ is a base-arc for $q$, then~\cite[Corollary~5.5]{Yoc} implies that the interior of $I(r)$ meets every leaf of the vertical foliation.

\begin{definition}
	\label{d:base-arc-endpoints}
	Suppose that $(q, v)$ is a rooted differential.
	We define $E(q, v)$ to be the set of points $r \in I_v$ so that $I(r)$ is a base-arc.
\end{definition}

The proof of the following lemma is analogous to the one given by Yoccoz in the abelian case~\cite[Proposition~5.6]{Yoc}.

\begin{lemma} 
	\label{l:base-arc-infinite}
	Suppose that  $(q, v)$ is a rooted differential. 
	If $(q,v)$ has no vertical saddle connection then $E(q,v)$ accumulates at zero.
	If $(q,v)$ has no horizontal saddle connection then $E(q,v)$ is unbounded.
\end{lemma}

\begin{proof}
	Suppose that $q$ has no vertical saddle connection.  
	As in~\cite[Corollary~5.5]{Yoc} the vertical foliation for $q$ is minimal (as otherwise the closure of a vertical leaf would be a subsurface with boundary, containing vertical saddle connections~\cite[Section~2]{Keane75}).
	So every (non-trivial) initial segment of $I_v$ meets all leaves of the vertical foliation and so satisfies condition \eqref{i:base-arc every saddle connection} in \Cref{d:base-arc}.
	We pass to a subarc to obtain condition \eqref{i:base-arc right endpoint} in \Cref{d:base-arc}. 
	Thus every non-trivial initial segment of $I_v$ contains a base-arc.
	
	Suppose instead that $q$ has no horizontal saddle connection. 
	So the horizontal foliation for $q$ is minimal.  
	Thus every sufficiently long initial segment of $I_v$ meets the interior of all vertical saddle connections and so satisfies condition \eqref{i:base-arc every saddle connection} in \Cref{d:base-arc}. 
	Extending further gives condition \eqref{i:base-arc right endpoint} in \Cref{d:base-arc}. 
	Thus every initial segment of $I_v$ is contained in a base-arc.
\end{proof}

\begin{definition}
	\label{d:breakpoint}
	Suppose that $I(r)$ is a base-arc for $q$.
	A point $p$ in the interior of $I(r)$ is a \emph{top breakpoint} if the upwards ray emanating from $p$ hits a singularity before it hits the interior of $I(r)$. 
	We define the \emph{bottom breakpoints} similarly. 
\end{definition}

Note that, if some $p$ in $I(r)$ is both a top and bottom breakpoint, then $q$ has a vertical saddle connection.

\begin{definition}
	\label{d:interval-exchange}
	The connected components of (the interior of) $I(r)$, minus the top breakpoints, are called the \emph{top intervals}.  
	The set of these is denoted $\It = \It(r)$.  
	We define the bottom intervals, and the set $\Ib = \Ib(r)$, similarly. 
	We define $U \from \bigsqcup \It \to I(r)$ by setting $U(p)$ equal to the first (interior) intersection between the upward ray emanating from $p$ and $I(r)$.
	We define $D \from \bigsqcup \Ib \to I(r)$ similarly. 
	We finally define $T = (U, D)$:
	the \emph{(non-classical) interval exchange transformation} (also called the \emph{linear involution} \cite{Dan-Nog88,Dan-Nog90}) induced by $I(r)$.
	We write $\bdy T_\Rtop$ and $\bdy T_\Rbot$ for the sets of top and bottom breakpoints of $T$, respectively. 
\end{definition}

Note that, if $q$ has enough vertical saddle connections, it can happen that a subinterval of $I(r)$ lies in both $\It$ and $\Ib$.

\begin{remark}
	\label{r:breakpoints-bounded}
	The number of top and bottom breakpoints $|\bdy T_\Rtop| + |\bdy T_\Rbot|$ equals the number of vertical separatrices emanating from the singularities.  
	In the abelian case this is proved in~\cite[Section 3.1]{Yoc}.   
\end{remark}

The next lemma is folklore. 

\begin{lemma}
	\label{l:base-arc-endpoints}
	Suppose that $(q, v)$ be a rooted differential.
	Then, considered as a subset of $I_v$, the set $E(q, v)$ has at most one accumulation point. 
	Furthermore, if there is an accumulation point then it is the infimum of $E(q, v)$ (and, in this case, the infimum is not an element of $E(q, v)$).
\end{lemma}




\begin{proof}
	Suppose, for a contradiction, that there is an increasing sequence $(r_n)_{n}$ in $E(q, v)$ converging to some point $x$ in $I_v$.  
	Thus, none of the $r_n$ are singularities (that is, lie in $Z(q)$). 
	By definition, either the upward or downward ray emanating from $r_n$ hits a singularity before it hits the interior of $I(r_n)$. 
	Breaking symmetry (and passing to a subsequence and reindexing as needed) we may assume that the \emph{upward} ray from $r_n$ hits a singularity (and so does not meet $I(r_n)$ on its interior).
	
	Suppose that there is a subsequence of the indices $(n_k)_k$ so that the points $s_k = r_{n_k}$ have the following property:
	\begin{itemize}
		\item the upward rays from $s_k$ hit a singularity before hitting the interior of $I(x) - I(s_k)$.
	\end{itemize} 
	Then it follows that there is no uniform upper bound on the number of top breakpoints for the interval exchange transformations induced on the base-arcs $I(s_k)$.
	This contradicts \Cref{r:breakpoints-bounded}.  
	After reindexing to eliminate finitely many of the $r_n$ we deduce:
	\begin{itemize}
		\item 
		for all $y$ in $[r_0, x)$ the upward ray from $y$ hits $I(x) - I(y)$ before hitting a singularity.
	\end{itemize}
	Thus the upward flow takes the interval $[r_0, x)$ isometrically, and strictly, inside of itself. 
	This is a contradiction. 
	
	Suppose instead that there is an decreasing sequence $(r_n)_{n}$ in $I_v$ converging to $x$.  
	Breaking symmetry, we may assume that there is a subsequence of indices $(n_k)_k$ so that the points $s_k = r_{n_k}$ have the following property:
	\[
	s_{k + 1} <  U_k(s_{k + 1}) < s_k
	\]
	Here $T_k = (U_k, D_k)$ is the interval exchange transformation induced on $I(s_k)$.  
	We deduce that $U_k(s_{k + 1}) - s_{k + 1}$ converges to zero.
	Let $\alpha_k$ be the upwards segment from $s_{k+1}$ to $U_k(s_{k + 1})$.
	From the discreteness of the saddle spectrum we deduce that the length of $\alpha_k$ tends to infinity with $k$.
	
	If $x$ (the limit of the $s_k$) is the base-point of $v$ then there is nothing to prove. 
	Suppose instead that the interval $I(x)$ is non-trivial.  
	The upwards segment $\alpha_k$ does not intersect $I(x)$.  
	Thus the Hausdorff limit in $q$ of $(\alpha_k)_k$ is a closed subsurface, which we denote by $X \subset S$.  
	We note that $X$ is disjoint from the interior of $I(x)$. 
	Thus $X$ is not all of $S$. 
	So $\bdy X$ is non-empty; 
	we deduce that $\bdy X$ is a union of vertical saddle connections. 
	Since none of these intersect the interior of $I(x)$ we deduce that $I(x)$ is not a base-arc. 
\end{proof}


\subsection{Labelled generalised permutations}
\label{s:combinatorics}

Suppose that $(q, v)$ is a rooted differential.
Recall that $I_v$ is oriented by $v$, giving upwards and downwards rays in the vertical foliation. 
Suppose that $r$ lies in $E(q, v)$. 
Thus $I(r) \subset I_v$ is a base-arc. 
Let $T = (U, D)$ be the (non-classical) interval exchange transformation induced by $I(r)$.
Note that $T$ gives a fixed-point free involution on $\It \sqcup \Ib$ as follows:
\[
T(J) = 
\begin{cases}
	U(J), &\mbox{if $J$ is a top interval} \\
	D(J), &\mbox{if $J$ is a bottom interval}
\end{cases}
\]
We deduce that $|\It| + |\Ib|$ is even.
Set $2d = |\It| + |\Ib|$.

We now code the action of $T$ on the top and bottom intervals. 
Let $\calA$ be a set of $d$ \emph{letters}.  
Let $\ell = |\It|$ and $m = |\Ib|$;
so $2d = \ell + m$.
We index the intervals of $\It$ (and of $\Ib$) according to their order along the base-arc $I = I(r)$. 
So $\It = (J_i)_{i = 1}^\ell$ and $\Ib= (J_i)_{i = \ell + 1}^{\ell + m}$. 
Now let $\pi = \pi_T \from \{ 1, 2, \ldots, 2d\} \to \calA$ be any two-to-one map with the following property: 
for all $a \in \calA$, if $\{i, j\} = \pi^{-1}(a)$ then $T(J_i) = J_j$.
Thus $\pi$ induces a fixed-point free involution $\sigma$ of $\{ 1, 2, \ldots, 2d\}$; 
here $\sigma(i) = j$ if and only if $\pi(i) = \pi(j)$ (and $i \neq j$).

We call $(\pi, \ell, m)$ a \emph{labelled generalised permutation} of $\calA$.
We say that $(\pi, \ell, m)$ \emph{models} the (non-classical) interval exchange transformation $T$.  
We often just write $\pi$, suppressing $\ell$ and $m$ from the notation.
Also, we often omit $T$ and simply say that $\pi$ \emph{models} the data $(q, v, I(r))$.

Generalised permutations were first considered by Danthony--Nogueira~\cites[Definition on page 471]{Dan-Nog88}[Definition on page 409]{Dan-Nog90} and then by Boissy--Lannaeu~\cite[Definition~2.4]{Boi-Lan}. 
We adopt the notation and language of the latter.

\begin{definition}
	\label{d:equivalent}
	Suppose that $(\pi, \ell, m)$ and $(\pi', \ell', m')$ are generalised permutations on $\calA$.  
	We say they are \emph{equivalent} if $m = m'$, if $\ell = \ell'$, and if there is a reindexing $s \in \Sym(\calA)$ so that $\pi' = s \circ \pi$.
	We call the equivalence class $[\pi]$ an \emph{unlabelled} generalised permutation.
\end{definition}

The equivalence classes of \Cref{d:equivalent} are sometimes referred to as \emph{reduced permutations} (for example by Boissy~\cite{Boi1} and~\cite{Boi2}). 

For labelled generalised permutations, Boissy--Lanneau~\cite[Definition~3.1]{Boi-Lan} introduce the notion of \emph{combinatorially irreducible}.
We do not reproduce the definition here.  
Instead we note one of their main results~\cite[Theorem 3.2]{Boi-Lan} which is crucial in our work. 

\begin{theorem}
	\label{t:irreducible}
	A labelled generalised permutation is combinatorially irreducible if and only if it models a rooted differential with a choice of base-arc. \qed
\end{theorem}

Let $\calR_\root$ be the set of unlabelled generalised permutations arising from rooted differentials (equipped with base-arcs) in $\calC_\root$.  
We call $\calR_\root$ the \emph{Rauzy class} of $\calC_\root$.
(This is sometimes called the \emph{reduced} Rauzy class.)
We call the set $\calR_{\lab}$  
of labelled permutations the \emph{labelled Rauzy class}. 
By \Cref{t:irreducible}, as we vary over all stratum components and all choices of rootings, all labelled irreducible generalised permutations arise from the above construction~\cite[Theorem~3.2]{Boi-Lan}.

Suppose that $(\pi, \ell, m)$ is a labelled generalised permutation of $\calA$.
The letters $\pi(1), \ldots, \pi(\ell)$ are the \emph{top letters} for $\pi$.
Similarly, the letters $\pi(\ell+1), \ldots, \pi(\ell + m)$ are the \emph{bottom letters}.  
Any letter that is both a top letter and a bottom letter is called a \emph{translation letter}.  
Any letter that is only a top letter (or only a bottom letter) is called a \emph{flip letter}. 
We explain the terminology below. 

We say that $\pi$ is a \emph{abelian permutation} if it has no flip letters.  
We say that $\pi$ is a \emph{quadratic permutation} it has (at least one) top flip letter and (at least one) bottom flip letter.
All generalised permutations that arising in \Cref{t:irreducible} are of one of these two types.

From now on, we will not use the terminology ``generalised permutation''.
Instead we collectively refer to abelian and quadratic permutations simply as \emph{permutations}. 

\subsection{The rectangles}
\label{s:rectangles}

Suppose that $(q, v)$ is a rooted differential. 
Suppose that $I(r)$ is a base-arc for $q$. 
Suppose that the labelled permutation $\pi$ models the data $(q, v, I(r))$.

Suppose, as above, that $\It = (J_i)_{i = 1}^\ell$ are the top intervals and $\Ib = (J_i)_{i = \ell + 1}^{\ell + m}$ are the bottom intervals. 
Fix $i \leq 2d$; 
let $j = \sigma(i)$ and let $\alpha = \pi(i) = \pi(j)$. 
If $i \leq \ell$ then $T(J_i) = U(J_i) = J_j$: that is, the upwards flow restricted to $J_i$ has first return (to $I$) equal to $J_j$.
We deduce that the return time is constant on $J_i$.
On the other hand, if $i > \ell$ then the points of $J_i$ flow downwards to land simultaneously in $J_j$.
In a slight abuse of notation we call the closure of the resulting vertical strip a \emph{rectangle}; 
we denote it by $R_\alpha$. 
As an additional abuse of notation we use $\bdy R_\alpha$ to denote $J_i \cup J_j$ together with the correctly chosen vertical segments connecting the endpoints of $J_i$ to those of $J_j$.
While $R_\alpha$ may not be an embedded closed rectangle, the set $R_\alpha - \bdy R_\alpha$ is an embedded open rectangle.  

\begin{definition}
	\label{Def:Cardinal}
	We lay out the base-arc $I(r)$ in the plane, placing the root at the origin and $I(r)$ itself on the positive real axis. 
	Suppose that $\pi(i) = \pi(j) = \alpha$ with $i \neq j$.
	There are two resulting layouts of $R_\alpha$ which we denote by $R_i$ and $R_j$. 
	The sides of these layouts receive \emph{cardinal directions}: south, east, north and west. 
	If $\alpha$ is a translation letter then the translation taking $R_i$ to $R_j$ preserves the cardinal directions.  
	If $\alpha$ is a flip letter then the resulting half-translation exchanges south and north as well as east and west. 
\end{definition}

\begin{corollary}
	\label{c:AtMostOne}
	The closure of the west side of any rectangle $R_i$ contains at most one singularity. 
	The same holds for the east side of any $R_i$. \qed
\end{corollary}


The vertical sides of rectangles are glued across certain sub-arcs called \emph{zippers}, which we now define.

\begin{figure}
	\centering
	\includegraphics{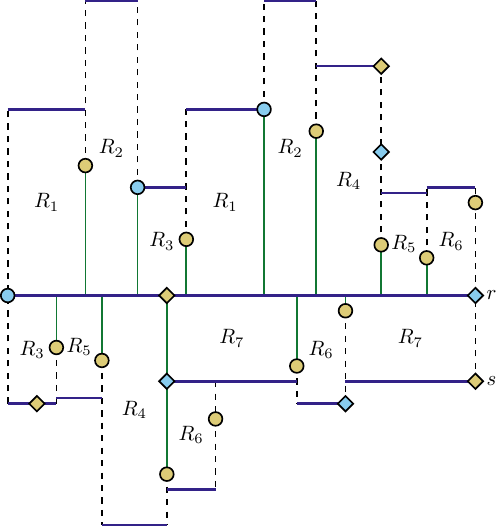}
	\caption{A zippered rectangle construction belonging to the permutation $\big( \begin{smallmatrix} 1 & 2 & 3 & 1 & 2 & 4 & 5 & 6 \\ & 3 & 5 & 4 & 7 & 6 & 7 \end{smallmatrix} \big)$.  
		The singularities are a regular point (blue dot) and a zero of cone angle $10\pi$ (yellow dot).
		Note that every zipper appears as a solid (green) arc exactly once.  
		Also, all rectangles appear twice \emph{except} $R_6$ which appears three times.  
		This is because the zipper through the right endpoint of $I$ is taller than $R_7$.}
	\label{f:zippered-rectangles-no-polygon}
\end{figure}

\subsection{The zippers} 

Suppose that $(q, v)$ is a rooted differential. 
Suppose that $I(r)$ is a base-arc for $q$. 
Let $T = (U, D)$ be the (non-classical) interval exchange transformation induced on $I(r)$.
Let $\pi$ be a labelled permutation that models $T$.

Suppose that $p$ is an top breakpoint for $T$; 
that is, the upward ray emanating from $p$ hits a singularity before it returns to the interior of $I$.
We denote this upward segment by $\zip_{\Rtop}(p)$ and we call it a \emph{top zipper}. 
By construction $\zip_{\Rtop}(p)$ meets the interior of $I$ only in the point $p$.  
Also by construction the boundary of $\zip_{\Rtop}(p)$ is $p$ itself and some singularity $z_{\Rtop}(p)$ in $Z(q)$.
We make similar definitions when $p$ lies in $\bdy T_\Rbot$ to obtain $\zip_{\Rbot}(p)$ and $z_{\Rbot}(p)$.

\begin{lemma}
	\label{l:zipper-neighbourhoods}
	Suppose that $p$ is a (top or bottom) breakpoint. 
	Suppose that $\zip(p)$ is the zipper for $p$. 
	Let $z(p)$ be the other endpoint of $\zip(p)$.
	\begin{itemize}
		\item 
		Suppose that the right endpoint of $I(r)$ does not lie in the interior of $\zip(p)$.
		Then the interior of $\zip(p)$ meets two sides of distinct rectangles $R_i$ and $R_j$ (of the layout) west and east of $\zip(p)$, respectively.
		Note that $\pi(i) = \pi(j)$ if and only if $z(p)$ is a pole.
		\item 
		Suppose that the right endpoint $r$ lies in the interior of $\zip(p)$.
		Then the interior of $\zip(p)$ meets (in $q$) three sides of three rectangles.
		Two of these rectangles are in the layout and are to the west of $\zip(p)$; 
		the third is not in the layout.
		In this case $r$ is not a singularity and $\zip(p)$ is the unique zipper that contains $r$.
	\end{itemize}
\end{lemma}

\begin{proof}
	Suppose that the right endpoint $r$ does not lie in the interior of $\zip(p)$.
	Then the rectangle heights (to the left and right of $\zip(p)$) are greater than or equal to the zipper height. 
	If the rectangle on the left has the same label as the rectangle on the right then $\zip(p)$ lies on the unique vertical separatrix through $z(p)$; 
	hence $z(p)$ is a pole. 
	
	Suppose that the right endpoint $r$ lies in the interior of $\zip(p)$.
	It follows from the definition of zippers that $r$ is not a singularity.
	Let $R$ and $R'$ be the rectangles meeting $p$.
	Let $R$ be the rectangle with height equal to the distance (along the zipper) between $p$ and $r$.
	In this case $R'$ has height greater than or equal to the zipper height, and there is a rectangle $R''$, stacked on top of $R$ (see $R_4$, $R_6$ and $R_7$ below the base-arc in \Cref{f:zippered-rectangles-no-polygon}), so that the height of $R$ plus the height of $R''$ is greater than or equal to the zipper height.
	Conversely, if the interior of a zipper meets three rectangles, consider the side where it meets two rectangles.
	Then the horizontal boundary shared by the two rectangles is a sub-arc of the base-arc, hence it must meet the zipper in the point $r$.
\end{proof}



\begin{corollary}
	\label{c:AtLeastOne}
	Suppose that $\pi(i) = \pi(j) = \alpha$ with $i \neq j$.
	Then at least one of the rectangles $R_i$ and $R_j$ (or perhaps both) has a singularity in the closure of its west side. 
	\qed
\end{corollary}


\subsection{Singularity parameters}
\label{s:singularity_parameters}

Suppose that $(q, v)$ is a rooted differential;
suppose that $I(r)$ is a base-arc for $(q, v)$. 
Suppose that $\pi$ models the data $(q, v, I(r))$.
Let $\calA$ denote the finite alphabet for $\pi$.

We now define the \emph{singularity parameters} $(x_q, y_q) \in \RR^\calA \cross \RR^\calA$ from the given zippered rectangle decomposition of $q$.
At the same time we define certain arcs $\gamma_i$, typically contained in the interior of their rectangle $R_i$.

Fix a letter $\alpha \in \calA$.
Suppose that $\pi(i) = \pi(j) = \alpha$ with $i \neq j$.
Applying \Cref{c:AtLeastOne}, and swapping $i$ and $j$ if needed, we may suppose that $R_i$ has a singularity $z_{\mathrm{w}}$ in the closure of its west side. 
Let $K_i$ be the eastward horizontal spanning arc of $R_i$, emanating from $z_{\mathrm{w}}$.
So $K_i$ ends in the east side of $R_i$. 

\begin{definition}
	\label{d:widths}
	We define the \emph{width} of $\alpha$ to be
	\[
	x_\alpha = |K_i| \qedhere
	\]
\end{definition}
\noindent
We deduce that $x_\alpha$ is the width of $R_i$ and thus the width of $R_\alpha$.
For future use, we note that the widths of rectangles are positive. 
That is,
\begin{equation}
	\label{e:widths}
	x_\alpha > 0
\end{equation}

Let $p_i$ be the endpoint of $K_i$ lying in the east side of $R_i$.
\begin{itemize}
	\item 
	If there is a singularity in the closure of the east side of $R_i$ (by \Cref{c:AtMostOne} there is at most one) then we denote it by $z_{\mathrm{e}}$.
	See \Cref{f:singularity left,f:singularity right}.
	\item
	Suppose instead that there is no singularity in the closure of the east side of $R_i$.
	In this case we extend the east side of $R_i$ away from $J_i \subset I(r)$ until it meets a singularity, which we denote $z_{\mathrm{e}}$. 
	(Note that this happens before the extension meets the interior of $I(r)$, since $I(r)$ is a base-arc.) 
	See \Cref{f:singularity no saddle connection}.
\end{itemize}
In either case we define $L_i$ to be the oriented vertical segment between $p_i$ and $z_{\mathrm{e}}$, oriented away from $p_i$. 
We define $\sgn(L_i)$ to be plus one if $L_i$ points north and minus one if $L_i$ points south. 

\begin{definition}
	We define the \emph{height} of $\alpha$ to be 
	\[
	y_\alpha = \sgn(L_i) \cdot |L_i|
	\]
	So $y_\alpha \in \RR$ is the signed height of $z_\mathrm{e}$ relative to $z_\mathrm{w}$.
\end{definition}

With $R_i$ and $R_j$ as above, we now consider the possibility that $R_j$ is the (unique) rectangle which does \emph{not} have a singularity in the closure of its west side.  
In this case $\alpha$ is a flip letter and $R_i$ was a rightmost rectangle in our layout.  
Let $f_\alpha \from \CC \to \CC$ be the half-translation taking $R_i$ to $R_j$.
We define $K_j$ and $L_j$ to be $f_\alpha(K_i)$ and $f_\alpha(L_i)$, respectively, reversing orientation in both cases.

\begin{definition}
	The pair $(x_\alpha, y_\alpha)$ is the \emph{singularity parameter} of the letter $\alpha \in \calA$.
\end{definition}

\noindent
Note that the singularity parameter is well-defined because the two rectangles $R_i$ and $R_j$ (and their various singularities) differ by a (half-)translation.

\begin{definition}
	\label{d:rectilinear}
	We define the oriented arc $\gamma_i = K_i \cup L_i$ to be the \emph{rectilinear arc} associated with the rectangle $R_i$.
\end{definition}

\begin{figure}
	\centering
	\begin{subfigure}[b]{0.31\textwidth}
		\centering
		\includegraphics[width=\textwidth]{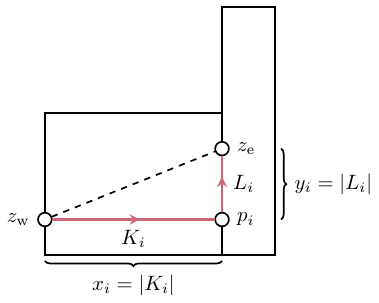}
		\caption{The curve $\gamma_i$ turns left at $p_i$ and can be tightened to a saddle connection.}
		\label{f:singularity left}
	\end{subfigure}
	\hfill
	\begin{subfigure}[b]{0.31\textwidth}
		\centering
		\includegraphics[width=\textwidth]{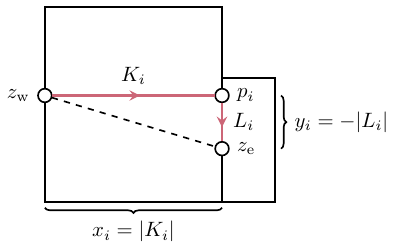}
		\caption{The curve $\gamma_i$ turns right at $p_i$ and can be tightened to a saddle connection.}
		\label{f:singularity right}
	\end{subfigure}
	\hfill
	\begin{subfigure}[b]{0.31\textwidth}
		\centering
		\includegraphics[width=\textwidth]{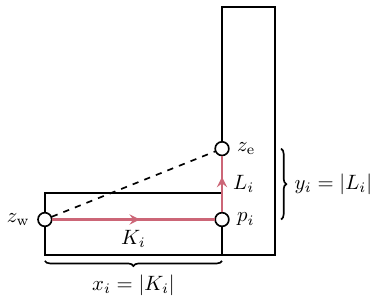}
		\caption{The curve $\gamma_i$ may or may not tighten to a saddle connection.}
		\label{f:singularity no saddle connection}
	\end{subfigure}
	\caption{Three cases of singularity parameters.}
	\label{f:singularity coordinates}
\end{figure}

\begin{remark}
	Suppose that $\gamma_i$ is contained in the closure of $R_i$.
	Then $\gamma_j$ is contained in the closure of $R_j$.
	In this case $\gamma_i$ is, up to a translation, 
	identical to $\gamma_j$ (contained in $R_j$).
	It follows that, again in this case, $\gamma_i$ can be straightened to give a saddle connection in $R_i$.
	See \Cref{f:singularity left,f:singularity right}.
	The parameter $x_\alpha + \mathrm{i} y_\alpha$ is then the period of $\gamma_i$.
	
	In general there is at most one letter (in both the abelian and quadratic setting) where $\gamma_i$ (and thus $\gamma_j$) may not be contained in the closure of $R_i$.
	In this case $\gamma_i$ may or may not be homotopic (relative to its endpoints) to a saddle connection.
	See \Cref{f:singularity no saddle connection} for the general situation and \Cref{f:zippered-rectangles-no-polygon} for a particular example.
	This explains the delicacy of the definitions of the rectilinear arcs $\gamma_i$ and of the parameters $y_\alpha$.
\end{remark}

\begin{remark}
	\label{r:homotopic}
	We further deduce that $\gamma_i$ is homotopic, relative to $Z(q)$, to $\gamma_j$ when $\alpha$ is a translation letter and is homotopic to the orientation-reverse of $\gamma_j$ when $\alpha$ is a flip letter.
\end{remark}

\begin{definition}
	Set $x_q = (x_\alpha)_{\alpha \in \calA}$ and $y_q = (y_\alpha)_{\alpha \in \calA}$.
	We call $(x_q, y_q)$ the \emph{singularity parameters} induced by the data $(q, v, I(r))$.
\end{definition}

\subsection{Width and height equalities}
\label{s:widths_and_heights}

We now form two larger rectilinear arcs in the plane: 
\[
\gamma_\Rtop = \bigcup_{i = 0}^{\ell} \gamma_i 
\qquad\qquad
\gamma_\Rbot = \bigcup_{i = \ell+1}^{\ell + m} \gamma_i 
\]
Both start at the root and end at the eastmost singularity (on the same vertical leaf as $r$).  
Since they have the same beginning and end, we deduce that their periods in $q$ are equal.
We call this the \emph{period equality}.
Taking real and imaginary parts, we deduce the following: 
\[
\sum_{i \leq \ell} x_{\pi(i)} = \sum_{i > \ell} x_{\pi(i)},
\qquad\qquad
\sum_{i \leq \ell} y_{\pi(i)} = \sum_{i > \ell} y_{\pi(i)}
\]
Since translation letters contribute to both sides, the period equality gives the \emph{width} and \emph{height equalities}:
\begin{equation}
	\label{e:e.x-y}
	\sum_{\alpha} x_{\alpha} = \sum_{\beta} x_\beta,
	\qquad\qquad
	\sum_{\alpha} y_{\alpha} = \sum_{\beta} y_\beta
\end{equation}
where $\alpha$ ranges over the top flip letters and $\beta$ ranges over the bottom flip letters.

\subsection{Masur polygon}

Suppose that a labelled permutation $\pi$ models the data $(q, v, I(r))$. 
We use the notion of rectilinear arcs as given in \Cref{d:rectilinear}.
Suppose that we are in the special situation where every rectilinear arc $\gamma_i$, when transported to $q$, tightens to give a saddle connection. 
In this case we may do the following:
\begin{itemize}
	\item 
	straighten all of the $\gamma_i$ (in $\CC$) to obtain line segments $\gamma^*_i$,
	\item 
	cut $\CC$ along the $\gamma^*_i$, and 
	\item 
	obtain a polygon $M = M(q, v, I(r))$. 
\end{itemize}
We call $M$ the \emph{Masur polygon} for $q$, induced by $I(r)$.  
When $q$ is abelian, the periods of the saddle connections $\gamma^*_i$ give the \emph{period coordinates} for $q$.
When $q$ is quadratic, the choice of root $v$ determines the signs of the periods of the $\gamma^*_i$.
In the quadratic case these periods satisfy the single complex relation given by \Cref{e:e.x-y}.

When some $\gamma_i$ cannot be straightened, the Masur polygon $M$, as induced by $I(r)$, does not exist.
For a discussion and further examples see \Cref{a:Masur}.

\subsection{Zipper parameters}
\label{s:zippers}

Again with notation as above, we break symmetry and pick $p \in \bdy T_\Rtop$.
We assume that $p \neq r \times \{\Rtop\}$ if $r \in \bdy T_\Rtop$.
Let $\zip(p)$ be a top zipper based at $p$.  
By a slight abuse of notation, think of $p$ as a point in $I$.
Let $R_{\pi(i)}$ for $i \leq \ell$ be the rectangle to the left of $\zip(p)$. 
Then the horizontal coordinate of $p$ (that is, the distance of $p$ from the left-endpoint of $I$) is given by
\[
x(p)= \sum_{j = 1}^i x_{\pi(j)}.
\]
The height of $\zip(p)$ is given by 
\begin{equation}
	\label{e:height_top_zipper}
	h(\zip(p)) = \sum_{j=1}^i y_{\pi(j)}.
\end{equation}
and we require this to be positive. 
This gives us the \emph{top zipper inequalities}
\begin{equation}
	\label{e:top-zippers} 
	\sum_{j=1}^i y_{\pi(j)} > 0
\end{equation}
for all $i < \ell$.

Similarly, if $\zip(p)$ for $p \in \bdy T_\Rbot \times \{\Rbot\}$ and $p \neq r \times \{\Rbot\}$ if $ r\in \bdy T_\Rbot$ is a bottom zipper and $R_{\pi(i)}$ for $i \geq \ell+1$ then the horizontal coordinate is
\[
x(p)= \sum_{j = \ell+1}^i x_{\pi(j)} 
\]
and the height is
\begin{equation}
	\label{e:height_bottom_zipper}
	h(\zip(p)) = \sum_{j = \ell+1}^i y_{\pi(j)}.
\end{equation}
Since it is a bottom zipper, we require that $h(\zip(p))$ is negative.  
This gives us the \emph{bottom zipper inequalities}
\begin{equation}
	\label{e:bottom-zippers}
	\sum_{j = \ell+1}^i y_{\pi(j)} < 0
\end{equation}
for all $\ell+1 \leq i < \ell+m$.

It remains to consider the right endpoint $r$. The zipper height of $\zip(r)$ gives us a linear relation in the $y$ parameters.
Note that the above equalities express the height $h(\zip(r))$ in two ways; namely 
\[
h(\zip(r)) = \sum_{j=1}^\ell y_{\pi(j)}
\]
and
\[
h(\zip(r)) = \sum_{j = \ell+1}^{\ell+m} y_{\pi(j)}.
\]
We thus recover the \emph{height} equality
\begin{equation}
	\sum_{k = 1}^\ell y_{\pi(k)} = \sum_{k = \ell + 1}^{\ell + m} y_{\pi(k)}
\end{equation}
which is equivalent to $\sum y_\alpha = \sum y_\beta$, 
where $\alpha$ ranges over the top flip letters and $\beta$ ranges over the bottom flip letters.

The height and width equalities are identical in form. 
Thus, the dimensions of the space of $x$ and $y$ parameters are equal; 
they are $|\calA|$ in the abelian case and $|\calA| - 1$ in the quadratic case.

\subsection{Rectangle parameters} 

For all rectangles $R = R_\alpha$, at least one of the points $z_{\mathrm{e}}$ and $z_{\mathrm{w}}$ lie in its east and west sides, respectively. 
Breaking symmetry, suppose that $\alpha$ is a top letter and $z_{\mathrm{e}}$ lies in its east side.
Let $\zip(p)$ for $p \in \bdy T_\Rtop \times \{\Rtop\}$ be the zipper with end point $z_{\mathrm{e}}$.
If $\alpha$ is a translation letter then there is a zipper $\zip(p')$ for $p' \in \bdy T_\Rbot \times \{\Rbot\}$ with endpoint $z_{\mathrm{e}}$ such that the union $\zip(p) \cup \zip(p')$ is the east side of $R_\alpha$. 
Recall that the heights of bottom zippers are negative. 
Hence, the height $h(R_\alpha)$ satisfies
\begin{equation}
	\label{e:rectangle height translation letter}
	h(R_\alpha) = h(\zip(p)) - h(\zip(p')).
\end{equation}

If $\alpha$ is a flip letter instead then there is a zipper $\zip(p')$ for $p' \in \bdy T_\Rtop \times \{\Rtop\}$ with end point $z_{\mathrm{e}}$ such that the union $\zip(p) \cup \zip(p')$ is the east side of $R_\alpha$. 
The height $h(R_\alpha)$ is then 
\begin{equation}\label{e:rectangle height flip letter}
	h(R_\alpha) = h(\zip(p)) + h(\zip(p')).
\end{equation}

A similar discussion follows if $\alpha$ is a bottom letter.

\subsection{Polytopes of differentials and polytopes of parameters}
\label{s:polytopes}

\begin{definition}
	\label{d:distinguished}
	Suppose that $(q, v)$ is a rooted differential.
	Suppose that 
	\begin{itemize}
		\item
		$1 \notin E(q, v)$ and
		\item
		there exist $s, t \in E(q, v)$ with $s < 1 < t$.
	\end{itemize}
	In this case \Cref{l:base-arc-endpoints} tells us that $r = \min (E(q, v) \cap (1, \infty))$ is well-defined.
	We call $I(r)$ the \emph{distinguished base-arc} for $(q, v)$.
\end{definition}

The choice of $1$ in the above definition is simply a choice, and can be replaced by any other positive real number. 

\begin{definition}
	\label{d:polytope_of_differentials}
	Suppose that $\pi$ is a labelled permutation.  
	We define $\calC(\pi)$, the \emph{polytope of differentials} modelled by $\pi$, as follows. 
	Suppose that $(q, v)$ is a rooted differential. 
	Suppose that $(q, v)$ has a distinguished base-arc $I(r)$. 
	Suppose that $(q, v, I(r))$ is modelled by $\pi$. 
	Then we place $(q, v)$ into the set $\calC(\pi)$.
\end{definition}

We deduce the following. 

\begin{lemma}
	\label{l:equivalent-implies-identicalpolytopes}
	Suppose that $\pi$ and $\pi'$ are labelled permutations.
	If $\pi$ and $\pi'$ are equivalent then $\calC(\pi) = \calC(\pi')$. 
	If $\pi$ and $\pi'$ are not equivalent then $\calC(\pi)$ and $\calC(\pi')$ are disjoint. \qed
\end{lemma}

\begin{lemma}
	\label{l:polytopes-open}
	Suppose that $\pi$ is a labelled permutation.
	Then the polytope $\calC(\pi)$ is open.
\end{lemma}

\begin{proof}
	Fix $(q, v)$ in $\calC(\pi)$. 
	Suppose that $I(r)$ is the given distinguished base-arc for $(q, v)$.
	We use $I(r)$ to give a layout of $q$ in the plane, made of the rectangles found in \Cref{s:rectangles}.
	Let $I(0, 1)$ be the open initial segment of $I(r)$ of length one.
	Let $z_{\mathrm{e}}$ be the easternmost singularity in this layout.
	Let $\zip(p)$ and $\zip(p')$ be the zippers contained in the northern and southern separatrices emanating from $z_{\mathrm{e}}$ (as in the layout). 
	By \Cref{l:zipper-neighbourhoods} there are rectangles $R$ and $R'$ \emph{not} in the layout whose sides contain $\zip(p)$ and $\zip(p')$ respectively.
	
	Note that the breakpoints (as in \Cref{d:breakpoint}) lie in $I(0, 1)$. 
	The top breakpoints are all distinct as are the bottom breakpoints. 
	Note also that the zippers have positive heights.
	We deduce, in a sufficiently small neighbourhood of $(q, v)$, that
	\begin{itemize}
		\item no pair top breakpoints collide,
		\item no pair of bottom breakpoints collide, and
		\item no singularity (except its westernmost endpoint) meets $I(0, 1)$.
	\end{itemize}
	In particular, the widths of the rectangles (including $R$ and $R'$) are bounded away from zero. 
	We deduce that all differentials near $(q, v)$ also have distinguished base-arcs and that these also induce the labelled permutation $\pi$.
\end{proof}

\begin{lemma}
	\label{l:polytopes-dense}
	The union of the polytopes $\calC(\pi)$ is dense in $\calC_\root$.
\end{lemma}

\begin{proof}
	By \Cref{r:countable-union} the set $\calV$ (of saddled differentials) is a countable union of codimension-one subsets.
	Let $\calO$ be the subset of $\calC_\root$ of differentials $(q, v)$ where $1$ lies in $E(q, v)$.  
	So $\calO$ is also a countable union of codimension-one subsets.
	Thus $\calC_\root - (\calV \cup \calO)$ is dense.
	
	Suppose that $(q,v)$ is a rooted differential in $\calC_\root - (\calV \cup \calO)$.
	By \Cref{l:base-arc-infinite}, the $E(q,v)$ accumulates at zero and is also unbounded. 
	Thus $E(q,v)$ contains values both smaller and larger than one, but does not contain one. 
	Thus $(q,v)$ has a distinguished base-arc and so lies in some $\calC(\pi)$, as desired. 
\end{proof}

\begin{remark}
	In \Cref{a:measures}, we recall the construction of the Masur--Smillie--Veech measure on $\calC_\root$. The subset $\calV \cup \calO$ has measure zero, and hence the union of polytopes has full measure.
\end{remark} 

We now discuss the \emph{polytope of parameters}.

\begin{definition} 
	\label{d:polytope_of_parameters}
	Suppose that $\pi$ is a labelled permutation. 
	The \emph{polytope of parameters} $P(\pi)$ is the set of pairs $(x,y) \in \RR^\calA \times \RR^\calA$ satisfying the following:
	\begin{enumerate}
		\item 
		the positivity inequalities \eqref{e:widths},
		\item 
		the width and height equalities \eqref{e:e.x-y},
		\item 
		the zipper inequalities \eqref{e:top-zippers} and \eqref{e:bottom-zippers}, and
		\item 
		\label{i:distinguished}
		the \emph{distinguished base-arc inequalities}
		\begin{equation}
			\label{e:distinguished}
			1 < \sum_{i = 1}^{\ell} x_{\pi(i)} < 1 + \min\{ x_{\pi(\ell)}, x_{\pi(\ell + m)} \}
		\end{equation}
	\end{enumerate}
	We say that parameters $(x, y) \in P(\pi)$ are \emph{admissible} for $\pi$.
\end{definition}

\begin{lemma}
	\label{l:distinguished_implies_admissible}
	Suppose that $\pi$ is a labelled permutation. 
	Suppose that $(q, v)$ lies in $\calC(\pi)$. 
	Suppose that $I(r)$ is the distinguished base-arc for $(q, v)$.
	Let $(x_q, y_q)$ be the singularity parameters induced by the data $(q, v, I(r))$.
	Then $(x_q, y_q)$ lies in $P(\pi)$.
\end{lemma}

\begin{proof}
	Since $\pi$ models the data $(q, v, I(r)$, the parameters $(x,y)$ satisfy 
	the positivity inequalities \eqref{e:widths} (shown in \Cref{s:singularity_parameters}), 
	the width and height equalities \eqref{e:e.x-y} (shown in \Cref{s:widths_and_heights}), and 
	the zipper inequalities \eqref{e:top-zippers} and \eqref{e:bottom-zippers} (shown in \Cref{s:zippers}). 
	
	Define $r'$ to be the maximum of $E(q, v) \cap (0, 1)$.  
	Note that $r'$ is well-defined because $(q, v)$ has a distinguished base-arc and by \Cref{l:base-arc-endpoints}.
	Recall that $r$ is the minimum of $E(q,v) \cap (1, \infty)$.
	Thus $r' < 1 < r$. 
	Suppose that $\pi$ is of the form $(\pi, \ell, m)$; 
	that is, there are $\ell$ letters on the top and $m$ letters on the bottom. 
	Since $r'$ and $r$ are consecutive points in $E(q, v)$, we have that
	\[
	r = r' + \min \{ x_{\pi(\ell)}, x_{\pi(\ell + m)} \} < 1 + \min \{ x_{\pi(\ell)}, x_{\pi(\ell + m)}\}
	\]
	as required.
\end{proof}

\begin{definition}
	\label{d:differentials-to-parameters}
	Suppose that $\pi$ is a labelled permutation. 
	Suppose that $(q, v)$ lies in $\calC(\pi)$. 
	Let $I(r)$ be the resulting distinguished base-arc.
	We define $p_\pi \from \calC(\pi) \to P(\pi)$ by setting $p_\pi(q, v) = (x_q, y_q)$.
\end{definition}

We now give a function in the opposite direction. 

\begin{definition}
	\label{d:parameters-to-differentials}
	Suppose that $\pi$ is a labelled permutation. 
	Suppose that $(x, y)$ lies in $P(\pi)$ (so is admissible for $\pi$).
	We now layout the resulting rectangles along the non-negative real axis in $\CC$ (as in \Cref{s:rectangles}). 
	We glue along the zippers to obtain the differential $q_\pi(x, y)$. 
\end{definition}

It is a delicate result of Boissy--Lanneau~\cite[Lemma 2.12]{Boi-Lan} that the function $q_\pi$ is well-defined.
Since the parameters are admissible, $q_\pi(x, y)$ lies in $\calC(\pi)$. 


From the definitions, the functions $q_\pi$ and $p_\pi$ are inverses of each other.
Thus both are bijections.

\begin{lemma}
	\label{l:homeomorphsims}
	Suppose that $\pi$ is a labelled permutation.
	Then the maps $q_\pi$ and $p_\pi$ are continuous. 
	Thus both are homeomorphisms.
\end{lemma}

\begin{proof}
	Fix parameters $(x, y)$ in $P(\pi)$.
	A small change $(x, y)$, remaining in $P(\pi)$, gives a small motion of the associated rectangles and zippers; 
	this leads to a small motion of the polygons, and thus by \Cref{d:stratum-component} remains in a small open set in $\calC(\pi)$.
	Thus $q_\pi$ is continuous.
	
	Fix instead a differential $(q, v)$ in $\calC(\pi)$. 
	Since $\calC(\pi)$ is open (\Cref{l:polytopes-open}) a sufficiently small deformation of $(q, v)$ remains in $\calC(\pi)$.
	Thus we may ignore the quotient (by scissors congruence) appearing in \Cref{d:stratum-component}. 
	Thus a sufficiently small deformation of $(q, v)$ only moves the vertices (of the polygons) a small amount.  
	Since we remain in $\calC(\pi)$ the shapes of the rectangles and zippers change continuously; 
	thus the parameters change continuously.
	So $p_\pi$ is continuous, as desired.
\end{proof}

\subsection{Based loops in \texorpdfstring{$\calC_\root$}{C\textasciicircum{}root}}

We now state our key tool for relating ``topology'' to ``dynamics''.  
Roughly, every (based) loop in $\calC_\root$ can (almost) be homotoped to be a concatenation of diagonal flow segments.
In the statement and proof we suppress the notation $v$ for the root.

\begin{theorem}
	\label{t:based-loops}
	Suppose that $\calC_\root$ is a stratum component of rooted quadratic differentials.
	Suppose that $q_0$ is a base-point in $\calC_\root$. 
	Suppose that $\gamma \from [0, 1] \to \calC_\root$ is a loop based at $q_0$.
	Then, up to a homotopy relative to the base-point, the loop $\gamma$ can be written as a finite concatenation of paths that are either
	\begin{itemize}
		\item 
		geodesic segments for the diagonal flow or
		\item 
		contained inside some polytope of differentials.
	\end{itemize}
\end{theorem}

\begin{proof}
	Recall that the set $\calW \subseteq \calC_\root$ of bi-saddled rooted differentials is a countable union of codimension-two loci. 
	For convenience, we assume that our basepoint $q_0$ is contained in some polytope but does not lie in $\calW$. 
	
	Let $q$ be another differential in $\calC_\root - \calW$. 
	By \Cref{l:base-arc-infinite}, we may choose a base-arc $I$ for $q$. 
	With $I$ chosen, we obtain a (generalised) permutation $\pi$ and singularity parameters $(x, y)$.
	Appealing to~\cite[Theorem~3.2]{Boi-Lan}, the permutation $\pi$ is irreducible (in the sense of~\cite[Definition~3.1]{Boi-Lan}). 
	
	By \Cref{l:base-arc-endpoints}, the induced singularity parameters  give coordinates in a neighbourhood of $q$. 
	So there exists an open set in $\calC_\root$ containing $q$ given by the zippered rectangles construction with underlying permutation $\pi$.
	If $q$ lies in $\calC(\pi)$ then there is an (even smaller) neighbourhood $U(q)$ of $q$ with the following properties:
	\begin{enumerate}
		\item $U(q)$ lies in $\calC(\pi)$ and
		\item $q^{-1}_\pi (U(q))$ is a product neighbourhood in $P(\pi)$.
	\end{enumerate}
	We call $U(q)$ an \emph{admissible box neighbourhood}.
	
	Suppose instead that $q$ does not lie in $\calC(\pi)$. 
	Since $\pi$ is irreducible we deduce that the base-arc $I$ for $q$ is not the distinguished base-arc.  
	That is, we violate either the upper or lower base-arc inequality
	\[
	1 < \sum_{k = 1}^\ell x_{\pi(k)} < 1 + \min\{ x_{\pi(\ell)}, x_{\pi(\ell + m)} \}.
	\]
	We remedy this by applying the diagonal flow. 
	That is, there is some (non-unique) time $t(q) \in \RR$ such that $g_{t(q)} q$ lies in $\calC(\pi)$. 
	
	Making choices, we have some admissible box neighbourhood $U(g_{t(q)} q)$.  
	In a slight abuse of notation, we take $U(q) = g_{-t(q)} U(g_{t(q)} q)$.
	We call $U(q)$ a \emph{pre-admissible box neighbourhood}.  
	Therefore, $\bdy U(q)$ is a union of finitely many codimension-one embedded submanifolds (with boundary) in $\calC_\root$.
	
	The locus $\calV$ of saddled rooted differentials can be covered by countably many relatively open codimension-one charts.
	Similarly, the locus $\calW$ of bi-saddled rooted differentials can be covered by countably many relatively open codimension-two charts.
	Hence, we may apply a homotopy (relative to $q_0$) to arrange that $\gamma$ is transverse to $\calV$ and to $\calW$~\cite[Theorem~2.5, page~78]{Hir}.
	In particular, after this, the loop $\gamma$ is disjoint from $\calW$.  
	The boxes $(U({\gamma(s)}))_s$ cover $\gamma$. 
	By compactness, there exists a finite collection $s_1, \dotsc, s_n \in [0, 1]$ so that $(U({\gamma(s_j)}))_j$ covers $\gamma$. 
	Let $U_j = U(\gamma(s_j))$. 
	We add to the collection $(U_j)$ an admissible box neighbourhood $U_0$ of $\gamma(s_0) = q_0$.
	
	Again appealing to~\cite[Theorem~2.5, page~78]{Hir} we perform a further homotopy (again relative to $q_0$) supported in the union of the boxes; 
	this makes $\gamma$ transverse to the sides of the boxes $U_j$ while keeping $\gamma$ transverse to $\calV$ (and disjoint from $\calW$).
	
	This done, $\gamma$ intersects $\bdy U_j$ only finitely many times.
	Let $J_0 = [0, s) \cup (s', 1]$ be the union of the components of $\gamma^{-1}(U_0)$ containing the boundary of $I$.
	Furthermore, for each $j > 0$, the preimage $\gamma^{-1}(U_j)$ is a finite union of intervals in $[0, 1]$. 
	All such intervals are (relatively) open.
	We now select a minimal subcollection $J_1, \dotsc, J_{m-1}$ of these intervals that covers $[s, s']$; 
	we choose the indexing so that $J_k$ is left of $J_{k+1}$.
	We use $V_k$ to denote the box $U_j$ containing $\gamma(J_k)$.
	(Note that the list $V_0, \dotsc, V_{m-1}$ may contain repetitions.)
	We now treat indices modulo $m$.
	Thus, for all $k$, we have that $J_k \cap J_{k+1}$ is a non-empty (relatively) open interval.
	(There is a special case when $m = 2$.)
	
	Since $\gamma$ is transverse to $\calV$, we have that $\gamma(s)$ lies in $\calV$ for at most countably many $s \in [0, 1]$.
	Thus, there exists a rooted differential $q_{k+1}$, without horizontal or vertical saddle connections, in the image of each set $\gamma(J_k \cap J_{k+1})$. 
	Hence, we obtain a sequence of times $0 = s_0 \leq s_1 \leq \dotsb \leq s_m = 1$  such that the closed intervals $[s_k, s_{k+1}] \subseteq [0, 1]$ cover $[0, 1]$ and $\gamma(s_k) = q_k$.
	
	\begin{figure}
		\centering
		\includegraphics{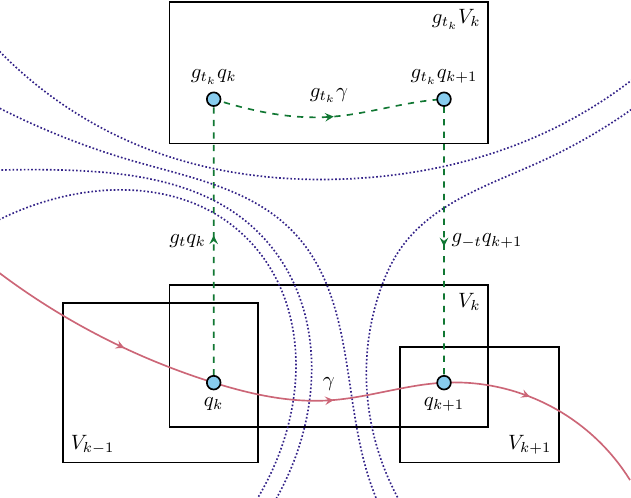}
		\caption{Illustration of the proof of \Cref{t:based-loops}. 
			Part of the loop $\gamma$ is depicted as a solid curve. 
			The dotted lines represent the boundaries of the polytopes. 
			Unlike the boxes $V_{k-1}$ and $V_{k+1}$, the box $V_k$ is not contained inside a polytope, so the diagonal flow must be applied to it. 
			The resulting segment $\delta_k$ is shown as dashed curve.}
		\label{f:based-loops}
	\end{figure}
	
	Since $V_k$ is a pre-admissible box neighbourhood there exists a real number $t_k$ such that $g_{t_k} V_k$ is completely contained inside some polytope in $\calC_\root$. 
	We now define a path $\delta_k$ starting at $q_k$ and ending at $q_{k+1}$.  
	If $t_k \geq 0$ then $\delta_k$ is the concatenation of
	\begin{itemize}
		\item $g_t q_k$ for $t \in [0, t_k]$;
		\item $g_{t_k} \gamma(s)$ for $s \in [s_k, s_{k+1}]$; and
		\item $g_{-t} q_{k+1}$ for $t \in [-t_k, 0]$.
	\end{itemize}
	If $t_k < 0$ then $\delta_k$ is the concatenation of
	\begin{itemize}
		\item $g_{-t} q_k$ for $t \in [0, -t_k]$;
		\item $g_{t_k} \gamma(s)$ for $s \in [s_k, s_{k+1}]$; and
		\item $g_{t} q_{k+1}$ for $t \in [t_k, 0]$.
	\end{itemize} 
	See \Cref{f:based-loops}.
	
	The union of $\delta_k$ and $\gamma_k = \gamma|[s_k, s_{k+1}]$ bounds a disc in $\calC_\root$; 
	this disc is foliated by the arcs $g_t \gamma_k$ where $t \in [0, t_k]$.
	In particular, $\delta_k$ and $\gamma_k$ are homotopic, relative to their common endpoints.
	
	Let $\delta$ be the concatenation of the paths $\delta_k$.
	By construction, $\delta$ is a closed curve homotopic to $\gamma$, fixing $q_0$.  
	Moreover, the pieces $g_{t_k} \gamma(s)$ for $s \in [s_k, s_{k+1}]$ in the concatenation are the only paths that are not (forward or backward) diagonal flow segments. 
	This concludes the proof of the theorem.
\end{proof}

\begin{remark}
	We do not attempt to make optimal choices to reduce the length of geodesic pieces in the concatenation for $\delta$.
	A simple way to reduce these lengths is to choose the admissible zippered rectangle construction whenever $q$ admits one.
	Thus, if $q \in \calC_\root - \calW$ is contained in some polytope, then we can choose the box $U(q)$ to be contained inside the same polytope and set $t(q) = 0$ for such boxes.
	On the other hand, when $q$ does not lie in any polytope we can choose the length of the base-arc to be as close to $1$ as possible, so $t(q)$ is as small as possible.
	See \Cref{s:crossing faces} for a concrete example of the construction.
\end{remark}

\subsection{Rauzy--Veech induction}
\label{s:RV-induction}

We now define Rauzy--Veech induction on zippered rectangles.
Let $I(r)$ be the base-arc of a zippered rectangle decomposition of a rooted differential $(q, v)$.
Suppose that the underlying labelled permutation is $\pi$. 
The induction is defined by passing to the longest base-arc $I(r')$ which is strictly contained in $I(r)$. 
The base-arc $I(r')$ has length
\[
|I(r')| = |I(r)| - \min \{ x_{\pi(\ell)}, x_{\pi(\ell+m)} \}
\]

We now say how the combinatorics and geometry of the decomposition change under a single Rauzy--Veech move.
Suppose that $\pi$ is a labelled irreducible permutation.
Suppose that $(x, y) \in P(\pi)$ are singularity parameters.

Let $\alpha = \pi(\ell)$ and $\beta = \pi(\ell+m)$ be the final letter on the top and bottom, respectively.
Since $\pi$ is irreducible, we deduce that $\alpha \neq \beta$.  
Suppose that $x_\alpha > x_\beta$.
Thus we say that the \emph{top letter wins}. 
The new parameters are 
\begin{align} \label{e:parameter-change}
	\begin{split}
		x'_\alpha &= x_\alpha - x_\beta \\
		y'_\alpha &= y_\alpha - y_\beta
	\end{split}
\end{align}
Also $x'_\rho = x_\rho$ for all $\rho \neq \alpha$
and $y'_\rho = y_\rho$ for all $\rho \neq \beta$.

We encode the parameter transformations as a matrix. 
Let $E = (e_{r s})$ be the $\calA \times \calA$ elementary matrix with ones along the diagonal, with $e_{\alpha \beta} = 1$ and with all other entries zero. 

\begin{convention}
	\label{conv:column-vectors}
	We write horizontal and vertical parameters as \emph{column} vectors. 
	Thus $E x' = x$ and $E y' = y$.
\end{convention}

Observe that our convention is \emph{opposite} to the one used in Yoccoz's lecture notes~\cite[Section~3.4]{Yoc}; 
it allows us to avoid taking various transposes.

To define the new permutation we recall that $\alpha$ is either a translation \emph{or} flip letter.
Suppose $\alpha$ is a translation letter.
Let $\pi(j) = \alpha$ for some $\ell+1 \leq j < \ell + m$. 
We then set 
\begin{itemize}
	\item $\pi'(i) = \pi(i)$ for all $i \leq j$, 
	\item $\pi'(j+1) = \beta$, and
	\item $\pi'(i) = \pi(i-1)$ for all $i > j+1$.
\end{itemize}  

Suppose instead that $\alpha$ is a flip letter and let $\pi(j) = \alpha$ for some $1 \leq j < \ell$. 
In $\pi'$ the top indices to range from $1$ to $\ell + 1$; 
the bottom indices to range from $\ell + 2$ to $\ell + m$.
We then set
\begin{itemize}
	\item $\pi'(i) = \pi(i)$ for all $i < j$, 
	\item $\pi'(j) = \beta$, 
	\item $\pi'(i) = \pi(i-1)$ for all $i > j$.
\end{itemize}  

With the above definitions, we set $R_{\Rtop} (\pi, x, y) = (\pi', x', y')$.
If $x_\alpha < x_\beta$ instead, then we say that the \emph{bottom letter wins}. 
The definition of $R_{\Rbot}$ is very similar.

\begin{remark}
	The induction is undefined on the codimension-one locus $x_\alpha = x_\beta$, which is contained in $\calV$.
\end{remark}

The Rauzy--Veech moves give the labelled Rauzy class $\calR_{\lab}$ the structure of a directed graph.
We denote this graph by $\calD_\lab$. 
The vertices of $\calD_\lab$ are irreducible labelled permutations in $\calR_{\lab}$. 
We have an arrow from a permutation $\pi$ to a permutation $\pi'$ if $\pi' = R_{\Rtop} (\pi)$ or $R_{\Rbot}(\pi)$.
A component of $\calD_\lab$ is called a \emph{labelled Rauzy diagram}.

Suppose $\pi$ and $\sigma$ are permutations equivalent by the reindexing $s \in \Sym(\calA)$.
Then the permutations $\pi'= R_{\Rtop} (\pi)$ and $\sigma' = R_{\Rtop} (\sigma)$ (or $R_{\Rbot}(\pi)$ and $R_{\Rbot}(\sigma)$ respectively) are also equivalent by $s$. 
Thus, the Rauzy class $\calR_\root$ inherits a directed graph structure from $\calD_\lab$. 
This graph is denoted $\calD_\root$: the \emph{Rauzy diagram}.

\begin{lemma}
	\label{l:regular_cover}
	(Any component of) $\calD_\lab$ is a regular cover of $\calD_\root$.
\end{lemma}


\begin{proof}
	Suppose that $\pi$ and $\pi'$ are labelled generalised permutations in (a component of) $\calD_\lab$.  
	Suppose that $\pi'$ lies in the fibre of $[\pi]$.  
	That is, there is some reindexing $s \in \Sym(\calA)$ so that $\pi' = s \circ \pi$.  
	Since $s$ gives a deck transformation of the covering $\calD_\lab \to \calD_\root$ (and so preserves components), we are done.
\end{proof}   

We now explain the coding of the diagonal flow using admissible parameters and Rauzy--Veech induction.
Suppose that $\pi$ is a labelled irreducible generalised permutation. 
Let $\calC(\pi) = q_\pi (P(\pi))$ be the corresponding polytope.
Let $\alpha = \pi(\ell)$ and $\beta = \pi(\ell + m)$ be the final letters on the top and the bottom.
Suppose that $(x, y)$ lies in $P(\pi)$. 
Thus $q = q_\pi(x, y)$ lies in $\calC(\pi)$. 
Suppose that $I = I(r)$ is the distinguished base-arc for $q$. 
Let $(t_-, t_+) = (t_-(q), t_+(q))$ be the largest interval so that
for all $t \in (t_-, t_+)$ we have $g_t(x, y)$ lying in $P(\pi)$. 
Note that $t_- < 0 < t_+$ because $I$ is the distinguished base-arc.

The parameters $g_{t_-}(x,y)$ and $g_{t_+}(x,y)$ satisfy all conditions of \Cref{d:polytope_of_parameters} \emph{except} the lower or upper bounds in the base-arc inequalities, respectively.
We define $q_- = g_{t_-} q$ and $q_+= g_{t_+} q$.
By~\cite[Lemma~2.12]{Boi-Lan}, the parameters $g_{t_-}(x, y)$ and $g_{t_+}(x, y)$ give zippered rectangles decompositions of $q_-$ and $q_+$ respectively. 
It also follows that
\begin{itemize}
	\item 
	$t_- = - \log |I|$ and $I$ gives a base-arc $I_-$ in $q_-$ with width one;
	\item 
	$t_+ = - \log (|I| - \min \{x_\alpha, x_\beta \})$ and $I$ gives a base-arc $I_+$ in $q_+$ whose width equals the upper bound in the base-arc inequalities.
\end{itemize}
We call $g_{t_-}(x,y)$ a \emph{backwards almost admissible parameter} for $\pi$ and we call $q_-$ a \emph{backwards almost admissible differential} for $\pi$.
We use similar language in the forwards direction.

\begin{definition}
	\label{d:backwards-flow-face}
	The subset $\bdy^- P(\pi)$ of backwards almost admissible parameters is the \emph{backwards flow face} of $P(\pi)$.
	We make similar definitions for $\bdy^- \calC(\pi)$ as well as $\bdy^+ P(\pi)$ and $\bdy^+ \calC(\pi)$. 
\end{definition}

\begin{notation}
	We take $\bdy^- P = \bigsqcup_\pi \bdy^- P(\pi)$ and $\bdy^- \calC = \bigsqcup_\pi \bdy^- \calC(\pi)$.
\end{notation}

\begin{lemma}
	\label{l:base-arc-discrete-at-lower-boundary}
	Suppose that the rooted differential $(q, v)$ lies in $\bdy^- \calC(\pi)$.
	Then there is some $\delta > 0$ so that: 
	\[
	(1 - \delta, 1 + \delta) \cap E(q, v) = \{1\}
	\]
\end{lemma}

\begin{proof}
	By definition, $1$ lies in $E(q,v)$ and there is some $\epsilon > 0$ (depending on $(q,v)$) so that 
	\begin{itemize}
		\item
		$g_\epsilon q$ is contained in $\calC(\pi)$ and
		\item 
		the geodesic segment $[q, g_\epsilon q]$ meets $\bdy^- \calC$ in exactly one point (that is, at $q$). 
	\end{itemize}
	By \Cref{d:polytope_of_differentials}, the set $E(g_\epsilon q, v)$ contains some $s < 1$. 
	Hence, $E(q, v)$ contains $e^{-\epsilon} s$.
	We conclude the proof by \Cref{l:base-arc-endpoints}.
\end{proof}

The length of the base-arc $I$ is the sum of the widths.
Therefore, $\bdy^- P(\pi)$ has codimension one in $P(\pi)$. 
Similarly, $\bdy^+ P(\pi)$ is contained in the union of two codimension-one loci.

Note that it is possible for $\bdy^+ \calC(\pi)$ and $\bdy^- \calC(\pi)$ to intersect. 
This happens if and only if there is a Rauzy--Veech move from $\pi$ to itself. 
In particular, if $R_{\Rtop} (\pi) = \pi$, then $\pi(\ell+m -1) = \alpha$.
Finally, if $R_{\Rbot} (\pi) = \pi$, then $\pi(\ell - 1) = \beta$.

The above discussion gives the following lemma.

\begin{lemma}
	\label{l:flow-face-parameters}
	The map $q_\pi \from P(\pi) \to \calC(\pi)$ extends continuously to 
	a homeomorphism 
	\[
	q_\pi^- \from P(\pi) \cup \bdy^- P(\pi) \to \calC(\pi) \cup \bdy^- \calC(\pi)
	\]
	Similarly, it extends continuously to a homeomorphism 
	\[
	q_\pi^+ \from P(\pi) \cup \bdy^+ P(\pi) \to \calC(\pi) \cup \bdy^+ \calC(\pi)
	\]
	These maps conjugate the diagonal flow on parameters to the diagonal flow on differentials. \qed
\end{lemma}

\begin{remark}
	There is also a continuous extension to $q_\pi^\pm \from P(\pi) \cup \bdy^\pm P(\pi)$.  
	However, this extension need not be a homeomorphism.
	See \Cref{s:crossing faces}.
\end{remark}

\begin{remark}
	Strictly speaking, we must define and give notation to the resulting quotient; that is, identifying points in $P \cup \bdy^\pm P$ having the same image under $\bigsqcup q_\pi^\pm$.
	As we shall see, the diagonal flow $g_t$ is well-defined on all but a measure zero subset of this quotient.  
	Also $\bigsqcup q_\pi$ is a measurable conjugacy between the diagonal flows on parameters and on differentials. 
	
	However, we will instead slightly abuse notation and use $P \cup \bdy^- P$ to denote this quotient.
\end{remark}

Let $\bdy^+ \calC_{\Rtop}(\pi)$ be the subset $\bdy^+ \calC(\pi) \cap q_\pi ( \{ (x,y) : x_\alpha > x_\beta \} )$. 
Similarly, let $\bdy^+ \calC_{\Rbot}(\pi)$ be the subset $\bdy^+ \calC(\pi) \cap q_\pi ( \{ (x,y) : x_\beta > x_\alpha \} )$.

\begin{lemma}
	\label{l:flow-face-identifications}
	Let $q$ be a rooted differential in $\bdy^+ \calC_{\Rtop}(\pi)$ (respectively $\bdy^+ \calC_{\Rbot}(\pi)$). 
	Then $q$ is contained in $\bdy^- \calC(R_{\Rtop}(\pi))$ (respectively $\bdy^- \calC(R_{\Rbot}(\pi))$).
\end{lemma}

\begin{proof}
	Suppose that $q$ lies in $\bdy^+ \calC(\pi)$.
	By \Cref{l:flow-face-parameters} there are parameters $(x, y)$ in the upper boundary of $P(\pi)$ so that $q = q_\pi (x, y)$. 
	Let $I$ be the associated base-arc for $q$. 
	Breaking symmetry, suppose that $x_\alpha > x_\beta$.
	(That is, $q$ lies in $\bdy^+ \calC_{\Rtop}(\pi)$.)
	Thus $|I| = 1 + x_\beta$.
	By \Cref{e:parameter-change} we can perform a Rauzy--Veech move and obtain $R_{\Rtop}(\pi, x, y) = (\pi', x', y')$.
	The width of the base-arc $I'$ after the move is $|I| - x_\beta = 1$. 
	Thus $q$ lies in $\bdy^- \calC(\pi') = \bdy^- \calC(R_{\Rtop}(\pi))$, as desired.
\end{proof}

\subsection{The diagonal flow and Rauzy-Veech induction}

Suppose that $q$ is contained in $\calC(\pi)$ with the distinguished base-arc $I$.
The rooted differential $q_+ = g_{t_+} q$ lies in $\bdy^+ \calC(\pi)$. 
Suppose that the parameters for $q$, hence for $q_+$, satisfy $x_\alpha  \neq x_\beta$.
By \Cref{l:flow-face-identifications}, the roote differential $q_+$ is contained in $\bdy^- \calC(\pi')$ where $\pi' = R_{\ast} (\pi)$ (here $\ast = \Rtop$ or $\ast = \Rbot$ depending on which of $x_\alpha$, $x_\beta$ is larger). 
We repeat the process by flowing $q_+$ forward to get a rooted differential $q'_+$ in $\bdy^+ \calC(\pi')$.
Let $\alpha', \beta' \in \calA$ be the top and bottom rightmost letters in $\pi'$.
If the $\alpha'$ and $\beta'$ widths in $q'_+$ are not equal then we may define $\pi'' = R_\ast (\pi')$, we see that $q'_+$ lies in $\bdy^- \calC(\pi'')$, and we may continue to iterate.
The iteration stops (after finitely many Rauzy--Veech moves) if and only if $q$ has a vertical saddle connection~\cite[Proposition~4.2]{Boi-Lan}.

Since $\calC(\pi) = \calC(\sigma)$ for equivalent permutations, the quotient graph, namely the Rauzy diagram $\calD_\root$, codes the diagonal flow on $\calC_\root$.

\begin{remark}
	\label{r:strong}
	As recalled in \Cref{a:measures}, the Masur--Smillie--Veech measure is a diagonal flow-invariant measure that is ergodic.
	By ergodicity, a set of positive measure in $\calC(\pi) - \calV$, visits every $\calC(\sigma)$ under the diagonal flow. 
	This implies that the Rauzy diagram $\calD_\root$ is \emph{strongly connected}: 
	there is a directed path between any pair of vertices. 
	Since each component of $\calD_\lab$ evenly covers $\calD_\root$, the components of $\calD_\lab$ are also strongly connected (as noted by Boissy--Lanneau \cite{Boi-Lan}).
\end{remark}

We fix a component of $\calD_\lab$ for the rest of the article.
With a slight abuse of notation, we (again) simply call this component $\calD_\lab$.

The lemmas below are standard facts in Rauzy--Veech theory and are often implicitly used. 
We include proof sketches for completeness.
Recall from \Cref{d:base-arc-endpoints} that $E(q, v)$ is the set of endpoints of base-arcs.

\begin{lemma}
	\label{l:finite-flow-faces}
	Suppose that $\pi$ is a labelled permutation in $\calD_\lab$.
	Suppose that $(q, v)$ is a rooted differential in the polytope $\calC(\pi)$. 
	Let $T_0 = - \log(\inf E(q, v))$. 
	(If the infimum is zero, we take $T_0 = \infty$.)
	Then, for any positive $T < T_0$, the diagonal flow segment $[q, g_T q]$ crosses only finitely many flow faces. 
\end{lemma} 

\begin{proof}
	Let $\bdy^- \calC_1, \bdy^- \calC_2, \dots$ be the sequence of backwards flow faces crossed by the segment $[q, g_T q]$.
	Let $0 < t_k \leq T$ be the monotonically increasing sequence of times such that $g_{t_k} q$ is contained in $\bdy^- \calC_k$.
	Let $I_v$ be the horizontal separatrix emanating from the root.
	It follows that the sequence of points $e^{-t_k}$ in $I_v$ give base-arcs for $q$. 
	Thus they are all contained in $E(q,v)$.
	
	Since $T < T_0$, by \Cref{l:base-arc-endpoints} the intersection $E(q, v) \cap [e^{-T}, 1)$ is finite. 
	Hence, the sequence $t_k$ is finite and we are done.
\end{proof}

\begin{remark}
	\label{r:deltas-finite-code}
	Recall that the differentials $q_k$ introduced in the proof of \Cref{t:based-loops} did not have any vertical or horizontal saddle connections.  
	Thus by \Cref{l:base-arc-endpoints} the geodesic segments $\delta_k$ (also introduced in the proof) satisfy the hypothesis of \Cref{l:finite-flow-faces}. 
	Thus the $\delta_k$ cross only finitely many flow faces.
\end{remark} 

\begin{definition}
	A \emph{Rauzy--Veech sequence} is a concatenation of Rauzy--Veech moves labelling a directed path in $\calD_\lab$.
\end{definition}

Let $\zeta$ be a finite Rauzy--Veech sequence that starts and ends at labelled permutations $\pi$ and $\pi'$.
Let $P(\zeta) \subseteq P(\pi)$ be the parameters whose Rauzy--Veech sequence begins with $\zeta$. 
By induction on the number of moves, we conclude that $P(\zeta)$ is a convex open subset of $P(\pi)$.
In particular, $\calC(\zeta) = q_\pi(P(\zeta))$ is path connected.

\begin{definition}
	\label{d:zeta-segment}
	Suppose that $q$ is a differential and $t$ is a time. 
	Suppose that $q$ lies in $\calC(\pi)$ and $q' = g_t q$ lies in $\calC(\pi')$.
	We say that the geodesic segment $[q, q']$ is a \emph{$\zeta$--segment} if the Rauzy--Veech sequence of $[q, g_t q]$ from $\pi$ is $\zeta$.
\end{definition}

It follows that $q \in \calC(\zeta)$.
We shall see (in \Cref{r:segments-non-empty}) that every finite Rauzy--Veech sequence $\zeta$ admits $\zeta$--segments. 

\begin{remark}
	\label{r:zeta-segment}
	Suppose that $q$ is a differential and $t$ is a time.
	Suppose that $q$ lies in $\calC(\pi) \cup \bdy^- \calC(\pi)$ and $q' = g_t q$ lies in $\calC(\pi') \cup \bdy^- \calC(\pi')$.
	Then, by \Cref{d:backwards-flow-face}, there is a small $\epsilon > 0$ (depending on $q$ and $q'$) so that 
	\begin{itemize}
		\item $g_\epsilon q$ lies in $\calC(\pi)$,
		\item $g_\epsilon q'$ lies in $\calC(\pi')$, and
		\item the interiors of the segments $[q, g_\epsilon q]$ and $[q', g_\epsilon q']$ do not cross flow faces.
	\end{itemize}
	We say that the geodesic segment $[q, q']$ \emph{$\zeta$--segment} if $[g_\epsilon q, g_\epsilon q']$ is.
\end{remark}

\begin{lemma}
	\label{l:segment-factor} 
	Suppose that $\zeta = \zeta' \zeta''$ is a concatenation of finite Rauzy--Veech sequences. 
	Suppose that $[q, g_t q]$ is a $\zeta$--segment. 
	Then there is a time $s \in [0, t]$ so that $[q, g_s q]$ is a $\zeta'$--segment and $[g_s q, g_t q]$ is a $\zeta''$--segment.
\end{lemma}

\begin{proof}
	As in \Cref{r:zeta-segment} we may (if needed) move the endpoints of the segment slightly to ensure that the endpoints lie in the interiors of polytopes.
	
	Note that, By definition, $P(\zeta)$ is a subset of $P(\zeta')$.
	Let $q = q_\pi(x,y)$.
	Then $(x, y)$ lies in $P(\zeta)$, hence in $P(\zeta')$.
	Thus there is a time $s \leqslant t$ such that $[q, g_s q]$ is a $\zeta'$--segment. 
	Suppose that $\zeta'$ ends at the labelled permutation $\pi'$.
	Since $[q, g_s q]$ can be extended by $\zeta''$ to a $\zeta$--segment, it follows that $[g_s q, g_t q]$ is a $\zeta''$--segment. 
\end{proof}

The segments associated with finite Rauzy-Veech sequences are ``unique up to homotopy'', as follows.

\begin{lemma}
	\label{l:segment-homotopy} 
	Suppose that $\zeta$ is a finite Rauzy--Veech sequence.
	Then any pair of $\zeta$--segments are isotopic in $\calC_\root$ through $\zeta$--segments.
\end{lemma}

\begin{proof}
	Suppose that $\zeta$ starts and ends at labelled permutations $\pi$ and $\pi'$. 
	Let $[p, g_t p]$ and $[q, g_{t'} q]$ be $\zeta$--segments.
	Applying \Cref{r:zeta-segment} we may assume that all endpoints lie in the interiors of polytopes. 
	Thus $p$ and $q$ lie in $\calC(\zeta)$, the image of $P(\zeta)$. 
	Suppose that $\rho \from [0, 1] \to P(\zeta)$ is a line segment connecting the parameters of $p$ to those of $q$.  
	Then for any $r \in [0, 1]$ we have that $q_\pi(\rho(r))$ lies in $\calC(\zeta)$.  
	Note that $q_\pi(\rho(0)) = p$ and $q_\pi(\rho(1)) = q$.
	Set $q_r = q_\pi(\rho(r))$.
	We choose times $t(r)$ continuously so that $t(0) = t$, so that $t(1) = t'$, and so that $[q_r, g_{t(r)} q_r]$ is a $\zeta$--segment. 
	This gives the required isotopy. 
\end{proof}

We give a full exposition of the \emph{width} and \emph{height cones} in \Cref{a:measures}, starting from \Cref{d:cones}.
Suppose that $\zeta$ is a finite Rauzy--Veech sequence from $\pi$ to $\pi'$.
We define $X(\zeta)$ to be the cone of widths in $X(\pi)$ whose Rauzy--Veech sequence is given by $\zeta$. 
Note that $X(\zeta)$ is the cone over the width parameters of points in $P(\zeta)$.
After recalling \Cref{e:parameter-change} and \Cref{conv:column-vectors}, we induct on the length of $\zeta$ to obtain a matrix $E_\zeta = E_1 \cdots E_n$. 
Here $E_i$ is the elementary matrix for the $i^{\text{th}}$ Rauzy--Veech move in $\zeta$.
If $(x, y)$ lies in $P(\zeta)$, then the new widths and heights $(x', y')$ lie in $P(\pi')$.  
The new widths and the old are related as follows.
\begin{equation}
	\label{e:RV-matrix}
	(E_\zeta x', E_\zeta y') = (x,y) 
\end{equation}
This extends \Cref{e:parameter-change}. 
In particular, we have 
\begin{equation}
	\label{e:X-cones}
	X(\zeta) = E_\zeta (X(\pi')) \subset X(\pi)
\end{equation}

\begin{remark}
	\label{r:segments-non-empty}
	Note that $X(\zeta)$ is nonempty.
	Thus any finite Rauzy--Veech sequence $\zeta$ admits $\zeta$--segments.
\end{remark}

\begin{lemma}
	\label{l:width-nesting}
	Suppose that $\pi$ is a labelled permutation.
	Suppose that $U$ is an open subset of $P(\pi)$. 
	Then there is a finite Rauzy--Veech sequence $\theta$, starting from $\pi$, so that 
	\[
	\closure{X(\theta)} - \{0\} \subset X(U)
	\]
	The same holds for any extension $\theta \eta$ of $\theta$.
\end{lemma}

We also give a version of this for the height cones in \Cref{l:height-nesting}.

\begin{proof}[Proof of {\Cref{l:width-nesting}}]
	By Masur~\cite{Mas} and Veech~\cite{Vee82} (more strongly
	by Kerckhoff, Masur, and Smillie \cite{Ker-Mas-Smi}, see also~\cite[Section~4.4, Theorem]{Yoc06}), a typical differential (with respect to the Masur--Smillie--Veech measure - see \Cref{a:measures}) has a uniquely ergodic vertical foliation. 
	Suppose that $q \in q_\pi(U)$ is one such. 
	Let $(x, y) \in P(\pi)$ be the parameters for $q$. 
	By definition, $x$ lies in $X(U)$.
	Also, $q$ admits an infinite Rauzy--Veech sequence $\zeta$. 
	Let $\zeta_n$ be its prefix of length $n$. 
	Let $E_n$ be the Rauzy matrix of $\zeta_n$. 
	Suppose that $\zeta_n$ ends at the labelled permutation $\pi_n$.
	Note that $X(\zeta_n) = E_n X(\pi_n)$ gives a sequence of nested cones in $X(\pi)$.
	By unique ergodicity, the intersection of the closures of these cones is exactly the ray through $x$. 
	(In the abelian case this is~\cite[Section~4.4, Proposition]{Yoc06}; 
	the same proof works in the quadratic case.)
	Since $U$ is open, for some sufficiently large $n$ we have that $\closure{X(\zeta_n)}- \{0\}$ is contained inside $X (U)$. 
	Setting $\theta$ to be $\zeta_n$, we are done.
\end{proof}

By \Cref{l:width-nesting}, it follows that if $q$ lies in $\calC(\theta)$ then the maximal geodesic segment in $\calC(\pi)$ containing $q$ intersects $q_\pi(U)$.

\section{The flow group is the fundamental group}

\subsection{Fundamental groups of directed graphs}

We will need the following lemma.

\begin{lemma}
	\label{l:directed-loops-suffice}
	Suppose that $\calD$ is a directed graph which is strongly connected.
	Suppose that $\pi$ is a vertex of $\calD$. 
	Then $\uppi_1(\calD, \pi)$ is generated by the homotopy classes of directed loops based at $\pi$. \qed
\end{lemma}

\begin{remark}
	If $\calD$ is finite then finitely many directed loops suffice. 
\end{remark}

\subsection{Rauzy diagrams and the flow group}
\label{s:Rauzy_diagrams_flow_group}

Suppose that $\calD_\root$ is a Rauzy diagram associated to stratum component of rooted differentials $\calC_\root$.
Let $\calD_\lab$ be (a component of) the labelled Rauzy diagram.

For every vertex $\pi$ in $\calD_\lab$ we choose a base-point $q^\pi$ in the interior of $\calC(\pi)$.  
For every directed edge $e = (\pi, \pi')$ in $\calD_\lab$ we choose a base-point $q^e$ in $\bdy^+ \calC(\pi) \cap \bdy^- \calC(\pi')$: the intersection of the forwards and backwards flow faces.
For example, we could produce such $q^e$ using an $e$--segment.

Suppose that $s \in \Sym(\calA)$ is a reindexing. 
By \Cref{l:equivalent-implies-identicalpolytopes}, we may choose base-points in polytopes to satisfy $q^\pi = q^{s \circ \pi}$.
By \Cref{l:regular_cover}, $(\pi, \pi')$ is an edge if and only if $(s \circ \pi, s \circ \pi') $ is an edge. 
Hence, we may choose base-points on flow faces to satisfy $q^e = q^{e'}$ if and only if $e$ and $e'$ are equivalent by reindexing.

By connecting the base-points according to the combinatorics of Rauzy--Veech moves, we build a graph $\calE_\root$ that is isomorphic to $\calD_\root$.
We choose a homeomorphism from $\calD_\root$ to $\calE_\root$ which (for all $\pi$ and $e$) sends $\pi$ to $q^\pi$ and sends the midpoint of $e$ to $q^e$. 

Fix a base-point $[\pi]$ for $\calD_\root$. 
The homeomorphism from $\calD_\root$ to $\calE_\root$ induces a homomorphism 
\begin{equation}
	\label{e:pi1-homomorphism}
	\Psi \from \uppi_1(\calD_\root, [\pi]) \to \uppi_1(\calC_\root, q^\pi)
\end{equation}

Here is our first consequence of \Cref{t:based-loops}.

\begin{theorem}
	\label{t:pi1-surjective}
	Suppose that $\calC_\root$ is a component of a stratum of rooted abelian or quadratic differentials. Suppose that $\pi$ a labelled permutation in $\calD_\lab$. 
	Then the homomorphism $\Psi$ (given in \Cref{e:pi1-homomorphism}) is surjective.
\end{theorem}

\begin{proof}
	Suppose that $\gamma$ is a loop in $\calC_\root$ based at $q^\pi$. 
	By \Cref{t:based-loops}, we can homotope $\gamma$ (relative to its base-point) to a finite concatenation of paths $\gamma_i$ where each $\gamma_i$ is either a (forwards or backwards) geodesic segment or is contained inside a polytope.   
	Breaking symmetry, we assume $\gamma$ is the concatenation $\gamma_1 \gamma_2 \dotsb \gamma_k$ where the odd indexed $\gamma_i$ are contained in a polytope and the even indexed $\gamma_i$ are (forwards or backwards) geodesic segments.
	
	Applying \Cref{r:deltas-finite-code}, we have finite Rauzy--Veech sequences $\zeta_{2i}$ in $\calD_\lab$ as follows: 
	\begin{itemize}
		\item traversed forward, each $\gamma_{2i}$ is a $\zeta_{2i}$--segment;
		\item $\zeta_2$ starts at $\pi$; and
		\item the end of $\zeta_{2i}$ is the beginning of $\zeta_{2i + 2}$.
	\end{itemize}
	Each $\zeta_{2i}$ descends to a path $\kappa_{2i}$ in $\calD_\root$. 
	By \Cref{l:segment-factor}, the geodesic $\gamma_{2i}$ visits (in the correct order) the polytopes coded by $\zeta_{2i}$.  
	Since the $\calC(\pi)$ are polytopes (and the same essentially holds for the flow faces) we can homotope $\gamma_{2i}$ to move its intersections with the flow faces to the points $q^e$.  
	This done we may homotope each $\gamma_{2i + 1}$ to the corresponding point $q^\pi$ and each $\gamma_{2i}$ (relative to the points $q^e$) to (the image in the graph $\calE_\root$ of) the corresponding path $\kappa_{2i}$.
\end{proof}

\begin{remark}
	\label{r:calC-lab}
	The components of $\calD_\lab$ evenly cover $\calD_\root$. 
	Thus, by \Cref{t:pi1-surjective}, corresponding to each component of $\calD_\lab$ there is a finite cover $\calC_\lab$ of $\calC_\root$. 
	This cover has an intrinsic description for abelian components (see~\cite[Proposition~1.2]{Boi15}).  
	It is an open problem to give an intrinsic description of $\calC_\lab$ in the quadratic case (see~\cite[Appendix~A]{Boi15}).
\end{remark}

Suppose that $q_0$ is a base-point in $\calC_\root$. 
Suppose that $U$ is a contractible open set around $q_0$.
For every $q \in U$, we choose a path $\eta_q$ from $q_0$ to $q$ inside of $U$.
As $U$ is contractible, any choice of $\eta_q$ is homotopic to any other relative to the end-points. 
By $\overline{\eta}_q$, we mean $\eta_q$ traversed in reverse.

Suppose that $\gamma$ is a geodesic segment whose endpoints $q$ and $q'$ lie in $U$.
The concatenation $\eta_q \gamma \overline{\eta}_{q'}$ is a loop in $\calC_\root$ based at $q_0$.
We call such loops \emph{almost-flow loops} based at $(U, q_0)$.

\begin{definition}
	The flow group $\Flow(U, q_0)$ is the subgroup of $\uppi_1(\calC_\root, q_0)$ generated by the almost-flow loops based at $(U, q_0)$. 
\end{definition}

Here is one of our main results. 

\begin{theorem}
	\label{t:flow-general}
	Suppose that $U$ is a contractible open set in $\calC_\root$. 
	Suppose that $q_0$ is a differential in $U$.
	Then 
	\[
	\Flow(U, q_0) = \uppi_1( \calC_\root, q_0)
	\]
\end{theorem}

\begin{proof}
	By \Cref{l:polytopes-dense}, the union of the polytopes $\calC(\pi)$ is dense in $\calC_\root$. 
	Thus, applying the change of base-point isomorphism for $\uppi_1$, we may assume that $q_0$ lies in some $\calC(\pi)$.
	Note that if $V \subset U$ also contains $q_0$ then $\Flow(V, q_0) \leq \Flow(U, q_0)$. 
	Thus we may assume that $U$ is contained in $\calC(\pi)$.
	Applying \Cref{l:directed-loops-suffice} and \Cref{t:pi1-surjective}, it suffices to show that for any directed based loop $\zeta$ in $\calD_\root$ there is a product of almost flow loops based at $(U, q_0)$ whose Rauzy--Veech sequences concatenate give the unique lift of $\zeta$ to $\calD_\lab$ starting at $\pi$.
	
	By \Cref{l:width-nesting}, there is a finite Rauzy--Veech sequence $\theta$ starting at $\pi$ so that 
	$\closure{X(\theta)} \subset X(q_\pi^{-1}(U))$. 
	It follows that the same is true for any finite extension of $\theta$. 
	So we may use strong connectivity of $\calD_\lab$ to extend $\theta$ until it ends at $\pi$.
	Thus $\theta$ is a loop in $\calD_\lab$.
	Thus $\theta$ also gives a loop in $\calD_\root$ -- in a small abuse of notation we use the same name for both.
	
	We now must force a geodesic segment to meet $U$ twice.
	Suppose that $q$ lies in the intersection of $\calC(\theta^2)$ and $U$.
	Let $[q, g_b q]$ be a $\theta^2$--segment. 
	By \Cref{l:segment-factor}, there is a time $a \in (0, b)$ as follows:
	\begin{itemize}
		\item 
		$[q, g_a q]$ and $[g_a q, g_b q]$ are both $\theta$--segments and
		\item 
		$g_a q$ is contained in $\calC(\theta) \cap U$.
	\end{itemize}
	Thus $\gamma_\theta = [q, g_a q]$ begins and ends in $U$. 
	The resulting almost-flow loop $\eta_q \gamma_\theta \overline{\eta}_{g_a q}$ lies in the based homotopy class $\Psi([\theta])$ (with $\Psi$ as defined in \Cref{e:pi1-homomorphism}).
	
	Suppose that $\zeta$ is any directed based loop in $\calD_\root$.
	The concatenation $\theta \zeta \theta$ is a directed based loop in $\calD_\root$. 
	Suppose that this lifts to the Rauzy--Veech sequence $\xi$ in $\calD_\lab$, starting from the labelled permutation $\pi$.
	
	Since $\xi$ extends $\theta$, there is a differential $q$ in the intersection of $\calC(\xi)$ and $U$.  
	Let $\gamma_\xi = [q, g_d q]$ be a $\xi$--segment. 
	By \Cref{l:segment-factor}, there is a time $c \in (0, d)$ as follows:
	\begin{itemize}
		\item 
		$[q, g_c q]$ is a $\theta \zeta$--segment,
		\item 
		$[g_c q, g_d q]$ is a $\theta$--segment, and
		\item 
		$g_c q$ is contained in the intersection $\calC(\theta) \cap U$.
	\end{itemize}
	Thus $\gamma_{\theta \zeta}= [q, g_c q]$ begins and ends in $U$.
	The resulting almost-flow loop $\eta_q \gamma_{\theta \zeta} \overline{\eta}_{g_c q}$ lies in the based homotopy class $\Psi([\theta \zeta])$. 
	
	Since $\Flow(U,q_0)$ is a subgroup we deduce that there is a product giving $\Psi([\zeta])$, as desired.
\end{proof}

We also obtain a ``flow loop'' version of \Cref{t:flow-general}. 
Suppose that $q_0$ is a base-point in $\calC_\root$. 
Suppose that $U$ is a contractible open set around $q_0$.

\begin{definition}
	The \emph{strict flow group} $\SFlow(U, q_0)$ is the subgroup of $\Flow(U, q_0)$ generated by loops of the form $\eta_q \gamma \overline{\eta}_q$ where 
	\begin{itemize}
		\item
		$\gamma$ is a closed geodesic that intersects $U$ and
		\item
		$q$ is any point of $U \cap \gamma$. \qedhere
	\end{itemize}
\end{definition}

Recall that $Y(\pi)$ is the open cone of height parameters for $\pi$ (\Cref{d:cones}). 
Recall that $P(\pi)$ is the polytope of parameters for $\pi$.
For any open set $U$ in $P(\pi)$, we define $Y(U)$ to be the cone over the height parameters of points in $U$.

Suppose that $\zeta$ is a finite Rauzy--Veech sequence from $\pi$ to $\pi'$.
We define 
\begin{equation}
	\label{e:Y-cones}
	Y(\zeta) = E_\zeta^{-1} (Y(\pi)) \subset Y(\pi')
\end{equation}
Recall that widths and heights are, respectively, unstable and stable for the flow. 
This is why when we go from \Cref{e:X-cones} to \Cref{e:Y-cones} we must replace $E_\zeta$ by $E_\zeta^{-1}$.

\begin{lemma}
	\label{l:height-nesting}
	Suppose that $\pi$ is a labelled permutation.
	Suppose that $U$ is an open subset of $P(\pi)$. 
	Then there is a finite Rauzy--Veech sequence $\theta$, ending at $\pi$, so that 
	\[
	\overline{Y(\theta)} - \{ 0 \} \subset Y(U)
	\]
	The same holds for any extension $\eta \theta$ of $\theta$.
\end{lemma}

\begin{proof}
	The proof is similar to that of \Cref{l:width-nesting} but instead using the genericity of uniquely ergodic horizontal foliations.    
\end{proof}

\begin{definition}
	\label{d:strong-nesting}
	Suppose that $U$ is an open set contained in a polytope $\calC(\pi)$.
	We call a based (at $\pi$) Rauzy--Veech loop $\theta$ \emph{strongly nested for $U$} if 
	\begin{itemize}
		\item $\closure{X(\theta)} - \{0\}$ is contained in $X(q_\pi^{-1}(U))$; and 
		\item 
		$\closure{Y(\theta)} - \{0\}$ is contained in $Y(q_\pi^{-1}(U))$.
	\end{itemize}    
\end{definition}

\begin{lemma}
	\label{l:strong_nesting_existence}
	Suppose that $U$ is an open set contained in $\calC(\pi)$.
	Then there is a Rauzy--Veech loop $\theta$, based at $\pi$, that is strongly nested for $U$.
\end{lemma}

\begin{proof}
	By \Cref{l:width-nesting}, there is a based (at $\pi$) Rauzy--Veech loop $\zeta$ such that $\closure{X(\zeta)} - \{0\}$ is contained in $X(q_\pi^{-1}(U))$. 
	By \Cref{l:height-nesting}, there is a based (also at $\pi$) Rauzy--Veech loop $\zeta'$ such that $\closure{Y(\zeta')} - \{0\}$ is contained in $Y(q_\pi^{-1}(U))$.     
	We deduce that the concatenation $\zeta \zeta'$ is strongly nested for $U$.
\end{proof}

When $X(q_\pi^{-1}(U)) = X(\pi)$ then positivity of the Rauzy matrix implies (but is not implied by) nesting in the widths.
For abelian strata, Marmi--Moussa--Yoccoz produce a sequence $\zeta$ so that the matrix $E_\zeta$ is positive;  see~\cite[Proposition~2]{Yoc06}. 
By extending it, we may assume $\zeta$ is a based loop.
The same proof works for quadratic strata. 
For related discussion see~\cite[Lemma~3.12]{Avi-Res}.
This gives the following.

\begin{corollary}
	\label{c:strong_nesting_by_positive_matrix}
	Suppose that $U \subset \calC(\pi)$ is an open subset. 
	Then there is a based loop $\theta$ which is strongly nested for $U$ and so that its Rauzy matrix $E_\theta$ is positive.
\end{corollary}

\begin{proof}
	Let $\zeta$ be a based loop so that the matrix $E_\zeta$ is positive. 
	By \Cref{l:strong_nesting_existence}, there is a Rauzy--Veech loop $\eta$, based at $\pi$, that is strongly nesting for $U$.
	The based loop $\theta = \eta \zeta \eta$ is thus strongly nesting for $U$ and has a positive Rauzy matrix.
\end{proof}

Strong nesting using a sequence with a positive Rauzy matrix simplifies the discussion in \Cref{s:dynamics}.

\begin{corollary}
	\label{c:strict-flow}
	Suppose that $q_0$ lies in $\calC_\root$. 
	Suppose that $U$ is a contractible open set containing $q_0$.
	Then
	\[
	\SFlow(U, q_0) = \uppi_1( \calC_\root, q_0).
	\]
\end{corollary}

\begin{proof}
	By \Cref{t:flow-general}, it suffices to prove that $\SFlow(U, q_0) = \Flow(U, q_0)$.  As before, we may (and do) assume that $U$ is contained in some polytope $\calC(\pi)$. 
	
	Suppose that $\gamma$ is a geodesic segment that starts at $q$ and ends at $q'$ in $U$.
	So $\eta_q \gamma \overline{\eta}_{q'}$ is an almost flow loop. 
	Since $\gamma$ returns to $\calC(\pi)$ we deduce that $\gamma$ is a $\zeta$--segment, for some based loop $\zeta$ in $\calD_\lab$ starting from $\pi$.
	
	Let $\theta$ be a Rauzy--Veech loop, based at $\pi$, that is strongly nested for $U$.
	Suppose that $\xi$ is any other Rauzy--Veech loop, based at $\pi$ (perhaps of length zero).  
	Then the extension $\theta \xi$ satisfies
	\begin{itemize}
		\item 
		the sequence $X((\theta \xi)^n)$ converges as $n \to \infty$ to a ray in $X (q_\pi^{-1}(U))$ and
		\item 
		the sequence $Y((\theta \xi)^n)$ converges as $n \to \infty$ to a ray in $Y (q_\pi^{-1}(U))$.
	\end{itemize}
	Let $x^{\theta \xi}$ and $y^{\theta \xi}$ be parameters that span the rays.
	Then, after appropriate scalings,
	we have that $q^{\theta \xi} = q_\pi (x^{\theta \xi}, y^{\theta \xi})$ lies in $U$. 
	Let $\gamma^{\theta}$ and $\gamma^{\theta\xi}$ be the orbits of $q^{\theta}$ and $\gamma^{\theta\xi}$, respectively, under the diagonal flow.
	Then:
	\begin{itemize}
		\item 
		$\gamma^{\theta}$ and $\gamma^{\theta\xi}$ are closed geodesics and 
		\item 
		the Rauzy--Veech sequences have, respectively, period $\theta$ and $\theta \xi$.
	\end{itemize}
	We use $p'$ to denote $q^\theta$ and $p''$ to denote $q^{\theta \zeta}$.
	Now consider the closed geodesics $\gamma' = \gamma^{\theta}$ and $\gamma'' = \gamma^{\theta\zeta}$.
	The based homotopy classes of $\eta_{p'} \gamma' \overline{\eta}_{p'}$ and $\eta_{p''}\gamma'' \overline{\eta}_{p''}$ are generators of $\SFlow(U, q_0)$.
	Also, \Cref{l:segment-homotopy}, the original almost flow loop $\eta_q \gamma \overline{\eta}_{q'}$
	is homotopic to 
	\[
	\overline { 
		\eta_{p'}\gamma' \overline{\eta}_{p'}
	} \cdot
	\eta_{p''} \gamma'' \overline{\eta}_{p''}
	\]
	Thus the class of $\eta_q \gamma \overline{\eta}_{q'}$ belongs to $\SFlow(U,q_0)$, as desired.
\end{proof}

\section{Dynamics of the diagonal flow}\label{s:dynamics}

\subsection{Coding formalism}

Let $\Pi$ be a finite or countable alphabet. 
We consider the symbolic space $\Sigma = \Pi^\ZZ$ endowed with the left shift map $\shift$. 
Suppose that $u \in \Pi^m$ and $v \in \Pi^n$ are finite words. 
We write $uv \in \Pi^{m+n}$ for their concatenation.
The (forward) cylinder $\Sigma(u)$ induced by $u$ is defined as 
\[
\Sigma(u) = \{ a \in \Sigma \st a_k = u_k \text{ for } 0 \leq k < m \}
\]

\begin{definition}
	\label{d:bounded-distortion}
	Suppose that $\mu$ is an $\shift$--invariant finite measure on $\Sigma$.
	We say that $\mu$ has \emph{bounded distortion} if there exists a constant $M \geq 1$ such that, for any finite words $u \in \Pi^r$ and $v \in \Pi^s$,
	\[
	\frac{1}{M} \mu(\Sigma(u v)) \leq \mu(\Sigma(u))\mu(\Sigma(v)) \leq M \mu(\Sigma(u v)) \qedhere
	\]
\end{definition}
\noindent
This generalises the property of being a \emph{Bernoulli shift}, where $M = 1$. 
For this reason, it is also called an \emph{approximate product structure}.

\subsection{A smaller section for the Rauzy--Veech renormalisation}

Suppose that $\calC_\root$ is a component of rooted differentials from a stratum component $\calC$. 
We take $\calC^{(1)}_\root$ to be the locus of rooted differentials of area one.
Note that this is preserved by the diagonal flow. 

\begin{notation}
	Suppose that $U$ is a subset of $\calC_\root$ closed under scaling: that is, $\RR \cdot U = U$. 
	Then we set $U^{(1)} = U \cap \calC^{(1)}_\root$.  
	We use similar notation in the parameter space.
\end{notation}

We now turn to the task of building a coding for the system $(\calC^{(1)}_\root, g_t)$.
The natural alphabet is in bijection with single moves in $\calD_\root$ (and so is finite).
The corresponding Poincaré section is 
\[
\bdy^- \calC^{(1)}_\root = \bigsqcup_{[\pi]} \bdy^- \calC^{(1)}(\pi)
\]
As discussed in \Cref{s:measures on backwards flow faces} (see in particular \Cref{l:RVinvariance}) this section carries the measure $\nu^{(1)}$ which is invariant for the Rauzy--Veech renormalisation.

However, there are two problems with this choice of alphabet and section.
\begin{itemize}
	\item
	Every backwards flow face $\bdy^- \calC^{(1)}(\pi)$ has infinite $\nu^{(1)}$--mass. See~\cite{Vee82} for a proof in the abelian case; 
	a similar calculation holds in the quadratic case.
	\item 
	The measure $\nu^{(1)}$ does not have bounded distortion for finite Rauzy--Veech sequences.
	This is a consequence of \cite[Proposition~5.2]{Vee78} in the abelian case and of~\cite[Equation~8.3 and Lemma~8.5]{Gad} in the quadratic case.
\end{itemize}
These imply, respectively, that the existence of the Lyapunov spectrum is unclear and that we cannot apply the simplicity criterion of Avila--Viana~\cite{Avi-Via07a}.

In the abelian case, Zorich~\cite{Zor97} gives an acceleration of the Rauzy--Veech renormalisation. 
The result is a larger (now countable) alphabet but a smaller Poincaré section with finite $\nu^{(1)}$--mass.  
This deals with the first problem, but not the second. 
To control the distortion, again in the abelian case, Avila--Gouëzel--Yoccoz give a further acceleration that results in a pre-compact (in $\bdy^- \calC^{(1)}$) Poincaré section. 

Here we extend their technique to deal with quadratic strata.
Suppose that $\pi$ is a labelled irreducible generalised permutation.
Define $X_1(\pi) = \{ x \in X(\pi) \st w_X(x) = 1 \}$
where $w_X$ is the sum of the widths.
We also take $\bdy^- P(\pi)$ to be the product $X_1(\pi) \cross Y(\pi)$.
Let $\bdy^- P^{(1)}(\pi)$ be the area-one locus in $X_1(\pi) \cross Y(\pi)$. 
(We give a more detailed discussion in \Cref{s:measures on backwards flow faces}.)

\begin{notation}
	We take $X_1 = \bigsqcup_\pi X_1(\pi)$ and $\bdy^- P^{(1)} = \bigsqcup_\pi \bdy^- P^{(1)}(\pi)$.
\end{notation}

Let $\xi$ be a Rauzy--Veech sequence starting at $\pi$ and ending at $\pi'$.
We define $\bdy^- P^{(1)}(\xi)$ to be the subset of $\bdy^- P^{(1)}(\pi)$ whose Rauzy--Veech expansion begins with $\xi$. 
Take $\bdy^- \calC^{(1)}(\xi) = q_\pi (\bdy^- P^{(1)}(\xi))$. 
Let $p^\pi$ be the projection to widths. 
We define $X_1(\xi) \subset X_1(\pi)$ to be the image of $\bdy^- P^{(1)}(\xi)$ under $p^\pi$.
By composing the Rauzy--Veech renormalisation maps $\RV^\bdy$ (defined in \Cref{s:measures on backwards flow faces}) for moves in $\xi$, we obtain a map $\RV^\bdy_\xi \from X_1(\pi') \cross Y(\xi) \to X_1(\xi) \cross Y(\pi)$.
Here $Y(\xi)$ is defined as in \Cref{e:Y-cones}. 

\begin{definition}
	\label{d:piece-of-lower-boundary}
	Suppose that $\zeta$ is a finite Rauzy--Veech sequence that ends at $\pi$.
	Suppose that $\eta$ is finite Rauzy--Veech sequence starting from $\pi$. 
	We define the $\calS_P(\zeta | \eta) = X_1(\eta) \cross Y(\zeta)$.
	We take $\calS^{(1)}_P(\zeta | \eta)$ to be the area-one locus inside of $\calS_P(\zeta | \eta)$.  
	Note that $\calS^{(1)}_P(\zeta | \eta)$ is a subset of $\bdy^- P^{(1)}(\pi)$.
	
	We finally define $\calS(\zeta | \eta)$ and $\calS^{(1)}(\zeta | \eta)$ to be  the images under $q_\pi$.
\end{definition}

In a slight abuse of notation we use $\pi$ to denote the Rauzy--Veech sequence of length zero starting (and ending) at $\pi$. 
So, for example, $\calS_P(\zeta | \pi) = X_1(\pi) \cross Y(\zeta)$.

Note that $\calS^{(1)}(\zeta | \eta)$ is the subset of differentials in $\bdy^- \calC^{(1)}(\pi)$ whose backwards and forwards Rauzy--Veech sequences begin with $\zeta$ and $\eta$ respectively.

We now fix a permutation $\pi$.
By \Cref{c:strong_nesting_by_positive_matrix}, we may also fix a directed loop $\theta$, based at $\pi$, which has a positive Rauzy matrix $E_\theta$ and is strongly nested for $P(\pi)$ (see \Cref{d:strong-nesting}). 
By extending $\theta$ even further (if necessary) we may also assume that $\theta$ is \emph{neat} in the sense of~\cite[Section 4.1.3]{Avi-Gou-Yoc}: 
that is, if $\theta = \zeta \eta$ and $\theta = \eta' \zeta$ then $\zeta = \theta$.
It follows that, in any Rauzy--Veech sequence, the occurrences of $\theta$ do not overlap. 

So, suppose that $\theta$ is strongly nested, positive, and neat. 
We use $\calS^{(1)}(\theta|\theta)$ as our Poincaré section.

Suppose that $s$ is a reindexing of $\calA$.
By \Cref{r:calC-lab}, the permutation $s$ defines an element of the deck group of the cover $\calD_\lab \to \calD_\root$.
Let $\theta_s$ be the translate of $\theta$ by $s$.
Since the cover is regular (\Cref{l:regular_cover}) $\theta_s$ is again a loop in $\calD_\lab$ based at $\pi_s = s \circ \pi$.  
Thus $\theta_s$ is again strongly nested, positive, and neat.

Note that all $\theta_s$ in $\calD_\lab$ project to $[\theta]$ in $\calD_\root$.
It follows that 
\[
q_{\pi_s} (\calS^{(1)}_P (\theta_s| \theta_s)) = \calS^{(1)}(\theta|\theta)
\]

It is often much more convenient to work in the parameter spaces rather than in $\calC_\root$. 
We use $\nu_P^{(1)}$ to denote the Rauzy--Veech invariant measure on $\bdy^- P^{(1)} = \bigsqcup_\pi \bdy^- P^{(1)}_\pi$ defined in \Cref{a:measures} (right after \Cref{r:coning}).
Suppose that $s$ is a reindexing.  
Then $s$ induces a map $s \from X(\pi) \cross Y(\pi) \to X(\pi_s) \cross Y(\pi_s)$ preserving all disintegrations discussed in \Cref{a:measures}.  
It follows that $\nu_P^{(1)}$ is invariant under these maps.
That is, for any measurable set $U \subseteq \bdy^- P^{(1)}_\pi$ we have
\begin{equation}
	\label{e:reindexings_have_equal_volumes}
	\nu_P^{(1)} (s(U)) = \nu_P^{(1)}(U)
\end{equation}
We now work with the section (in parameters) given by $\bigsqcup_s \,  \calS^{(1)}_P(\theta_s | \theta_s)$.
By \Cref{e:reindexings_have_equal_volumes} we have
\[
\nu_P^{(1)} (\calS^{(1)}_P(\theta_s | \theta_s)) = \nu_P^{(1)}(\calS^{(1)}_P(\theta | \theta))
\]
Applying this and \Cref{d:Veech_and_Rauzy--Veech}, we have
\[
\nu^{(1)}(\calS^{(1)}(\theta| \theta)) = \nu_P^{(1)}(\calS^{(1)}_P(\theta | \theta))
\]

\begin{lemma}
	\label{l:section-finite-volume}
	$\calS^{(1)}_P(\theta|\theta)$ has finite $\nu_P^{(1)}$--volume.
\end{lemma}

\begin{proof}
	Since $\calS^{(1)}_P(\theta|\theta)$ lies in $\bdy^- P^{(1)}(\pi)$ it suffices to prove that its $\nu_\pi^{(1)}$--volume is finite.
	By \Cref{s:projecting to widths}, the measure $\nu^{(1)}_\pi$ pushes forward under $p^\pi$ to the measure $\phi_{X_1(\pi)}$ on $X_1(\pi)$. 
	By \Cref{l:nu_X_one_finite}, the measure $\nu_{X_1(\pi)}$ on $X_1(\pi)$ is finite. 
	Also, by \Cref{e:density} we have
	\[
	\frac{\diff \phi_{X_1(\pi)}}{\diff \nu_{X_1(\pi)}} = \vol_\pi.
	\]
	for a smooth function $\vol_\pi \from X_1(\pi) \to \RR$.
	
	By strong nesting, the closure $\closure{X_1 (\theta)}$ is compact and lies in the interior of $X_1(\pi)$.
	Hence, $\vol_\pi$ restricted to $X_1(\theta)$ is bounded.
	Since $\calS^{(1)}_P(\theta|\theta)$ is a subset of $\bdy^- P^{(1)}(\theta)$, it follows that 
	\[
	\int_{\calS^{(1)}_P(\theta| \theta)} \diff \nu_\pi^{(1)} 
	< \int_{\bdy^- P^{(1)} (\theta)} \diff \nu_\pi^{(1)} 
	= \int_{X_1(\theta)} \vol_\pi(x)\, \diff \nu_{X_1(\pi)} (x) 
	< \infty
	\]
	Thus $\calS^{(1)}_P(\theta|\theta)$ has finite $\nu_\pi^{(1)}$--mass.
\end{proof}

\subsection{The pieces of the first return map}
\label{s:Pieces}

As usual (in order to have non-negative matrices) we work with the inverse of the first return map.
On $\bdy^- P$ the first return, by the diagonal flow, is given by 
the Rauzy--Veech renormalisation maps $\RV^\bdy$.
As noted in \Cref{t:all-ergodic} the measure $\nu^{(1)}_P$ is ergodic for $\RV^\bdy$ (restricted to $\bdy^- P^{(1)}$).
So for a $\nu^{(1)}_P$--typical point of $\bigsqcup_s \, \calS^{(1)}_P(\theta_s|\theta_s)$ the first return is a finite power of $\RV^\bdy$. 
The power required and the polytopes visited depend in piecewise continuous fashion on the given point.  
Now we analyse the pieces.

Let $\Lambda = \Lambda_\theta$ be the set of Rauzy--Veech sequences which 
\begin{itemize}
	\item start with $\theta_r$ for some reindexing $r$,
	\item end with $\theta_t$ for some reindexing $t$, and 
	\item do not contain $\theta_s^2$ for any reindexing $s$. 
\end{itemize}
In particular, $\theta_s$ (for any reindexing $s$) itself lies in $\Lambda$. 


\begin{lemma}
	\label{l:first-return}
	Suppose that $\xi$ is contained in $\Lambda$. 
	Suppose that $\xi$ is the concatenation $\xi' \xi''$ of non-empty Rauzy--Veech sequences. 
	Then $\calS_P(\xi' | \xi'')$ is disjoint from $\calS_P(\theta_s | \theta_s) $ for all reindexings $s$.
\end{lemma}

\begin{proof}
	For any reindexing $s$ the sequence $\theta_s$ is neat.
	Note that $\xi$ does not contain $\theta_s^2$. 
	The lemma follows.
\end{proof}

The next result follows from the definitions. See \Cref{f:domain-and-range}.
\begin{lemma}
	\label{l:domain-and-range}
	Suppose that $\xi$ is contained in $\Lambda$. 
	Suppose that $r$ and $t$ are reindexings so that $\xi$ starts with $\theta_r$ and ends with $\theta_t$.
	Then we have the following:
	\begin{itemize}
		\item $\calS_P(\theta_r \xi|\theta_t)$ is a subset of $\calS_P (\theta_t | \theta_t)$, 
		\item $\calS_P(\theta_r| \xi \theta_t)$ is a subset of $\calS_P(\theta_r | \theta_r)$ and
		\item $\RV^\bdy_\xi (\calS_P(\theta_r \xi|\theta_t))= \calS_P(\theta_r | \xi \theta_t )$.\qed
	\end{itemize}
\end{lemma}

\begin{figure}
	\centering
	\includegraphics{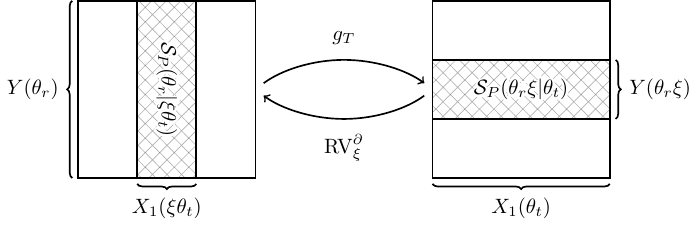}
	\caption{The domain and range of the piece $\RV^\bdy_\xi$.}
	\label{f:domain-and-range}
\end{figure}

By \Cref{l:first-return,l:domain-and-range}, we deduce that $\RV^\bdy_\xi \from \calS_P(\theta_r \xi|\theta_t) \to \calS_P(\theta_r | \xi \theta_t)$ is the inverse of a piece of the first return map to $\bigsqcup_s \, \calS_P(\theta_s | \theta_s)$. This also holds for the restriction to the area-one locus: namely, $\RV^\bdy_\xi \from \calS_P^{(1)}(\theta_r \xi|\theta_t) \to \calS_P^{(1)}(\theta_r |\xi\theta_t)$.

\begin{lemma}
	\label{l:first-returns-give-letters}
	Suppose that $(x, y)$ is a parameter in $\bigsqcup_s \, \calS_P(\theta_s | \theta_s)$.
	Suppose that there is a $T > 0$ so that $g_T(x, y)$ again lies in $\bigsqcup_s \, \calS_P(\theta_s | \theta_s)$.  
	Then there is a least such $T$.
	Furthermore, the flow segment $\gamma$ from $(x,y)$ to $g_T(x,y)$ is a $\xi$--segment for some $\xi$ in the alphabet $\Lambda$.
\end{lemma}

\begin{proof}
	By \Cref{l:base-arc-discrete-at-lower-boundary,l:finite-flow-faces}, the least positive $T$ such that $g_T(x,y)$ lies in $\bigsqcup_s \, \calS_P(\theta_s | \theta_s)$ exists.
	Since $\gamma$ returns to $\bigsqcup_s \, \calS_P(\theta_s | \theta_s)$, we have the following. 
	\begin{itemize}
		\item By \Cref{r:zeta-segment} the Rauzy--Veech sequence $\xi = \xi(\gamma)$ is well defined.
		\item By definition of the section, $\xi$ starts with some $\theta_r$ and ends with some $\theta_t$.
	\end{itemize} 
	Suppose that $\xi$ contains $\theta_s^2$. 
	We write $\xi$ as a finite concatenation $\xi_1 \xi_2 \dotsb \xi_r$ where each $\xi_i$ is in $\Lambda$.
	It follows that there exists a positive time $S < T$ such that
	\begin{itemize}
		\item the segment from $(x,y)$ to $g_S(x,y)$ is a $\xi_1$--segment, and
		\item $g_S(x,y)$ lies in $\bigsqcup_s \, \calS_P(\theta_s | \theta_s)$.
	\end{itemize}
	This contradicts the fact that $g_T(x,y)$ is the first return.
	It follows that $\xi$ is in $\Lambda$, as desired.
\end{proof}

With \Cref{l:first-returns-give-letters} in hand, the following is well-defined.

\begin{definition}
	\label{d:first-return-time}
	Suppose that $\xi$ is in $\Lambda$.
	Suppose that $(x, y)$ lies in $\calS_P(\theta_r | \xi \theta_t)$.
	Suppose that $\RV^\bdy_\xi(x', y') = (x, y)$, where $(x',y')$ lies in $\calS_P(\theta_r \xi| \theta_t)$.
	Then the first return of $g_T(x, y)$ to $\bigsqcup_s \, \calS_P(\theta_s | \theta_s)$ occurs at time
	\[
	\rho(x,y) = \log (w( E_\xi x'))
	\]
	Note that this is a $\nu_P$--measurable function on $\bigsqcup_s \, \calS_P(\theta_s | \theta_s)$.
\end{definition}

We now obtain the following.

\begin{lemma}
	\label{l:section-first-return-bounded-below}
	Suppose that $(x, y)$ is a point in $\bigsqcup_s \, \calS_P(\theta_s | \theta_s)$ which has a first return to $\bigsqcup_s \, \calS_P(\theta_s | \theta_s)$.
	Then $\rho(x, y) > \log 2$.
\end{lemma}


\begin{proof}
	Let $(x', y')$ be the first return of $g_t(x, y)$ to the section.
	By \Cref{l:first-returns-give-letters}, the flow segment from $(x,y)$ to $(x',y')$ is a $\xi$--segment for some $\xi$ in $\Pi$.
	Suppose that $E_\xi$ is its Rauzy matrix.
	Thus $x = (E_\xi x')/w(E_\xi x')$ and the return time is $\rho(x, y) = \log w(E_\xi  x')$.
	(Here we suppress the heights to simplify the notation.)
	Since $\xi$ is in $\Pi$, it can be written as a concatenation $\xi' \theta$, where $\xi'$ could be empty.
	Thus $E_\xi$ factors as $E_{\xi'} E_\theta$.
	Note that $E_{\xi'}$ is an integral, non-negative, invertible matrix.
	We now compute as follows.
	\[
	\log w(E_\xi x') = \log w(E_{\xi'} E_\theta  x') \geq  \log w(E_\theta x') >  \log |\calA| \geq \log 2 \qedhere
	\]
\end{proof}

By ergodicity of the map $\RV^\bdy$ (see \Cref{t:all-ergodic}), a $\nu^{(1)}_P$--typical point of $\bigsqcup_s \, \calS^{(1)}_P(\theta_s | \theta_s)$ returns to $\bigsqcup_s \, \calS^{(1)}_P(\theta_s | \theta_s)$ infinitely often (forwards and backwards). 
Hence, the disjoint union $\bigsqcup_{\xi \in \Lambda} \calS^{(1)}_P(\theta_r | \xi \theta_t)$ has full measure in $\bigsqcup_s \, \calS^{(1)}_P(\theta_s | \theta_s)$.

The measure $\nu^{(1)}_P$ is invariant under the Rauzy--Veech renormalisation (see \Cref{r:nu_P_invariant}).
Since each first return to $\bigsqcup_s \, \calS_P(\theta_s | \theta_s)$ is given by a finite Rauzy--Veech sequence, we conclude that the restriction of $\nu^{(1)}_P$ to $\bigsqcup_s \, \calS_P(\theta_s | \theta_s)$ is invariant under first returns.

\subsection{Distortion}
\label{s:distortion}

In order to control the measure distortion of the pieces $\RV^\bdy_\xi$ (for $\xi$ in $\Lambda$) we first consider the measure distortion of the Rauzy maps $\R^\bdy_\xi$.

We use the notation of \Cref{s:projecting to widths}. 
Let $\zeta$ be a finite Rauzy sequence that starts at $\pi$ and ends at $\pi'$.
By composing the renormalisation maps for each move in $\zeta$, we obtain the Rauzy renormalisation map $\R^\bdy_\zeta \from X_1(\pi') \to X_1(\zeta)$.
Again, we take $E_\zeta$ to be the associated Rauzy matrix.
Recall that $w_X \from X_1(\pi) \to \RR$ is the sum of the widths.
We deduce the following:
\[
\R^\bdy_\zeta (x') = \frac{E_\zeta x'}{w_X (E_\zeta x')}
\]

We use $\calJ (\R^\bdy_\zeta)$ to denote the Jacobian of $\R^\bdy_\zeta$; that is,
\[
\calJ(\R^\bdy_\zeta) (x') = 
\frac{\diff \nu_{X_1(\pi)}}{\diff [(\R^\bdy_\zeta)_* \nu_{X_1(\pi')}]} (\R^\bdy_\zeta (x'))
\]

\begin{definition}
	\label{d:loop-bounded-distortion}
	Let $K \geq 1$. 
	We say that a Rauzy--Veech sequence $\zeta$ from $\pi$ to $\pi'$ has \emph{$K$--bounded distortion} if 
	\[
	\frac{ \calJ (\R^\bdy_\zeta) (x) }
	{ \calJ (\R^\bdy_\zeta) (x')} \leq K
	\]
	for any pair of points $x, x' \in X_1(\pi')$.
\end{definition}

The next lemma says that probabilities, conditioned on a sequence having $K$--bounded distortion, are preserved (up to multiplication by $K^2$).

\begin{lemma}
	\label{l:relative-probability}
	Suppose that $\zeta$ is Rauzy--Veech sequence from $\pi$ to $\pi'$.
	Suppose that $\zeta$ has $K$--bounded distortion. 
	Suppose that $U'$ is any measurable subset of $X_1(\pi')$. 
	Then, taking $\nu = \nu_{X_1(\pi)}$ and $\nu' = \nu_{X_1(\pi')}$, we have
	\[
	\frac{1}{K^2} \cdot \frac{\nu'(U')}{\nu'(X_1(\pi'))} 
	\leq \frac{\nu( \R^\bdy_\zeta(U') )}{\nu (X_1(\zeta))} 
	\leq K^2 \cdot \frac{\nu'(U')}{\nu'(X_1(\pi'))}
	\]
\end{lemma}

\begin{proof}
	We fix a point $x_0$ in $X_1(\pi')$. 
	Since $\zeta$ has $K$--bounded distortion, it follows that for any $x \in X_1(\pi')$, 
	\[
	\frac{1}{K} \calJ (\R^\bdy_\zeta) (x_0) \leq \calJ (\R^\bdy_\zeta) (x) \leq K \calJ (\R^\bdy_\zeta) (x_0)
	\]
	
	Suppose that $V'$ is any measurable subset of $X_1(\pi')$ with positive measure.
	From the definitions and by the change of coordinates formula we have the following:
	\begin{align*}
		\nu (\R^\bdy_\zeta(V')) 
		&= \int_{\R^\bdy_\zeta(V')} \diff \nu \\
		&= \int_{\R^\bdy_\zeta(V')} \calJ(R^\bdy_\zeta) \diff (\R^\bdy_\zeta)_* \nu \\
		&= \int_{V'} \calJ (\R^\bdy_\zeta) \diff \nu'
	\end{align*}
	
	By \Cref{l:nu_X_one_finite}, all the integrals above are finite. 
	This and the preceding bounds on the Jacobian give the following:
	\[
	\frac{1}{K} \calJ(\R^\bdy_\zeta)(x_0)  
	\leq \frac{\nu (\R^\bdy_\zeta(V'))}{\nu' (V')} 
	\leq K \calJ(\R^\bdy_\zeta)(x_0) 
	\]
	
	We now suppose (as we may) that $U'$ has positive measure.
	We apply the above estimate twice: 
	once with $V' = U'$ and once with $V' = X_1(\pi')$ (noting that $\RV^\bdy_\zeta (X_1(\pi')) = X_1 (\zeta)$) to get
	\[
	\frac{1}{K} \calJ(\R^\bdy_\zeta)(x_0)  
	\leq \frac{\nu (\R^\bdy_\zeta(U'))}{\nu' (U')} 
	\leq K \calJ(\R^\bdy_\zeta)(x_0) 
	\]
	and
	\[
	\frac{1}{K} \calJ(\R^\bdy_\zeta)(x_0)  
	\leq \frac{\nu (X_1(\zeta))}{\nu' (X_1(\pi'))} 
	\leq K \calJ(\R^\bdy_\zeta)(x_0) 
	\]
	The conclusion follows.
\end{proof}

\begin{corollary}
	\label{c:relative-probability}
	Suppose that $\zeta$ is a Rauzy--Veech sequence from $\pi$ to $\pi'$ and $\zeta'$ a finite Rauzy--Veech sequence that starts from $\pi'$.
	Suppose that $\zeta$ has $K$--bounded distortion. Then, taking $\nu = \nu_{X_1(\pi)}$ and $\nu' = \nu_{X_1(\pi')}$, 
	we get
	\[
	\frac{1}{K^2} \cdot \frac{\nu' (X_1(\zeta'))}{\nu'(X_1(\pi'))}
	\leq \frac{\nu (X_1(\zeta \zeta'))}{\nu (X_1 (\zeta))}
	\leq K^2 \cdot \frac{\nu' (X_1(\zeta'))}{\nu'(X_1(\pi'))}
	\qedhere
	\]
\end{corollary}

\begin{proof}
	Take $U' = X_1(\zeta')$ and thus find $\R^\bdy_\zeta (U') = X_1(\zeta \zeta')$.  
	The corollary follows directly by \Cref{l:relative-probability}.
\end{proof}

Suppose that $\zeta$ is a Rauzy--Veech sequence from $\pi$ to $\pi'$ with Rauzy matrix $E_\zeta$.
In the abelian case, by \cite[Proposition 5.2]{Vee78}, we have for $x \in X_1(\pi')$ that
\begin{equation}
	\label{e:Jacobian-abelian}
	\calJ(\R^\bdy_\zeta) (x) = \frac{1}{w_X(E_\zeta x)^{|\calA|}}
\end{equation}
In the quadratic case, by \cite[Equation 8.3 and Lemma 8.1]{Gad}, there exists a constant $c_\zeta> 0$ such that 
\begin{equation}
	\label{e:Jacobian-quadratic}
	\calJ(\R^\bdy_\zeta) (x) = \frac{c_\zeta}{ w_X( E_\zeta x )^{|\calA|-1}}
\end{equation}
The appearance of the constant $c_\zeta$ arises from the width equality in \eqref{e:e.x-y}.

Suppose that $\zeta$ is a Rauzy--Veech sequence from $\pi$ to $\pi'$.
Suppose that $\alpha$ is a label in $\calA$. 
We define $\col_\alpha(E_\zeta)$ to be the $\alpha$--column of $E_\zeta$.
Suppose that $k \geq 1$ is a constant. 
The matrix $E_\zeta$ is said to be \emph{$k$--balanced} if 
\[
\frac{\|\col_\alpha(E_\zeta)\|_1} 
{\|\col_{\alpha'}(E_{\zeta})\|_1} \leq k
\]
for any $\alpha$ and $\alpha'$ in $\calA$.

\begin{lemma}
	\label{l:balanced_implies_bounded_distortion}  
	Suppose that $\zeta$ is a Rauzy--Veech sequence from $\pi$ to $\pi'$. 
	Suppose that the Rauzy matrix $E_\zeta$ is $k$--balanced.
	Then $\zeta$ has $K$--bounded distortion for $K = k^{|\calA|}$ in the abelian case and for $K = k^{|\calA|-1}$ in the quadratic case.
\end{lemma}

\begin{proof}
	Since $E_\zeta$ is non-negative, for every $x \in X_1(\pi')$ we have the following
	\[
	w_X(E_\zeta x) = \sum_{\alpha \in \calA} x_\alpha \| \col_\alpha(E_\zeta)\|_1
	\]
	Since $w_X(x) = 1$ for all $x \in X_1(\pi')$, it follows that
	\[
	\min_{\alpha \in \calA} \|\col_\alpha(E_\zeta)\|_1 \leq w_X(E_\zeta x) \leq \max_{\alpha \in \calA} \|\col_\alpha(E_\zeta)\|_1
	\]
	Therefore,
	\[
	\frac{w_X(E_\zeta x)}{w_X(E_\zeta x')} \leq k
	\]
	for every $x, x' \in X_1(\pi')$.
	
	In either case (abelian or quadratic), the lemma follows from \Cref{e:Jacobian-abelian} or \Cref{e:Jacobian-quadratic}, with $K = k^{|\calA|}$ or $K = k^{|\calA|-1}$, respectively.
\end{proof}

We now use our (strongly nested, positive, neat) sequence $\theta$. 

\begin{lemma}
	\label{l:balanced_when_ends_theta}    
	Suppose that $\zeta \theta_s$ (for some reindexing $s$) is a finite Rauzy--Veech sequence. Then there is $k_\theta \geq 1$, depending only on $\theta$, such that $E_{\zeta \theta_s}$ is $k_\theta$--balanced.
\end{lemma}

\begin{proof}
	Suppose that $M$ is a non-negative matrix with rows and columns indexed by $\calA$.
	We define $\|M\|_1 = \max_\alpha \|\col_\alpha(M)\|_1$.
	We denote the $(\alpha, \beta)$ entry of $M$ by $M(\alpha, \beta)$.
	We define $M_{\min} = \min_{\alpha, \beta} M (\alpha, \beta)$.
	
	Note that for any reindexing $s$, we have $\|E_{\theta_s} \|_1 = \| E_\theta \|_1$ and $(E_{\theta_s})_{\min}  = (E_\theta)_{\min}$.
	Since $E_\theta$ is a positive integer matrix, we have $(E_\theta)_{\min} \geq 1$.
	
	Now note that $E_{\zeta\theta_s} = E_\zeta E_{\theta_s}$, so
	\[
	\col_\beta(E_{\zeta \theta_s}) = \sum_{\alpha \in \calA} E_{\theta_s}(\alpha, \beta) \col_{\alpha}(E_\zeta)
	\]
	We deduce that, for any $\beta \in \calA$,
	\[
	\|E_\zeta\|_1  
	\leq (E_{\theta_s})_{\min} \cdot \|E_\zeta\|_1  
	\leq \| \col_\beta(E_{\zeta \theta_s}) \|_1 
	\leq \| E_{\theta_s} \|_1 \cdot \|E_\zeta\|_1
	\]
	It follows that $E_{\zeta \theta_s}$ is $k_\theta$--balanced for $k_\theta = \| E_\theta \|_1$.
\end{proof}

From this we deduce the following. 

\begin{corollary}
	\label{c:words_balanced}
	Suppose that $u = \xi_1 \cdots \xi_r$ is a finite Rauzy--Veech sequence obtained by concatenating $\xi_j$, all in $\Lambda$.
	Then there is $k_\theta \geq 1$, depending only on $\theta$, such that $E_u$ is $k_\theta$--balanced.    
\end{corollary}

\begin{proof}
	Note that $\xi_r$ ends with $\theta_t$ for some reindexing $t$.
	So we may write $u$ as $\zeta \theta_t$, for a (possibly empty) Rauzy sequence $\zeta$.
	The corollary follows from \Cref{l:balanced_when_ends_theta}.
\end{proof}

From this and \Cref{l:balanced_implies_bounded_distortion} we have the following.

\begin{corollary} \label{c:words_bounded_distortion}
	Suppose that $u = \xi_1 \cdots \xi_r$ where all $\xi_j$ are in $\Lambda$.
	Then, with $K$ as in \Cref{l:balanced_implies_bounded_distortion}, the sequence $u$ has $K$--bounded distortion. \qed
\end{corollary}

We now consider our section $\bigsqcup_s \, \calS^{(1)}_P(\theta_s|\theta_s)$. 
Recall that $\calS^{(1)}_P (\theta_s | \pi_s)$ is the area-one locus in $X_1(\pi_s) \cross Y(\theta_s)$. 
Let $Y(\theta_s, x)$ denote the preimage of $x \in X_1(\pi_s)$ under the (restriction of) projection to the first factor.
Note that $Y(\theta_s, x)$ is a subset of $Y(\pi_s, x)$,
as defined in \Cref{s:projecting to widths}. 
By \Cref{l:fibres-finite-vol}, we have
\[
\nu_{Y(\pi_s,x)} (Y(\theta_s, x)) < \nu_{Y(\pi_s,x)} (Y(\pi_s, x)) < \infty
\]
We define the function $\vol_{\theta_s} \from X_1(\pi_s) \to \RR$ by $\vol_{\theta_s} (x) = \nu_{Y(\pi_s,x)} (Y(\theta_s, x))$.
Note that $\vol_{\theta_s}$ is smooth.

\begin{lemma} 
	\label{l:bounds_vol_theta}
	There exist constants $L_\theta, U_\theta > 0$ depending only on $\theta$ 
	so that, for any reindexing $s$ and any $x \in X_1(\theta_s)$, we have
	\[
	L_\theta \leq \vol_{\theta_s}(x) \leq U_\theta
	\]
\end{lemma}

\begin{proof}
	Observe that, by \Cref{e:reindexings_have_equal_volumes}, we have $\vol_{\theta_s}(s(x)) = \vol_\theta(x)$ for every $x \in X_1(\pi)$.
	Note that $\vol_{\theta} (x) > 0$ for any $x \in X_1(\pi)$. 
	The lemma follows from the fact that $\closure{X_1(\theta)}$ is compact and contained in the interior of $X_1(\pi)$.
\end{proof}

We now consider the area-one locus in $X_1(\pi_s) \cross Y(\pi_s)$ with the measure $\nu^{(1)}_P$.
As in \Cref{s:projecting to widths} (immediately before \Cref{e:density}), the measure pushes forward (under projection to widths) to the measure $\phi_{X_1(\pi_s)}$.
By \Cref{e:density}, the measure $\phi_{X_1(\pi_s)}$ has density $\vol_{\pi_s}(x)$ with respect to $\nu_{X_1(\pi_s)}$.
It follows that when restricted to $\calS^{(1)}_P(\theta_s|\pi_s)$ the measure $\nu^{(1)}_P$ pushes forward to the measure $\vol_{\theta_s}(x) \cdot \nu_{X_1(\pi_s)}$. 

\begin{lemma}
	\label{l:measure-comparison}
	Suppose that $\zeta$ is a finite Rauzy--Veech sequence that starts at $\pi_s$. 
	Then we have:
	\[
	L_\theta \cdot  \nu_{X_1(\pi_s)} (X_1(\zeta)) \leq \nu^{(1)}_P (\calS^{(1)}_P(\theta_s| \zeta)) \leq U_\theta \cdot \nu_{X_1(\pi_s)} (X_1(\zeta))
	\]
\end{lemma}

\begin{proof}
	Note that $\calS^{(1)}_P (\theta_s | \zeta)$ is a subset of $\calS^{(1)}_P(\theta_s|\pi_s)$. 
	By definition of the pushforward measure we have the following:
	\[
	\int_{\calS^{(1)}_P (\theta_s | \zeta)} \diff \nu^{(1)}_P = \int_{X_1(\zeta)} \vol_{\theta_s} \diff \nu_{X_1(\pi_s)}
	\]
	The conclusion now follows from the bounds on $\vol_{\theta_s}$ in \Cref{l:bounds_vol_theta}.
\end{proof}

\subsection{The alphabet for the countable shift}
\label{s:countable_shift}
Recall the set $\Lambda = \Lambda_\theta$ defined at the beginning of \Cref{s:Pieces}.
Reindexing defines an equivalence relation on $\Lambda$. 
We denote the set of equivalence classes as $\Pi$.
Each equivalence class contains a unique representative that begins with $\theta$.
Hence, we may identify $\Pi$ with the set of Rauzy--Veech sequences which 
\begin{itemize}
	\item start with $\theta$,
	\item end with $\theta_t$ for some reindexing $t$, and
	\item do not contain $\theta^2_s$ for any reindexing $s$.
\end{itemize}
Suppose that $\xi$ and $\xi'$ are elements of $\Pi$. 
Suppose that $\xi$ ends with $\theta_t$. 
Let $\xi_t'$ be the reindexing of $\xi'$ by $t$. 
Then we define the concatenation $\xi \xi' \in \Pi^2$ to be the Rauzy-Veech sequence $\xi \xi'_t$.
We now recursively extend this to obtain finite words over $\Pi$.

As a piece of simplifying notation we define 
\[
\nu^{(1)}_P(\eta|\zeta) = \nu^{(1)}_P (\calS^{(1)}_P(\eta|\zeta))
\]

\begin{lemma}
	\label{l:almost-bernoulli}
	There exists a constant $M \geq 1$, depending only on $\theta$, with the following property. 
	Suppose that $u$ and $v$ are words in $\Pi^i$ and  $\Pi^j$ respectively. 
	Then we have the following:
	\[
	\frac{1}{M} 
	\leq \frac{\nu_P^{(1)}(\theta| uv )}{\nu^{(1)}_P(\theta|u) \cdot \nu_P^{(1)}(\theta|v)}
	\leq M
	\]
\end{lemma}

\begin{proof}
	We take $\nu = \nu_{X_1(\pi)}$. 
	Suppose that $u$ ends with $\theta_t$. 
	The concatenation $uv$ defines the Rauzy--Veech sequence $u v_t$.
	By \Cref{l:measure-comparison} applied to $u v_t$ we have:
	\[
	L_\theta \cdot \nu (X_1(u v_t)) \leq \nu_P^{(1)} (\theta| u v_t)  \leq U_\theta \cdot \nu (X_1(u v_t))
	\]
	We take $\nu_t = \nu_{X_1(\pi_t)}$.
	By \Cref{c:words_bounded_distortion,c:relative-probability} applied to the concatenated sequence $u v_t$ we get
	\[
	\frac{1}{m} \cdot  \nu(X_1(u)) \cdot \nu_t(X_1(v)) \leq \nu (X_1(u v_t)) \leq m \cdot \nu(X_1(u)) \cdot \nu_t(X_1(v)) 
	\]
	By \Cref{l:measure-comparison}, we have
	\[
	\frac{1}{U_\theta} \cdot \nu^{(1)}_P (\theta| u) \leq \nu(X_1(u))
	\]
	Similarly for $v_t$, we have
	\[
	\frac{1}{U_\theta} \cdot \nu^{(1)}_P(\theta_t| v_t) \leq \nu_t (X_1(v_t))
	\]
	Similarly, we have
	\[
	\nu(X_1(u))  \leq \frac{1}{L_\theta} \cdot \nu^{(1)}_P(\theta| u)
	\]
	and
	\[
	\nu_t (X_1(v_t))  \leq \frac{1}{L_\theta} \cdot \nu^{(1)}_P(\theta_t| v_t)
	\]
	Combining the above estimates, we get
	\[
	\frac{L_\theta}{m U_\theta^2} \cdot \nu^{(1)}_P (\theta| u) \cdot \nu^{(1)}_P (\theta| v) \leq \nu^{(1)}_P (\theta | u v_t) \leq  \frac{m U_\theta}{L_\theta^2} \cdot \nu^{(1)}_P (\theta| u) \cdot \nu^{(1)}_P (\theta| v)
	\]
	which concludes the proof by taking $M = \max\left\{ \frac{m U_\theta^2}{L_\theta}, \frac{m U_\theta}{L_\theta^2} \right\} \geq 1$.
\end{proof}

\subsection{Contraction}

The discussion above, restricted to abelian components, reproduces the work of~\cite[Sections~4.1 and~4.2]{Avi-Gou-Yoc}.
In this section and the next we give a (partial) dictionary between our language and theirs.

Recall (from \Cref{s:polytopal-measures-full}, immediately after \Cref{d:cones}) that the width and height cones do not contain a linear subspace. 
It follows that their Hilbert metric is well-defined. 
The pieces of the first return to $\bigsqcup_s \, \calS^{(1)}_P(\theta_s|\theta_s)$ depend only on the widths.
So it suffices to consider the Hilbert metric on $X_1(\pi_s)$ for each $\pi_s$.
For definitions and background, see~\cite[Section 4.2.1]{Avi-Gou-Yoc}.

By positivity of $E_{\theta_s}$, there is a positive constant $c_\theta < 1$ (that depends only on $\theta$) such that in the Hilbert metric on $X_1(\pi_s)$ we have that
\begin{itemize}
	\item the cone $X_1(\theta_s)$ has diameter bounded above by $c_\theta$ and
	\item for any $u$ in $\Pi^r$ (that ends in say $\theta_t$), the cone $X_1(u\theta_t)$ has diameter bounded above by $c_\theta^{r+1}$.  
\end{itemize}
We call the diameter bounds the \emph{uniform contraction property}.


\subsection{Summary of the coding}

We summarise our coding for the diagonal flow.
Again, for notational convenience, we work in parameter spaces. 
\begin{itemize}
	\item 
	Our section $\bigsqcup_s \, \calS^{(1)}_P(\theta_s|\theta_s)$ admits a (full measure) countable partition.  
	The pieces of this partition are the sets $\calS^{(1)}_P(\theta_r|\xi\theta_t)$ where $\xi$ in $\Lambda$ starts with $\theta_r$ and ends with $\theta_t$.
	\item The inverse of the map $\RV^\bdy_\xi$ restricted to $\calS^{(1)}_P(\theta_r|\xi \theta_t)$ gives a piece of the first return to $\bigsqcup_s \, \calS^{(1)}_P(\theta_s| \theta_s)$ (with image $\calS^{(1)}_P(\theta_r\xi |\theta_t)$).
	\item By the uniform contraction property, the pieces of first return give a \emph{uniformly expanding Markov map} (in the sense of \cite[Definition 2.2]{Avi-Gou-Yoc}).
	\item The maps $\RV^\bdy_\xi$ (where $\xi$ ranges over $\Lambda$) give a \emph{hyperbolic skew-product} (in the sense of \cite[Definition 2.5]{Avi-Gou-Yoc}) over the expanding Markov map. 
\end{itemize}

Taking the image under $q_P = \bigsqcup_\pi q_\pi$, we get the first return map for our section in differentials.  
We denote this by $F_\theta \from \calS^{(1)}(\theta|\theta) \to \calS^{(1)}(\theta|\theta)$. 
By \Cref{l:first-return}, \Cref{l:domain-and-range} and \Cref{l:first-returns-give-letters}, the pieces of $F_\theta$ are in bijection with $\Pi$.
(Note that this is only defined up to sets with $\nu^{(1)}$ measure zero).
Recall that $\Sigma = \Pi^{\ZZ}$ and $S$ is the shift on $\Sigma$.
Through the partition of $\calS^{(1)}(\theta| \theta)$, we obtain a conjugacy from $(\Sigma, \shift)$ to $(\calS^{(1)}(\theta| \theta), F_\theta)$.

\begin{remark}
	\label{r:dictionary}
	We pushforward (by the inverse of the bijection) the measure $\nu^{(1)}$ on $\calS^{(1)}(\theta|\theta)$ to obtain a measure $\mu$ on $\Sigma$.
	By \Cref{d:Veech_and_Rauzy--Veech} and \Cref{l:almost-bernoulli}, we deduce that $\mu$ has bounded distortion.
\end{remark}

\subsection{The first return time function} 
\label{s:first-return-time}

We now restrict to $\calS_P^{(1)}(\theta| \theta)$ (instead of $\bigsqcup_s \calS_P^{(1)} (\theta_s | \theta_s)$.
We now define a simpler alphabet. 
Let $\Pi'$ be the set of finite Rauzy--Veech sequences which 
\begin{itemize}
	\item 
	start with $\theta$,
	\item 
	end with $\theta$, and
	\item 
	do not contain $\theta^2$.
\end{itemize}
Following \Cref{l:first-return}, \Cref{l:domain-and-range}, and \Cref{l:first-returns-give-letters}, the pieces of the first return to $\calS_P^{(1)}(\theta| \theta)$ are given by the alphabet $\Pi'$.
By \Cref{c:words_bounded_distortion}, any sequence $\eta$ that lies in $\Pi'$ has $K$--bounded distortion where $K$ depends only on $\theta$.
Suppose that $(x,y)$ lies in $\calS^{(1)}_P (\theta | \eta \theta)$. 
Suppose that $\RV^\bdy_\eta (x',y') = (x,y)$. 
The return time for $(x,y)$, under diagonal flow, is given by
\begin{equation}
	\label{e:restricted_return_time}
	\rho' (x,y) = \log (w(E_\eta x'))
\end{equation}

Suppose that $\xi$ lies in $\Pi$. Suppose that $\xi$ ends with $\theta_t$ for some reindexing $t$.
We define $\Pi'_\xi$ to be those $\eta$ in $\Pi'$ that start with $\xi$.
The union $\bigsqcup_{\eta \in \Pi'_\xi} \calS^{(1)}_P (\theta | \eta \theta)$ has full $\nu^{(1)}_P$--measure in $\calS^{(1)}_P (\theta | \xi \theta_t)$.

Recall (from \Cref{d:first-return-time}) that we use $\rho$ to denote the first return time to $\bigsqcup_s \calS^{(1)}_P (\theta_s | \theta_s)$.
Suppose that $(x,y)$ lies in some $\calS^{(1)}_P (\theta | \eta \theta)$ for some $\eta$ in $\Pi'_\xi$.
It follows that $\rho(x, y) \leq \rho'(x,y)$.
We deduce that 
\[
\int_{\calS^{(1)}_P (\theta| \xi \theta) } \rho (x,y) \, d \nu^{(1)}_P \leq \sum_{\eta \in \Pi'_\xi} \int_{\calS^{(1)}_P (\theta | \eta \theta)} \rho'(x,y) \, d \nu^{(1)}_P
\]
So, to prove $\nu^{(1)}_P$--integrability of $\rho$, it suffices to prove $\nu^{(1)}_P$--integrability of $\rho'$. 
We do this in \Cref{s:measure-estimates} and \Cref{s:first-return-time-integrable}.

\subsection{The key measure estimate}
\label{s:measure-estimates}

We state the key estimate needed to show $\nu^{(1)}_P$--integrability of $\rho'$.
From this we will deduce the $\log$--integrability of the discrete cocycles in \Cref{s:integrability_discrete}.

We organise $\eta$ in $\Pi'$ by the size of $\| E_\eta \|_1$, on a logarithmic scale, as follows.
Given $S >1$ and $k \geq 1$, we denote
\[
\Pi'_k(S) = \{ \eta \in \Pi' \st  S^{k-1} \leqslant \| E_\eta \|_1 < S^k \}
\]

Applying the recurrence estimates of~\cite[Propositions~10.21 and~10.33]{Gad}, there exist constants $S > 1$, $C > 0$, and $0 < c < 1$ that depend only on $\theta$ such that, for any $k \geq 1$, we have the following:
\[    
\sum_{\eta \in \Pi'_k(S)} \nu_{X_1(\pi)} (X_1(\eta)) < C \cdot c^k
\]

By \Cref{l:bounds_vol_theta}, the density $\vol_\theta$ is bounded above and below on $X_1(\theta)$.
So, by \Cref{l:measure-comparison}, 
a similar estimate holds for $\nu^{(1)}_P$; namely
\begin{equation}
	\label{e:main-estimate}
	\sum_{\eta \in \Pi'_k(S)} \nu_P^{(1)} (\theta| \eta \theta) < U_\theta C \cdot c^k   
\end{equation}

\subsection{The first-return time is integrable} 
\label{s:first-return-time-integrable}

\begin{definition}
	Suppose that $(\Sigma, \mu)$ is a measure space. 
	Suppose that $\rho \from \Sigma \to \RR$ is non-negative and $\mu$--measurable. 
	We say that $\rho$ has \emph{exponential tails} if there exists $h > 0$ such that
	\[
	\int_\Sigma e^{h \rho}\, \diff \mu  < \infty \qedhere
	\]
\end{definition}

\begin{lemma}
	\label{l:first-return-time-integrable}
	The first return time $\rho'$ has exponential tails over $( \calS^{(1)}_P (\theta| \theta), \nu^{(1)}_P)$.
	In particular, $\rho'$ is integrable over $( \calS^{(1)}_P (\theta| \theta), \nu^{(1)}_P)$.
\end{lemma}

\begin{proof}
	
	Let $U_\theta$, $S$, $C$, $c$ be the constants in \Cref{e:main-estimate}.
	Suppose that $\eta$ is in $\Pi'_k(S)$. 
	Suppose that $(x, y)$ lies in $\calS_P^{(1)}(\theta|\eta\theta)$.
	Let $(x', y') \in \calS_P^{(1)}(\theta\eta|\theta)$ be the first-return of $(x, y)$.
	Then
	\[
	\rho'(x,y) =  \log (w( E_\eta x')) \leq \log \| E_\eta \|_1 < \log S^k = k \log S
	\]
	Given $h > 0$, the recurrence estimate in \Cref{e:main-estimate} shows that 
	\[
	\int_{\calS^{(1)}_P(\theta| \theta)} e^{h \rho'} \, \diff \nu^{(1)}_P = \sum_{k = 1}^\infty \, \sum_{\eta \in \Pi'_k(S)} \int_{\calS^{(1)}_P(\theta| \eta\theta)} e^{h \rho'} \, \diff \nu^{(1)}_P < \sum_{k=1}^\infty U_\theta C (S^{ h} c)^k
	\]
	The series on the right converges exactly when $S^h c < 1$.
\end{proof}

\section{Cocycles} 

\subsection{Hodge bundles}
\label{s:Hodge-bundle}

Suppose that $\calC$ is a stratum component.
Fix a differential $q$ in $\calC$; 
let $Z(q)$ be the set of singularities.
Fix also a surface $S$, a finite set $Z(S) \subset S$, and a homeomorphism $f_q \from (S, Z(S)) \to (q, Z(q))$.
Let $\calM = \calM(S, Z(S))$ be the moduli space of Riemann surfaces marked at $|Z(S)|$--many unlabelled points. 
Let $\calT = \calT(S, Z(S))$ be the corresponding Teichmüller space:
that is, points of $\calT$ are (equivalence classes of) Riemann surfaces $(X, Z(X))$ equipped with a marking homeomorphism $f_X \from (S, Z(S)) \to (X, Z(X))$.
Recall that $\calT$ is the universal (orbifold) cover of $\calM$.  
We use $\Mod(S, Z(S))$ to denote the deck group (acting on the right).
Thought of as a mapping class group, $\Mod(S, Z(S))$ acts transitively on the points of $Z(S)$.

The \emph{(relative) Hodge bundle} $\cover{\upH}$ over $\calT$ is the vector bundle which, at a point $[X, Z(X), f_X] \in \calT$, has fibre $\upH^1(X, Z(X); \RR)$. 
The markings give a trivialisation of $\cover{\upH}$:
namely, at $[X, Z(X), f_X] \in \calT$, the map $f_X \colon (S, Z(S)) \to (X, Z(X))$ induces an isomorphism 
\[
f_X^* \from \upH^1(X, Z(X); \RR) \to \upH^1(S, Z(S); \RR)
\]  
This, in turn, induces an isomorphism of vector bundles $\cover{\upH} \to \calT \times \upH^1(S, Z(S); \RR)$.
Note that $\Mod(S, Z(S))$ acts on the trivial bundle (again on the right), and thus also on $\cover{\upH}$. 
The quotient $\upH$ is an (orbifold) vector bundle over $\calM$.
We call $\upH$ the \emph{(relative) Hodge bundle} over $\calM$.

Suppose now that $\calL$ is an orbifold equipped with a map to $\calM$. 
We pullback $\upH$ to obtain an orbifold vector bundle $\upH_\calL$. 
In a small abuse of notation, we take $\upH_\calC$ to be the orbifold vector bundle obtained when $\calL = \calC_\root$.
We call $\upH_{\calC}$ the \emph{(relative) Hodge bundle} over $\calC_\root$.

\begin{lemma}
	\label{l:VectorBundle}
	The orbifold vector bundle $\upH_\calC$ is a vector bundle.
\end{lemma}

\begin{proof}
	We trace through the construction. 
	Let $\cover{\calC}_\root$ be the cover of $\calC_\root$ obtained by equipping differentials $((q, v), Z(q))$ with marking homeomorphisms $f_q$ and then taking equivalence classes.
	Thus there is a natural map from $\cover{\calC}_\root$ to $\calT$ that takes a marked differential $[(q, v), Z(q), f_q]$ to a marked Riemann surface $[X, Z(X), f_X]$.
	We form the \emph{(relative) Hodge bundle} $\cover{\upH}_\calC$ over $\cover{\calC}_\root$ by pulling back $\cover{\upH} \to \calT$. 
	Thus the fibre over $[(q, v), Z(q), f_q]$ is $\upH^1(X, Z(X); \RR)$.
	As before, we trivialise using the markings, and then quotient by the action of the mapping class group $\Mod(S, Z(S))$.
	Finally, the mapping class group acts without fixed points on $\cover{\calC}_\root$.  
	So $\upH_\calC$ is a vector bundle, as desired.
\end{proof}

\subsection{Orientation covers}
\label{s:orientation-covers}

Suppose that $\calC$ is a stratum component of quadratic differentials.
Suppose that $q$ lies in $\calC$. 
Then $q$ has a canonical \emph{orientation cover}, say $p$.
(The cover $p$ is branched over every pole, and over every zero of odd order, of $q$.)
We take $Z(p)$ to be the full preimage of $Z(q)$. 
(Note that this differs from the discussion in~\cite[Section~7.4.2]{Avi-Res}.)
Note that $p$ is an abelian differential.
Let $\iota_p$ be the deck transformation of the cover $p \to q$; 
note that $\iota_p$ is an involution.
We use $\pm \omega_p$ to denote the two cohomology classes whose periods give $p$.
For further details, see~\cite[Construction, page 519]{Lan04b}.

Fix a surface $R$, a finite set $Z(R) \subset R$, and a homeomorphism $f_p \from (R, Z(R)) \to (p, Z(p))$.
Let $\calT' = \calT(R, Z(R))$ be the corresponding Teichmüller space and let $\calM' = \calM(R, Z(R))$.
Let $\iota \from R \to R$ be the pullback of the involution $\iota_p$.
Note that the isotopy class of $\iota$ is independent of $p$; 
thus $\iota$ gives a well-defined mapping class $[\iota]$ in $\Mod(R, Z(R))$.

We now define $\calB$ to be the stratum component of abelian differentials that contains $p$.
It is unclear how to realise $\calC$ as (an isomorphic copy of) a locus in $\calB$.  
To address this we resort to \emph{bi-rooted} differentials. 

We define $\calB_\biroot$ to be the following stratum of differentials:
a point $(p, u, u')$ of $\calB_\biroot$ is an abelian differential $p$ in $\calB$ equipped with (distinct, unlabelled) roots $u$ and $u'$.
As usual the roots point along some horizontal separatrix. 
Furthermore, we require that $u$ points in the positive horizontal direction if and only if $u'$ points in the negative horizontal direction.

\begin{lemma}
	\label{l:Manifold}
	The stratum $\calB_\biroot$ is a manifold.
\end{lemma}

\begin{proof}
	Orbifold points in strata of differentials only arise from translation symmetries.
\end{proof}


The map from $\calC_\root$ to $\calB_\biroot$, taking $(q, v)$ to its orientation cover, is an isomorphism onto its image: see~\cite[Section~7.4.3]{Avi-Res}.
We use $\calC'_\root$ to denote the image.

Let $\upH'$ be the (relative) Hodge bundle over $\calM'$.
Note that $\calC'_\root$ has a forgetful map to $\calM'$.  
Let $\upH'_\calC$ be the pullback of $\upH'$.  
We again call $\upH'_\calC$ the \emph{(relative) Hodge bundle} over $\calC'_\root$.
As in \Cref{l:VectorBundle}, we have the following.

\begin{lemma}
	\label{l:VectorBundleTwo}
	The bundle $\upH'_\calC$ is a vector bundle. \qed
\end{lemma}

Recall that the fibre of $\upH'_\calC$ over $(p, u, u')$ is $\upH^1(p, Z(p); \RR)$ which is isomorphic to $\upH^1(R, Z(R); \RR)$.  
Since $\iota_p$ is an involution, so is $\iota_p^*$ acting on cohomology.
It thus splits each fibre of $\upH'_\calC$ into $+1$ and $-1$ eigenspaces: we call these the \emph{plus} and \emph{minus fibres}.
Since the maps $\iota_p$, for $p \in \calC'_\root$, induce a single isotopy class $[\iota]$ in $\Mod(R, Z(R))$, the plus and minus fibres induce a splitting of the vector bundle $\upH'_\calC$.
We define the \emph{plus piece} $\upH'_+$ and the \emph{minus piece} $\upH'_-$ to be the vector bundles induced by the plus and minus fibres respectively.

\subsection{Symplectic cocycles}
\label{s:symplectic_cocycles}
Suppose that $\calC$ is a space with a flow $g_t$.
A \emph{symplectic cocycle} for $g_t$ 
(acting on $\RR^{2m}$) is a map $\upC \from \RR \times \calC \mapsto \Sp(2m, \RR)$ such that for all $q \in \calC$ and for all $s, t \in \RR$ we have the following:
\begin{itemize} 
	\item $\upC(0, q) = \Id$ and 
	\item $\upC(t + s, q) = \upC(t, g_s q) \, \upC(s, q)$.
\end{itemize} 

The cocycle $\upC$ is \emph{$\log$--integrable} (with respect to a finite flow-invariant measure) if for all $t$ the functions $q \mapsto \log \| \upC(t, q) \|$ and $q \mapsto \log \| \upC(t, q)^{-1} \|$ are integrable.
We will consider cocycles taking values in $\Sp(2m, \ZZ)$. 
For these, the function $q \mapsto \log \| \upC(t, q) \|^{-1}$ is bounded above and its integrability is implied by the finiteness of the invariant measure.
So it will suffice to show that $q \mapsto \log \| \upC(t, q) \|$ is integrable.

If the flow-invariant measure is ergodic, then Oseledets theorem \cites{Ose}[Chapter 4]{Bar-Pes} applies.
It states that for almost every $q$ in $\calC$ and every non-zero vector $v \in \RR^{2m}$ the limit
\[
\lim_{t \to \infty} \frac{1}{t} \log \frac{\| \upC(t, q) \cdot v \|_1}{ \| v \|_1}
\]
exists and depends only on $v$ (and not on $q$).
Moreover, the limit can achieve up to $2m$ values, which, by symplecticity, have the form $\lambda_1 \geq \dots \geq \lambda_{m} \geq -\lambda_m \geq \dots \geq -\lambda_1$.
These numbers are known as the \emph{Lyapunov exponents}; 
the set of numbers is the \emph{Lyapunov spectrum}.
The spectrum is \emph{simple} if all inequalities are strict.

\subsection{Konstevich--Zorich cocycles}
\label{s:KZ_cocycles}

By Lefschetz duality, the Hodge bundles $\upH_\calC$ and $\upH'_\calC$ can also be defined with fibres in $\upH_1(S - Z(S); \RR)$ or $\upH_1(R - Z(R); \RR)$. 
By a standard procedure of filling in the punctures, we also obtain vector bundles with fibres in $\upH_1(S; \RR)$ or $\upH_1(R; \RR)$. By a slight abuse of terminology and notation, we also refer to them as Hodge bundles and denote them by $\upH_\calC$ and $\upH'_\calC$. 
In the sequel, we will only use this notation to refer to these versions with fibres in absolute homology of the (unpunctured) surfaces.

The (lift of the) diagonal flow acts on $\cover{\calC}_\root \cross \upH_1 (S, Z(S); \RR)$ by $g_t ((q,v), c) = (g_t (q,v), c)$.
Quotienting by the action of $\Mod(S, Z(S))$ on both factors, we obtain the \emph{Kontsevich--Zorich cocycle} $\upC^\KZ$ on $\calC_\root$.

Suppose now that $\calC_\root$ is quadratic.  
As in \Cref{s:orientation-covers} we have an isomorphic copy of $\calC_\root$, called $\calC'_\root$ lying in $\calB_\biroot$.  
The pullback of $\upH_\calB$ to $\calC_\root$ is (isomorphic to) $\upH'_\calC$.  
As previously discussed, the pullback splits as a direct sum $\upH'_+ \oplus \upH'_-$ called the plus and minus pieces.  
The plus piece $\upH'_+$ is isomorphic to $\upH_\calC$ while the minus piece $\upH'_-$ is isomorphic to the tangent bundle $\upT\calC_\root$. 
See~\cite[Sections~1 and~2]{Tre}.
The Kontsevich--Zorich cocycle $\upB^\KZ$ on $\calB_\biroot$ pulls back and splits to give 
\begin{itemize}
	\item
	the \emph{plus cocycle} $\upC^\KZ_+$, a copy of $\upC^\KZ$, and
	\item 
	the \emph{minus cocycle} $\upC^\KZ_-$
\end{itemize}
on the plus and minus pieces, respectively.
Again, see~\cite[Sections~1 and~2]{Tre}.

The point of \Cref{s:orientation-covers} is to give a representation of $\upH'_-$ where the minus cocycle $\upC^\KZ_-$ can be computed in terms of a basis of homology classes lying in $\upH_1(R, \ZZ)$.
We do this in \Cref{s:SpanningMinus}.

\subsection{Log-integrability of the continuous cocycles}
\label{s:integrability_continuous}
By \cite[Remark 31]{For-Mat} (which summarises consequences of \cite[Lemma 2.1]{For}), the continuous Kontsevich--Zorich cocycle (over any $SL(2,\RR)$--orbit) is $\log$--integrable.
In particular, this implies the $\log$--integrability of $\upC^\KZ_+$ and $\upC^\KZ_-$.



\subsection{Spanning sets for the plus piece}
\label{s:SpanningPlus}

For each irreducible generalised permutation, there is a preferred set of cycles $\{c_\alpha\}_{\alpha \in \calA}$ on $S - Z$ \cite[Section 4.1]{Gut17}. 
These give a basis for $\upH_1(S - Z; \RR)$ and are (Lefschetz) dual to the rectilinear arcs in \Cref{d:rectilinear}, which belong to $\upH_1(S, Z(S); \RR)$.
Lifting appropriately these also give a basis for $\upH_1^+(R, Z(R); \RR)$.

As temporary notation, we take $\upP = \upC^{\KZ}_+$ to be the plus cocycle.
Suppose that $\pi$ is a permutation. 
Suppose that $\alpha$ and $\beta$ (in $\calA$) are the rightmost top and bottom letters in $\pi$. 
Breaking symmetry, suppose that $\xi \from \pi \to \pi'$ is the Rauzy--Veech move in which $\alpha$ wins. 
Suppose that the cycles $c_\alpha$ and $c_\beta$ (for $\pi$) have non-zero algebraic intersection.
With respect to the plus spanning sets for $\pi$ and $\pi'$, we have 
\[
\upP_\xi = \Id + E(\alpha, \beta)
\]
(Here $E(\alpha, \beta)$ is the matrix whose entries are zero except the $(\alpha, \beta)$--entry which is one.)
Similarly, if $c_\alpha$ and $c_\beta$ have zero algebraic intersection, then 
\[
\upP_\xi = \Id - 2E( \alpha, \alpha) - E(\alpha, \beta)
\]

Recall that the linear change in width parameters in a finite Rauzy--Veech sequence $\delta$ is expressed by the Rauzy matrix $E_\delta$ (see \Cref{e:X-cones}). 

\begin{lemma}
	\label{l:RV_dominates_plus}
	Suppose that $\upP = \upC^{\KZ}_+$ is the plus co-cycle. 
	Then, for any finite Rauzy--Veech sequence $\delta$, we have the following:
	\[
	\| \upP_\delta \|_1 \leq \| E_\delta \|_1
	\]
\end{lemma}

\begin{proof}
	Suppose that $C$ is a matrix with real entries.
	We define $|C|$ to be the (non-negative) matrix whose coefficients are the absolute values of the coefficients of $C$. 
	
	For any Rauzy--Veech move $\delta$, the plus cocycle satisfies $| \upP_\delta | = E_\delta $.
	It follows that for any finite Rauzy--Veech sequence $\delta = \delta_1 \delta_2 \dotsc \delta_k$ 
	\begin{align*}
		\| \upP_\delta \|_1 &= \|  \upP_{\delta_1} \upP_{\delta_2} \dotsc \upP_{\delta_k} \|_1 \\
		&\leq \| |\upP_{\delta_1}| \cdot | \upP_{\delta_2} | \dotsc  | \upP_{\delta_k}| \|_1 \\ 
		&= \| E_{\delta_1} E_{\delta_2} \dotsc E_{\delta_k} \|_1 \\ 
		&=  \| E_\delta \|_1 \qedhere
	\end{align*}
\end{proof}

\subsection{Spanning sets for the minus piece}
\label{s:SpanningMinus}

We now use zippered rectangles to give an explicit spanning set for the minus piece.
Suppose that $\pi$ is an irreducible labelled generalised permutation. 
Suppose that $j = \sigma(i)$ and $i < j$. 
Let $g_i$ and $g'_i$ be the two (oriented) lifts to $\cover{q}$ of the rectilinear arc $\gamma_i$ (as in \Cref{d:rectilinear}).
By construction, the involution $\iota$ exchanges $g_i$ and $g'_i$ preserving orientation.
Let $\alpha = \pi(i)$.
We now define the relative homology class
\[
c_\alpha = [g_i] - [g'_i] \in \upH_1(\cover{S}, \cover{Z}; \RR)
\]
Applying $\iota$ we find that $c_\alpha$ lies in the minus piece $\upH_1^-(\cover{S}, \cover{Z}; \RR)$ of homology. 

See \Cref{f:minus-homology} for an illustration of these cycles.

\begin{figure}
	\centering
	\begin{subfigure}[b]{\textwidth}
		\centering
		\includegraphics[scale=0.80]{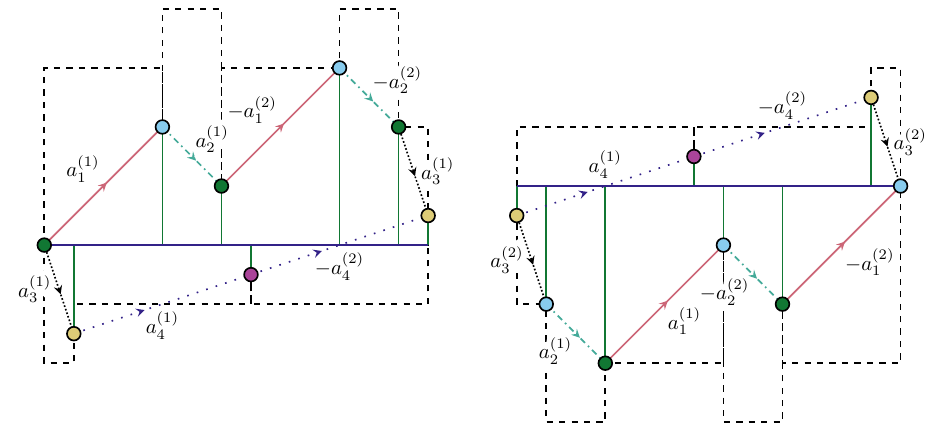}
		\caption{Original double cover.}
	\end{subfigure}
	
	\begin{subfigure}[b]{\textwidth}
		\centering
		\includegraphics[scale=0.80]{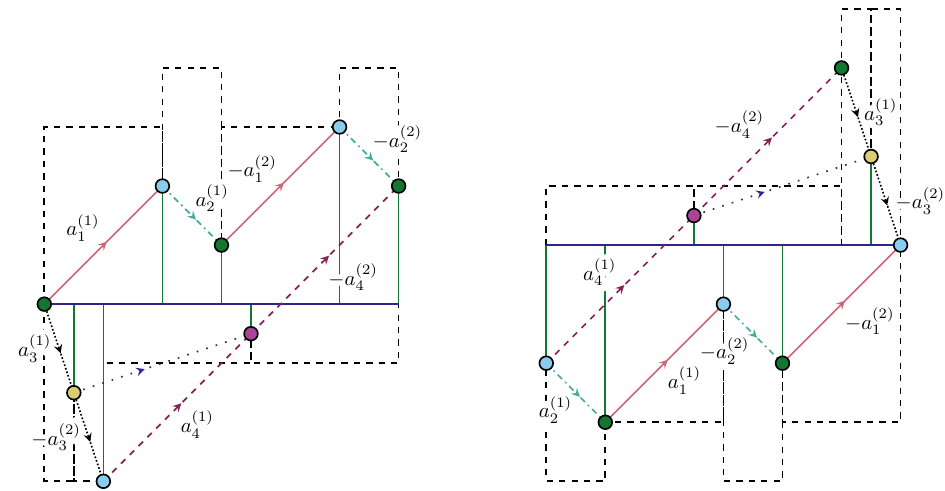}
		\caption{Double cover after a Rauzy move.}
	\end{subfigure}
	\caption{Example of the spanning set for the minus piece rendering the linear transformations coming from Rauzy moves equal to the Rauzy--Veech matrices. The original permutation is $\big(\begin{smallmatrix} 1 & 2 & 1 & 2 & 3 \\ & 3 & 4 & 4 \end{smallmatrix}\big)$ representing the stratum $\calQ(2,-1,-1)$, which becomes $\big(\begin{smallmatrix} 1 & 2 & 1 & 2 \\ 3 & 3 & 4 & 4 \end{smallmatrix}\big)$ after one bottom Rauzy move. In this case, the cycles in the spanning set can be tightened to saddle connections, so they are drawn in this manner. The general case is similar.}
	\label{f:minus-homology}
\end{figure}

By direct computation we obtain that in a Rauzy--Veech move, the matrix (with respect to initial and final minus spanning sets) for the minus cocyle $\upC^\KZ_-$ is exactly the Rauzy matrix.

\subsection{Log-integrability of the discrete cocycles}
\label{s:integrability_discrete}

Suppose that $(\calS, \nu)$ is a measure space with a measure preserving transformation $T: \calS \to \calS$.
A \emph{discrete symplectic cocycle} for $T$ 
(acting on $\RR^{2m}$) is a map $\upD \from \ZZ \times \calS \mapsto \Sp(2m, \ZZ)$ such that for all $q \in \calS$ and for all $s, t \in \ZZ$ we have the following:
\begin{itemize} 
	\item $\upD(0, q) = \Id$ and 
	\item $\upD(t + s, q) = \upD(t, T^s q) \, \upD(s, q)$.
\end{itemize} 

The definition of $\log$--integrability of $\upD$, and of the Lyapunov exponents for $\upD$, is similar to that in \Cref{s:symplectic_cocycles}. 

Recall our construction of the Poincaré section $\calS_P = \bigcup_s \calS^{(1)}_P(\theta_s| \theta_s)$.
Recall that the pieces of the first return are given by the countable alphabet $\Pi$ in \Cref{s:countable_shift}.
Suppose that $(x,y)$ lies in $\calS_P$ and in fact lies in $\calS_P (\theta_r |\xi  \theta_t)$ (as in \Cref{l:domain-and-range}).
We define $\upD(1, (x,y)) = E_\xi$.
We extend to obtain the discrete Rauzy cocycle $\upD^{\R} \from \ZZ \times \calS \mapsto \Sp(2m, \ZZ)$.

\begin{lemma}
	\label{l:RV_integrable}
	The Rauzy cocycle $\upD^{\R}$ is $\log$--integrable (with respect to $\nu^{(1)}_P$).
\end{lemma}

\begin{proof}
	Recall that the alphabet $\Pi'$ (from \Cref{s:first-return-time}) gives the first returns to the smaller section $\calS^{(1)}_P (\theta | \theta)$. 
	
	Suppose that $\xi$ is contained in $\Pi$.
	Recall that $\Pi'_\xi$ (also from \Cref{s:first-return-time}) is the set of extensions $\eta$ of $\xi$ that lie in $\Pi'$.
	Since $\| E_\xi \|_1 \leqslant \| E_\eta \|_1$, it suffices to prove $\log$--integrability (over $\calS^{(1)}_P(\theta| \theta)$) of the (discrete) cocycle defined by 
	\[
	\upD'(1, (x,y)) = E_\eta \qquad \mbox{for $(x, y)$ in $\calS^{(1)}_P(\theta| \eta \theta)$}
	\]
	
	Since $\eta$ is $k_\theta$--balanced, we have
	\[
	\|E_\eta \|_1 \leq k_\theta \cdot w(E_\eta x') 
	\]
	for any $x'$ where there is some $y'$ with $(x', y')$ in $\calS^{(1)}_P (\theta \eta | \theta)$.
	Recall that the first return time function $\rho'(x, y)$, for the section $\calS^{(1)}_P(\theta | \theta)$, satisfies \Cref{e:restricted_return_time}. 
	Hence, $\log \| E_\eta \|_1 \leq \rho'(x, y) + \log  k_\theta$.
	By \Cref{l:first-return-time-integrable} and \Cref{l:section-finite-volume}, we deduce that $\upD'$ is $\log$--integrable.
\end{proof}

We define the \emph{discrete cocycles} $\upD^{\KZ}_{\pm}$ by restricting $\upC^{\KZ}_{\pm}$ to $\Pi$.
By \Cref{l:RV_dominates_plus}, we deduce that $\| \upD^{\KZ}_+ \|_1 \leq \| \upD^{\R} \|_1$. 
Since $\upC^{\KZ}_{-, \xi} = E_\xi$ for all finite Rauzy--Veech sequences $\xi$, we deduce that $\| \upD^{\KZ}_- \|_1 = \| \upD^{\R} \|_1$.
Thus \Cref{l:RV_integrable} proves the following.

\begin{corollary}
	\label{c:discrete_integrable}
	The discrete cocycles $\upD^\KZ_{\pm}$ are $\log$--integrable.  \qed
\end{corollary}

\subsection{The discrete and continuous Lyapunov spectra}

Before discussing the Lyapunov spectra, we restrict from relative to absolute homology.
We do this following \cite[Section 3.3, page 34]{For-Mat}.

\begin{remark}
	\label{r:absolute_integrable}
	We note that $\upH_1(S; \RR)$ is a subgroup of $\upH_1(S, Z(S); \RR)$.  
	All cocycles preserve the resulting subbundle of the Hodge bundle.
	Moreover, the (images of the) cocyles are \emph{bounded} on the quotient bundle coming from $\upH_1(S, Z(S); \RR)/\upH_1(S; \RR)$. 
	Thus the Lyapunov exponents contributed by the relative cycles in $\upH_1(S, Z(S); \RR)$ vanish.
	
	Accordingly, we henceforth only consider cocycles restricted to the absolute homology. 
	Since they are obtained by restriction, they are again $\log$--integrable (by \Cref{s:integrability_continuous} and \Cref{c:discrete_integrable}).
	In a small abuse of notation we use the same notation for the restricted cocycles.
\end{remark}

We now apply Oseledets' theorem.
That is, for $\nu^{(1)}_P$--almost every $(x, y)$ in the Poincaré section $\calS = \bigcup_s \calS^{(1)}_P (\theta_s | \theta_s)$ and for every $v$ in the absolute part of the plus or minus piece we have that 
\[
\lim_{n \to \infty} \frac{1}{n} \log \frac{\| \upD^\KZ_\pm (n, (x,y)) \cdot v \|_1}{\| v \|_1}
\]
exists. 
Thus, the Lyapunov spectra for both of $\upD^\KZ_\pm$ exist.

Recall that $\rho$ is the first return time function for our section. 
We define 
\[
\rho_\av = \int_{\calS } \rho(x,y) \, \diff \nu^{(1)}_P 
\]
Suppose that $(x,y)$ lies in $\calS$ and returns infinitely often.
We denote the time between the $i$--th and the $(i-1)$--th return by $\rho_i(x,y)$.
By \Cref{t:all-ergodic}, it follows that for $\nu^{(1)}_P$--almost every $(x,y)$ in $\calS$ we have
\[
\lim_{n \to \infty} \frac{1}{n} \sum_{i=0}^{n-1} \rho_i(x,y) = \rho_\av 
\]

It follows that each Lyapunov exponent for the discrete cocycles $\upD^{\KZ}_\pm$ is $\rho_\av$ times the corresponding exponent for the continuous cocycles $\upC^{\KZ}_\pm$.
In particular, simplicity for the discrete cocyles implies simplicity for the continuous ones.

\subsection{Simplicity criterion}

We now state a criterion for Lyapunov simplicity of discrete cocycles. 
We state a specific version (for symplectic cocycles) that follows from combining \cite{Ben97} with \cites{Avi-Via07a}[Theorem 7.1]{Avi-Via07b}.
For further details, see \cite[Footnote 2 and Appendix A]{Avi-Mat-Yoc}.

Suppose that $\calC_\root$ is a stratum component of rooted differentials. 
Suppose that $\calS$ is our Poincaré section.
We consider almost-flow loops that arise from flow segments that start and end in $\calS$.

Suppose that $\gamma_1$ and $\gamma_2$ are almost-flow loops in $\calC_\root$.
The product $\gamma_1 \gamma_2$ is also realised by an almost-flow loop. 
Thus, almost-flow loops form a monoid under concatenations.
We call this the \emph{flow monoid}.
Suppose that $\upD$ is a locally constant integral symplectic cocycle over $\calS$.
By evaluating $\upD$ over each almost-flow loop, we obtain a representation of the flow monoid into the integral symplectic group.
We call this the \emph{symplectic flow monoid}.

\begin{criterion}\label{c:simplicity}
	Suppose that $\upD$ is a locally constant integral symplectic cocycle over $\calS$ that is $\log$--integrable with respect $\nu^{(1)}_P$.
	If the symplectic flow monoid is Zariski dense (in the symplectic group), then the Lyapunov spectrum is simple.
\end{criterion}

\section{Monodromy groups}
\label{s:RV}

Suppose that $\pi$ is a (generalised) permutation.
Suppose that $\calD_\lab$ is a labelled Rauzy diagram containing $\pi$ as its basepoint.
Suppose that $\calD_\root$ is the unlabelled Rauzy diagram containing $[\pi]$ as its basepoint.
Suppose that $\calC_\root$ and $\calC$ are the corresponding strata of (rooted) abelian or quadratic differentials; here we use $q^\pi$ as the basepoint (see \Cref{s:Rauzy_diagrams_flow_group}).
We assume that $\calC$ is non-empty.
Suppose that $\calM(S, Z(S))$ and $\calM(S)$ are the corresponding moduli spaces; here we use the conformal structure of $q^\pi$ as the basepoint.
Finally we take $\calA(S)$ to be the moduli space of \emph{principally polarised abelian varieties}, based at the image of $q^\pi$.
Thus we have natural maps of pointed spaces:
\[
\calD_\lab \to \calD_\root \to \calC_\root \to \calC \to \calM(S, Z(S)) \to \calM(S) \to \calA(S)
\]
This gives a sequence of (orbifold) fundamental groups.  
Pulling back, we obtain representations of these fundamental groups into the relative mapping class group $\Mod_Z(S) = \Mod(S, Z(S))$, the absolute mapping class group $\Mod(S)$, and the symplectic group $\Sp(S) = \Sp(2g(S), \ZZ)$. 
We arrange these groups in the following commutative diagram.
\begin{figure}[H]
	\[
	\begin{tikzcd}[column sep=small]
		\uppi_1(\calD_\lab) \arrow[equal]{r} \arrow[hook,"\text{f.i.}"]{d} & \uppi_1(\calD_\lab) \arrow[two heads]{r} \arrow[hook,"\text{f.i.}"]{d} & \Mod_Z(\calD_\lab) \arrow[two heads]{r} \arrow[hook,"\text{f.i.}"]{d} & \Mod(\calD_\lab) \arrow[two heads]{r} \arrow[hook,"\text{f.i.}"]{d} & \Sp(\calD_\lab) \arrow[hook,"\text{f.i.}"]{d} \\
		\uppi_1(\calD_\root) \arrow[two heads]{r} & \uppi_1(\calC_\root) \arrow[two heads]{r} \arrow[hook,"\text{f.i.}"]{d} & \Mod_Z(\calC_\root) \arrow[two heads]{r} \arrow[hook,"\text{f.i.}"]{d} & \Mod(\calC_\root) \arrow[two heads]{r} \arrow[hook,"\text{f.i.}"]{d} & \Sp(\calC_\root) \arrow[hook,"\text{f.i.}"]{d} \\
		& \uppi_1^\orb(\calC) \arrow[two heads]{r} & \Mod_Z(\calC) \arrow[two heads]{r} \arrow[hook]{d} & \Mod(\calC) \arrow[two heads]{r} \arrow[hook]{d} & \Sp(\calC) \arrow[hook]{d} \\
		& & \Mod_Z(S) \arrow[two heads]{r} & \Mod(S) \arrow[two heads]{r} & \Sp(S)
	\end{tikzcd}
	\]
	\caption{Relations between different groups in the abelian case.}
	\label{fig:commutative_diagram_abelian}
\end{figure}
Here ``f.i.'' stands for ``finite index''.
Note that the surjectivity of the homomorphism 
\[
\uppi_1(\calD_\root, [\pi]) \to \uppi_1(\calC_\root, q^\pi)
\]
is given by \Cref{t:pi1-surjective}.
We call the groups in the first row \emph{Rauzy--Veech} groups and those in the second and third rows \emph{monodromy groups}.
The groups in the third and fourth columns are the relative and absolute \emph{modular groups}; 
those in the fifth column are the \emph{symplectic groups}.

The abelian case of the following corollary provides a partial answer (that is, up to finite index) to a question by Yoccoz \cite[Remark in Section 9.3]{Yoc}.

\begin{corollary}
	\label{c:RV_finite_index_in_monodromy}
	Suppose that $\calC$ is a stratum component of abelian or quadratic differentials.
	Suppose that $\calD_\lab$ is a labelled Rauzy--Veech diagram associated to $\calC$.
	Then the relative modular Rauzy--Veech group $\Mod_Z(\calD_\lab)$ has finite index in relative modular monodromy group $\Mod_Z(\calC)$. \qed
\end{corollary}

In the quadratic case, there is also the following sequence of pointed spaces:
\[
\calD_\lab \to \calD_\root \to \calC'_\root \to \Mod_Z(R) \to \Mod(R) \to \calA(R)
\]
Recall that that the Hodge bundle over $\calC'_\root$ splits into the plus and minus pieces.
Again we may take fundamental groups and pull-back to obtain two commutative diagrams. 
\begin{figure}[H]
	\[
	\begin{tikzcd}[column sep=small]
		\uppi_1(\calD_\lab) \arrow[equal]{r} \arrow[hook,"\text{f.i.}"]{d} & \uppi_1(\calD_\lab) \arrow[two heads]{r} \arrow[hook,"\text{f.i.}"]{d} & \Mod_Z(\calD_\lab) \arrow[two heads]{r} \arrow[hook,"\text{f.i.}"]{d} & \Mod(\calD_\lab) \arrow[two heads]{r} \arrow[hook,"\text{f.i.}"]{d} & \Sp_{\pm}(\calD_\lab) \arrow[hook,"\text{f.i.}"]{d} \\
		\uppi_1(\calD_\root) \arrow[two heads]{r} & \uppi_1(\calC'_\root) \arrow[two heads]{r} & \Mod_Z(\calC'_\root) \arrow[two heads]{r} \arrow[hook,"\text{f.i.}"]{d} & \Mod(\calC'_\root) \arrow[two heads]{r} \arrow[hook,"\text{f.i.}"]{d} & \Sp_{\pm}(\calC'_\root) \arrow[hook,"\text{f.i.}"]{d} \\
		& & \Mod_Z(R) \arrow[two heads]{r} & \Mod(R) \arrow[two heads]{r} & \Sp_{\pm}(R)
	\end{tikzcd}
	\]
	\caption{Relations between different groups in the quadratic case.}
	\label{fig:commutative_diagram_quadratic}
\end{figure}
Note that $\calC'_\root$ is canonically isomorphic to $\calC_\root$.
Also, here there are only three rows, instead of four, because $\calC$ lives in the cotangent bundle to the moduli space $\calM_Z(S)$ (not $\calM_Z(R)$). 


We now combine \Cref{c:RV_finite_index_in_monodromy} with the work of Calderon and Calderon--Salter~\cite{Cal, Cal-Sal21a, Cal-Sal21b, Cal-Sal23}.
We thus obtain a description (up to finite index) of the modular Rauzy--Veech groups $\Mod_Z(\calD_\lab)$ and also the Rauzy--Veech groups lying in $\Aut(\upH_1(S, Z; \ZZ)$ for non-hyperelliptic abelian components in genus at least five.

\begin{corollary}
	Suppose that $\calC$ is a non-hyperelliptic, abelian stratum component. 
	Suppose that $\calM(S, Z)$ is the associated moduli space.
	Suppose that $S$ has genus at least five. 
	Suppose that $\phi$ is the framing of the unit tangent bundle $T^1(S - Z)$ induced by the horizontal vector field of $q \in \calC$. 
	Let $\Mod(S, Z)[\phi]$ denote the stabiliser of $\phi$ in $\Mod(S, Z)$.
	Then we have:
	\begin{enumerate}
		\item 
		The modular Rauzy--Veech group $\Mod_Z(\calD_\lab)$ is a finite-index subgroup of $\Mod(S, Z)[\phi]$.
		\item 
		The Rauzy--Veech group in $\PAut(\upH_1(S, Z; \ZZ))$ is a finite-index subgroup of the kernel of the crossed homomorphism 
		\[
		\Theta_\phi \from \PAut(\upH_1(S, Z; \ZZ)) \to \upH^1(S; \ZZ/2\ZZ)
		\]
		defined by Calderon--Salter~\cite[Section 4]{Cal-Sal21b}. \qed
	\end{enumerate}
\end{corollary}

This classification was already known for hyperelliptic components, and in this case the index is known to be one \cite{Avi-Mat-Yoc}.

\section{Classification of components and adjacencies}

\subsection{Classification of components of strata of abelian and quadratic differentials} \label{s:classification}

For the reader's convenience, we restate the complete classification of the components of abelian and quadratic strata.

\begin{theorem}[{\cite{Kon-Zor03}}]
	\label{Thm:ClassificationAbelian}
	The following is the classification of the components of the strata of abelian differentials (up to regular marked points).
	\begin{itemize}
		\item In genus one, the only stratum is $\calH(0)$. It is non-empty, connected and hyperelliptic.
		\item In genus two, the only strata are $\calH(2)$ and $\calH(1,1)$. They are non-empty, connected and hyperelliptic.
		\item In genus three, the strata $\calH(4)$ and $\calH(2,2)$ have two components. One of them is hyperelliptic and the other one corresponds to odd spin structures. Every other stratum is non-empty and connected.
		\item Finally, for genus $g$ at least four:
		\begin{itemize}
			\item The stratum $\calH(2g - 2)$ has three components. One of them is hyperelliptic, and the other two are distinguished by even and odd spin structures.
			\item The stratum $\calH(g - 1, g - 1)$ can have two or three components depending on the parity of $g$. If $g$ is even, it has two components. One of them is hyperelliptic, and the other one is not. If $g$ is odd, it has three components. One of them is hyperelliptic, and the other two are distinguished by even and odd spin structures.
			\item All other strata of the form $\calH(2\kappa_1, \dotsc, 2\kappa_n)$ have two components, distinguished by even and odd spin structures.
			\item The remaining strata are non-empty and connected.
		\end{itemize}
	\end{itemize}
\end{theorem}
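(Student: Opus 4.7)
The plan has two parts: establishing that hyperellipticity and spin parity are genuine invariants of connected components, and then showing these form a complete invariant by connecting any two differentials with matching invariants via a path in the stratum.

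For invariance, I would argue as follows. The hyperelliptic locus in any stratum is both open and closed inside its ambient component, since the hyperelliptic involution varies continuously in families and admitting such an involution is a closed condition. For the spin parity, defined only when every $\kappa(z)$ is even, I would use the Johnson-style construction of a $\ZZ/2$-valued quadratic form on $H_1(S;\ZZ/2)$ refining the intersection pairing: the horizontal vector field of an abelian differential determines a framing of $S$ minus the zeros, hence a quadratic refinement whose Arf invariant is $\ZZ/2$-valued and therefore locally constant on the stratum.

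For completeness, I would induct on the complexity of the stratum, say on its complex dimension. The base cases in genus at most three are handled by hand using classical facts: in genus one the $j$-invariant parametrises $\calH(0)$; in genus two every holomorphic abelian differential sits over a hyperelliptic curve; in genus three the hyperelliptic loci in $\calH(4)$ and $\calH(2,2)$ are separated from the non-hyperelliptic loci by a direct Weierstrass-point argument, and one then checks that the non-hyperelliptic remainder has odd spin. For the inductive step, the key tool is the local structure of the principal boundary: near a differential with a very short saddle connection one can describe a neighbourhood via a plumbing/bubbling construction onto a stratum of strictly smaller dimension. Two differentials with matching invariants can be degenerated into the same boundary stratum, joined there by the inductive hypothesis, and then lifted back through the plumbing.

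The main obstacle will be tracking the invariants coherently through these surgery operations. One must verify how hyperellipticity and spin parity behave under splitting a zero of order $2k$ into zeros of orders $2a$ and $2b$ with $a+b=k$, and under bubbling a handle across a saddle connection (the relevant computation for spin is the additivity of the Arf invariant under connect-sum of framed surfaces). The subtlest case is isolating the hyperelliptic components inside $\calH(2g-2)$ and $\calH(g-1,g-1)$: Riemann--Hurwitz forces the quotient differential on $\PP^1$ to have a rigidly prescribed singularity profile, and one must then show that the resulting closed sublocus is in fact a full connected component rather than sitting inside a larger non-hyperelliptic component of the same spin parity, which is accomplished by producing an explicit obstruction (e.g.\ a continuous family of involutions) preventing a path from leaving the hyperelliptic locus.
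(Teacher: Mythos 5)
This theorem is not proved in the paper at all: it is restated verbatim from Kontsevich--Zorich \cite{Kon-Zor03} purely for the reader's convenience (\Cref{s:classification}), so there is no in-paper argument to compare against. Judged against the actual Kontsevich--Zorich proof, your outline has the right overall shape---locally constant invariants (hyperellipticity and the Arf invariant of the quadratic form induced by the horizontal framing), followed by a completeness argument via zero-splitting and handle-bubbling surgeries with an induction anchored at the minimal strata---but it contains one genuinely false step and leaves the hardest part as a black box.

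The false step is the opening claim that ``the hyperelliptic locus in any stratum is both open and closed inside its ambient component.'' It is closed, but it is emphatically not open in general: in most strata (for example $\calH(2,2,2)$ in genus $4$) the hyperelliptic differentials form a positive-codimension sublocus sitting inside a non-hyperelliptic component, so hyperellipticity is \emph{not} locally constant there. Openness holds only when a Riemann--Hurwitz dimension count shows that the space of admissible quotient differentials on $\PP^1$ has the same dimension as the stratum, which is precisely what singles out $\calH(2g-2)$ and $\calH(g-1,g-1)$ (with the involution exchanging the two zeros) as the only strata admitting hyperelliptic components. Your final paragraph gestures at this, but it contradicts your first paragraph, and the dimension count is the actual mechanism---not ``a continuous family of involutions preventing a path from leaving the locus,'' which would again only prove closedness. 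Separately, the completeness half is where essentially all of the work in \cite{Kon-Zor03} lives: one must show every component degenerates to (and is reached from) a component of a minimal stratum, classify components of $\calH(2g-2)$ by a genus induction on bubbled handles, and verify the parity bookkeeping under each surgery; as written this is a plan rather than a proof, which is acceptable for a sketch but should be flagged as the main open work.
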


\begin{theorem}[{\cite{Lan08,Che-Moe}}]
	\label{Thm:ClassificationQuadratic}
	The following is the classification of the components of the strata of quadratic differentials (up to regular marked points).
	\begin{itemize}
		\item In genus zero, every stratum is non-empty and connected.
		\item In genus one, the strata $\calQ(0)$ and $\calQ(1, -1)$ are empty. All other strata are nonempty and connected.
		\item In genus two, the strata $\calQ(4)$ and $\calQ(3, 1)$ are empty. Moreover, the stratum $\calQ(2, 2)$ is non-empty, connected and hyperelliptic.
		\item In genus three, the strata $\calQ(9, -1)$, $\calQ(6, 3, -1)$ and $\calQ(3, 3, 3, -1)$ have two components, known as \emph{regular} and \emph{irregular} components.
		\item In genus four, the strata $\calQ(6,6)$, $\calQ(6, 3, 3)$ and $\calQ(3, 3, 3, 3)$ have three components. One of them is hyperelliptic, and the other two are known as \emph{regular} and \emph{irregular} components. Moreover, the strata $\calQ(12)$ and $\calQ(9, 3)$ have two components, known as \emph{regular} and \emph{irregular} components.
		\item Finally, for genus at least two:
		\begin{itemize}
			\item The strata of the form $\calQ(4j + 2, 4k + 2)$, $\calQ(4j + 2, 2k - 1, 2k - 1)$ and $\calQ(2j - 1, 2j - 1, 2k - 1, 2k - 1)$ for $j, k \geq 0$ not contained in the previous list have two components. One of them is hyperelliptic, and the other one is not.
			\item The remaining strata are non-empty and connected.
		\end{itemize}
	\end{itemize}
\end{theorem}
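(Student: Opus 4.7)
The statement recapitulates the Lanneau--Chen--M\"oller classification of connected components of strata of meromorphic quadratic differentials with at most simple poles, and is not proved ab initio in this paper but rather imported from \cite{Lan08,Che-Moe}. The plan for a proof would proceed in three stages: non-emptiness, invariants, and connectedness of putative components.

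First, for non-emptiness one exhibits explicit representatives in each listed non-empty stratum. Low genus cases admit pillowcase constructions and branched covers of the torus; higher genus cases are handled via zippered rectangles or by lifting an abelian differential from the orientation double cover (whose signature is determined by $\kappa$). The empty strata $\calQ(0)$, $\calQ(1,-1)$, $\calQ(4)$, and $\calQ(3,1)$ are excluded by obstructions: for example, $\calQ(0)$ and $\calQ(1,-1)$ would have orientation double covers lying in strata of abelian differentials in genus one that are either empty or forced to be trivial, and the genus two exclusions follow from a direct period-coordinate dimension count together with a residue/Riemann--Hurwitz argument on the double cover.

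Second, one introduces the distinguishing invariants and shows they are $\SL(2,\RR)$-invariant, so that they are constant on connected components. Hyperellipticity is preserved by Teichm\"uller flow since the hyperelliptic involution is canonical on its locus and commutes with the local trivialisations. The regular/irregular dichotomy that appears in genus three and four is detected by the dimension of $H^0(C, \mathcal{O}(D))$ for a natural divisor $D$ supported on the zeros and poles (equivalently, by whether the differential lies on a specific Teichm\"uller curve); Chen--M\"oller show this dimension takes exactly two values on each of the exceptional strata and is constant along the $\SL(2,\RR)$ action.

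Third, one establishes connectedness of each putative component. Following Lanneau, one enumerates the extended Rauzy classes of irreducible generalised permutations representing the stratum and checks that each putative component corresponds to a single extended Rauzy class; strong connectedness of such a class, as used implicitly via the zippered rectangles parametrisation of \Cref{l:polytopal-decomposition}, then gives connectedness of the moduli component. The hyperelliptic components are separated off first by the invariance argument above, and the regular/irregular separation in the four genus-three and four genus-four exceptional strata completes the count.

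The main obstacle is the regular/irregular separation: one must show that the cohomological invariant is not merely locally constant on the stratum but genuinely distinguishes two disjoint non-empty open subsets whose union is the complement of the hyperelliptic locus. This is the deepest step and requires Chen--M\"oller's algebro-geometric analysis of Teichm\"uller curves and the projection to the Hodge bundle, which lies outside the Rauzy--Veech toolbox that suffices for every other line of the classification.
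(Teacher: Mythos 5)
The paper does not prove this theorem; it simply restates the Lanneau and Chen--M\"oller classification for the reader's convenience, citing \cite{Lan08,Che-Moe}, exactly as you observe. Your three-stage outline (non-emptiness via explicit constructions, invariance of hyperellipticity and of the regular/irregular dichotomy via the dimension of $H^0(C,\mathcal{O}(D))$, and connectedness via extended Rauzy classes) is a faithful summary of how those cited works proceed, and correctly flags the regular/irregular separation as the step requiring Chen--M\"oller's algebro-geometric input beyond the Rauzy--Veech combinatorics. There is no gap relative to what the paper itself does, since the paper does nothing beyond citation.
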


\subsection{Simple extensions}
\label{s:extensions}

Suppose that $\pi$ and $\pi'$ are irreducible (generalised) permutations.

\begin{definition}
	\label{d:type_preserving}
	We say $\pi'$ is a \emph{type-preserving simple extension} of $\pi$ if 
	\begin{itemize}
		\item 
		$\pi$ and $\pi'$ have the same type (both abelian or both quadratic) and 
		\item
		$\pi'$ is obtained from $\pi$ by inserting a single letter $\alpha$.  
		Furthermore, in $\pi'$: 
		\begin{itemize}
			\item 
			if $\alpha$ is leftmost in one row (top or bottom) then it is not leftmost in the other row; and
			\item 
			$\alpha$ is not rightmost in either row (top or bottom).
			\qedhere
		\end{itemize}
	\end{itemize}
\end{definition}

\noindent
Similarly, we have
\begin{definition}
	\label{d:type_changing}
	We say that $\pi'$ is a \emph{type-changing simple extension} of $\pi$ if 
	\begin{itemize}
		\item 
		$\pi$ is abelian, $\pi'$ is quadratic, and 
		\item
		$\pi'$ is obtained from $\pi$ by inserting a single top flip letter $\alpha$ and a single bottom flip letter $\beta$.
		Furthermore, in $\pi'$:
		\begin{itemize}
			\item 
			at most one of $\alpha$ or $\beta$ is leftmost in its row; and
			\item neither $\alpha$ nor $\beta$ is rightmost in its row. \qedhere
		\end{itemize}
	\end{itemize}
\end{definition}

Note that there are no type-changing extensions from a quadratic permutation to an abelian one.
Type-preserving extensions in the abelian case were introduced by Avila--Viana~\cite[Section~5.2]{Avi-Via07b}.
Type-changing extensions were introduced in~\cite[Definition~3.1]{Gut17}.
By~\cite[Theorem~3.2]{Boi-Lan}, a (type-preserving or -changing) simple extension $\pi'$ of an irreducible (generalised) permutation $\pi$ is again irreducible.
See also~\cite[Lemma~3.2]{Gut17}

Regardless of the types, a simple extension either \emph{preserves} or \emph{increases} the genus of the underlying surface. 

We now follow the paragraph immediately before, and the proof of,~\cite[Lemma~3.3]{Gut17}.

\begin{definition}
	Suppose that $\pi$ lies in a labelled Rauzy diagram $\calD$.
	Suppose that a simple extension $\pi'$ of $\pi$ lies in $\calD'$.
	
	We now define $\calD''$, the \emph{labelled diagram of simple extensions containing $\pi'$}.
	The vertices of $\calD''$ are exactly the permutations in $\calD'$ that are simple extensions of permutations in $\calD$.
	The arrows of $\calD''$ are more subtle.
	(Essentially, when an inserted letter appears at the end, it must lose.) 
	
	Suppose that $\sigma \to \tau$ is a Rauzy move in $\calD$. 
	Suppose that $\delta$ is the (copy of the) losing letter and $\epsilon$ is the (copy of the) winning letter.
	Suppose that $\sigma'$ in $\calD''$ is a simple extension of $\sigma$. 
	Suppose that $\zeta$ is the (copy of the) letter immediately to the left of $\delta$. 
	We now have two cases.
	\begin{itemize}
		\item 
		Suppose that $\zeta$ is not an inserted letter. 
		In this case, we take $\tau' \in \calD''$ to be the result of $\delta$ losing to $\epsilon$.
		\item 
		Suppose that $\zeta$ is an inserted letter, say $\alpha$.
		Suppose that $\eta$ is the (copy of the) letter immediately to the left of $\zeta$.
		We now have two subcases. 
		\begin{itemize}
			\item 
			Suppose that $\eta$ is not an inserted letter.
			In this case, we take $\tau' \in \calD''$ to be the result of $\delta$ and then $\alpha = \zeta$ losing to $\epsilon$.
			\item
			Suppose that $\eta$ is an inserted letter. 
			Thus $\alpha = \zeta = \eta$ was a flip letter. 
			In this case, we take $\tau' \in \calD''$ to be the result of $\delta$ and then $\alpha = \zeta$ losing (the latter twice) to $\epsilon$.
		\end{itemize}
	\end{itemize}
	In all cases we add an arrow from $\sigma'$ to $\tau'$ to $\calD''$.
	(See the figures in the proof of~\cite[Lemma~3.3]{Gut17}.)
	This completes the construction of $\calD''$.
\end{definition}

By construction, the diagram $\calD''$ is a finite cover of $\calD$.
If $\calD'$ is abelian, then each connected component of $\calD''$ is a degree-one cover of $\calD$~\cite[Section~5.2]{Avi-Via07b}.

\begin{example}
	\label{exa:RV_noT(c)ontained}    
	By \cite[Definition 3.1]{Boi-Lan}, the permutation $\sigma$ given by
	\[
	\sigma = \left(\begin{smallmatrix} && 1 & 2 & 1 & 2 & 0 \\ 0 & 3 & 4 & 5 & 6 & 3 & 4 & 5 & 6 \end{smallmatrix}\right)
	\]
	is irreducible and lies in a labelled Rauzy diagram of $\calQ(2,4)$. 
	The finite Rauzy sequence $\xi$ in which $0$ wins once over $6$, $5$, $4$, and $3$ is a loop based at $\sigma$.
	Suppose we insert a label $\alpha$ to obtain the permutation 
	\[
	\tau = \left(\begin{smallmatrix} &&& 1 & 2 & 1 & 2 & 0 \\ 0 & \alpha & 3 & 4 & \alpha & 5 & 6 & 3 & 4 & 5 & 6 \end{smallmatrix}\right)
	\]
	Then the sequence $\xi''$ in $\calD''$ shadowing $\xi$ does not return to $\tau$.
	On the other hand, the sequence shadowing $\xi^2$ does return to $\tau$.
	Thus, in this example, $\calD''$ is a cover of $\calD$ of degree at least two. 
\end{example}

Since arrows in $\calD''$ are finite composites of arrows in $\calD'$, directed loops in $\calD''$ naturally give loops in $\calD'$.
We use this observation in the abelian and quadratic settings as follows.

Suppose that $\pi$ is abelian and $\pi'$ is a type-preserving simple extension of $\pi$.
Suppose additionally that the underlying surfaces for $\pi'$ and $\pi$ have the same genus.
Avila--Viana (\cite[Lemma 5.6]{Avi-Via07b}) prove that the actions on absolute homology of $S$ induced by an arrow in $\calD$ and an associated (composite) arrow in $\calD''$ coincide.
Composing the maps $\uppi_1(\calD'') \to \uppi_1(\calD')$ and $\uppi_1(\calD') \to \Sp(S)$ (the top row of \Cref{fig:commutative_diagram_abelian}), we obtain the image $\Sp(\calD'')$. 
It follows that $\Sp(\calD'') = \Sp(\calD)$.
We record this in the following commutative diagram.
\[
\begin{tikzcd}
	\uppi_1(\calD') \arrow[two heads]{r} & \Sp(\calD') \arrow[hook]{r} & \Sp(S) \arrow[equal]{d} \\
	\uppi_1(\calD'') \arrow[two heads]{r} \arrow["\isom"]{d} \arrow[hook]{u} & \Sp(\calD'') \arrow[hook]{r} \arrow[equal]{d} \arrow[hook]{u} & \Sp(S) \arrow[equal]{d} \\
	\uppi_1(\calD) \arrow[two heads]{r} & \Sp(\calD) \arrow[hook]{r} & \Sp(S)
\end{tikzcd}
\]

Suppose now that $\pi'$ is a quadratic permutation. 
If $\calD$ is abelian then we let $R$ be a disjoint union of two copies of $S$ and $\iota$ the involution that switches the two copies. 
Similar to the abelian case, \cite[Lemma 4.3]{Gut17} shows that the actions on the absolute part of the plus piece (in the homology of the orientation double cover $R$) induced by an arrow in $\calD$ and an associated (composite) arrow in $\calD''$ coincide.
Composing the maps $\uppi_1(\calD'') \to \uppi_1(\calD')$ and $\uppi_1(\calD') \to \Sp_+(R)$ (the top row of \Cref{fig:commutative_diagram_quadratic}), we obtain the image $\Sp_+(\calD'')$. 
It follows that $\Sp_+ (\calD'')$ is finite index in $\Sp_+ (\calD)$.
We record this in the following commutative diagram.
\[
\begin{tikzcd}
	\uppi_1(\calD') \arrow[two heads]{r} & \Sp_+(\calD') \arrow[hook]{r} & \Sp_+(R) \arrow[equal]{d} \\
	\uppi_1(\calD'') \arrow[two heads]{r} \arrow[hook, "\text{f.i.}"]{d} \arrow[hook]{u} & \Sp_+(\calD'') \arrow[hook]{r} \arrow[hook, "\text{f.i.}"]{d} \arrow[hook]{u} & \Sp_+(R) \arrow[equal]{d} \\
	\uppi_1(\calD) \arrow[two heads]{r} & \Sp_+(\calD) \arrow[hook]{r} & \Sp_+(R)
\end{tikzcd}
\]

We now have the following.

\begin{corollary}
	\label{c:RV_inclusion}
	Suppose that $\pi$ is an irreducible (generalised permutation) in a labelled Rauzy diagram $\calD$.
	Suppose that $\pi'$ in $\calD'$ is obtained from $\pi$ by a finite sequence of genus-preserving simple extensions. 
	Then,
	\begin{itemize}
		\item If $\pi$ is abelian, then $\Sp_+(\calD')$ contains a subgroup isomorphic to $\Sp(\calD)$.
		\item If $\pi$ is quadratic, then $\Sp_+(\calD')$ contains a subgroup isomorphic to a finite-index subgroup of $\Sp_+(\calD)$. 
	\end{itemize} 
\end{corollary}

\begin{proof}
	So suppose that $\pi$ is abelian. 
	Suppose that $\xi = (\pi_i)_i$ is a loop in $\calD$. 
	Suppose that $\xi'' = (\pi_i'')_i$ is the resulting path in $\calD''$ (which in turn has a natural inclusion into $\calD'$) 
	which shadows $\xi$. 
	A flip letter in $\pi_i''$ always loses to some translation letter in $\pi_i''$; 
	furthermore, this translation letter appears in $\pi_i$. 
	Thus flip letters remain flip letters and do not change side.  
	The rest of the proof follows Section~5.2 of~\cite{Avi-Via07b}.
	
	The second conclusion follows from our second commutative diagram (immediately above). 
\end{proof}

\subsection{Extending from basic components}
\label{s:extending_from_basic}

For this section only, we restrict to stratum components where the underlying genus is at least one.  
We also call the following stratum components \emph{basic components}: 
\begin{itemize}
	\item minimal abelian (a single zero),
	\item minimal quadratic (a single zero),
	\item hyperelliptic quadratic with two singularities,
	\item $\calQ(5, -1)$, or $\calQ(9, -1)^\irreg$.
\end{itemize}

If $\calQ(\kappa)$ contains a basic component then we call $\kappa$ a \emph{basic datum}.

\begin{table}
	\centering
	\begin{tabular}{|c|c|c|}
		\hline
		Source & Target & Permutation
		\\
		\hline
		$\calH(2)$ & $\calQ(6, -1, -1)^\hyp$ & $\begin{pmatrix*}
			1 & \alpha & \alpha & 2 & 3 & 4 \\ 4 & 3 & 2 & \beta & \beta & 1
		\end{pmatrix*}$
		\\
		\hline
		$\calH(2)$ & $\calQ(6, -1, -1)^\nonhyp$ & $\begin{pmatrix*}
			1 & \alpha & \alpha & 2 & 3 & 4 \\ 4 & 3 & \beta & \beta & 2 & 1
		\end{pmatrix*}$
		\\
		\hline
		$\calH(1, 1)$ & $\calQ(3, 3, -1, -1)^\hyp$ & $\begin{pmatrix*}
			1 & \alpha & \alpha & 2 & 3 & 4 & 5 \\ 5 & 4 & 3 & 2 & \beta & \beta & 1 
		\end{pmatrix*}$
		\\
		\hline
		$\calH(1, 1)$ & $\calQ(3, 3, -1, -1)^\nonhyp$ & $\begin{pmatrix*}
			1 & \alpha & \alpha & 2 & 3 & 4 & 5 \\ 5 & 4 & \beta & \beta & 3 & 2 & 1
		\end{pmatrix*}$
		\\
		\hline
	\end{tabular}
	\caption{Explicit extensions into the strata $\calQ(6, -1, -1)$ and $\calQ(3, 3, -1, -1)$ in genus two. The permutation in the third column represents the component in the second column. Erasing the letters $\alpha$ and $\beta$ produces a permutation representing the (connected) stratum in the first column. Moreover, the source permutation in $\calH(1, 1)$ is itself a simple extension of a permutation in $\calH(2)$.}
	\label{table:extensions_genus_two}
\end{table}

\begin{table}
	\centering
	\begin{tabular}{|c|c|c|}
		\hline
		Source & Target & Permutation
		\\
		\hline
		$\calQ(8)$ & $\calQ(9, -1)^\reg$ & $\begin{pmatrix*}
			1 & 2 & 1 & 3 & 4 & 3 & 5 & 2 \\
			& 6 & 5 & 4 & \alpha & \alpha & 6
		\end{pmatrix*}$
		\\
		\hline
		$\calQ(12)^\reg$ & $\calQ(6, 6)^\reg$ & $\begin{pmatrix*}
			1 & 2 & \alpha & 3 & 4 & 5 & \alpha & 6 & 7 & 5 \\
			& 2 & 4 & 6 & 8 & 7 & 8 & 3 & 1
		\end{pmatrix*}$
		\\
		\hline
		$\calQ(12)^\irreg$ & $\calQ(6, 6)^\irreg$ & $\begin{pmatrix*}
			& 1 & 2 & 3 & 4 & 3 & 5 & 6 & 7 \\
			8 & 1 & 6 & 8 & \alpha & 4 & 2 & 7 & \alpha & 5
		\end{pmatrix*}$
		\\
		\hline
		$\calQ(12)^\reg$ & $\calQ(9, 3)^\reg$ & $\begin{pmatrix*}
			1 & 2 & 3 & \alpha & 4 & 3 & 5 & 6 & 7 \\
			2 & 4 & 6 & 8 & 7 & 8 & 5 & \alpha & 1
		\end{pmatrix*}$
		\\
		\hline
		$\calQ(12)^\irreg$ & $\calQ(9,3)^\irreg$ & $\begin{pmatrix*}
			1 & 2 & 3 & 4 & 5 & \alpha & 4 & \alpha & 6 & 7 \\
			&2 & 6 & 8 & 5 & 3 & 7 & 8 & 1
		\end{pmatrix*}$
		\\
		\hline
	\end{tabular}
	\caption{Explicit extensions into non-hyperelliptic components of exceptional strata with less than three singularities. The permutation in the third column represents the component in the second column. Erasing the letter $\alpha$ produces a permutation representing the component of a minimal stratum in the first column.}
	\label{table:extensions_sporadic}
\end{table}

\begin{proposition}
	\label{prop:induction}
	Suppose that $\calC'$ is a stratum component in genus at least one.
	Suppose that $\calC'$ is not basic.
	Then there is a basic component $\calC$ and (generalised) permutations $\pi$ and $\pi'$ in the associated labelled Rauzy diagrams so that $\pi'$ is obtained from $\pi$ by a sequence of genus-preserving simple extensions. 
\end{proposition}

\begin{proof}
	Suppose that the stratum containing $\calC'$ is connected: that is, equals $\calC'$. 
	In this case, the proposition follows from~\cite[Lemma~6.5]{Gut19} (if $\calC'$ is abelian) or from~\cite[Lemma 5.1]{Gut17} (if $\calC'$ quadratic).
	So, for the rest of the proof, we suppose that the stratum containing $\calC'$ is not connected. 
	
	Suppose that $\calC'$ is abelian.
	From this and the classification of components (\Cref{Thm:ClassificationAbelian}) we deduce that the genus $g$ of $\calC'$ is at least three. 
	If $\calC'$ is hyperelliptic then $\calC' = \calH^\hyp (g-1, g-1)$ (for $\calC'$ to be not basic).   
	The permutation $\left(\begin{smallmatrix} 1 & \alpha & \cdots & 2g \\ 2g & \cdots & \alpha & 1\end{smallmatrix}\right)$ lies in a labelled Rauzy diagram of $\calC'$.
	It is obtained by a simple extension of the permutation $\left(\begin{smallmatrix} 1 & \cdots & 2g \\ 2g & \cdots & 1\end{smallmatrix}\right)$ in a labelled Rauzy diagram of the basic component $\calH^\hyp(2g-2)$.
	Suppose now that $\calC'$ is non-hyperelliptic. 
	Thus $\calC'$ is even or odd. 
	By \cite[Lemma~6.4]{Gut19}, spin is preserved under simple extensions. 
	So, we obtain the proposition in this case. 
	
	Suppose that $\calC'$ is quadratic. 
	From this and the classification of components (\Cref{Thm:ClassificationQuadratic}) we deduce that the genus $g$ of $\calC'$ is at least two. 
	\begin{itemize}
		\item 
		Suppose that $\calC'$ has genus two. Then $\calC'$ is a component of $\calQ(6, -1, -1)$ or $\calQ(3, 3, -1, -1)$. 
		The explicit simple extensions used to both components of each of these strata are laid out in \Cref{table:extensions_genus_two}.
		\item 
		Suppose that $\calC'$ has genus $g$ at least three.
		\begin{itemize}
			\item 
			Suppose that $\calC'$ is non-hyperelliptic. 
			Suppose further that the stratum containing $\calC'$ has exactly two components, the other component being hyperelliptic. 
			Recalling that hyperellipticity cannot arise by simple extensions, we choose $\calQ(4g - 4)$ as our basic component.
			\item 
			Suppose that $\calC'$ is non-hyperelliptic in a stratum with three components. 
			Then there are finitely many possibilities for $\calC'$. 
			The cases where the basic component is abelian can be found in~\cite[Table~1]{Gut17}. 
			The remaining cases are found in \Cref{table:extensions_sporadic}.
			\item 
			Suppose now that $\calC'$ is hyperelliptic. 
			Since $\calC'$ is not basic, it has three or four singularities. 
			So we can use $\calH^\hyp(2g - 2)$ as the basic component -- see~\cite[Section~5.2]{Gut17}.
		\end{itemize}
	\end{itemize}
	This completes the proof. 
\end{proof}

\section{Zariski density}
\label{s:density}

We state one of our main results. 

\begin{theorem}
	\label{t:zariski}
	The symplectic monodromy groups and the symplectic Rauzy--Veech groups for all abelian components, and the plus and minus symplectic monodromy groups and the plus and minus symplectic Rauzy--Veech groups for all quadratic components, are Zariski dense in their ambient symplectic groups.
\end{theorem}

\noindent
In this section we deal with the minus piece. 
In \Cref{s:zariski-density-plus} we address the plus piece for the basic components other than $\calQ(5, -1)$, $\calQ(9, -1)^{\irreg}$, $\calQ(12)^{\reg}$ and $\calQ(12)^{\irreg}$.
These last four cases are handled in \Cref{s:sporadic}.

\subsection{What is already known}

Suppose that $\calC$ is an abelian component. 
Suppose that $\calD$ is a labelled Rauzy diagram for $\calC$.
By \cite[Theorem 1.1]{Avi-Mat-Yoc} and \cite[Theorem 1.1]{Gut19}, the symplectic Rauzy--Veech group $\Sp(\calD)$ (hence the symplectic monodromy group $\Sp(\calC)$) has finite index in $\Sp(S)$.
Hence $\Sp(\calD)$ is Zariski dense.

Suppose that $\calC'$ is a quadratic component with a labelled Rauzy diagram $\calD'$.
Suppose that a labelled permutation $\pi'$ in $\calD'$ can be obtained by a finite sequence of simple extensions from an abelian permutation $\pi$. Suppose that $\pi$ lies in a labelled Rauzy diagram of an abelian component $\calC$. 
Recall that $\Sp_+(R)$ is isomorphic to $\Sp(S)$.
By \Cref{c:RV_inclusion}, the plus Rauzy--Veech group $\Sp_+ (\calD')$ contains a subgroup isomorphic to $\Sp(\calD)$. 
Hence, it is also finite index in its ambient symplectic group.
Hence, the group $\Sp_+ (\calD')$ is Zariski dense.
By \Cref{fig:commutative_diagram_quadratic}, the symplectic monodromy group $\Sp_+ (\calC')$ is also Zariski dense. 

\subsection{Overview of remaining cases}

The other cases (the plus symplectic monodromy groups of the remaining quadratic components and the minus symplectic monodromy groups of all quadratic components) are more difficult.
In \Cref{s:minus}, we show that the Zariski density of the minus symplectic monodromy groups follows directly from  Filip's work~\cite{Fil17}. 
By \Cref{c:RV_finite_index_in_monodromy}, the minus Rauzy--Veech groups are also Zariski dense. 

In \Cref{s:zariski-density-plus}, we prove (again using Filip's results) Zariski density of the plus symplectic monodromy groups of all basic components.
By \Cref{c:RV_finite_index_in_monodromy}, their plus Rauzy--Veech groups are also Zariski dense. 
Using \Cref{prop:induction} and \Cref{c:RV_inclusion}, we obtain density of the plus groups for all quadratic components.

\subsection{Algebraic hulls and Zariski closures of monodromies}
\label{s:algebraic_hulls_Zariski_closures}

Suppose that $\calC_\root$ is a stratum component of rooted abelian differentials. 
Suppose that $\calM(S, Z)$ is the associated moduli space.
We denote the holomorphic part of $\upH^1(S, Z(S); \CC)$ by $\upH^{1,0}(S, Z(S); \CC)$. 
As real vector spaces, the space $\upH^{1,0}(S, Z(S); \CC)$ is isomorphic to $\upH^1(S, Z(S); \RR)$ by the isomorphism $[c] \mapsto \Re([c])$.
A differential $\omega$ in $\calC_\root$ gives a cohomology class $[\omega]$ in $\upH^1(S, Z(S); \CC)$.
The complex line $\CC [\omega]$ is identified to a plane in $\upH^1(S, Z(S); \RR)$ by the isomorphism, which is spanned by $[\Re(\omega)]$ and $[\Im(\omega)]$.
This plane is called the \emph{tautological plane}.
See \cite{Fil17}.

Locally, the map that sends differentials in $\calC_\root$ to their cohomology classes in $\upH^{1,0}(S, Z(S); \CC)$ gives period coordinates on $\calC_\root$. 

Suppose that $\calN$ is a linear invariant submanifold in $\calC_\root$.
Suppose that $\omega$ lies in $\calN$.
Locally, in period coordinates, the submanifold $\calN$ is defined by homogeneous linear equations with real coefficients.
The tangent space $T_{\omega} \calN$ can therefore be identified with this linear subspace of $\upH^{1,0}(S, Z(S); \CC)$. 
By using the aforementioned isomorphism, the tangent space $T_{\omega} \calN$ can be identified with a linear subspace of $\upH^1(S, Z(S); \RR)$. The tangent bundle $T \calN$ is thus isomorphic to a sub-bundle of the (relative real) Hodge bundle over $\calN$.

We may project the subbundle to the absolute real Hodge bundle $\upH^{\abs}_{\calN}$.
We denote the image by $p(T\calN)$.

Recall that a linear representation of a group $G$ on a vector space $V$ is \emph{strongly irreducible} if its restriction to every finite index subgroup of $G$ is irreducible.

The symplectic monodromy group $\Sp(\calN)$ acts by automorphisms on each fibre of $\upH^{\abs}_{\calN}$.
This splits $\upH^{\abs}_{\calN}$ into a direct sum of strongly irreducible symplectic sub-bundles.
The sub-bundle $p(T\calN)$ is strongly irreducible.

Suppose that $V$ is a strongly irreducible piece in $\upH^{\abs}_{\calN}$. 
The \emph{algebraic hull} for $V$ is the smallest algebraic subgroup of $\Sp (V)$ that the cocycle $\upC_{\KZ}$ (restricted to $V$) measurably conjugates into.

We describe Filip's results~\cite[Theorem~1.2 and Corollary~1.7]{Fil17} for possible 
algebraic hulls for linear invariant submanifolds and their implications here.

\begin{itemize}
	\item The Zariski closere of the symplectic monodromy group restricted to $p(T \calN)$ (or any of its Galois conjugates) is the full symplectic group and the restricted $\KZ$--cocycle has no zero exponents on $p(T \calN)$.
	See \cite[Corollary 1.7]{Fil17}.
	In \Cref{s:minus}, we use this to show the Zariski density of the minus symplectic monodromy group of any quadratic component.
	\item For the remaining strongly irreducible pieces, Filip shows that the Lie algebra representations of their algebraic hulls must be, up to compact factors, from the following list \cite[Theorem 1.2]{Fil17}:
	\begin{enumerate}
		\item $\mathfrak{sp}(2g, \RR)$ in the standard representation,
		\item $\mathfrak{su}(p, q)$ in the standard representation or $\mathfrak{su}(p,1)$ in any exterior power representation,
		\item $\mathfrak{so}(2n-1, 2)$ in the spin representation,
		\item $\mathfrak{so}^\ast(2n)$ in the standard representation, or 
		\item $\mathfrak{so}(2n-2, 2)$ in either of the spin representations.
	\end{enumerate} 
\end{itemize}

Moreover, Eskin--Filip--Wright show that the algebraic hull of a strongly irreducible piece that does not contain the tautological plane equals the Zariski closure of the symplectic monodromy group~\cite[Theorem 1.1]{Esk-Fil-Wri}.   
Thus, Filip's list also classifies Lie algebra representations of the Zariski closure of symplectic monodromy groups.

\subsection{Zariski density for the minus piece}
\label{s:minus} 

Suppose that $\calC_\root$ is a quadratic component.
Then $\calC'_\root$ is a linear invariant submanifold in $\calB_{\biroot}$.
By Filip's classification, the Zariski closure of the sympelctic monodromy group for $p(T \calC'_\root)$ is Zariski dense.
Finally, note that
\[
p (T \calC') = \upH_-^1(R; \ZZ)
\]
Hence, by Lefschetz duality, the minus symplectic monodromy group (acting on the dual $\upH_1^-(R, Z(R); \ZZ)$) is Zariski dense.

\section{Zariski density of the plus piece for the basic components}
\label{s:zariski-density-plus}

Suppose that $\calC$ is a quadratic stratum component.  
As usual, let $S$ be the underlying surface and let $R$ be the resulting branched double cover. 
Recall that $\upH_1(S; \ZZ)$ is isomorphic to $\upH_1^+(R; \ZZ)$; 
the isomorphism conjugates the corresponding monodromies.

Observe that the monodromy actions of $\calC$ on $\upH_1^+(R; \ZZ)$ and on $\upH_+^1(R; \ZZ)$ are isomorphic by Poincaré duality. 
We use $M$ to denote this abstract symplectic monodromy group. 
Note that $\upH_+^1(R; \ZZ)$ does not contain the tautological plane (see \Cref{s:algebraic_hulls_Zariski_closures}). 
By Eskin--Filip--Wright~\cite[Theorem 1.1]{Esk-Fil-Wri}, we may apply Filip's classification to the Zariski closure of $M$. 
Moreover, Treviño \cite[Theorem 1]{Tre} proved that the plus Lyapunov spectrum contains no zero exponents.

In this section,  we show that the action of $M$ is strongly irreducible.  
This done, we apply Filip's classification as follows.
Let $\mathfrak{m}$ be the Lie algebra of the Zariski closure of $M$.
By Treviño there are no zero exponents, so $\mathfrak{m}$ is one of the following (up to compact factors):
\begin{enumerate}
	\item 
	$\mathfrak{sp}(2n, \RR)$ in the standard representation (degree $2n$, dimension $n(2n + 1)$);
	\item 
	$\mathfrak{su}(n, n)$ in the standard representation (degree $4n$, dimension $4n^2 - 1$);
	\item 
	$\mathfrak{so}(2n-1,2)$ in the spin representation (degree $2^n$, dimension $n(2n+1)$);
	\item 
	$\mathfrak{so}(2n-2,2)$ in one of the spin representations (degree $2^{n-1}$, dimension $n(2n-1)$); or
	\item 
	$\mathfrak{so}^*(2n)$ in the standard representation for even $n$ (degree $4n$, dimension $n(2n-1)$).
\end{enumerate}

This list can be refined further. 
Observe that, by strong irreducibility, the degree of the representation must equal $2g$, the rank of $\upH_1(S)$.
Thus we have the following:
\begin{enumerate}
	\item
	$2n = 2g$, so $\dim_{\RR} \mathfrak{sp}(2g, \RR) = g(2g + 1)$; \label{i:sp}
	\item 
	$4n = 2g$, so $\dim_{\RR} \mathfrak{su}(g/2, g/2) = g^2 - 1$; \label{i:su}
	\item 
	$2^n = 2g$, so $\dim_{\RR} \mathfrak{so}(2n-1,2) = \log_2(2g) \log_2(8g^2)$; \label{i:so odd}
	\item 
	$2^{n-1} = 2g$, so $\dim_{\RR} \mathfrak{so}(2n-2,2) = \log_2(4g) \log_2(8g^2)$; and \label{i:so even}
	\item 
	$4n = 2g$, so $\dim_{\RR} \mathfrak{so}^*(g/2, g/2) = g(g - 1)/2$. \label{i: so star}
\end{enumerate}

\begin{corollary}
	\label{c:Zarisiki_density_odd_genus}
	Suppose that $\calC$ is a quadratic component. 
	Suppose that the genus of the underlying surface is odd. 
	Suppose that the action of the symplectic monodromy group $M$ on $\upH_1(S; \RR)$ is strongly irreducible. 
	Then $\mathfrak{m} = \mathfrak{sp}(2g, \RR)$.
\end{corollary}

\begin{proof}
	The possibilities \eqref{i:su}--\eqref{i: so star} require the genus $g$ be even (or, in fact, a power of two). 
	Thus, assuming that the action of $M$ is strongly irreducible, if $g$ is odd then $\mathfrak{m}$ has to be $\mathfrak{sp}(2g, \RR)$. 
\end{proof}

Dealing with the case of even genus takes up the rest of \Cref{s:zariski-density-plus} and \Cref{s:sporadic}.
For each basic component we first prove that the action of $M$ is strongly irreducible.
We then give various delicate arguments to provide lower bounds for the dimension of $\mathfrak{m}$, the Lie algebra of the Zariski closure of $M$.
(This requires the additional flexibility given by \Cref{t:flow-general}.)
In this way, we prove that $\mathfrak{m}$ is the symplectic representation.

\subsection{Dehn twists and other tools}

We begin with a simple lemma. 

\begin{lemma}
	\label{l:cylinder}
	Suppose that $\calC$ is any stratum component.
	Suppose that $q$ is a differential in $\calC$.
	Suppose that $A$ is a flat cylinder in $q$. 
	Then the Dehn twist in $A$ lies in the symplectic monodromy group of $\calC$. \qed 
\end{lemma}

Suppose now that $c, d$ are oriented simple closed curves in $S$. 
(In a small abuse of notation we also use $c$ and $d$ to denote the corresponding cycles in homology.)
Suppose that $\omega(c, d)$ is their algebraic intersection number. 
Suppose that $T(c)$ is the right Dehn twist about $c$.  
(In small abuses of notation, we also use $T(c)$ to denote the corresponding action on homology.)
Then, in homology, we have the following:
\begin{equation}\label{e:twist}
	T(c)(d) = d + \omega(c, d) \cdot c
\end{equation}

The Dehn twist $T(c)$ acts on homology as a symplectic transvection.
Inspired by this, we define a linear map $D(c) \in \End(\upH_1(S; \RR))$ by
\begin{equation}
	\label{e:defn_D}
	D(c) (d) = T(c) (d) - d = \omega(c, d) \cdot c
\end{equation}
Note that $k D(c)(d) = T(c)^k(d) - d$.  

\begin{lemma}
	\label{l:lie_algebra_nilpotents}
	Suppose that $T(c)^k$ lies in $M$ for $k \in \NN$. 
	Then $k D(c)$ lies in $\mathfrak{m}$. \qed
\end{lemma}

The next lemma shows how Dehn twists in $M$ force invariant subspaces to ``expand''.

\begin{lemma}
	\label{l:twist}
	Suppose that $V$ is a non-trivial subspace of $\upH_1(S; \RR)$.
	Suppose further that $V$ is invariant under some finite-index subgroup of $M$.
	Suppose that $c, d \in \upH_1(S; \RR)$ have $\omega(c, d) \neq 0$.
	Suppose that $T(c)$ lies in $M$ and $d$ lies in $V$.
	Then $c$ also lies in $V$.
\end{lemma}

\begin{proof}
	Fix $k > 0$ so that $T(c)^k$ leaves $V$ invariant. 
	Thus $T(c)^k(d)$ lies in $V$. 
	By \Cref{e:twist}, we have that $k D(c)(d) = T(c)^k(d) - d$ is a non-zero multiple of $c$.
	Thus $c$ lies in $V$ and we are done. 
\end{proof}

We now give a criterion for an action to be strong irreducible.

\begin{definition}
	\label{d:chain}
	Suppose that $B$ is a collection of oriented simple closed curves. 
	Suppose that $c$ and $d$ lie in $B$.  
	Then a \emph{chain from $c$ to $d$ in $B$} is a sequence $(c = c_0, c_1, \ldots, c_n = d)$ in $B$ so that $\omega(c_i, c_{i+1}) \neq 0$ for all $i$.   
\end{definition}

For such a collection $B$ we write $[B] = \{[c] \in \upH_1(S; \RR) \st c \in B\}$.

\begin{lemma}
	\label{l:irreducible}
	Suppose that $B$ is a collection of oriented simple closed curves. 
	Suppose that 
	\begin{itemize}
		\item $[B]$ spans $\upH_1(S; \RR)$,
		\item all $c, d \in B$ are connected by a chain in $B$, and
		\item $T(c)$ lies in $M$ for all $c \in B$.
	\end{itemize}
	Then the action of $M$ on $\upH_1(S; \RR)$ is strongly irreducible.
\end{lemma}

\begin{proof}
	Suppose that $V$ is a non-trivial subspace preserved by some finite index subgroup $N < M$. 
	We must show that $V = \upH_1(S; \RR)$.
	It suffices to prove that $V$ contains the classes $[B]$. 
	
	Fix any non-zero $v$.  
	Since $[B]$ spans, there is some $c \in B$ so that $\omega(c, v) \neq 0$. 
	So, by \Cref{l:twist}, the cycle $c$ lies in $V$. 
	Since all pairs in $B$ are connected by chains, applying \Cref{l:twist} inductively, we deduce that $[B] \subset V$, as desired. 
\end{proof}

\begin{remark}
	In some cases, $[B]$ is, in fact, a basis for $\upH_1(S; \RR)$. 
	However, this is not always possible to arrange;
	the chain hypothesis may force us to add extra curves.
\end{remark}

Suppose that $c$ and $d$ are curves in $S$.  
We say they form a \emph{symplectic block} if $\omega(c, d) = 1$. 
We use the following two lemmas to produce new Dehn twists in $M$ from old ones. 
The first one is well-known; 
the second one is a slightly less general version of \cite[Corollary 2.10]{Gut19}:

\begin{lemma}
	\label{l:conjugation}
	Suppose that $c$ and $d$ are curves in $S$ forming a symplectic block. 
	If $T(c), T(d) \in M$, then $T(c + d) \in M$.
\end{lemma}

\begin{proof}
	We have that $T(c) T(d) T(c)^{-1} = T(T(c)(d)) = T(c + d) \in M$.
\end{proof}

\begin{lemma}
	\label{l:two_blocks}
	Suppose $c_1, c_2, d_1, d_2$ are curves in $S$ so that $(c_i, d_i)$ are a symplectic blocks for each $i \in \{1, 2\}$. 
	Let $b$ be another curve on $S$ such that $\omega(c_i, b) = \omega(d_i, b) = \pm 1$ for each $i \in \{1, 2\}$. 
	If $T(c_i), T(d_i) \in M$ for each $i \in \{1, 2\}$, then $T(c_1 + c_2)^2 \in M$. \qed
\end{lemma}

\subsection{Minimal strata}
\label{s:minimal strata}

For $g \geq 5$, the strata $\calQ(4g - 4)$ are connected. 
For $g =4$, the stratum $\calQ(12)$ has components $\calQ(12)^\reg$ and $\calQ(12)^\irreg$.
Let $d = 2g$ and consider
\[
\pi_d =
\begin{pmatrix*}
	1 & 2 & 1 & 3 & 4 & 5 & 6 & 7 & 8 & \cdots & d-1 \\
	2 & 4 & 3 & 6 & 5 & 8 & 7 & \cdots & d & d-1 & d
\end{pmatrix*}.
\]
The rooted differentials in $\calC(\pi_d)$ belong to $\calQ(4g - 4)$ (and to $\calQ(12)^\reg$ when $g =4$).

We provide a set $B$ satisfying the hypothesis of \Cref{l:irreducible}. 
The precise choice of curves in $B$ depends on the parity of $g$.  

\begin{figure}
	\centering
	\begin{subfigure}[b]{\textwidth}
		\centering
		\includegraphics[scale=0.8]{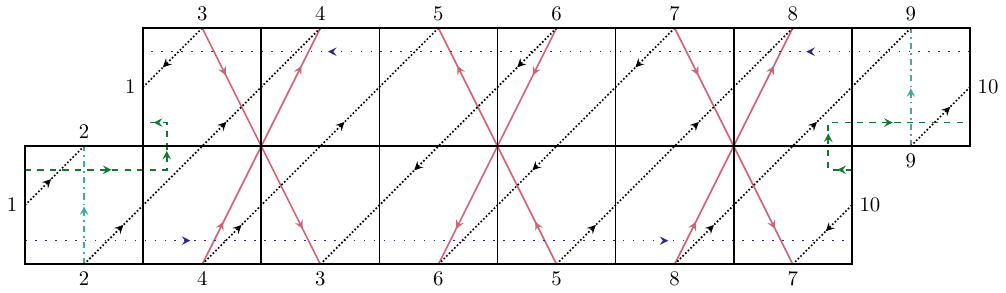}
		\caption{Odd $g$.}
		\label{f:odd-minimal}
	\end{subfigure}
	
	\begin{subfigure}[b]{\textwidth}
		\centering
		\includegraphics[scale=0.8]{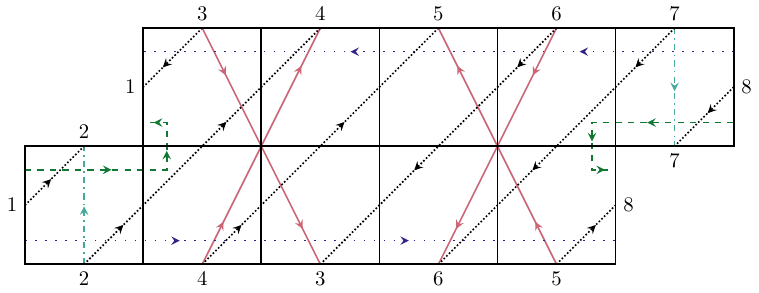}
		\caption{Even $g$ with the curve $b$ having slope one.}
		\label{f:even-minimal-1}
	\end{subfigure}
	
	\begin{subfigure}[b]{\textwidth}
		\centering
		\includegraphics[scale=0.8]{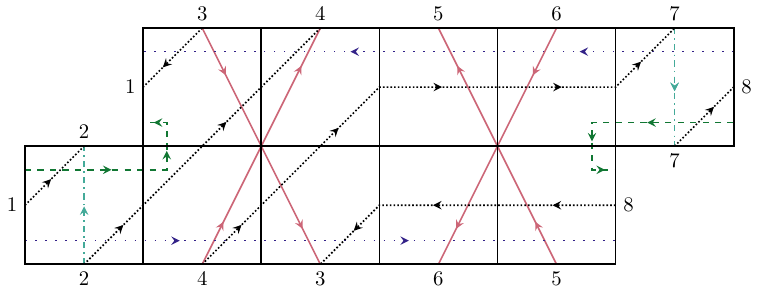}
		\caption{Even $g$ with the modified curve $b'$.}
		\label{f:even-minimal-2}
	\end{subfigure}
	\caption{Basis and useful curves.}
	\label{f:minimal}
\end{figure}

\begin{definition}
	\label{d:basis_minimal}
	Let $P_d$ be the square-tiled polygon with $2(d-3)$ on-axis squares glued as follows:
	\begin{itemize}
		\item there are two rows of $d-3$ squares and
		\item the top row begins one square to the right of the bottom row.
	\end{itemize}
	The left-bottom and right-top vertices partition the boundary of $P_d$ into two arcs, each consisting of $d$ sides of squares. 
	We call these arcs the \emph{top} and \emph{bottom} of $P_d$ and label them according to the rows of $\pi_d$. 
	Gluing sides of $P_d$ according to their labels gives the underlying surface $S$.
	See \Cref{f:minimal} for examples.
	
	Let $c_i$ be the simple closed curve in $S$ meeting the side labelled $i$ once transversely and no other sides.
	For $2 < i < 2d - 1$ we orient $c_i$ from the bottom to the top of $P_d$ if and only if $i$ is congruent to zero or one modulo four.
	We orient $c_1$ from the left to the right and $c_2$ from the bottom to the top. 
	We do the same for curves $c_{d}$ and $c_{d-1}$ if $g$ is odd; we use the opposite orientations if $g$ is even.
	(For examples, see the dashed, dashed-dotted, and solid curves in \Cref{f:minimal}.)
	
	Let $b$ and $p$ be curves representing the following sums in homology:
	\[
	b = \sum_{i = 1}^d c_i, \qquad p = c_1 + (-1)^g c_d
	\]
	When $g$ is even, let $b'$ be a curve representing the following sum in homology:
	\[
	b' = \sum_{i=1}^{d-4} c_i - c_{d-1} - c_d
	\]
	(The curve $b$ is densely dotted and has slope one in Figures~\ref{f:odd-minimal} and~\ref{f:even-minimal-1}.
	The curve $p$ is loosely dotted and is horizontal in all three figures.
	The curve $b'$ is densely dotted and has slope one (in all symplectic blocks except the last) in \Cref{f:even-minimal-2}.)
	
	Finally, we set 
	\[
	B = \begin{cases}
		\{c_2, \dotsc, c_{d-1}, b, p\}         & \text{if $g$ is odd}  \\
		\{ c_2, \dotsc, c_{d-1}, b, b', p\}    & \text{if $g$ is even}
	\end{cases} \qedhere
	\]
\end{definition}

Note that $[B]$ is a spanning set for $\upH_1(S; \RR)$.
It is a basis when $g$ is odd.

\begin{lemma}
	\label{l:irreducible-minimal}
	The action of $M$ on $\upH_1(S; \RR)$ is strongly irreducible.
\end{lemma}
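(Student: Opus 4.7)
The strategy is to apply the strong irreducibility criterion of \Cref{l:irreducible} to an explicit finite set $B$ built from the spanning cycles pictured in the figures. Concretely, take
\[
B = \{c_2, c_3, \dotsc, c_{d-1}, b, p\}
\]
when $g$ is odd, and
\[
B = \{c_2, c_3, \dotsc, c_{d-1}, b, b', p\}
\]
when $g$ is even. The set $B$ spans $H_1(S; \RR)$: the first $d-2$ elements together with $b$ and $p$ (and $b'$ in the even case) recover the full basis $\{c_1, \dotsc, c_d\}$ via the linear relations $b = \sum_i c_i$, $p = c_1 + \varepsilon c_d$, and (if $g$ is even) $b' = \sum_{i=1}^{d-4} c_i - c_{d-1} - c_d$, which together with the $c_i$ for $2 \leq i \leq d-1$ suffice to solve for $c_1$ and $c_d$ separately.

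Next I would verify that every element of $B$ is the core curve of a horizontal, vertical, or slope-$1$ cylinder on the flat surface associated with the zippered rectangles for $\pi_d$. The horizontal curves $p$ (and similarly the obvious horizontal cylinders cut out by the rectangle structure) have Dehn twists in the Veech group generated by the horizontal unipotent $\begin{pmatrix}1 & t\\0 & 1\end{pmatrix}$, which act by Dehn twists in the cores of horizontal cylinders and lie in $M$ as elements of the flow group along suitable loops in $\calC_{\pi_d}$. The same argument applies in the vertical direction for $c_2,\dotsc,c_{d-1}$ and $c_1,c_d$, and in the slope-$1$ direction for $b$ (and $b'$). This identification is a direct inspection of the zippered rectangles picture in \Cref{f:odd-minimal}--\Cref{f:even-minimal-2}.

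With the Dehn twists in hand, the only remaining task is to verify the chain condition on non-zero algebraic intersections. I would compute: consecutive curves $c_i, c_{i+1}$ in the basis have $\omega(c_i, c_{i+1}) = \pm 1$ because they share a common boundary edge in the rectangular layout, so $c_2, c_3, \dotsc, c_{d-1}$ form a single chain. The slope-$1$ cycle $b$ crosses each horizontal $c_i$ transversally, so $\omega(b, c_i) \neq 0$ for a range of $i$, joining $b$ into the chain; the horizontal cycle $p = c_1 + \varepsilon c_d$ has non-zero intersection with the (anti-)diagonal $b$ because $c_1$ and $c_d$ each contribute a transverse crossing with $b$ and in the relevant parity these contributions do not cancel. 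In the even-genus case the auxiliary curve $b'$ is likewise seen to intersect enough of the $c_i$ (and $b$) to sit in the chain. Having exhibited the chain, \Cref{l:irreducible} yields the strong irreducibility of $M$ on $H_1(S; \RR)$.

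The main obstacle is the bookkeeping of orientations and signs in the intersection calculations, particularly for $\omega(b, p)$, where the definition $p = c_1 + \varepsilon c_d$ with $\varepsilon = (-1)^g$ is designed precisely to ensure non-vanishing across both parities; the even case is the more delicate one, which is why the modified curve $b'$ is introduced. Once these intersection numbers are checked against the explicit flat surface, the chain condition is immediate and the proof concludes.
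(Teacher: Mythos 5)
Your overall strategy coincides with the paper's: the same set $B$ (including $b'$ in even genus), the same criterion \Cref{l:irreducible}, and the same chain of non-zero intersections. However, there are two points where your sketch has a genuine gap rather than mere bookkeeping left to the reader.

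First, the mechanism by which $T(u)$ lands in $M$. The horizontal unipotent $\bigl(\begin{smallmatrix}1 & t\\ 0 & 1\end{smallmatrix}\bigr)$ does not produce the individual Dehn twist in the core of one horizontal cylinder; when it closes up at all, it produces a simultaneous multitwist in \emph{all} horizontal cylinders (with powers governed by the moduli), which is not what \Cref{l:irreducible} consumes. The correct statement, which the paper uses implicitly, is that shearing a \emph{single} cylinder by its circumference is a closed loop in the stratum component whose monodromy is the Dehn twist in that one core curve. Second, and more seriously, your treatment of $b'$ in the even-genus case is wrong as stated: $b'$ is \emph{not} the core curve of a slope-$1$ (or any) cylinder on the flat surface of \Cref{f:even-minimal-1}. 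This is precisely why the paper introduces it as a ``modified'' slope-one curve. The paper's proof first applies a quarter-twist shear inside the cylinder with core $c_{d-3}-c_{d-2}$, after which the four relevant horizontal saddle connections acquire slope one and $b'$ straightens into the core curve of a cylinder on the \emph{deformed} surface (\Cref{f:even-minimal-2}); only then does $T(b')\in M$ follow. You correctly identify the even case as the delicate one, but the resolution requires this explicit deformation, not just a sign check on $\omega(b,p)$. Without it, the even-genus half of the lemma does not go through.
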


\begin{proof}
	We check that the set $B$ defined in \Cref{d:basis_minimal} satisfies the hypothesis of \Cref{l:irreducible}.
	
	As seen in Figures~\ref{f:odd-minimal} and~\ref{f:even-minimal-1}, the curves $\{ c_2, \dotsc, c_{d-1}, b\}$ are core curves of flat cylinders.
	By \Cref{l:cylinder}, the Dehn twists in these curves lie in $M$.
	
	Now consider the flat cylinder with core curve $c_{d-3} - c_{d-2}$.  
	A left-handed shear applied inside this cylinder makes the four horizontal saddle connections have slope one.  
	Equivalently, it performs a one-quarter Dehn twist. 
	This straightens the curve $b'$; see \Cref{f:even-minimal-2}. 
	Thus, $b'$ is the core curve of a flat cylinder on a different differential. 
	By \Cref{l:cylinder}, the Dehn twist $T(b')$ lies in $M$.
	
	Thus, for any curve $u$ in $B$, the Dehn twist $T(u)$ is in $M$.
	We also note that for any pair $c$ and $d$ in $B$ there exists a chain connecting them.
	Thus the set $B$ satisfies the hypothesis of \Cref{l:irreducible}, as desired.
\end{proof}

From \Cref{c:Zarisiki_density_odd_genus}, we deduce the following:

\begin{corollary}
	\label{c:Zariski_minimal_odd_genus}
	For odd genus $g \geq 3$, we have that $\mathfrak{m} = \mathfrak{sp}(2g, \RR)$. \qed
\end{corollary}

We will use the following estimate to handle the case where $g$ is even, except for $g = 4$ that is done in \Cref{s:sporadic}.

\begin{lemma}
	\label{l:dimension_bound_minimal}
	For any $g \geq 3$, we have that
	\[
	\dim_{\RR} \mathfrak{m} \geq 2g - 1 + \binom{2g - 4}{2} = 2g^2 - 7g + 9.
	\]
\end{lemma}

\begin{proof}
	The cycles $c_2, \dotsc, c_{d-1}$ and $p$ are realised as core curves of flat cylinders.
	Hence, the Dehn twists $T(c_i)$ for $i = 2, \dotsc, d-1$ and $T(p)$ are in $M$. 
	With $D(c)$ as defined in \Cref{e:defn_D} we have the following:
	\[
	\begin{aligned}[c]
		D(c_2)(c_1) &= -c_2 \\
		D(c_{2i+1})(c_{2i+2}) &= c_{2i+1} \\
		D(c_{2i+2})(c_{2i+1}) &= -c_{2i+2} \\
		D(c_{d-1})(c_d) &= -c_{d-1} \\
		D(p)(c_2) = p, \, D(p)(c_{d-1}) &= (-1)^g p
	\end{aligned}
	\qquad
	\begin{aligned}[c]
		D(c_2)(c_i) &= 0 \text{ for } i \neq 1 \\
		D(c_{2i+1})(c_j) &= 0 \text{ for } j \neq 2i+2 \\
		D(c_{2i+2})(c_j) &= 0 \text{ for } j \neq 2i+1 \\
		D(c_{d-1})(c_j) &= 0 \text{ for } j \neq d \\
		D(p)(c_j) &= 0 \text{ for } j \notin \{2, d-1\}
	\end{aligned}
	\]
	
	\begin{claim*}
		For each $3 \leq i < j \leq 2g - 2$, the element $T(c_i + c_j)^2$ belongs to $M$.
	\end{claim*}
	
	\begin{proof}
		\Cref{l:conjugation} implies that $T(c_{2k+1} + c_{2k+2})^2$ lies in $M$.
		In all other cases, the result follows from \Cref{l:two_blocks}.
	\end{proof}
	
	We define $E(c) = 2 D(c)$ and observe the following:
	\begin{align*}
		E(c_{2i+1} + c_{2j+1})(c_{2i+2}) &= 2(c_{2i+1} + c_{2j+1}) \\
		E(c_{2i+1} + c_{2j+1})(c_{2j+2}) &= 2(c_{2i+1} + c_{2j+1}) \\
		E(c_{2i+1} + c_{2j+2})(c_{2i+2}) &= 2(c_{2i+1} + c_{2j+2}) \\
		E(c_{2i+1} + c_{2j+2})(c_{2j+1}) &= -2(c_{2i+1} + c_{2j+2}) \\
		E(c_{2i+2} + c_{2j+2})(c_{2i+1}) &= -2(c_{2i+2} + c_{2j+2}) \\
		E(c_{2i+2} + c_{2j+2})(c_{2j+1}) &= -2(c_{2i+2} + c_{2j+2})
	\end{align*}
	
	We make the following definitions:
	\begin{align*}
		\mathfrak{d} &= \{D(c_i) \st 2 \leq i \leq d-1\} \cup \{D(p)\} \\
		\mathfrak{e} &= \{E(c_i + c_j) \st 3 \leq i < j \leq 2g - 2\} \\
		\mathfrak{f} &= \mathfrak{d} \cup \mathfrak{e}
	\end{align*}
	By \Cref{l:lie_algebra_nilpotents}, we have that $\mathfrak{f}$ is a subset of $\mathfrak{m}$.
	Note that the cardinality of $\mathfrak{d}$ is $2g - 1$ and the cardinality of $\mathfrak{e}$ is $\binom{2g - 4}{2}$. 
	Hence, the cardinality of $\mathfrak{f}$ is $2g - 1 + \binom{2g - 4}{2} = 2g^2 - 7g + 9$.
	
	\begin{claim*}
		$\mathfrak{f}$ is linearly independent in $\mathfrak{m}$.
	\end{claim*}
	
	\begin{proof}
		Using the (symplectic) basis $\{c_1, \dotsc, c_d\}$  we identify the vector space of linear transformations of $\upH_1(S; \RR)$ with the square matrices $\Mat_d(\RR)$.
		We may then associate a matrix to each endomorphism in $\mathfrak{f}$. 
		Let $M_{i,j}$ be the matrix with the $(i,j)$--entry one and all other entries zero. 
		Using the calculations above, we express each endomorphism in $\mathfrak{f}$ as a linear combination of $M_{i,j}$ as follows:
		\begin{itemize}
			\item The matrix for $D(c_{2i+1})$ is $M_{(2i+1), (2i+2)}$.
			\item The matrix for $D(c_{2i+2})$ is $-M_{(2i+2), (2i+1)}$.
			\item The matrix $M_{(2i+2), (2j+1)}$ features only in the linear combination of the matrix for $E(c_{2i+1} + c_{2j+1})$.
			\item The matrix $M_{(2i+2), (2j+2)}$ features only in the linear combination of the matrix for $E(c_{2i+1} + c_{2j+2})$.
			\item The matrix $M_{(2i+1), (2j+2)}$ features only in the linear combination of the matrix for $E(c_{2i+2} + c_{2j+2})$.
		\end{itemize}
		The claim follows.
	\end{proof}
	\noindent
	The estimate for $\dim_{\RR} \mathfrak{m}$ follows.
\end{proof}

For even $g \geq 6$, the estimate in the previous lemma is enough to establish Zariski-density:

\begin{corollary}
	\label{c:Zariski_density_six_above}
	For even genus $g \geq 6$, we have $\mathfrak{m} = \mathfrak{sp}(2g, \RR)$.
\end{corollary}

\begin{proof}
	Since $\dim_{\RR} \mathfrak{m} \geq 2g^2 - 7g + 9$, it follows that if $g \geq 6$ then
	\begin{align*}
		\dim_{\RR} \mathfrak{m} &> \dim_{\RR} \mathfrak{su}(g/2,g/2) = g^2 -1\\
		\dim_{\RR} \mathfrak{m} &> \dim_{\RR} \mathfrak{so}(2n-1,2) = \log_2(2g) \log_2(8g^2) \text{ for } n = \log_2(2g)\\
		\dim_{\RR} \mathfrak{m} &> \dim_{\RR} \mathfrak{so}(2n-2,2) = \log_2(4g) \log_2(8g^2) \text{ for } n = \log_2(4g)\\
		\dim_{\RR} \mathfrak{m} &> \dim_{\RR} \mathfrak{so}^*(g/2) = g(g - 1)/2.
	\end{align*}
	The corollary follows.
\end{proof}

Zariski density for $\calQ^\reg(12)$ and $\calQ^\irreg (12)$ (the remaining cases) is shown \Cref{s:simplicity}. 

\subsection{Hyperelliptic components with two singularities}
\label{s:hyperelliptic with two}

Let $r, s \geq 1$ be odd positive integers.
We give a permutation $\pi_{r,s}$ that represents the component $\calQ(2r, 2s)^{\hyp}$. 
In particular, we may assume $s \leq r$.
The permutation $\pi_{r,s}$ is defined by the top row being $(\alpha, 0, 1 \cdots, r-1, \alpha, r, \cdots, r+s-1, r+s)$ and the bottom row being $(r+s, r+s-1, \cdots, r, \beta, r-1, \cdots, 1, 0, \beta)$.
Note that $\pi_{r,s}$ is symmetric under rotation by $\pi$.
When $s < r$ the permutation $\pi_{r,s}$ looks as follows:
\[
\begin{pmatrix*}
	\alpha & 0 & \cdots & s & s+1 & \cdots & r-1 & \alpha & r & \cdots & r+s \\
	r+s & \cdots & r & \beta & r-1 & \cdots & s+1 & s & \cdots & 0 & \beta \\
\end{pmatrix*}
\]
When $s = r$, the permutation $\pi_{r,s}$ looks as follows:
\[
\begin{pmatrix*}
	\alpha & 0 & \cdots & r-1 & \alpha & r & \cdots & 2r-1 & 2r \\
	2r & 2r-1 & \cdots & r & \beta & r-1 & \cdots & 0 & \beta \\
\end{pmatrix*}
\]

See \Cref{f:hyp_s_neq_r} for a flat surface in $\calQ(6, 2)^{\hyp}$ (with $r = 3$ and $s = 1$) and \Cref{f:hyp_s_eq_r} for a flat surface in $\calQ(6, 6)^{\hyp}$ (with $r = 3$ and $s = 3$). 
The genus of the underlying surface is $g = (r + s + 2)/2$.

\begin{figure}
	\begin{subfigure}[b]{\textwidth}
		\centering
		\includegraphics[scale=0.8]{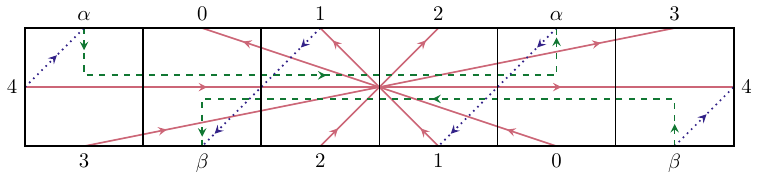}
		\caption{When $s < r$.}
		\label{f:hyp_s_neq_r}
	\end{subfigure}
	
	\begin{subfigure}[b]{\textwidth}
		\centering
		\includegraphics[scale=0.8]{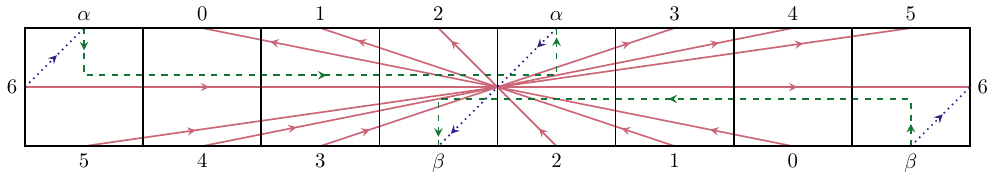}
		\caption{When $s = r$.}
		\label{f:hyp_s_eq_r}
	\end{subfigure}
	\caption{Curves for the hyperelliptic case.}
	\label{f:hyperelliptic}
\end{figure}

\begin{definition}
	\label{d:basis_hyperelliptic}
	Let $P_{r,s}$ be a square-tiled polygon with $r + s + 2$ on-axis squares arranged in a row.
	The line segment joining the left-top vertex and the right-bottom vertex partitions the boundary of $P_{r,s}$ into two arcs, each consisting of $r + s + 3$ sides of squares.
	We call these arcs the \emph{top} and \emph{bottom} of $P_{r,s}$ and label them according to the rows of $\pi_{r,s}$. 
	Gluing sides of $P_{r,s}$ according to their labels gives the underlying surface $S$.
	See \Cref{f:hyperelliptic} for two examples.
	
	Let $c_i$ (respectively $c_\alpha$ and $c_\beta$) be the simple closed curve in $S$ that meets the side labelled $i$ (respectively $\alpha$ and $\beta$) once transversely and no other sides.
	We orient $c_i$ from the bottom to the top of $P_{r,s}$. 
	We orient $c_\alpha$ from left to right and $c_\beta$ from right to left.
	(For examples, see the solid and dashed curves in \Cref{f:hyperelliptic}.)
	
	Let $c_{\alpha \beta}$ be the simple closed curve in $P_{r,s}$ with slope one that intersects the edges labelled $\alpha$ and $\beta$ transversely and once.
\end{definition}

Observe that the union of $c_\alpha$ and $c_\beta$ (with the given orientations) is the boundary of a sub-surface.
Thus $c_\alpha + c_\beta = 0$ in absolute homology. Using this, we see that
\begin{equation}
	\label{e:c_alphabeta}
	c_{\alpha\beta} =
	\begin{cases}
		-c_s + c_{r+s} - 2c_\alpha & s < r \\
		c_{2r} - 2c_\alpha & s = r
	\end{cases}
\end{equation}
The set $B' = \{c_0, \cdots, c_{r+s}, c_\alpha\}$ is a basis for $\upH_1(S;\RR)$; 
to see this, note that the matrix of algebraic intersection numbers has determinant one. We will use this basis several times in this section.

\begin{lemma}
	\label{l:strong_irreducible_hyperelliptic}
	The action of $M$ on $\upH_1(S; \RR)$ is strongly irreducible.
\end{lemma}

\begin{proof}
	Since $B'$ is a basis of $\upH_1(S; \RR)$, from \Cref{e:c_alphabeta} we see that the set $B = \{c_0, \cdots, c_{r+s}, c_{\alpha \beta} \}$ is also a basis.
	Note that
	\begin{itemize}
		\item all curves in $B$ are core curves of flat cylinders, 
		\item any pair of cycles in $\{c_0, \dotsc, c_{r+s}\}$ intersect, and
		\item $c_{\alpha \beta}$ intersects $c_0$.
	\end{itemize}
	By \Cref{l:cylinder}, for all $c \in B$, the twist $T(c)$ lies in $M$.
	Thus $B$ satisfies the hypotheses of \Cref{l:irreducible} and we are done.
\end{proof}

\begin{lemma}
	\label{l:dimensio_bound_hyperelliptic}
	For any odd positive integers $r \geq s \geq 1$, we have that 
	\[
	\dim_{\RR} \mathfrak{m} \geq 2g - 1 + \binom{2g - 1}{2} = g(2g - 1) + 1
	\]
	where $g = (r + s + 2)/2$.
\end{lemma}

\begin{proof}
	Observe that the curves $c_i$, for $0 \leq i \leq r + s$, and $c_{\alpha\beta}$ are core cores of flat cylinders (as can be observed in \Cref{f:hyperelliptic}). Hence, the twists $T(c_i)$ and $T(c_{\alpha\beta})$ lie in $M$. By \Cref{l:conjugation}, the twist $T(c_i + c_j)$ also lies in $M$ for each $0 \leq i < j \leq r + s$.
	Recall that we define $D(c) = T(c) - \Id$ in \Cref{e:defn_D}. 
	By \Cref{l:lie_algebra_nilpotents}, the elements $D(c_i)$, $D(c_i + c_j)$ and $D(c_{\alpha\beta})$ all lie in $\mathfrak{m}$.
	
	We now use the basis $B' = \{ c_0 , c_1, \dotsc, c_{r+s}, c_\alpha\}$ to identify the linear transformations of $\upH_1(S; \RR)$ with the square matrices in $\Mat_{2g}(\RR)$. 
	For $0 \leq i, j \leq r+s$ let $M_{i,j}$ be the matrix with $(i,j)$--entry one and remaining entries zero. 
	Similarly, we define $M_{i, \alpha}$ and $M_{\alpha, \alpha}$.
	
	If $P(i,j)$ is a logical proposition on $i$ and $j$, we define
	\[
	\llbracket P(i, j) \rrbracket = \begin{cases}
		1 & \text{if } P(i,j) \text{ is true} \\
		0 & \text{if } P(i,j) \text{ is false}.
	\end{cases}
	\]
	
	For any $0 \leq i, k \leq r + s$ we have 
	\[
	D(c_i)(c_k) = \begin{cases}
		c_i & k < i \\
		0   & k=i \\
		-c_i & k > i
	\end{cases} \qquad \mbox{and} \qquad
	D(c_i)(c_\alpha) = \begin{cases}
		-c_i & i < r \\
		0 & i \geq r
	\end{cases}
	\]
	Hence, for each $0 \leq i \leq r + s$,
	\begin{equation}
		\label{e:D(c_i)}
		D(c_i) = \sum_{k < i} M_{i, k} - \sum_{k > i} M_{i, k} - \llbracket i < r \rrbracket  M_{i, \alpha}.
	\end{equation}
	
	For any $0 \leq i < j \leq r + s$ and $0 \leq k \leq r + s$, let $c_{ij} = c_i + c_j$. Then,
	\[
	D(c_{ij})(c_k) = \begin{cases}
		2c_{ij} & k < i \\
		c_{ij} & k = i \\
		0 & i < k < j \\
		-c_{ij} & k = j \\
		-2c_{ij} & k > j
	\end{cases} \qquad \mbox{and} \qquad
	D(c_{ij})(c_\alpha) =  \begin{cases}
		-2c_{ij} & j < r \\
		-c_{ij} & i < r \leq j \\
		0 & i \geq r
	\end{cases}
	\]
	Hence, for each $0 \leq i < j \leq r + s$,
	\begin{equation}
		\label{e:D(cij)}
		\begin{aligned}
			D(c_i+c_j) ={}&  (M_{i,i} + M_{j, i}) - (M_{i, j} + M_{j,j}) \\
			&{} + 2\sum_{k < i}  (M_{i, k} + M_{j, k}) - 2 \sum_{j <  k} (M_{i, k} + M_{j,k}) \\
			& {} - (2 \llbracket j < r \rrbracket +  \llbracket i < r \leq j \rrbracket) (M_{i, \alpha} + M_{j, \alpha})
		\end{aligned}
	\end{equation}
	
	Finally, by \Cref{e:c_alphabeta}, we deduce that the $\alpha$--th row of the matrix $D(c_{\alpha\beta})$ does not vanish.  
	
	We set
	\[
	\mathfrak{d} = \{D(c_i) \st 0 \leq i \leq r + s\} \cup \{D(c_i + c_j) \st 0 \leq i < j \leq r + s\} \cup \{D(c_{\alpha\beta})\}
	\]

	\begin{claim*}\label{l:independence}
		$\mathfrak{d}$ is linearly independent in $\mathfrak{m}$.
	\end{claim*}
	
	\begin{proof}
		
		Let $0 \leq i \leq r + s$. Let $W$ be the $i$--th row of $D(c_i)$. For $i + 1 \leq j \leq r + s$, let $V_j$ be the $i$--th row of $D(c_i + c_j)$. We define $\mathfrak{R}_i = \{W, V_{i+1}, \dotsc, V_{r+s}\}$. The crux of the argument is the following:
		
		\begin{subclaim*}
			\label{c:vectors}
			$\mathfrak{R}_i$ is linearly independent.
		\end{subclaim*}
		
		\begin{proof}
			We denote the standard basis in $\RR^{2g}$ by $\{ e_0 , \dotsc , e_{r+s}, e_\alpha \}$. 
			From \Cref{e:D(c_i)}, we deduce
			\[
			W = \sum_{k < i} e_k - \sum_{k > i} e_k - \llbracket i < r \rrbracket e_\alpha
			\]
			From \Cref{e:D(cij)} we deduce for $i + 1 \leq j \leq r + s$
			\[
			V_j = 2 \sum_{k < i} e_k + e_i - e_j - 2 \sum_{j < k} e_k - (2\llbracket j < r\rrbracket + \llbracket i < r \leq j\rrbracket)e_\alpha
			\]
			
			We show linear independence by a Gaussian elimination argument. 
			For $i+1 \leq j < r+s$, we eliminate $e_k$ for $k > j$ from the linear combination for $V_j$, as follows. 
			Consider the vector
			\[
			V'_j = V_j + 2\sum_{k = j+1}^{r+s} (-1)^{k+j} V_k
			\]
			In terms of the basis, we obtain
			\[
			V'_j = 2(-1)^{j+1} \sum_{k < i} e_k + (-1)^{j+1} e_i + e_j + 2(-1)^j \llbracket j < r\rrbracket e_\alpha + (-1)^j \llbracket i < r \leq j \rrbracket e_\alpha.
			\]
			We also let $V'_{r+s} = V_{r+s}$.
			
			We apply a similar process to $W$, by eliminating all instances of $e_k$ for $k > i$.
			Let $W'$ be the vector $W - V'_{r+s} - V'_{r+s-1} - \dotsb - V'_{i+1}$. 
			We obtain 
			\[
			W' = \begin{cases}
				-\sum_{k < i} e_k + \llbracket i < r\rrbracket e_\alpha & \text{ if $i$ is even} \\
				+\sum_{k \leq i} e_k - \llbracket i < r\rrbracket e_\alpha  & \text{ if $i$ is odd}.
			\end{cases}
			\]
			
			Observe that 
			\begin{itemize}
				\item for $i+1 \leq j \leq r+s$, the vector $e_j$ features only in the linear combination for $V'_j$, and
				\item the vector $W'$ is nonzero and features none of the $e_j$ for $i+1 \leq j \leq r+s$.
			\end{itemize}
			The vectors $\{W', V'_{i+1}, \cdots , V'_{r+s} \}$ are thus linearly independent, as desired.
		\end{proof}
		
		A linear combination
		\[
		L = \sum_{i = 0}^{r + s} a_i D(c_i) + \sum_{i = 0}^{r + s} \sum_{j = i + 1}^{r + s} a_{i, j} D(c_i + c_j) + a_{\alpha\beta} D(c_{\alpha\beta})
		\]
		of matrices in $\mathfrak{d}$ can be regrouped as $\sum_{i = 0}^{r+s} S_i + a_{\alpha\beta} D(c_{\alpha\beta})$ where
		\[
		S_i = a_i D(c_i) + \sum_{j = i+1}^{r+s} a_{i,j} D(c_i+ c_j)
		\]
		In particular, when $i = r+s$ we have $S_{r+s} = a_{r+s} D(c_{r+s})$. Assume that $L = 0$. 
		
		The $\alpha$--th row of $D(c_{\alpha\beta})$ is non-zero and of every other matrix in $\mathfrak{d}$ is zero. 
		It follows that $a_{\alpha\beta} = 0$.
		Thus $L = \sum_{i = 0}^{r+s} S_i$.
		We prove by induction that the remaining coefficients are also zero.
		
		The $0$--th row vanishes for all $S_j$ with $j> 0$.
		Thus $0$--th row of $L$ equals the $0$--th row of $S_0$. 
		Hence, by restricting to the $0$--th row, we obtain a linear combination of the elements of $\mathfrak{R}_0$ that equals zero. 
		By the subclaim, we deduce that $a_0 = 0$ and $a_{0, j} = 0$ for each $j > 0$. 
		Thus, $S_0 = 0$ and $L = \sum_{j = 1}^{r+s} S_j$.
		
		Now suppose that all coefficients in $S_p$ are zero for $0 \leq p < i \leq r + s$. 
		Thus $L = S_i + \sum_{j = i+1}^{r+s} S_j$.
		Note that the $i$--th row of $S_j$ vanishes for all $j>i$.
		Thus by restricting to the $i$--th row, we obtain that the $i$--th row of $S_i$ equals zero. 
		This row is a linear combination of vectors in $\mathfrak{R}_i$. 
		Again, by the subclaim, we deduce that $a_i = a_{i, j} = 0$ for every $i < j \leq r + s$. 
		The claim follows by induction.
	\end{proof}
	The cardinality of $\mathfrak{d}$ is exactly $2g - 1 + \binom{2g - 1}{2} + 1 = g(2g - 1) + 1$, so the lemma follows from the claim.
\end{proof}

\begin{corollary}
	\label{c:Zariski_density_five_above_hyperelliptic}
	For $g \geq 2$, we have $\mathfrak{m} = \mathfrak{sp}(2g, \RR)$.
\end{corollary}
\begin{proof}
	Suppose that $g \geq 4$. Since $\dim_{\RR} \mathfrak{m} \geq g(2g - 1) + 1$, it follows that
	\begin{align*}
		\dim_{\RR} \mathfrak{m} &> \dim_{\RR} \mathfrak{su}(g/2,g/2) = g^2 -1\\
		\dim_{\RR} \mathfrak{m} &> \dim_{\RR} \mathfrak{so}(2n-1,2) = \log_2(2g) \log_2(8g^2) \text{ for } n = \log_2(2g)\\
		\dim_{\RR} \mathfrak{m} &> \dim_{\RR} \mathfrak{so}(2n-2,2) = \log_2(4g) \log_2(8g^2) \text{ for } n = \log_2(4g)\\
		\dim_{\RR} \mathfrak{m} &> \dim_{\RR} \mathfrak{so}^*(g/2,g/2) = g(g - 1)/2.
	\end{align*}
	
	For $g = 3$, we have directly from the list that $\mathfrak{m} = \mathfrak{sp}(2g, \RR)$ since the action of $M$ on $\upH_1(S; \RR)$ is strongly irreducible.
	
	The only remaining case is $g = 2$. Here, we have that
	\begin{align*}
		\dim_{\RR} \mathfrak{m} \geq g(2g - 1) = 6 &> 3 = \dim_{\RR} \mathfrak{su}(1,1) \\
		\mathfrak{sp}(4, \RR) &\cong \mathfrak{so}(3,2) \\
		\dim_{\RR} \mathfrak{sp}(4, \RR) = 10 &< 15 = \dim_{\RR} \mathfrak{so}(4,2),
	\end{align*}
	so the only possibility for $\mathfrak{m}$ is $\mathfrak{sp}(4, \RR) \cong \mathfrak{so}(3,2)$.
\end{proof}

\subsection{Exceptional non-minimal strata}

In \Cref{table:extensions_sporadic}, we exhibit an explicit simple extension from a component of a minimal stratum to each non-hyperelliptic component of exceptional non-minimal strata that was not already treated by the fourth author \cite[Table 1]{Gut17}, except for $\calQ(9, -1)^\irreg$. Indeed, there does not exist a simple extension from $\calQ(8)$ to this last component as noted by Lanneau \cite{Lan08},
so we treat it separately in \Cref{s:sporadic}. These computations were performed by using the \texttt{surface\_dynamics} package for SageMath~\cite{Sage20}.

\section{Simplicity}
\label{s:simplicity}

We now prove the Kontsevich--Zorich conjecture.

\begin{theorem}
	\label{t:KZ}
	The Kontsevich--Zorich cocycle has a simple spectrum for all components of all strata of abelian differentials. The plus and minus Kontsevich--Zorich cocycles also have a simple spectrum for all components of all strata of quadratic differentials. 
\end{theorem}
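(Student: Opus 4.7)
The plan is to deduce Theorem \ref{t:KZ} directly from the Avila--Viana simplicity criterion (Criterion \ref{c:simplicity}) applied in turn to the Kontsevich--Zorich cocycle (abelian case) and to each of the plus and minus Kontsevich--Zorich cocycles (quadratic case). To invoke the criterion, I need to verify three things for each cocycle: that it is locally constant, that it is integrable, and that the associated monoid of almost-flow loops is Zariski dense in the ambient symplectic group.

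First I would recall from Section \ref{s:dynamics} that with respect to the symbolic coding given by Theorem \ref{t:coding}, the Kontsevich--Zorich cocycle and its plus and minus restrictions (in appropriate trivialisations over the polytopes $\calC_\pi$) are locally constant. For integrability, the Rauzy--Veech cocycle is integrable because its $\log$-norm controls the return time function $\xi$ to the section $\calS_\theta$, which has exponential tails; this is equivalent to the finiteness of the Masur--Veech volume of $\calC^\root$. In the abelian case the Kontsevich--Zorich cocycle (after lifting to $\calC^\lab$) coincides with the Rauzy--Veech cocycle, so integrability is immediate. In the quadratic case, Section \ref{s:RV} shows that both the plus cocycle and the minus cocycle are dominated by the Rauzy--Veech cocycle in the sense required, so Lemma \ref{l:integrable} delivers their integrability.

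Next I would show the Zariski density of the almost-flow monoid. This is where the core of the paper enters: Theorem \ref{t:zariski} asserts that the Rauzy--Veech group (respectively the plus and minus Rauzy--Veech groups) is Zariski dense in its ambient symplectic group. By the construction in Proposition \ref{p:Rauzy homomorphism} together with Lemma \ref{l:segment homotopy}, every directed loop in $\calD^{\red}$ based at $\pi$ is realised by an almost-flow loop whose image under the evaluation of the cocycle is exactly the corresponding Rauzy--Veech matrix. Hence the Rauzy--Veech group is contained in the group generated by the image of the almost-flow monoid; since the former is Zariski dense, so is the group generated by the monoid. By definition, this means the monoid is Zariski dense.

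With all three hypotheses verified, a direct application of Criterion \ref{c:simplicity} yields simplicity of the spectrum in each case. The main obstacle in this argument is not in any single step of the deduction itself, which is a bookkeeping exercise, but rather lies entirely upstream in Theorem \ref{t:zariski}; in particular, the delicate inputs are \Cref{t:flow-general} (which allows us to replace loops in the Rauzy diagram by arbitrary loops in $\calC^\root$ when computing Zariski closures) and the dimension-counting arguments that eliminate all but the full symplectic Lie algebra for the plus piece in the quadratic setting.
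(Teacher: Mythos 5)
Your proposal is correct and follows essentially the same route as the paper: verify the three hypotheses of the Avila--Viana criterion (locally constant, integrable, Zariski dense monoid) and apply it, with all substantive input coming from \Cref{t:zariski} and the earlier coding/integrability discussion. The paper's proof of \Cref{t:KZ} is a near-verbatim version of this bookkeeping argument.
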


\begin{proof}
	Let $\calC$ be any component of a stratum of abelian or quadratic differentials. We have the following facts.
	\begin{itemize}
		\item 
		The diagonal flow on $\calC_\root$ (a finite cover of $\calC$) admits a coding as a countable shift with an approximate product structure.
		See \Cref{l:almost-bernoulli}.
		\item 
		The plus and minus cocycles lifted to this cover are locally constant and integrable. 
		See \Cref{s:integrability_continuous} and \Cref{c:discrete_integrable}.
		\item 
		The symplectic monodromy and Rauzy--Veech groups for the plus and minus cocycles (lifted to $\calC_\root$) are Zariski dense in their ambient symplectic groups.
		See \Cref{s:classification,s:density,s:zariski-density-plus}.
		Since the Zariski closure of a monoid is a group\footnote{See Footnotes~5 and~6 of~\cite{Sol95}.}, 
		we deduce that the corresponding monoids are also Zariski dense.
	\end{itemize}
	The hypotheses of \Cref{c:simplicity} are then met. The conjecture follows. 
\end{proof}

\newpage

\appendix

\section{Examples}

\subsection{The decomposition of \texorpdfstring{$\calC_\root$}{C\textasciicircum{}root} is not polytopal}
\label{a:non-polytopal}

In this section, we present an explicit example showing that the decomposition of $\calC_\root$ into the union of the $\overline{\calC(\pi)}$, where $\pi \in \calR$, is ``not polytopal''. Indeed, a compact arc in $\calC_\root$ may intersect $\calS = \calC_\root - \bigcup_{\pi \in \calR} \calC(\pi)$ infinitely many times even if it is transverse to $\calV$.

\begin{figure}
	\centering
	\begin{subfigure}[b]{\textwidth}
		\centering
		\includegraphics[scale=0.60]{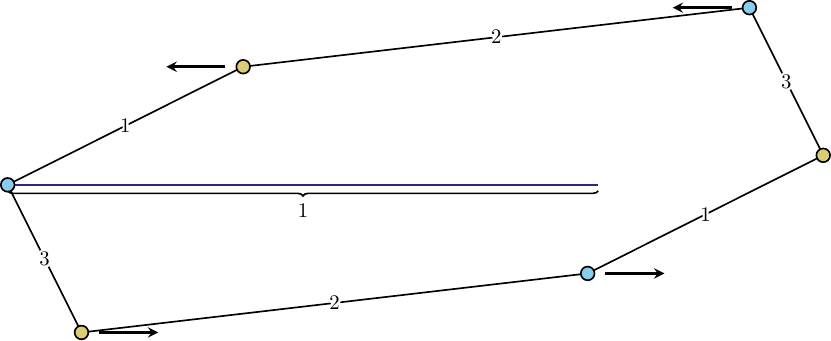}
		\caption{$\gamma(0)$.}
	\end{subfigure}
	
	\begin{subfigure}[b]{\textwidth}
		\centering
		\includegraphics[scale=0.60]{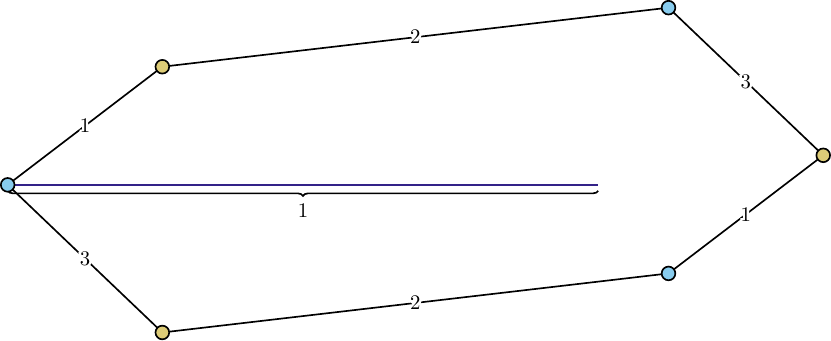}
		\caption{$\gamma(1)$.}
	\end{subfigure}
	\caption{A curve $\gamma \from [0, 1] \to \calH(0, 0)^\root$ that ends at $\calV$. The real periods are deformed following the arrows, while the imaginary periods remain constant.}
	\label{f:nonpolytopal example}
\end{figure}

Let $\calC = \calH(0, 0)$ and consider the curve shown in \Cref{f:nonpolytopal example}. The rooted differential $\gamma(1)$ contains a ``wide'' vertical cylinder, that is, a vertical cylinder such that an arc of length one emanating from the root ends before it crosses the cylinder entirely. On the other hand, we assume that the rooted differential $\gamma(0)$ is not bi-saddled, and that it has an admissible zippered rectangles construction for the underlying permutation $\pi = \big( \begin{smallmatrix} 1 & 2 & 3 \\ 3 & 2 & 1 \end{smallmatrix} \big)$.

Starting from $s = 0$ and as $s$ increases, the distinguished base-arc shrinks until its length is exactly equal to $1 + \min\{ x_1, x_3 \}$ at $s = s_1 > 0$. 
Thus, $\gamma(s)$ admits a distingushed base-arc for every $0 \leq s < s_1$, but $\gamma(s_1)$ does not (as an equality in \Cref{e:distinguished} is achieved). Indeed, $\gamma(s_1)$ belongs to a flow face. As $\gamma$ passes through this flow face, a forward Rauzy move must be performed to again obtain admissible parameters.
The winning letter is $3$, so the resulting permutation after the Rauzy move is again $\pi$.

This process continues inductively. Indeed, starting from $s = s_k$, for any integer $k \geq 1$, and as $s$ increases, the distinguished base-arc continues to shrink until the curve hits the flow face again at $s = s_{k+1} > s_k$. Thus, $\gamma(s)$ admits a distinguished base-arc for every $s_k < s < s_{k+1}$. A Rauzy move must be performed when the curve crosses the flow face; the winning letter continues to be $3$. Hence, the resulting permutation is again $\pi$.

In summary, there exists a countable collection $0 < s_1 < \dotsb < s_k < s_{k+1} < \dotsb {}$ such that:
\begin{enumerate}
	\item $\gamma(s)$ admits a distinguished base-arc for $0 \leq s < s_1$ and every $s_k < s < s_{k+1}$ for $k \geq 1$;
	\item $\gamma(s_k)$ belongs to a flow face for every $k \geq 1$.
\end{enumerate}
Thus, $\gamma$ intersects $\calS$ infinitely many times and there is no finite Rauzy--Veech sequence shadowing $\gamma$.
Moreover, as this process unfolds, the width of the rectangle $R_1$ goes to zero, while its height grows indefinitely. Hence, the admissible zippered rectangles constructions become more and more degenerate along $\gamma$, and do not converge to a well-defined element of $P(\pi)$. See \Cref{f:nonpolytopal rectangles example} for an illustration of this phenomenon.

\begin{figure}
	\centering
	\begin{subfigure}[b]{\textwidth}
		\centering
		\includegraphics[scale=0.60]{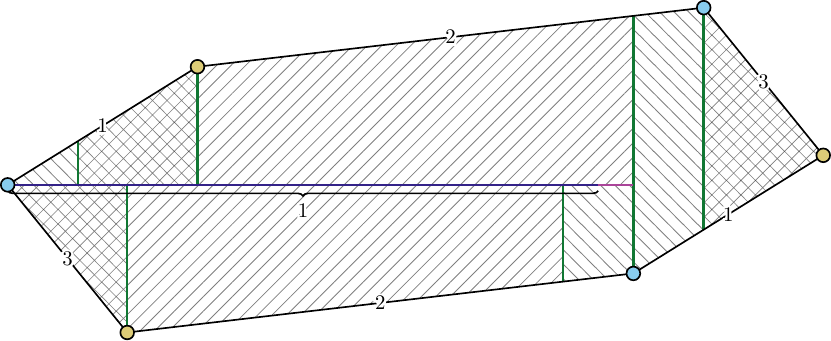}
		\caption{$\gamma(0)$.}
	\end{subfigure}
	
	\begin{subfigure}[b]{\textwidth}
		\centering
		\includegraphics[scale=0.60]{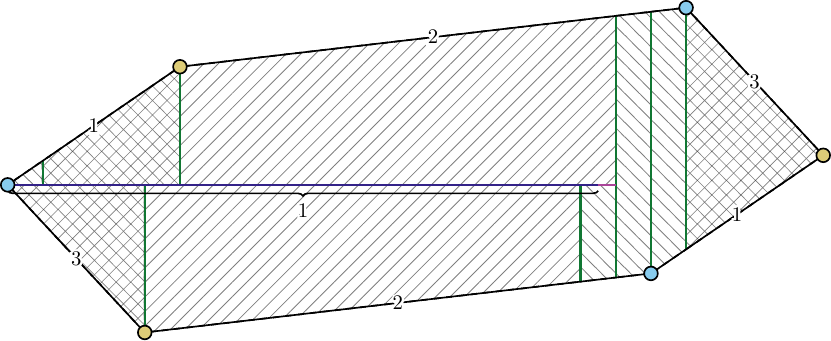}
		\caption{$\gamma(s)$ for $s_1 < s < s_2$.}
	\end{subfigure}
	
	\begin{subfigure}[b]{\textwidth}
		\centering
		\includegraphics[scale=0.60]{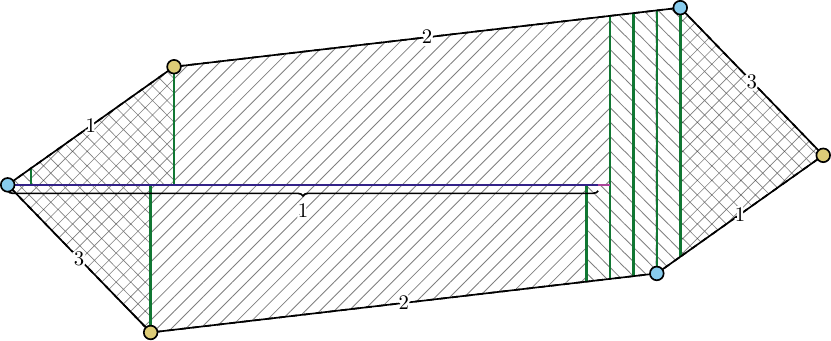}
		\caption{$\gamma(s)$ for $s_2 < s < s_3$.}
	\end{subfigure}
	\caption{Admissible zippered rectangles construction of $\gamma(s)$.}
	\label{f:nonpolytopal rectangles example}
\end{figure}

Similar examples exist also for \emph{toppling faces}, that is, when either some width $x_\alpha$ or some zipper height goes to zero.
It is possible to find a compact arc in $\calC_\root$ transverse to $\calV$ that intersects infinitely many toppling faces. Indeed, a simple way to obtain such an example is to consider a horizontal slit $J$ that does not meet the base-arc $I$, and such that some vertical segments emanating from $I$ meet $J$ before their first return. Then, the slit can be rotated until it becomes vertical (in a way that it still does not meet the base-arc). This forces the widths of some rectangles to hit zero infinitely many times before the slit becomes vertical.

\subsection{Crossing a toppling face}
\label{s:crossing faces}

In this section, we present concretely the construction used in the based-loop theorem, namely \Cref{t:based-loops}, in which any loop in $\uppi_1(\calC_\root, q_0)$ is written as a finite concatenation of paths that are forward (or backward) diagonal flow segments or are contained inside a polytope. The path in $\calC_\root$ that we present is not closed, but it still illustrates the key point. For more complicate paths or loops, this procedure has to be done several times.

\begin{figure}
	\centering
	\begin{subfigure}[b]{\textwidth}
		\centering
		\includegraphics[scale=0.60]{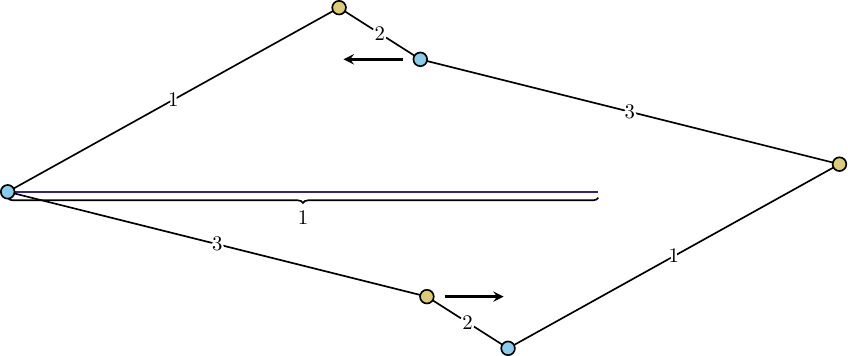}
		\caption{$\gamma(0)$.}
	\end{subfigure}
	
	\begin{subfigure}[b]{\textwidth}
		\centering
		\includegraphics[scale=0.60]{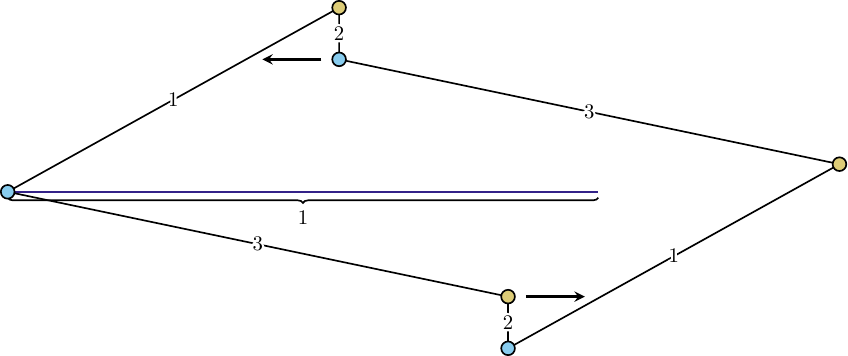}
		\caption{$\gamma(1/2)$.}
	\end{subfigure}
	
	\begin{subfigure}[b]{\textwidth}
		\centering
		\includegraphics[scale=0.60]{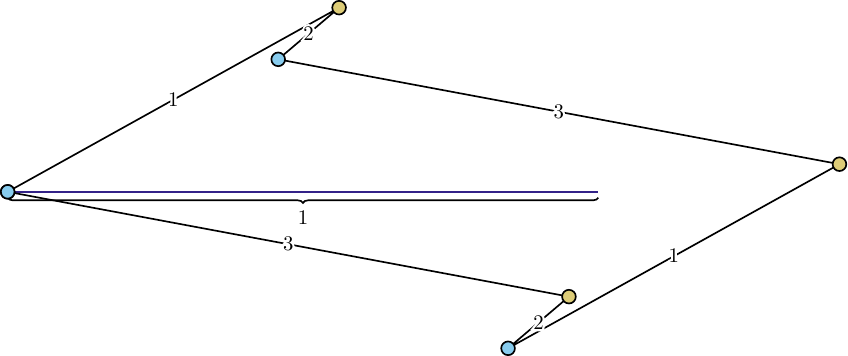}
		\caption{$\gamma(1)$.}
	\end{subfigure}
	\caption{A curve $\gamma \from [0, 1] \to \calH(0, 0)^\root$ that passes through $\calV$. The real periods are deformed following the arrows, while the imaginary periods remain constant.}
	\label{f:curve example}
\end{figure}

Let $\calC = \calH(0, 0)$. 
Consider the path $\gamma \from [0, 1] \to \calC_\root$ illustrated in \Cref{f:curve example}. 
Assume that $\gamma(0)$ and $\gamma(1)$ are not bi-saddled, while $\gamma(1/2)$ has a vertical saddle connection and, thus, belongs to $\calV$. 

\begin{figure}
	\centering
	\includegraphics[scale=0.60]{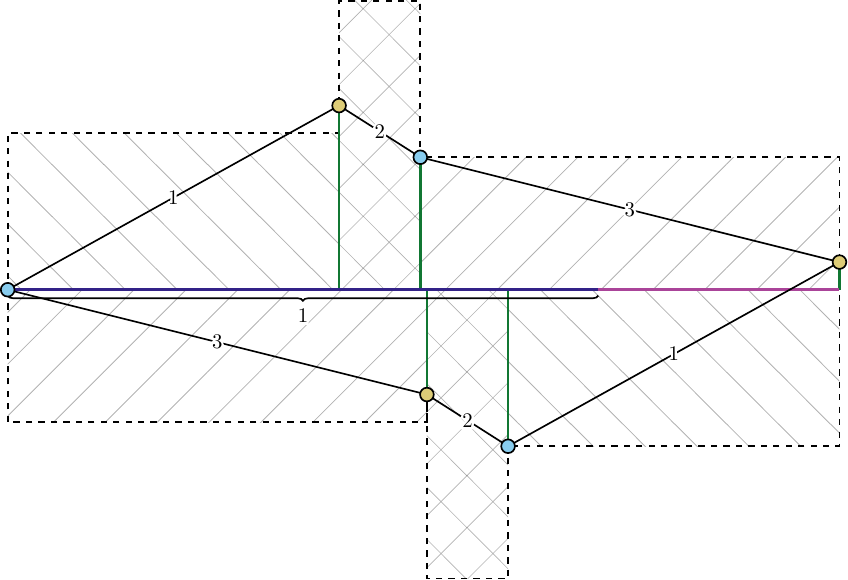}
	\caption{Admissible zippered rectangles construction of $\gamma(0)$.}
	\label{f:curve example 0 rect}
\end{figure}

We may assume that $\gamma(0)$ is not bi-saddled, so it admits a distinguished base-arc.
The resulting zippered rectangles construction, with underlying permutation $\pi = \big( \begin{smallmatrix} 1 & 2 & 3 \\ 3 & 2 & 1 \end{smallmatrix} \big)$, is shown in \Cref{f:curve example 0 rect}.

If $0 \leq s < 1/2$, a parameter $(x_s, y_s) \in P(\pi)$ of this zippered rectangles construction satisfies $q_\pi(x_s, y_s) = \gamma(s)$. As $s$ increases towards $1/2$, these parameters approach the boundary of $P(\pi)$ and $\gamma(1/2) \notin \calC(\pi)$. In particular, as $s$ increases to $1/2$, the width $x_2$ tends to zero while all other parameters stay bounded away from zero and infinity, and thus $\gamma(1/2)$ can be said to lie on a toppling face.

\begin{figure}
	\centering
	\begin{subfigure}[b]{\textwidth}
		\centering
		\includegraphics[scale=0.60]{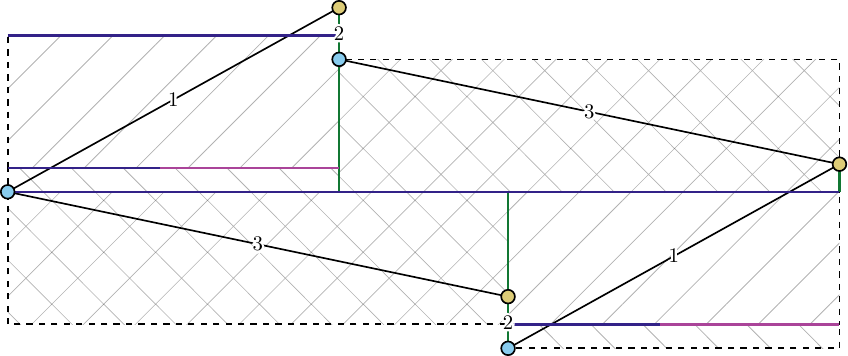}
		\caption{Before cutting and pasting.}
	\end{subfigure}
	
	\begin{subfigure}[b]{\textwidth}
		\centering
		\includegraphics[scale=0.60]{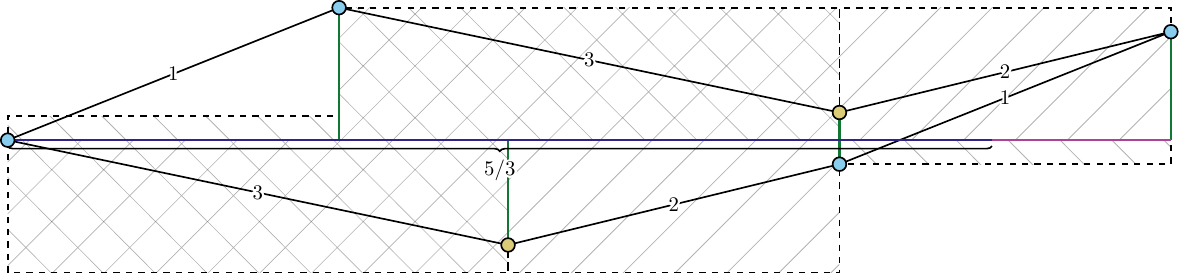}
		\caption{After cutting and pasting by two backward Rauzy moves.}
	\end{subfigure}
	\caption{Zippered rectangles construction of $\gamma(1/2)$ with a base-arc of length at least $5/3$.}
	\label{f:curve example 1/2 rect}
\end{figure}

On the other hand, $\gamma(1/2)$ does not lie in $\calW$. 
Thus, by \Cref{l:base-arc-infinite} the differential $\gamma(1/2)$ admits a base-arc. 
We choose a base-arc of length at least $5/3$, since the interior of any horizontal segment with length at least $5/3$ meets every leaf of the vertical foliation.
The resulting zippered rectangles construction, with underlying permutation $\sigma = \big( \begin{smallmatrix} 1 & 3 & 2 \\ 3 & 2 & 1 \end{smallmatrix} \big)$, is shown in \Cref{f:curve example 1/2 rect}. 
After diagonal flow by $T = -\log(5/3)$, this base-arc becomes a distinguished base-arc.  
Thus $g_T(\gamma(1/2))$ lies in $\calC(\sigma)$.
Observe that $\sigma$ is obtained from $\pi$ by two backward Rauzy moves. 

\begin{figure}
	\centering
	\begin{subfigure}[b]{\textwidth}
		\centering
		\includegraphics[scale=0.60]{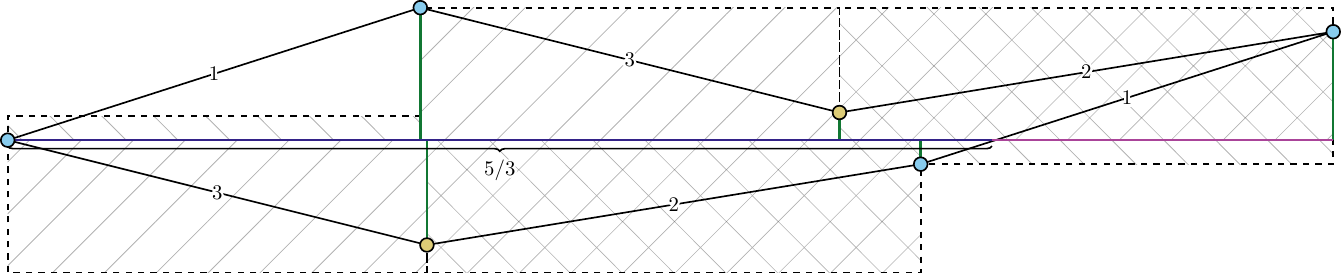}
		\caption{$\gamma(0)$.}
	\end{subfigure}
	
	\begin{subfigure}[b]{\textwidth}
		\centering
		\includegraphics[scale=0.60]{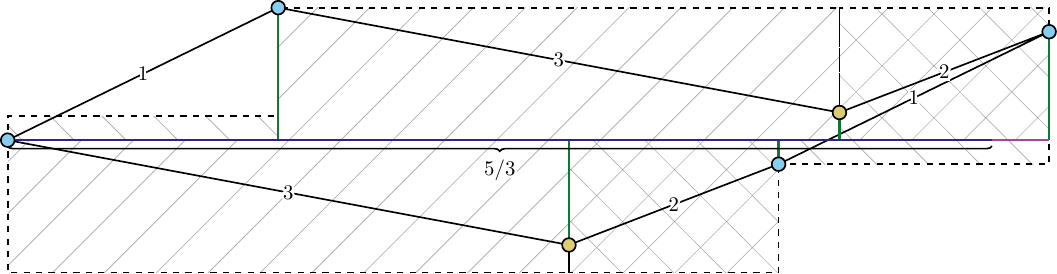}
		\caption{$\gamma(1)$.}
	\end{subfigure}
	\caption{Zippered rectangles constructions with a base-arc of length at least $5/3$.}
	\label{f:curve example 0 1 non-admiss rect}
\end{figure}

For ``large'' deformations of parameters, the zippered rectangles construction with a base-arc of length at least $5/3$ is contained in $\calC(\sigma)$ after diagonal flow by $T = -\log(5/3)$. In particular, we have that $g_T(\gamma(s)) \in \calC(\sigma)$ for every $s \in [0, 1]$. Let $(x_s', y_s') \in P(\sigma)$ such that $q_\sigma(x_s', y_s') = g_T(\gamma(s))$. \Cref{f:curve example 0 1 non-admiss rect} shows these zippered rectangles constructions for $\gamma(0)$ and $\gamma(1)$.

\begin{figure}
	\centering
	\begin{subfigure}[b]{\textwidth}
		\centering
		\includegraphics[scale=0.60]{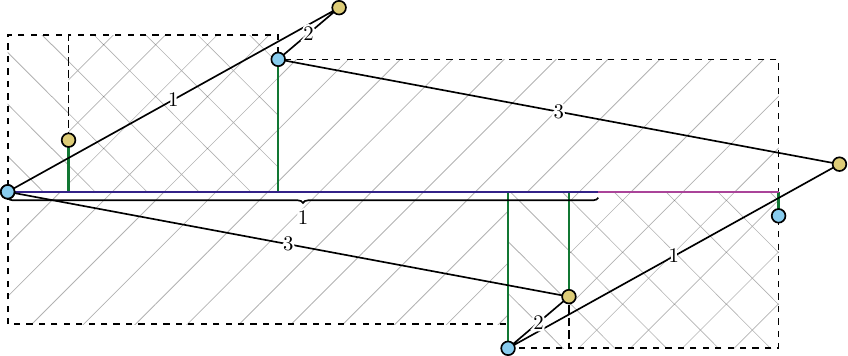}
		\caption{Before cutting and pasting.}
	\end{subfigure}
	
	\begin{subfigure}[b]{\textwidth}
		\centering
		\includegraphics[scale=0.60]{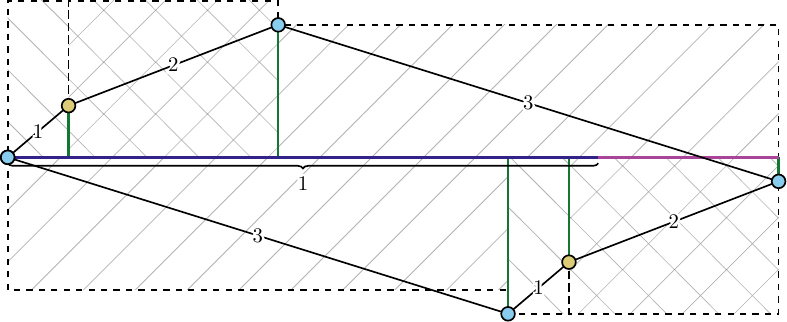}
		\caption{After cutting and pasting by two backward Rauzy moves followed by two forward Rauzy moves.}
	\end{subfigure}
	\caption{Admissible zippered rectangles construction of $\gamma(1)$.}
	\label{f:curve example 1 rect}
\end{figure}

Finally, we assume that $\gamma(1)$ is also not bi-saddled, so it admits a distinguished base-arc. 
The resulting zippered rectangles construction, with underlying permutation $\tau = \big( \begin{smallmatrix} 1 & 2 & 3 \\ 3 & 1 & 2 \end{smallmatrix} \big)$, is shown in \Cref{f:curve example 1 rect}. Observe that $\tau$ is obtained from $\pi$ by two backward Rauzy moves followed by two forward Rauzy moves.

If $1/2 < s \leq 1$, a parameter $(x_s, y_s)$ of this zippered rectangles construction satisfies $q_\tau(x_s, y_s) = \gamma(s)$. As $s$ decreases towards $1/2$, these parameters approach the boundary of $P(\tau)$ and $\gamma(1/2) \notin \calC(\tau)$.

Putting everything together, we obtain three open sets $U_0, U_{1/2}, U_1 \subseteq \calC_\root$ satisfying:
\begin{itemize}
	\item $U_0 = q_\pi(W_0)$, where $W_0 \subseteq P(\pi)$ is an open set containing $(x_0, y_0)$ whose closure is contained in $P(\pi)$;
	\item $U_{1/2} = g_{-T}(q_\sigma(W_{1/2}))$, where $W_{1/2} \subseteq P(\sigma)$ is an open set containing $(x_s', y_s')$ for every $s \in [0, 1]$ whose closure is contained in $P(\sigma)$; and
	\item $U_1 = q_\tau(W_1)$, where $W_1 \subseteq P(\tau)$ is an open set containing $(x_1, y_1)$ whose closure is contained in $P(\tau)$.
\end{itemize}

Then, $\gamma$ is homotopic, relative to its endpoints, to the concatenation of the paths:
\begin{itemize}
	\item $g_t \gamma(0)$ for $t \in [0, T]$;
	\item $g_T \gamma(s)$ for $s \in [0, 1]$; and
	\item $g_{-t} \gamma(1)$ for $t \in [0, T]$.
\end{itemize}

Therefore, the combinatorial description of this concatenation is
\[
\begin{pmatrix*}
	1 & 2 & 3 \\
	3 & 2 & 1
\end{pmatrix*} \xrightarrow{\Rbot^{-1}}
\begin{pmatrix*}
	1 & 3 & 2 \\
	3 & 2 & 1
\end{pmatrix*} \xrightarrow{\Rtop^{-1}}
\begin{pmatrix*}
	1 & 3 & 2 \\
	3 & 2 & 1
\end{pmatrix*} \xrightarrow{\Rbot}
\begin{pmatrix*}
	1 & 2 & 3 \\
	3 & 2 & 1
\end{pmatrix*} \xrightarrow{\Rtop}
\begin{pmatrix*}
	1 & 2 & 3 \\
	3 & 1 & 2
\end{pmatrix*}
\]
which is the (undirected) Rauzy--Veech sequence shadowing $\gamma$.

\section{Zariski density of the remaining cases}\label{s:sporadic}

In this section, we explicitly check the Zariski density for the plus piece of the four remaining components, namely $\calQ(5, -1)$, $\calQ(9, -1)^{\irreg}$, $\calQ(12)^{\reg}$ and $\calQ(12)^{\irreg}$.
We do this by using the following sufficient criterion.

\begin{criterion}[{\cite[Theorem 9.10]{Pra-Rap}}] \label{c:prasad rapinchuk}
	Let $G$ be a subgroup of $\Sp(2g, \ZZ)$. We have that $G$ is Zariski dense in $\Sp(2g, \RR)$ provided the Zariski closure of $G$ is not a power of $\SL(2, \RR)$, and there exist elements $A, B \in G$ satisfying:
	\begin{enumerate}
		\item $A$ is Galois-pinching in the sense of Matheus--Möller--Yoccoz \cite{Mat-Moe-Yoc}. That is, all of its eigenvalues are real and have distinct moduli, and the Galois group of its characteristic polynomial is maximal; and \label{i:Galois pinching}
		\item $B$ has infinite order and does not commute with $A$. \label{i:infinite order}
	\end{enumerate}
\end{criterion}

Since $A$ is symplectic, its characteristic polynomial $P$ is reciprocal. 
Thus, the Galois group of $P$ is contained inside an appropriate hyperoctahedral group. 
Hence, this group is maximal if and only if it has order $2^g g!$. 
Moreover, if the Zariski closure of a symplectic monodromy group is a power of $\SL(2, \RR)$, then it has more than one noncompact factor, which is forbidden for strongly irreducible pieces \cites[Theorem 1.2]{Fil17}[Theorem 1.1]{Esk-Fil-Wri}. 
Thus, if we can establish \Cref{c:prasad rapinchuk} together with \Cref{l:irreducible}, we obtain the Zariski density of $G$ inside $\Sp(2, \RR)$.

For the remaining components, we follow the same strategy.
We start with a specific permutation $\pi$.
We then exhibit two cycles $\delta_1$ and $\delta_2$ based at $[\pi]$ in the Rauzy diagram. 
We arrange matters so that the squares $\delta_1^2$ and $\delta_2^2$ are cycles in the \emph{labelled} Rauzy diagram based at $\pi$. 
Let $D_1$ and $D_2$ be the matrices coming from the actions of $\delta_1^2$ and $\delta_2^2$ on absolute homology, respectively, in the basis preferred by $\pi$.  
Let $A = D_1 D_2$ and $B = D_1$.  
Then we claim that $A$ and $B$ satisfy \Cref{c:prasad rapinchuk}.

These cycles $\delta_1$ and $\delta_2$ were found by a randomised computer search on the Rauzy diagrams.
We choose relatively short cycles to ensure that entries of the matrices $A$ and $B$ are relatively small.

\subsection{Zariski density of \texorpdfstring{$\calQ(5, -1)$}{Q(5, -1)}}
\label{s:Q(5 -1)}

Let
\[
\pi =
\begin{pmatrix*}
	1 & 2 & 3 & 2 & 4 \\
	4 & 5 & 5 & 3 & 1
\end{pmatrix*}
\]
and
\begin{align*}
	\delta_1 = {} & \Rbot^3 \Rtop^2 \Rbot^3 \Rtop \Rbot^2 \Rtop \Rbot^3 \Rtop^2 \Rbot^3 \\
	\delta_2 = {} & \Rtop^2 \Rbot \Rtop \Rbot \Rtop \Rbot \Rtop^3 \Rbot \Rtop \Rbot \Rtop \Rbot^2 \Rtop^2
\end{align*}

\begin{figure}
	\centering
	\includegraphics[scale=0.8]{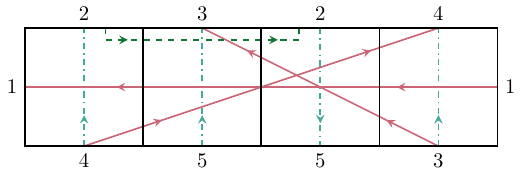}
	\caption{Representative of $\calQ(5, -1)$.}
	\label{f:Q(5 -1)}
\end{figure}

Consider the four curves $c_1, \dotsc, c_4$ depicted in \Cref{f:Q(5 -1)} as solid or dashed lines. These cycles form a basis for the absolute homology as their intersection matrix is
\[
\Omega =
\begin{pmatrix*}[r]
	0 & 0 & -1 & -1 \\
	0 & 0 & 1 & 0 \\
	1 & -1 & 0 & -1 \\
	1 & 0 & 1 & 0
\end{pmatrix*}
\]
that has determinant $1$. On the other hand, the cycle $v \in \upH_1(S; \RR)$ depicted in \Cref{f:Q(5 -1)} as dash-dotted vertical lines can be written as $v = -c_2 + c_3 + c_4$. Thus, the set $B = \{c_1, c_3, c_4, v\}$ readily satisfies the hypotheses of \Cref{l:irreducible}, so the $M$--action is strongly irreducible.

In the chosen basis, the matrices induced by $\delta_1^2$ and $\delta_2^2$ are
\[
A_1 = \begin{pmatrix*}[r]
	1 & -2 & -2 & 0 \\
	0 & -1 & -2 & 0 \\
	0 & 0 & 1 & 2 \\
	0 & 0 & 0 & -1
\end{pmatrix*}
\qquad
A_2 = \begin{pmatrix*}[r]
	-1 & 0 & 0 & 0 \\
	0 & -2 & 2 & -1 \\
	1 & 2 & -1 & 2 \\
	-2 & 1 & -2 & 0
\end{pmatrix*}
\]
Then, $A = A_1 A_2$ has the form
\[
A = \begin{pmatrix*}[r]
	-1 & 0 & 1 & -2 \\
	2 & 2 & -4 & 3 \\
	2 & 6 & -7 & 0 \\
	0 & 5 & -4 & -4 \\
\end{pmatrix*}
\]
The characteristic polynomial $P$ of $A$ is $P(t) = t^4 + 10 t^3 + 22 t^2 + 10 t + 1$. 
We verified in Magma \cite{Magma97} that $A$ is Galois pinching, that is, it satisfies condition \eqref{i:Galois pinching} of \Cref{c:prasad rapinchuk}. 
Setting $B = A_1$, we similarly check that $B$ satisfies condition \eqref{i:infinite order} of the criterion. 
Thus, the plus piece of $\calQ(5, -1)$ is Zariski dense inside $\Sp(4, \RR)$.

\subsection{Zariski density of \texorpdfstring{$\calQ(9, -1)^{\irreg}$}{Q(9, -1)\textasciicircum{}irr}}

Let
\[
\pi = 
\begin{pmatrix*}
	1 & 2 & 3 & 4 & 5 & 6 & 3 \\
	7 & 7 & 6 & 5 & 4 & 2 & 1
\end{pmatrix*}
\]
and
\begin{align*}
	\delta_1 = {} & \Rbot^4 \Rtop^5 \Rbot^3 \Rtop \Rbot^5 \Rtop \Rbot^6 \Rtop^2 \\
	\delta_2 = {} & \Rbot^4 \Rtop^2 \Rbot^3 \Rtop^3 \Rbot^7 \Rtop^3 \Rbot \Rtop^3 \Rbot \Rtop^2 \Rbot^2 \Rtop^3 \Rbot^2 \Rtop^2 \Rbot^2 \Rtop^2 \Rbot^2
\end{align*}

Consider the six curves $c_1, \dotsc, c_6$ depicted in \Cref{f:Q(9 -1)irr} as solid or dashed lines. These cycles form a basis for the absolute homology, as their intersection matrix is
\[
\Omega =
\begin{pmatrix*}[r]
	0 & -1 & 0 & -1 & -1 & -1 \\
	1 & 0 & 0 & -1 & -1 & -1 \\
	0 & 0 & 0 & 1 & 1 & 1 \\
	1 & 1 & -1 & 0 & -1 & -1 \\
	1 & 1 & -1 & 1 & 0 & -1 \\
	1 & 1 & -1 & 1 & 1 & 0
\end{pmatrix*}
\]
that has determinant $1$. On the other hand, the cycle $v \in \upH_1(S; \RR)$ depicted in \Cref{f:Q(9 -1)irr} as dash-dotted vertical lines can be written as $v = c_2 + c_3$. Thus, the set $B = \{c_1, c_2, c_4, c_5, c_6, v\}$ readily satisfies the hypotheses of \Cref{l:irreducible}, so the $M$--action is strongly irreducible.

\begin{figure}
	\centering
	\includegraphics[scale=0.8]{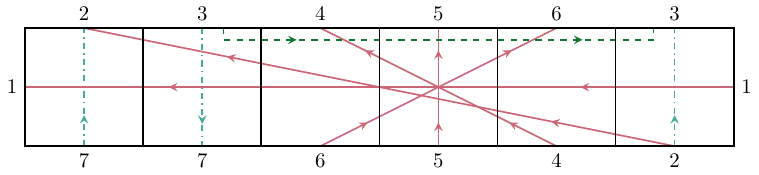}
	\caption{Representative of $\calQ(9, -1)^{\irreg}$.}
	\label{f:Q(9 -1)irr}
\end{figure}

In the chosen basis, the matrices induced by $\delta_1^2$ and $\delta_2^2$ are
\[
A_1 = \begin{pmatrix*}[r]
	0 & 2 & -1 & -1 & -1 & -2 \\
	0 & -1 & 0 & 0 & 0 & 0 \\
	-1 & -1 & 0 & -2 & -2 & -1 \\
	0 & 0 & 0 & -1 & 0 & 0 \\
	0 & 0 & 0 & 0 & -1 & 0 \\
	1 & 3 & -1 & 2 & 2 & 0
\end{pmatrix*}
\quad
A_2 = \begin{pmatrix*}[r]
	-3 & -10 & -2 & -4 & -6 & -4 \\
	1 & 3 & 0 & 2 & 2 & 0 \\
	2 & 3 & -2 & 0 & -1 & 0 \\
	0 & -2 & -2 & -1 & -2 & -2 \\
	-3 & -7 & 1 & -2 & -2 & 0 \\
	0 & 0 & 0 & 0 & 0 & -1
\end{pmatrix*}
\]
Then, $A = A_1 A_2$ has the form
\[
A = \begin{pmatrix*}[r]
	-2 & 0 & 2 & 0 & -1 & -1 \\
	-6 & -1 & 3 & -2 & 0 & -3 \\
	7 & -1 & -2 & 2 & 3 & 1 \\
	3 & -3 & 2 & 1 & 3 & -2 \\
	5 & -3 & 3 & 2 & 3 & -2 \\
	8 & -2 & -2 & 2 & 5 & 0
\end{pmatrix*}
\]
The characteristic polynomial $P$ of $A$ is $P(t) = t^6 + t^5 - 22 t^4 - 52 t^3 - 22 t^2 + t + 1$. 
Again, we use Magma to check that $A$ is Galois pinching. Setting $B = A_1$, we can readily check that $B$ satisfies condition \eqref{i:infinite order} of the criterion. Thus, the plus piece of $\calQ(9, -1)^{\irreg}$ is Zariski dense inside $\Sp(6, \RR)$.

\subsection{Zariski density of \texorpdfstring{$\calQ(12)^{\reg}$}{Q(12)\textasciicircum{}reg}}

Let
\[
\pi = 
\begin{pmatrix*}
	1 & 2 & 1 & 3 & 4 & 5 & 6 & 7 \\
	2 & 4 & 3 & 6 & 5 & 8 & 7 & 8
\end{pmatrix*}
\]
and
\begin{align*}
	\delta_1 = {} & \Rbot^4 \Rtop^2 \Rbot \Rtop^6 \Rbot \Rtop^4 \Rbot^2 \Rtop \Rbot^4 \Rtop^2 \Rbot^7 \Rtop^2 \Rbot^2 \Rtop \Rbot^5 \Rtop^4 \Rbot \Rtop \Rbot \Rtop \Rbot \\
	\delta_2 = {} & \Rbot \Rtop \Rbot \Rtop^3 \Rbot \Rtop^6 \Rbot^2 \Rtop^4 \Rbot \Rtop^3 \Rbot^2 \Rtop^3 \Rbot^2 \Rtop \Rbot^4 \Rtop^2 \Rbot \Rtop^3 \Rbot^2 \Rtop^2 \Rbot^3 \Rtop \Rbot^5
\end{align*}

Let $M$ be the symplectic monodromy group of $\calQ(12)^{\reg}$. We have that the $M$--action is strongly irreducible by \Cref{l:irreducible-minimal}.

Consider the six curves $c_1, \dotsc, c_6$ shown in \Cref{f:Q(12)reg}. Ordered appropriately, these curves form a symplectic basis. In this basis, the matrices induced by $\delta_1^2$ and $\delta_2^2$ are
\begin{align*}
	A_1 &= \begin{pmatrix*}[r]
		2 & 5 & -1 & 7 & 6 & 2 & 1 & 0 \\
		-1 & 0 & 1 & -1 & -2 & 1 & 3 & 0 \\
		-1 & 2 & -1 & 2 & 1 & 1 & -2 & 0 \\
		-1 & -4 & 1 & -6 & -5 & -2 & 0 & 0 \\
		1 & 1 & 2 & 1 & 0 & 2 & 5 & 0 \\
		-1 & -3 & 1 & -4 & -4 & -1 & 3 & 0 \\
		0 & 0 & 0 & 0 & 0 & 0 & -1 & 0 \\
		-1 & -5 & 7 & -11 & -10 & 2 & 11 & -1
	\end{pmatrix*} \\
	A_2 &= \begin{pmatrix*}[r]
		1 & 3 & -5 & 1 & 6 & -7 & -3 & -1 \\
		0 & -2 & 2 & 0 & -3 & 3 & 1 & 1 \\
		4 & -1 & -1 & 2 & 3 & -4 & 1 & 0 \\
		-7 & 5 & -3 & -2 & -1 & 3 & -5 & -1 \\
		5 & -5 & 3 & 2 & -2 & 1 & 5 & 2 \\
		2 & 0 & -1 & 0 & 2 & -3 & 0 & -1 \\
		8 & -1 & -4 & 3 & 7 & -10 & 0 & -1 \\
		2 & 3 & -5 & 1 & 6 & -7 & -3 & -2
	\end{pmatrix*}
\end{align*}
Then, $A = A_1 A_2$ has the form
\[
A = \begin{pmatrix*}[r]
	17 & -7 & 15 & -17 & 5 & 11 & 36 & 20 \\
	23 & -13 & 25 & -38 & 8 & 24 & 62 & 33 \\
	6 & 0 & 0 & 7 & 0 & -7 & -5 & -2 \\
	33 & -20 & 34 & -50 & 6 & 37 & 89 & 51 \\
	28 & -16 & 31 & -47 & 9 & 33 & 81 & 44 \\
	15 & -7 & 12 & -15 & 3 & 8 & 27 & 15 \\
	21 & -6 & 5 & 12 & -6 & -6 & 7 & 11 \\
	1 & -1 & 0 & 1 & -2 & 1 & 1 & 2
\end{pmatrix*}
\]

\begin{figure}
	\centering
	\includegraphics[scale=0.8]{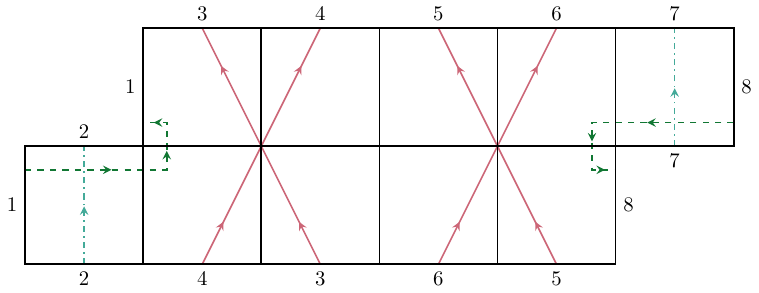}
	\caption{Representative of $\calQ(12)^{\reg}$.}
	\label{f:Q(12)reg}
\end{figure}

The characteristic polynomial $P$ of $A$ is $P(t) = t^8 + 20 t^7 - 1686 t^6 -24 t^5 + 36258 t^4 - 24 t^3 - 1686 t^2 + 20 t + 1$.
Using Magma, we can check that $A$ is Galois pinching.
Setting $B = A_1$, we can also similarly check that $B$ satisfies condition \eqref{i:infinite order} of the criterion.
Thus, the plus piece of $\calQ(12)^{\reg}$ is Zariski dense inside $\Sp(8, \RR)$.

\subsection{Zariski density of \texorpdfstring{$\calQ(12)^{\irreg}$}{Q(12)\textasciicircum{}irr}}

Let
\[
\pi = 
\begin{pmatrix*}
	1 & 2 & 1 & 3 & 4 & 5 & 6 & 7 \\
	2 & 6 & 5 & 4 & 3 & 8 & 7 & 8
\end{pmatrix*}
\]
and
\begin{align*}
	\delta_1 = {} & \Rbot^3 \Rtop \Rbot^2 \Rtop^5 \Rbot \Rtop^5 \Rbot^2 \Rtop^3 \Rbot \Rtop^2 \Rbot^2 \Rtop^6 \Rbot \Rtop^2 \Rbot^2 \Rtop \Rbot \Rtop^3 \Rbot^2 \Rtop^6 \Rbot^4 \\
	\delta_2 = {} & \Rbot^5 \Rtop^5 \Rbot \Rtop \Rbot \Rtop^4 \Rbot^3 \Rtop^7 \Rbot^2 \Rtop \Rbot \Rtop^4 \Rbot^4
\end{align*}

Consider the six curves $c_1, \dotsc, c_6$ depicted in \Cref{f:Q(12)irr} as solid, dashed or dash-dotted lines. These cycles form a basis for the absolute homology as their intersection matrix is
\[
\begin{pmatrix*}[r]
	0 & 1 & 0 & 0 & 0 & 0 & 0 & 0 \\
	-1 & 0 & 0 & 0 & 0 & 0 & 0 & 0 \\
	0 & 0 & 0 & -1 & -1 & -1 & 0 & 0 \\
	0 & 0 & 1 & 0 & -1 & -1 & 0 & 0 \\
	0 & 0 & 1 & 1 & 0 & -1 & 0 & 0 \\
	0 & 0 & 1 & 1 & 1 & 0 & 0 & 0 \\
	0 & 0 & 0 & 0 & 0 & 0 & 0 & 1 \\
	0 & 0 & 0 & 0 & 0 & 0 & -1 & 0
\end{pmatrix*}
\]
and has determinant $1$. On the other hand, the cycle $b$, depicted as the slope-$1$ densely dotted lines, can be written as $b = c_1 + c_2 - c_3 + c_4 + c_7 - c_8$, and the cycle $p$ depicted as loosely dotted horizontal lines can be written as $p = c_1 + c_8$. Thus, the set $B = \{c_2, c_3, c_4, c_5, c_6, c_7, b, p\}$ readily satisfies the hypotheses of \Cref{l:irreducible}, so the $M$--action is strongly irreducible.

\begin{figure}
	\centering
	\includegraphics[scale=0.8]{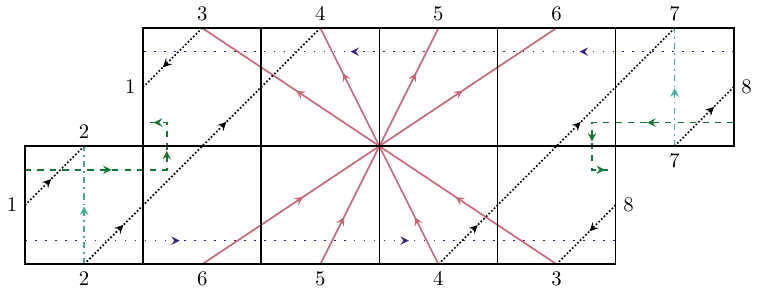}
	\caption{Representative of $\calQ(12)^{\irreg}$.}
	\label{f:Q(12)irr}
\end{figure}

In the chosen basis, the matrices induced by $\delta_1^2$ and $\delta_2^2$ are
\begin{align*}
	A_1 &= \begin{pmatrix*}[r]
		-1 & 6 & -2 & 6 & 11 & 8 & 6 & -4 \\
		2 & -5 & 0 & -6 & -9 & -6 & -4 & 6 \\
		-1 & -3 & -2 & -6 & -8 & -5 & -3 & 4 \\
		-1 & -5 & -1 & -9 & -11 & -7 & -5 & 6 \\
		0 & 0 & 0 & 0 & -1 & 0 & 0 & 0 \\
		1 & 3 & 1 & 6 & 8 & 4 & 3 & -4 \\
		-2 & -4 & 2 & -4 & -8 & -6 & -5 & 0 \\
		0 & -6 & 2 & -6 & -11 & -8 & -6 & 3 \\
	\end{pmatrix*} \\
	A_2 &= \begin{pmatrix*}[r]
		-2 & 1 & 1 & 1 & -1 & -2 & -3 & 0 \\
		3 & -5 & 3 & 3 & 3 & 0 & -2 & 0 \\
		0 & 0 & -1 & 0 & 0 & 0 & 0 & 0 \\
		0 & 0 & 0 & -1 & 0 & 0 & 0 & 0 \\
		0 & 2 & -2 & -2 & -1 & 2 & 2 & 0 \\
		-3 & 4 & -3 & -3 & -3 & -1 & 2 & 0 \\
		0 & 0 & 0 & 0 & 0 & 0 & -1 & 0 \\
		-7 & 13 & -9 & -9 & -7 & 2 & 5 & -1
	\end{pmatrix*}
\end{align*}
Then, $A = A_1 A_2$ has the form
\[
A = \begin{pmatrix*}[r]
	6 & -15 & 1 & 1 & 6 & 12 & 2 & 43 \\
	-19 & 27 & 3 & 5 & 4 & -25 & 4 & -43 \\
	-7 & -19 & 2 & 1 & 12 & 18 & -2 & 51 \\
	-33 & 11 & 6 & 9 & 22 & -11 & 4 & 13 \\
	-41 & 34 & 8 & 11 & 21 & -33 & 8 & -29 \\
	-24 & 30 & 5 & 7 & 8 & -28 & 6 & -40 \\
	-15 & 24 & 3 & 5 & 4 & -23 & 5 & -35 \\
	32 & -12 & -4 & -6 & -16 & 10 & 0 & 5
\end{pmatrix*}
\]
The characteristic polynomial $P$ of $A$ is $P(t) = t^8 - 47 t^7 - 794 t^6 + 11691 t^5 - 22022 t^4 + 11691 t^3 - 794 t^2 - 47 t + 1$. By using Magma again, we can explicitly check that $A$ is Galois pinching. Setting $B = A_1$, we can similarly check that $B$ satisfies condition \eqref{i:infinite order} of the criterion. Thus, the plus piece of $\calQ(12)^{\irreg}$ is Zariski dense inside $\Sp(8, \RR)$.

\section{Masur polygons}
\label{a:Masur}

As is well-known~\cites[Figure 17]{Via06}[Figure 8]{Boi-Lan}, the Masur polygon associated with given height parameters is not necessarily embedded.  We give an example in \Cref{f:zippered self intersections}.  This is one justification for our use of singularity parameters (\Cref{s:singularity_parameters}).

\begin{figure}
	\centering
	\begin{subfigure}[b]{0.45\textwidth}
		\centering
		\includegraphics[width=\textwidth]{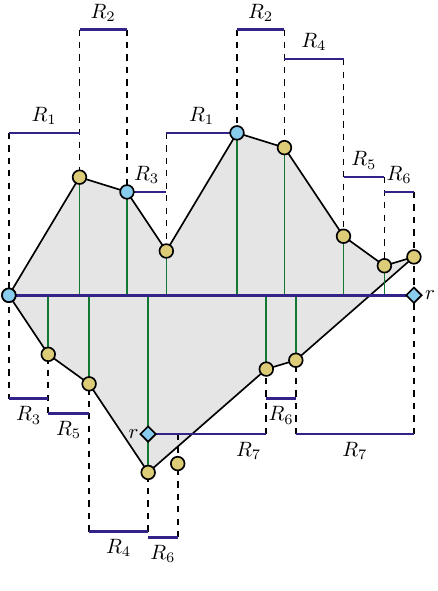}
		\caption{Without self-intersections.}
		\label{f:zippered}
	\end{subfigure}
	\hfill
	\begin{subfigure}[b]{0.45\textwidth}
		\centering
		\includegraphics[width=\textwidth]{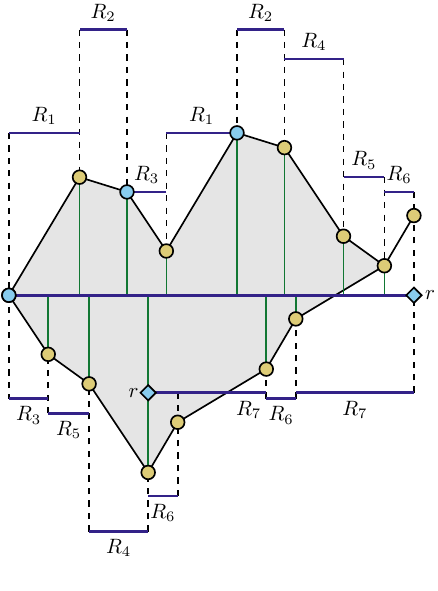}
		\caption{With self-intersections.}
		\label{f:zippered self intersections}
	\end{subfigure}
	\caption{Two illustrations of the zippered rectangles construction for the permutation $\big( \begin{smallmatrix} 1 & 2 & 3 & 1 & 2 & 4 & 5 & 6 \\ & 3 & 5 & 4 & 7 & 6 & 7 \end{smallmatrix} \big)$.  The two singularities are marked with circles while the right endpoint $r$ is marked with a diamond.  On the left (A) the polygon constructed by joining the singularities has no self-intersections.  On the right (B) the polygon self-intersects. In each figure we include an extra copy of $R_6$ in order to provide an open neighbourhood of the zipper running through the (copy of the) right endpoint $r$ (here marked with a diamond).  Note that to provide an open neighbourhood of the boundary of the polygon in (A) we would have to include an extra copy of $R_5$.  In (B) we would have to include an extra copy of $R_5$ and $R_4$.}
	\label{f:zippered rectangles}
\end{figure}

\section{Veech, Rauzy, and Masur--Smillie--Veech measures}
\label{a:measures}

Here we give a brief account of some of the various measures arising on stratum components, on lower boundaries of polytopes, and on spaces of (non-classical) interval exchange transformations.  

\subsection{A special case of the disintegration theorem}
\label{s:disintegration}

Suppose that $M$ is a smooth $m$--manifold.
Let $\mathrm{C}_{\mathrm{c}}(M)$ be the vector space of continuous and compactly supported functions from $M$ to $\RR$. 
By the representation theorem, 
the vector space of linear functionals on $\mathrm{C}_{\mathrm{c}}(M)$ is (canonically) isomorphic to the vector space of (signed) Radon measures on $M$.  
This isomorphism sends the cone of positive linear functionals to the cone of (unsigned) Radon measures. 

Suppose now that $M$ is smoothly embedded in $N$, a smooth $n$--manifold.
Let $\iota \from M \to N$ be the inclusion.
Suppose that $\omega$ is a $m$--form on $N$. 
Define $\varphi_\omega \from \mathrm{C}_{\mathrm{c}}(M) \to \RR$ by $\varphi_\omega(f) = \int_M f \cdot \iota^* \omega$.
Applying the isomorphism, let $\mu_\omega$ be the resulting Radon measure. 

As an example of this, suppose that $M = N = \RR^n$. 
Suppose that $x = (x_i)$ are the usual coordinates on $\RR^n$. 
Let $\diff x = \bigwedge_i \diff x_i$ be the resulting volume form.  
Then $\leb_N = \mu_{\diff x}$ is the usual Lebesgue measure on $\RR^n$. 

Suppose now that $F \from N \to \RR^k$ is a smooth function.
Suppose that $F = (f_j)$ are the components of $F$.
Define $\diff F = \bigwedge_j \diff f_j$.
Suppose that $v \in \RR$ is a regular value for $F$.
So $M = F^{-1}(v)$ is a smooth $(n - k)$--submanifold. 
Suppose that $\omega$ and $\omega'$ are any pair of $n - k$--forms on $N$. 
Suppose that $\omega \wedge \diff F = \omega' \wedge \diff F$.
We deduce that $\mu_\omega$ and $\mu_{\omega'}$ are equal as measures on $M$. 


Suppose that $N$, $F$, $v$, $M$, and $\omega$ are as above.
Suppose also that $N$ is equipped with a volume form $\diff x$.
Let $\leb_N$ be the resulting Lebesgue measure on $N$.
Suppose finally that $\omega \wedge \diff F = \diff x$.  
Set $\mu = \mu_\omega$. 
Then we say that $\mu$ is obtained by \emph{disintegrating $\leb_N$ with respect to $F$}.  
By the discussion in the previous paragraph, the measure $\mu$ depends only on $F$ and not on the choice of $\omega$. 
Note that this is a (very special) case of the disintegration theorem;
the level sets of $F$ provide the foliation needed in that result.

\subsection{Measures on full cones}
\label{s:polytopal-measures-full}


Suppose that $\pi$ is a generalised permutation, as defined in \Cref{s:combinatorics}.
Let $W(\pi)$ and $H(\pi)$ each be copies of $\RR^\calA$; 
these record the widths and heights of the singularity parameters introduced in \Cref{s:singularity_parameters}.
Let $V(\pi) = W(\pi) \cross H(\pi)$.
We equip $V(\pi)$ with the volume form $\diff x \wedge \diff y$; 
this gives the Lebesgue measure $\leb_{W(\pi)} \cross \leb_{H(\pi)}$. 
We equip $V(\pi)$ with the \emph{diagonal flow} defined by $(x,y) \mapsto (e^t x, e^{-t} y)$.
(This is sometimes called the \emph{Veech flow}~\cite[Section~3]{Avi-Gou-Yoc}.)
Note that the diagonal flow on $V(\pi)$ preserves volume in $V(\pi)$, but not in $W(\pi)$ or in $H(\pi)$. 

\begin{definition}
	\label{d:cones}
	Let $X(\pi)$ be the open cone in $W(\pi)$ given by the positivity conditions (that is, $x_\alpha > 0$ for all $\alpha \in \calA$) and also, in the quadratic case, the width equality in \eqref{e:e.x-y}.
	
	Let $Y(\pi)$ be the open cone in $H(\pi)$ given by the zipper inequalities 
	\eqref{e:top-zippers} and \eqref{e:bottom-zippers} and also, in the quadratic case, the height equality in \eqref{e:e.x-y}.
	
	We call $X(\pi)$ and $Y(\pi)$ the \emph{cone of widths} and the \emph{cone of heights}, respectively, for $\pi$.
\end{definition}

If we replace the strict inequalities by non-strict ones, we obtain the \emph{closed cones} $\close{X}(\pi)$ and $\close{Y}(\pi)$.  
We now suppose that $\pi$ is irreducible, as defined by~\cite[Definition~3.1]{Boi-Lan}. 
Then we have the following.
\begin{itemize}
	\item 
	By~\cite[Theorem 3.2]{Boi-Lan}, the cones $X(\pi)$ and $Y(\pi)$ are non-empty. 
	\item 
	Furthermore, if $\pi$ is abelian then the cones have dimension $|\calA|$.  
	If $\pi$ is quadratic then they have dimension $|\calA| - 1$. 
	\item
	The closed cones are the closures of the open ones.  
	\item 
	None of the cones (open or closed) contain a (non-trivial) linear subspace.
\end{itemize}

In the abelian case we define $\lambda_\pi$ to be the restriction of the product measure $\leb_{W(\pi)} \times \leb_{H(\pi)}$ to $X(\pi) \cross Y(\pi)$.
Note that $\lambda_\pi$ is invariant under the diagonal flow.

In the quadratic case we define the function $w_\flip \from V(\pi) \to \RR$ to be the sum of the widths of the top flip letters minus the sum of the widths of the bottom flip letters. 
We define $h_\flip \from V(\pi) \to \RR$ similarly on heights. 
We define $F = (w_\flip, h_\flip)$.
Note that $X(\pi) \cross Y(\pi)$ is an open subset of the zero-set of $F$.
Flowing for time $t$ scales $w_\flip$ by $e^t$ and $h_\flip$ by $e^{-t}$.
We deduce that $\diff F = \diff w_\flip \wedge \diff h_\flip$ is invariant under the diagonal flow. 
We define the measure $\lambda_\pi$ on $X(\pi) \cross Y(\pi)$ by disintegrating $\leb_{W(\pi)} \times \leb_{H(\pi)}$ with respect to $F$.  
Since both $\diff x \wedge \diff y$ and $\diff F$ are invariant under the diagonal flow, the same holds for $\lambda_\pi$.
We may also restrict $w_\flip$ and $h_\flip$ to $X(\pi)$ and $Y(\pi)$, respectively. 
If we disintegrate $\leb_{W(\pi)}$ with respect to $w_\flip$ we get a measure $\lambda_{X(\pi)}$ on $X(\pi)$.
Similarly, if we disintegrate $\leb_{H(\pi)}$ with respect to $h_\flip$ we get a measure $\lambda_{Y(\pi)}$ on $Y(\pi)$.
It follows that $\lambda_\pi$ is the product measure $\lambda_{X(\pi)} \cross \lambda_{Y(\pi)}$.

\subsection{Invariance under Rauzy--Veech moves}
\label{s:RV_invariance}

Suppose that the generalised permutations $\pi$ and $\pi'$ are related by a single Rauzy--Veech move. 
Let $E$ be the associated Rauzy--Veech matrix.
From \Cref{conv:column-vectors} the induced Rauzy--Veech map $\RV \from V(\pi') \to V(\pi)$ is given by $\RV (x', y') = (E x', Ey')$.
Note that, by \Cref{e:parameter-change} the matrix $E$ is unimodular;
thus $\RV$ preserves volume (as given by the forms $\diff x' \wedge \diff y'$ and $\diff x \wedge \diff y$).

In the abelian case we deduce that the pull-back of $\lambda_\pi$ by $\RV^*$ gives $\lambda_{\pi'}$.

The quadratic case is slightly harder. 
By \Cref{e:parameter-change}, and by working through the two relevant cases (where the losing letter is either a translation or a flip letter), we deduce that the pull-back of the function $w_\flip$ (on $V(\pi)$) by $\RV^*$ gives $w'_\flip$ (on $V(\pi')$).
The same holds for $h_\flip$ and $h'_\flip$.
We deduce that, in the quadratic case, the pull-back of $\lambda_\pi$ by $\RV^*$ gives $\lambda_{\pi'}$.

\subsection{The area-one locus}

Suppose that $\pi$ is a generalised permutation. 
The function $q_\pi$ extends naturally to a map from $X(\pi) \cross Y(\pi)$ to $\calC_\root$. 
We fix a parameter $(x, y) \in X(\pi) \cross Y(\pi)$ and set $q = q_\pi(x, y)$. 
The area of the differential $q$ is the sum of the areas of the rectangles $R_\alpha$ appearing in the zippered rectangle decomposition.
Let $h_\alpha$ be the height of $R_\alpha$.
Thus the area of $q$ is
\[
A(x, y) = \sum_{\alpha \in \calA} x_\alpha h_\alpha
\]
By \Cref{e:height_top_zipper,e:height_bottom_zipper,e:rectangle height translation letter,e:rectangle height flip letter}, each $h_\alpha$ is a linear combination of the height parameters $(y_\beta)$. 
Thus $A \from X(\pi) \cross Y(\pi) \to \RR$ is bilinear; 
we deduce that $A$ is an invariant of the diagonal flow.

By disintegrating $\lambda_\pi$ with respect to the area $A$ we get a flow-invariant measure $\lambda^{(1)}_\pi$ on the unit area locus in $X(\pi) \cross Y(\pi)$.

The area $A$ is also invariant under Rauzy--Veech moves. 
Fix one such; say $\RV \from V(\pi') \to V(\pi)$.
Since $\lambda_\pi$ pulls back by $\RV^*$ to $\lambda_{\pi'}$, it follows that $\lambda^{(1)}_\pi$ pulls back by $\RV^*$ to $\lambda^{(1)}_{\pi'}$.

Recall from \Cref{d:polytope_of_parameters} that $P(\pi)$ is the polytope of parameters: the open subset of $X(\pi) \cross Y(\pi)$ cut out by the distinguished base-arc inequalities (\Cref{d:polytope_of_parameters} and \Cref{e:distinguished}).
Note that, from \Cref{d:polytope_of_differentials}, we have that $\calC(\pi) = q_\pi(P(\pi))$ is the polytope of differentials. 

We define $P^{(1)}(\pi) \subset P(\pi)$ to be set of parameters with $A(x, y) = 1$.
Recall that $P = \bigsqcup_\pi P(\pi)$. 
We denote the map $\bigsqcup_\pi q_\pi$ by $q_P$.

\begin{notation}
	We take $P^{(1)} = \bigsqcup_\pi P^{(1)}(\pi)$.
\end{notation}

We define $\lambda_P$ and $\lambda^{(1)}_P$ by restricting the measures $\lambda_\pi$ and $\lambda^{(1)}_\pi$ (as $\pi$ varies) and then taking the appropriate unions.
Note that both are invariant under the diagonal flow and also under the Rauzy--Veech moves.
We call $\lambda_P^{(1)}$ the \emph{Veech measure}.

\begin{remark}
	\label{r:Veech_measure_ergodic}
	Veech proved that the diagonal flow on $P^{(1)}$ is weak mixing, hence ergodic, for $\lambda_P^{(1)}$~\cite[Theorem 6.13]{Vee86}.
	
	By \Cref{r:calC-lab}, $\calD_\lab$ is a regular cover of $\calD_\root$. 
	Let $d_\lab$ be the degree of this cover.
	Note that this is the number of reindexings $s \in \Sym(\calA)$ such that $\pi_s = \pi \circ s$ is in the same component of $\calD_\lab$ as $\pi$.
	
	Recall that $q_P= \bigsqcup_\pi q_\pi$. 
	We now define the measure $\lambda^{(1)}$ on $\calC^{(1)}_\root$ by 
	\begin{equation}
		\label{d:Veech_measure}
		\lambda^{(1)} = \frac{1}{d_\lab} (q_P)_\ast \lambda^{(1)}_P
	\end{equation}
	We again call the image \emph{Veech measure}.
	It follows that this is ergodic for the diagonal flow. 
\end{remark}

\subsection{Induced measures on backwards flow faces}
\label{s:measures on backwards flow faces}

We now produce a measure suitable for our coding. 

We define $w \from X(\pi) \cross Y(\pi) \to \RR$ to be the sum of the widths.
From \Cref{d:backwards-flow-face} we have that the backwards flow face $\bdy^- P(\pi)$ is a level set of $w$, namely:
\[
\{(x, y) \in X(\pi) \cross Y(\pi) \st w(x, y) = 1\}
\]
is the backwards flow face $\bdy^- P(\pi)$. 
Disintegrating $\lambda_\pi$ with respect to $w$ we obtain a measure $\nu_\pi$ on $\bdy^- P(\pi)$.

Let $w_X \from X(\pi) \to \RR$ be, again, the sum of the widths. 
We define:
\[
X_1(\pi) = \{ x \in X(\pi) \st w_X(x) = 1\}
\]
Disintegrating the measure $\lambda_{X(\pi)}$ with respect to $w_X$ we obtain a measure $\nu_{X_1(\pi)}$ on $X_1(\pi)$.


\begin{lemma}
	\label{l:nu_X_one_finite}
	The measure $\nu_{X_1(\pi)}$ on $X_1(\pi)$ is finite. \qed
\end{lemma}

\begin{proof}
	We define $L$ to be the subspace of $W(\pi)$ given by $w_\flip = 0$. 
	Note that the cone $X(\pi)$ is contained in $L$.
	We disintegrate $\leb_{W(\pi)}$ with respect to $w_\flip$ to obtain the measure $\lambda_L$ on $L$.
	Note that $\lambda_{X(\pi)}$ is the restriction of $\lambda_L$ to $X(\pi)$.
	
	We define $L_1$ to be the affine subspace of $L$ given by $w_X = 1$. 
	Note that the polytope $X_1(\pi)$ is contained in $L_1$.
	We define $\nu_{L_1}$ to be the measure on $L_1$ given by disintegration of $\lambda_L$ with respect to $w_X$.
	Note that $\nu_{X_1(\pi)}$ is the restriction of $\nu_{L_1}$ to $X_1(\pi)$.
	
	Suppose that $v$ is any vector in $L$ such that $w_X(v) = 0$. 
	Translation by $v$ preserves $L_1$ and leaves $\nu_{L_1}$ invariant.
	Since $\nu_{L_1}$ is a smooth measure, it is a scalar multiple of the Lebesgue measure on $L_1$.
	
	The lemma now follows from the fact that $\closure{X_1(\pi)}$ is compact in $L_1$.
\end{proof}

Note that $\bdy^- P(\pi)$ equals the product $X_1(\pi) \cross Y(\pi)$.
Disintegration commutes with the product structure; 
thus the measure $\nu_\pi$ is the product measure $\nu_{X_1(\pi)} \cross \lambda_{Y(\pi)}$.

Suppose that $\pi$ to $\pi'$ is a Rauzy--Veech move.
Breaking symmetry, suppose that $\pi \to \pi'$ is a top move with winning label $\alpha$, losing label $\beta$, and matrix $E$. 
Let $\RV \from V(\pi') \to V(\pi)$ be the induced map.
We define the following subcones:
\[
Y^\alpha(\pi') = \{ y' \in Y'(\pi) \st y'_\alpha < 0 \}
\quad
\mbox{and}
\quad
X^\beta_1(\pi)  = \{ x \in X_1(\pi) \st x_\beta < x_\alpha \}
\]
We define the \emph{Rauzy--Veech renormalisation} map $\RV^\bdy \from X_1(\pi') \cross Y^\alpha(\pi') \to X^\beta_1(\pi) \cross Y(\pi)$ by
\[
\RV^\bdy(x', y') = \left( \frac{1}{w(Ex')} \cdot Ex', w(Ex') \cdot Ey' \right)
\]
(Here we suppress the second coordinate $y$ inside of $w$.)
Note that this is a ``bi-projective'' homeomorphism.

\begin{lemma}
	\label{l:RVinvariance}
	With notation as above: $\RV^\bdy_* (\nu_{\pi'}) = \nu_\pi$.
\end{lemma}

\begin{proof}
	We take $w' \from X(\pi') \cross Y(\pi') \to \RR$ to be the sums of the widths.
	We define the map $E^\bdy \from X(\pi') \cross Y^\alpha(\pi') \to X^\beta(\pi) \cross Y(\pi)$ as follows:
	\[
	E^\bdy(x', y') = \left( \frac{w'(x')}{w(E x')} \cdot E x', \frac{w(Ex')}{w'(x')} \cdot E y'\right)
	\]
	(Again, we may suppress the height coordinates inside of $w$ and $w'$.)
	Note that $E^\bdy$ is an extension of $\RV^\bdy$. 
	
	Recall that the Rauzy matrix $E$ is unimodular.  
	Note that, in $E^\bdy$, the widths and heights are scaled by reciprocal factors.
	We deduce that $\diff \lambda_\pi$ pulls back, via $E^\bdy$, to $\diff \lambda_{\pi'}$.
	
	Since $w$ is linear, we have:
	\[
	w(E^\bdy(x', y')) = w \left( \frac{w'(x')}{w(Ex')} \cdot Ex' \right) = w'(x', y')
	\]
	That is, $\diff w$ pulls back, via $E^\bdy$, to $\diff w'$.
	
	Let $\omega$ be any form on $X(\pi) \cross Y(\pi)$ so that $\omega \wedge \diff w = \diff \lambda_\pi$. 
	Note that, as discussed in \Cref{s:disintegration}, integrating against $\omega$ gives the measure $\nu_\pi$. 
	Define $\omega'$ to be the pullback, via $E^\bdy$, of $\omega$.  
	It follows that $\omega' \wedge \diff w' = \diff \lambda_{\pi'}$. 
	So, by \Cref{s:disintegration}, the form $\omega'$ gives the measure $\nu_{\pi'}$.
	Since $E^\bdy$ is an extension of $\RV^\bdy$, and since ``pulling back'' is 
	functorial, the lemma is proved.
\end{proof}

\begin{remark}
	\label{r:coning}
	Instead of defining $\nu_\pi$ via disintegration, we could define it through a coning construction, as follows. 
	Suppose that $U$ is a small open set in $\bdy^- P(\pi)$. 
	We define $C(U)$ to be its cone to the origin in $X(\pi) \cross Y(\pi)$. 
	We now take $\nu_\pi(U)$ equal to $\lambda_\pi(C(U))$.
	Under this definition, the fact that $\nu_\pi$ is invariant under $\RV^\bdy$ (that is, \Cref{l:RVinvariance}) follows directly from the flow-invariance and the Rauzy--Veech invariance of $\lambda_\pi$.
	A standard exercise then shows that this measure, obtained by coning, is scalar multiple of the measure obtained by disintegration. 
\end{remark}

In a small abuse of notation we now use $\RV^\bdy$ to denote the disjoint union of all of the Rauzy--Veech renormalisation maps.  
We define $\nu_P$ to be the induced measure on the union $\bdy^- P$.  
By further disintegration, with respect to the area $A$, we obtain a measure $\nu^{(1)}_P$ on $\bdy^- P^{(1)}$.

\begin{remark}\label{r:nu_P_invariant}
	Since the area and $\nu_P$ are invariant under $\RV^\bdy$ the same holds for $\nu^{(1)}_P$.
\end{remark}

\subsection{Projecting to widths}
\label{s:projecting to widths}

Fix $x \in X_1(\pi)$.
Note that all coordinates of $x$ are positive. 
We define the function $A_x \from Y(\pi) \to \RR$ by $A_x (y) = A(x,y)$.
We define the following:
\begin{align*}
	Y(\pi, x) &= \{ y \in Y(\pi) \st A(x, y) = 1 \} \\
	\close{Y}(\pi, x) &= \{ y \in \close{Y}(\pi) \st A(x, y) = 1 \}
\end{align*}
Disintegrating $\lambda_{Y(\pi)}$ with respect to the function $A_x$, we get a measure $\nu_{Y(\pi,x)}$ on $Y(\pi, x)$.

\begin{lemma}
	\label{l:cross_section}
	Suppose that $y \in \close{Y}(\pi)$ is a non-zero vector.  
	Then $A_x(y)$ is positive. 
\end{lemma}

It follows that $\close{Y}(\pi, x)$ meets all rays in $\close{Y}(\pi)$.  
Thus $\close{Y}(\pi, x)$ and $\close{Y}(\pi, x')$ are projectively equivalent for all $x, x' \in X_1(\pi)$.

\begin{proof}[Proof of \Cref{l:cross_section}]
	Recall that $x$ lies in $X_1(\pi)$.
	Since $X_i(\pi)$ is a subset of $X(\pi)$, all widths in $x$ are positive.
	
	Breaking symmetry, 
	we may assume that there is a label $\alpha$ which is a top label, which is not last on top, and so that the singularity height $y_\alpha$ is positive.  
	We further may assume that $y_\alpha$ is the first such; 
	that is, all labels $\beta$ on top and to the left of $\alpha$ have height $y_\beta = 0$. 
	We deduce that $A_x(y) \geq x_\alpha y_\alpha$.  
	Thus, the area $A_x(y)$ is positive.
\end{proof}

\begin{lemma}
	\label{l:fibres-finite-vol}
	For any irreducible generalised permutation $\pi$, and for any $x \in X_1(\pi)$, we have
	\[
	\nu_{Y(\pi,x)} (Y(\pi,x)) < \infty
	\]
\end{lemma}

\begin{proof}
	Let $L(x)$ be the hyperplane in $H(\pi)$ defined by $A_x(y) = 1$.
	Thus $Y(\pi, x) = Y(\pi) \cap L(x)$ and $\close{Y}(\pi, x) = \close{Y}(\pi) \cap L(x)$.
	Note that $\nu_{Y(\pi,x)}$ is the restriction of a Lebesgue measure $\nu_x$ on $L(x)$. 
	So it suffices to prove that $\close{Y}(\pi, x)$ is compact in $L(x)$.
	
	Note that $\close{Y}(\pi)$ is an intersection of finitely many closed half-spaces.
	By the main theorem for cones~\cite[Theorem~1.3]{Ziegler95} there is a finite list $(v^i)_{i = 1}^n$ of vectors in $\close{Y}(\pi)$ so that every $v \in \close{Y}(\pi)$ is a non-negative linear combination of the vectors $(v^i)_i$. 
	We may assume that the zero vector is not in this collection. 
	
	We define $w^i = v^i / A_x(v^i)$;
	this is well-defined by \Cref{l:cross_section}. 
	Note that the vectors $(w^i)_i$ again generate $\close{Y}(\pi)$. 
	Also, these vectors lie in $L(x)$.
	
	Recall that 
	\[
	\Delta^n = \left\{ a \in \RR^n \,\middle|\, a_i \geq 0, \sum a_i = 1 \right\}
	\]
	is the standard simplex in $\RR^n$. 
	Note that $\Delta^n$ is compact.
	We now define $D \from \Delta^n \to \close{Y}(\pi)$ by $D(a) = \sum a_i w^i$.
	Since $D$ is continuous, the image of $\Delta^n$ in $\close{Y}(\pi)$ is compact.
	
	\begin{claim}
		$D(\Delta^n) = \close{Y}(\pi, x)$.
	\end{claim}
	
	\begin{proof}
		Suppose that $a$ lies in $\Delta^n$.
		So $A_x(D(a)) = A_x(\sum a_i w^i) = \sum a_i A_x(w^i) = 1$. 
		Thus $D(a)$ lies in $\close{Y}(\pi, x)$.
		
		Suppose that $v$ lies in $\close{Y}(\pi, x)$.
		Thus $v = \sum E_i w^i$ for some $E_i$, which are non-negative.  
		Thus $1 = A_x(v) = A_x(\sum E_i w^i) =  \sum E_i A_x(w^i) = \sum E_i$. 
		Thus $D(b) = v$.
	\end{proof}
	
	This completes the proof of the lemma.
\end{proof}

By \Cref{l:fibres-finite-vol} the volume 
\[
\vol_\pi(x) = \nu_{Y(\pi,x)} (Y(\pi,x))
\]
is finite. 
Thus $\vol_\pi(x)$ is a smooth function of $x$.
From \Cref{r:coning} we deduce that the volume scales as 
\[
\vol_\pi(rx) = \vol_\pi(x) / r^{\dim Y(\pi)}
\]
So we may pick a basepoint $x_0 \in X_1(\pi)$ with $\vol_\pi(x_0) = 1$.  
That is, $\nu_{Y(\pi,x_0)}$ is a probability measure. 
Thus the product measure $\nu_{X_1(\pi)} \cross \nu_{Y(\pi,x_0)}$ on $X_1(\pi) \cross Y(\pi, x_0)$, under projection to widths, pushes forward to $\nu_{X_1(\pi)}$.

Recall that $Y(\pi, x_0)$ and $Y(\pi, x)$ are projectively equivalent, as both meet all rays in $Y(\pi)$.
Let $f_x \from Y(\pi, x) \to Y(\pi, x_0)$ be the resulting diffeomorphism. 
By \Cref{r:coning}, the measures $\nu_{Y(\pi,x)}$ and $\nu_{Y(\pi,x_0)}$ agree with the measures obtained by coning to the origin in $Y(\pi)$.
Hence, the Radon--Nikodym derivative of the pushforward measure $(f_x)_* \nu_{Y(\pi,x)}$ with respect to $\nu_{Y(\pi,x_0)}$ is constant over $Y(\pi,x_0)$.
Since $\vol_\pi(x_0) = 1$, we deduce that this constant is $\vol_\pi(x)$.

We define $p^\pi \from \bdy^- P^{(1)}(\pi) \to X_1(\pi)$ to be the projection to the widths. 
Note that this is a surjection.
The fibre of $p^\pi$ over $x \in X_1(\pi)$ is exactly $\{x \} \times Y(\pi, x)$.
The fibrewise diffeomorphisms $f_x$ vary smoothly with $x$.
Thus we obtain a diffeomorphism $f \from \bdy^- P^{(1)}(\pi) \to X_1(\pi) \cross Y(\pi, x_0)$.
From the paragraph above, we conclude that the pushforward measure $f_* \nu^{(1)}_\pi$ is absolutely continuous with respect to the product measure $\nu_{X_1(\pi)} \cross \nu_{Y(\pi,x_0)}$, and has density $\vol_\pi$.

Let $\phi_{X_1(\pi)} = p^\pi_* \nu_\pi^{(1)}$ be the pushforward measure on $X_1(\pi)$. 
We deduce the following:
\begin{equation}
	\label{e:density}
	\frac{\diff \phi_{X_1(\pi)}}{\diff \nu_{X_1(\pi)}} = \vol_\pi
\end{equation}

Suppose now that $\pi \to \pi'$ is a Rauzy--Veech move.
Breaking symmetry, we suppose that it is a top move, with winning label $\alpha$, losing label $\beta$, and matrix $E$.
We define the \emph{Rauzy renormalisation map} $\R^\bdy \from X_1(\pi') \to X^\beta_1(\pi)$ by 
\[
\R^\bdy (x') = \frac{E x'} {w_X(E x')}
\]
From \Cref{l:RVinvariance}, we deduce the following.

\begin{lemma}
	\label{l:R-invariance}
	With notation as above:
	\[
	\pushQED{\qed}
	\R^\bdy_* (\phi_{X_1(\pi')}) = \phi_{X_1(\pi)} \qedhere
	\popQED
	\]
\end{lemma}


In a small abuse of notation we now use $\R^\bdy$ to denote the union of all Rauzy renormalisation maps.  
We also define $\phi$ to be the induced measure on $X_1 = \bigsqcup_\pi X_1 (\pi)$.
Similarly, we set $\vol = \bigsqcup_\pi \vol_\pi$.
By \Cref{l:R-invariance}, the measure $\phi$ is invariant for $\R^\bdy$.

Recall by \Cref{l:flow-face-parameters} that $\bdy^- \calC(\pi) = q_\pi(\bdy^- P(\pi))$. 
We similarly define $\bdy^- \calC^{(1)}(\pi)$ to be $q_\pi( \bdy^- P^{(1)}(\pi))$. 
We now define $\bdy^- \calC_\root$ to be the union $\bigsqcup_\pi \bdy^- \calC(\pi)$.
Also we define $\bdy^- \calC^{(1)}_\root$ to be the union $\bigsqcup_\pi \bdy^- \calC^{(1)}(\pi)$.

Recall that $q_P = \bigsqcup_\pi q_\pi$ and that $d_\lab$ is the degree of the cover $\calD_\lab \to \calD_\root$.

\begin{definition}
	\label{d:Veech_and_Rauzy--Veech}
	We define measures $\nu$ and $\nu^{(1)}$ on $\bdy^- \calC$ and $\bdy^- \calC^{(1)}$ by 
	\begin{equation}
		\label{e:measures_on_rooted_differentials}
		\nu = \frac{1}{d_\lab} (q_P)_\ast \nu_P \, \text{ and } \nu^{(1)} = \frac{1}{d_\lab} (q_P)_\ast \nu_P^{(1)} 
	\end{equation}
	We call the measure $\nu^{(1)}$ the \emph{Veech measure} on $\bdy^- \calC^{(1)}_\root$ and the measure $\nu^{(1)}$ the \emph{Rauzy--Veech measure} on $\bdy^- \calC^{(1)}_\root$.
\end{definition}

\subsection{Summary}
\label{s:measures-summary}

The previous discussion gives us two measures, $\phi_{X_1}$ and $\nu_{X_1}$, on $X_1$ that are absolutely continuous with respect to  each other.
We showed that the density of $\phi_{X_1}$ with respect to $\nu_{X_1}$ is in fact the volume of the fiber in $Y$, and so is smooth. 
The measure $\phi_{X_1}$ is invariant under the Rauzy renormalisation $R_\bdy$, while $\nu_{X_1}$ is not. 
However, $\nu_{X_1}$ is easier to handle in estimates required to get the appropriate coding (in \Cref{s:dynamics}) for the diagonal flow. 
The coding is built with a pre-compact Poincaré section and hence its projection to widths is pre-compact in $X_1$.
We then use the smoothness of $\vol_\pi$ to conclude that up to a uniform constant (that depends on the choice of section) the estimates hold for $\phi_{X_1}$. 

\begin{figure}
	\[
	\begin{tikzcd}
		& & \text{Real dimension} \\
		& (W \times H, \leb_W \times \leb_H, g_t) & {2|\calA|} \\
		(P, \lambda_P, g_t) \arrow[hook,"\text{inclusion}"]{r} & (X \times Y, \lambda_X \times \lambda_Y, \mbox{$g_t$ and $\RV$}) \arrow[hook,"w_{\text{flip}} \times h_{\text{flip}}"]{u} & {2|\calA| - 2} \\
		(P^{(1)}, \lambda_P^{(1)}, g_t) \arrow[hook,"A"]{u} & & {2|\calA| - 3} \\
		(\bdy^- P^{(1)}, \nu_P^{(1)}, \RV^\bdy) \arrow[hook,"w"]{u} \arrow[two heads,"p"]{dd} & & {2|\calA| - 4} \\
		& (X, \lambda_X) & {|\calA| - 1}  \\
		(X_1, \phi_{X_1}, \R^\bdy) \arrow[leftarrow,"\mbox{$(\text{id},\vol)$}"]{r} & 
		(X_1, \nu_{X_1}) \arrow[hook,"w_X"]{u} & {|\calA| - 2}
	\end{tikzcd}
	\]
	\caption{The various dynamical systems (and measure spaces) discussed above, in the quadratic case.  For the most part the arrows are decorated with the function used to disintegrate the earlier measure and so obtain the later measure.}
	\label{f:measure_diagram}
\end{figure}

The diagram in \Cref{f:measure_diagram} summarises the construction of these measures in the quadratic case. 
We start with the Lebesgue measure $\leb_W \cross \leb_H$ on the disjoint union $W \cross H = \bigsqcup_\pi W(\pi) \cross H(\pi)$. 
This measure is invariant under the diagonal flow $g_t$.
We next disintegrate with respect to $w_\flip \cross h_\flip$ to give the measure $\lambda_X \cross \lambda_Y$ on the disjoint union $X \cross Y = \bigsqcup_\pi X(\pi) \cross Y(\pi)$.
By \Cref{s:polytopal-measures-full} and by \Cref{s:RV_invariance}, this measure is invariant under the diagonal flow and under the Rauzy--Veech moves, here denoted $\RV$.
We restrict $\lambda_X \cross \lambda_Y$ to $P = \bigsqcup_\pi P(\pi)$ to obtain the measure $\lambda_P$; 
thus this is again invariant for the diagonal flow. 
We then disintegrate $\lambda_P$ with respect the area $A$ to obtain a $g_t$--invariant measure $\lambda^{(1)}_P$ on the area-one locus $P^{(1)}$.
By \Cref{r:Veech_measure_ergodic}, this gives us the \emph{Veech measure} $\lambda^{(1)}$ on $\calC^{(1)}_\root$.

We disintegrate $\lambda^{(1)}_P$ again, this time with respect to the sum of the widths $w$, to obtain a measure $\nu_P^{(1)}$, which is now $\RV^\bdy$--invariant.
By \Cref{d:Veech_and_Rauzy--Veech}, this gives us the \emph{Rauzy--Veech} measure $\nu^{(1)}$ on $\bdy^- \calC^{(1)}$.

Recall that $X_1 = \bigsqcup_\pi X_1(\pi)$. 
The measure $\nu_P^{(1)}$ is pushed forward to a measure $\phi_{X_1}$ on $X_1$ via the projection $p \from \bdy^- P^{(1)} \to X_1$: the disjoint union of the projection maps $p^\pi$.
The measure $\phi_{X_1}$ is now $\R^\bdy$--invariant;  
this gives what we call \emph{Rauzy} measure.

We now may state one of the major results in the area.

\begin{theorem}
	\label{t:all-ergodic}
	The dynamical systems
	$(X_1, \phi_{X_1}, \R^\bdy)$, 
	$(\bdy^- \calC^{(1)}, \nu^{(1)}, \RV^\bdy)$, and 
	$(\calC^{(1)}, \lambda^{(1)}, g_t)$ are ergodic. \qed
\end{theorem}

For the interval exchange transformations, and for abelian differentials,  see~\cite[Corollaries~27.2 and~27.3]{Via06}.
For non-classical interval exchange transformations, the ergodicity of $\R^\bdy$ is proved in~\cite[Theorem 13.1]{Gad}.
Viana~\cite{Via06} deduces the ergodicity of the systems acting on abelian differentials from the case of interval exchange transformations.
The same proof carries over for quadratic components.

There is one more measure to consider. 
We directly disintegrate the Lebesgue measure $\lambda_X$ on the open cone $X$ with respect to the sum of the widths $w_X$ to obtain $\nu_{X_1}$.
This measure \emph{is} finite; 
it is essentially a Lebesgue measure on the interior of a compact polytope. 
However, it is \emph{not} $\R^\bdy$--invariant; 
this is because its construction only takes widths (and not heights) into account. 
Nevertheless, it is absolutely continuous, with a smooth (hence finite) density, with respect to $\phi_{X_1}$.

\subsection{Masur--Smillie--Veech measure}

We now define the \emph{Masur--Smillie measure} (see \cite[Proposition 4.4]{Mas} for abelian components and the principal quadratic stratum and see \cite[Section 1]{Mas-Smi} for general quadratic strata).
This is a diagonal flow-invariant measure on $\cover{\calC}$, the locus of abelian differentials that are orientation covers of differentials in $\calC$.
It is then pushed forward to $\calC$ (where it receives the same name). 

The lattice $\upH_1(\cover{S}, \cover{Z}; \ZZ)$ splits integrally over the minus and plus pieces in $\upH_1(\cover{S}, \cover{Z}; \CC)$. 
We normalise the Lebesgue measure on $\upH_1(\cover{S}, \cover{Z}; \CC)$ so that the covolume of the integer lattice in the minus piece is one.
Pulling the normalised measure (in the minus piece) back by period coordinates, we get a measure on $\cover{\calC}$. 
As a change of period coordinates is unimodular (in fact, symplectic) the measure is well-defined. 
As the diagonal flow stretches real periods by $e^t$ and contracts imaginary periods by $e^{-t}$ the measure is flow-invariant. 
Disintegrating with respect to area gives the \emph{Masur--Smillie measure}.
Since it is defined by periods, after lifting to $\calC_\root$ we find that the Masur--Smillie measure is absolutely continuous with respect to the Veech measure.
By the ergodic theorem, it follows that the Masur--Smillie and Veech measures are multiples of each other. 
We call the probability measure in this measure class the \emph{Masur--Smillie--Veech} measure.

\sloppy\printbibliography
\end{document}